\newcommand{\mathbb}[1]{\mathsf{I\!#1}}
\newcommand{\BYDEF}{\,\shortstack{\textup{\tiny def} \\ = }\,}
\newtheorem{a-1}{Assumption}[section]
\newtheorem{a-2}[a-1]{Assumption}
\newtheorem{a-3}[a-1]{Assumption}
\newtheorem{a-4}[a-1]{Assumption}
\newtheorem{a-5}[a-1]{Assumption}
\newtheorem{l-1a}[a-1]{Lemma}
\newtheorem{t-1new}[a-1]{Theorem}
\newtheorem{t-2new}[a-1]{Theorem}
\newtheorem{d-1}[a-1]{Definition}
\newtheorem{t-1}[a-1]{Theorem}
\newtheorem{p-1opt}[a-1]{Proposition}
\newtheorem{p-limav}[a-1]{Proposition}
\newtheorem{l-1}[a-1]{Lemma}
\newtheorem{l-2}[a-1]{Lemma}
\newtheorem{c-1}[a-1]{Corollary}
\newtheorem{c-2}[a-1]{Corollary}
\newtheorem{t-2}[a-1]{Theorem}
\newtheorem{p-1}[a-1]{Proposition}
\newtheorem{t-3new}[a-1]{Theorem}
\newtheorem{a-1x}[a-1]{Assumption}
\newtheorem{a-2x}[a-1]{Assumption}
\newtheorem{a-3x}[a-1]{Assumption}
\newtheorem{a-4x}[a-1]{Assumption}
\newtheorem{a-5x}[a-1]{Assumption}
\newtheorem{a-6x}[a-1]{Assumption}
\newtheorem{a-7x}[a-1]{Assumption}
\newtheorem{p-1x}[a-1]{Proposition}
\newtheorem{t-1x}[a-1]{Theorem}
\newtheorem{r-1}[a-1]{Remark}
\newtheorem{r-2}[a-1]{Remark}
\newtheorem{r-3}[a-1]{Remark}
\newtheorem{l-81}[a-1]{Lemma}
\newtheorem{l-82}[a-1]{Lemma}
\def\Re{{\rm I\kern-0.2em R}}
\def\R{\Re}
\newtheorem{Theorem}{Theorem}[section]
\newtheorem{Definition}[Theorem]{Definition}
\newtheorem{Proposition}[Theorem]{Proposition}
\newtheorem{Lemma}[Theorem]{Lemma}
\newtheorem{Corollary}[Theorem]{Corollary}
\newtheorem{Assumption}[Theorem]{Assumption}
\title{Averaging and linear programming in some singularly perturbed  problems of optimal control}
\author{Vladimir Gaitsgory\thanks{Flinders Mathematical Sciences Laboratory, School of Computer Science, Engineering and Mathematics, Flinders University, GPO Box 2100, Adelaide SA 5001, Australia, vladimir.gaitsgory@flinders.edu.au; The work of V. Gaitsgory was
supported by the Australian Research Council Discovery Grants DP130104432 and
DP120100532} \and
Sergey Rossomakhine
\thanks{Flinders Mathematical Sciences Laboratory, School of Computer Science, Engineering and Mathematics, Flinders University, GPO Box 2100, Adelaide SA 5001, Australia, serguei.rossomakhine@flinders.edu.au; The work of S. Rossomakhine was
supported by the Australian Research Council Discovery Grant
DP120100532}}
\begin{document}

\maketitle
\begin{abstract}
The paper aims at the development of an apparatus  for analysis and construction of near optimal solutions   of singularly
perturbed (SP) optimal controls problems (that is, problems of optimal control of SP systems) considered on the infinite time horizon.
 We mostly focus on problems with
time discounting criteria but  a possibility of the extension of results to periodic optimization problems is discussed as well.
Our consideration is based on earlier results on averaging of SP control systems and on linear programming formulations of optimal control problems.
The idea that we exploit is to first
asymptotically approximate a given problem of optimal control of the SP system by a certain averaged
optimal control problem, then reformulate this averaged problem as an infinite-dimensional (ID) linear programming (LP) problem,
and then approximate the latter by semi-infinite LP problems. We show that the optimal solution of these semi-infinite LP problems and their duals
(that can be found with the help of a modification of an available LP software) allow one to construct near optimal controls of the SP system. We demonstrate the construction with two numerical examples.
\end{abstract}

\bigskip

{\bf Key words.} Singularly perturbed optimal control problems, Averaging and linear programming,  Occupational measures, Numerical solution

\bigskip

{\bf AMS subject classifications.} 34E15, 34C29, 34A60, 93C70

\bigskip
\bigskip

{\bf I. Introduction and preliminaries.}

\section{Contents of the paper}\label{Sec-Contents}
The paper aims at the development of an apparatus  for analysis and construction of near optimal solutions   of singularly
perturbed (SP) optimal controls problems (that is, problems of optimal control of SP systems) considered on the infinite time horizon.
 We mostly focus on problems with
time discounting criteria but  a possibility of the extension of results to periodic optimization problems is discussed as well. Our consideration is
 based on earlier results on averaging of SP control systems and on linear programming formulations of optimal control problems.
The idea that we exploit is to first
asymptotically approximate a given problem of optimal control of the SP system by a certain averaged
optimal control problem, then reformulate this averaged problem as an infinite-dimensional (ID) linear programming (LP) problem,
and then approximate the latter by semi-infinite LP problems. We show that the optimal solution of these semi-infinite LP problems and their duals
(that can be found with the help of a modification of an available LP software) allow one to construct near optimal controls of the SP system.

We will be considering the SP system written in the form
\begin{eqnarray}\label{e:intro-0-1}
\epsilon y'(t) &=&f(u(t),y(t),z(t)) ,   \\
z'(t)&=&g(u(t),y(t),z(t)),   \label{e:intro-0-2}
\end{eqnarray}
where  $\epsilon > 0$ is a small parameter; $\ f(\cdot) :U\times
\mathbb{R}^m \times \mathbb{R}^n \to \mathbb{R}^m, \ g(\cdot):U\times
\mathbb{R}^m \times \mathbb{R}^n\to \mathbb{R}^n$ are continuous
vector functions satisfying Lipschitz conditions in $z$ and $y$; and where
controls $u(\cdot) $ are  measurable functions of time
 satisfying  the inclusion
\begin{equation}
 u(t) \in U,
 \end{equation}
 $U$ being a given compact metric space. The system (\ref{e:intro-0-1})-(\ref{e:intro-0-2}) will be considered with the initial condition
 \begin{equation}\label{e-initial-SP}
 (y_{\epsilon}(0),
z_{\epsilon}(0)) =(y_0,
z_0). \end{equation}
We are assuming that  all  solutions of the system obtained with this initial condition
  satisfy the inclusion
\begin{equation}\label{equ-Y}
(y_{\epsilon}(t),z_{\epsilon}(t))\in Y\times Z \ \ \ \ \forall t\in [0,\infty),
\end{equation}
 where $Y$ is a  compact
 subset of $\mathbb{R}^m$ and $Z $ is a  compact subset of $\mathbb{R}^n$ (the consideration is readily extendable to the case when only optimal and near optimal solutions satisfy (\ref{equ-Y})).
 We will  be mostly dealing with the problem of optimal control
\begin{equation}\label{Vy-perturbed}
\displaystyle{
  \inf _{u(\cdot) }\int_0 ^ { + \infty }e^{-C t}G(u(t), y_{\epsilon}(t),
z_{\epsilon}(t))dt} \stackrel{\rm def}{=} V_{di}^*(\epsilon, y_0, z_0) ,
\end{equation}
where $G(\cdot): U\times \R^m \times  \R^n \mapsto \R$ is a continuous function,
$C >0$ is a discount rate,  and   $inf$ is sought over all controls and the corresponding solutions
of (\ref{e:intro-0-1})-(\ref{e:intro-0-2}) that satisfy the initial condition(\ref{e-initial-SP}).
However, the approach that we are developing is applicable to other classes of SP optimal control problems   as well. To demonstrate this point, we will indicate a way how results obtained for the  problem with time discounting criterion (\ref{Vy-perturbed})
can be extended to the periodic optimization setting, and we will consider an example of a SP periodic optimization problem which is numerically solved
with the help of the proposed technique.

  The presence of $\epsilon$ in the  system  (\ref{e:intro-0-1})-(\ref{e:intro-0-2})) implies that
the rate with which the $y$-components of the state variables
change their values is of the order $\frac{1}{\epsilon}$ and is, therefore,
much higher than the rate of changes of the $z$-components (since
$\epsilon$ is assumed to be small). Accordingly, the
$y$-components and $z$-components of the state variables  are
referred to as {\it fast} and {\it slow}, respectively.

Problems of optimal controls of singularly perturbed systems appear in a variety of  applications and have received a
great deal of attention in the literature (see, e.g., \cite{Alv-1}, \cite{Art3}, \cite{Ben},
\cite{Bor-Gai}, \cite{Col}, \cite{Dmitriev}, \cite{Dont-Don}, \cite{DonDonSla},  \cite{Fil4}, \cite{Gai8}, \cite{Gra2}, \cite{Kab2}, \cite{Kok1},
\cite{Kus3}, \cite{Lei}, \cite{Nai}, \cite{plot}, \cite{QW}, \cite{Reo1}, \cite{Vel}, \cite{Vig},
\cite{Yin} and references therein).
A most common  approach to such problems is  based on the idea of approximating
the slow $z$-components of the solutions of the SP system (\ref{e:intro-0-1})-(\ref{e:intro-0-2}) by
the solutions of the so-called reduced system
\begin{equation}\label{e:red-1} z'(t)= g(u(t),q(u(t),z(t)),z(t)),  \end{equation}
   which is obtained from (\ref{e:intro-0-1}) via   the replacement of $\ y(t)\ $ by   $\ q(u(t),z(t)) \ $, with $q(u,z)$ being the  root of the equation
\begin{equation}\label{e:red-2} f(u,y,z) = 0  . \end{equation}
Note that the equation (\ref{e:red-2})  can be
 obtained by  formally equating  $\epsilon$  to zero in (\ref{e:intro-0-1}).

Being very efficient in dealing with many important classes of optimal control problems (see, e.g.,
 \cite{Ben}, \cite{DonDonSla}, \cite{Kab2}, \cite{Kok1}, \cite{Kus3}, \cite{Nai},  \cite{Reo1}, \cite{Vel},
 \cite{Yin}), this approach may not be applicable in the general case (see examples in  \cite{Art2},
\cite{Gai0}, \cite{Gai1}, \cite{Lei}).
 In fact, the validity of the assertion that the system (\ref{e:red-1}) can be used for finding a near optimal control of the SP system (\ref{e:intro-0-1})-(\ref{e:intro-0-2}) is related to the
  validity of the hypothesis that the optimal control of the latter is in some sense
slow and that (in the optimal or near optimal regime) the fast state variables converge rapidly to their quasi steady states
 defined by the root of (\ref{e:red-2}) and remain in a neighborhood of this root, while the
slow variables  are changing in accordance with (\ref{e:red-1}). While the validity of such a hypothesis has been established
under natural stability conditions  by famous Tichonov's theorem in the case of uncontrolled dynamics
(see  \cite{Reo} and \cite{Vas}), this hypothesis may not be valid in  the control setting if the dynamics is nonlinear and/or  the objective function is non-convex,
the reason for this being the fact that the use of rapidly oscillating controls may lead to  significant (not tending to zero with $\epsilon $)
improvements of the performance indexes.

Various averaging type approaches allowing one to deal with the fact that the optimal
or near optimal controls can take the form of rapidly oscillating
functions have been proposed and studied by a number of researchers (see  \cite{Alv}, \cite{Alv-1}, \cite{Art2}, \cite{Art0},  \cite{Art3}, \cite{Art1}, \cite{Bor-Gai}, \cite{Bor-Gai-1}, \cite{Bor-Gai-2}, \cite{Don}, \cite{Dont-Don}, \cite{Fil4}, \cite{Filatov},
\cite{Gai0},
 \cite{Gai1}, \cite{Gai8}, \cite{Gai-Leiz}, \cite{Gai-Ng}, \cite{Gra}, \cite{Gra2}, \cite{plot}, \cite{QW},   \cite{Vig} and references therein). This collective effort
lead to a good understanding of what the \lq\lq true limit" problems, optimal solutions of which approximate optimal solutions of the SP problems with small $\epsilon $, are. However, to the best of our knowledge, no algorithms
for finding such approximating solutions (in case fast oscillations may lead to a significant improvement of the performance) have been discussed in the literature. In this paper, we fill this gap by developing an apparatus for construction of such  algorithms, our development being based
  on results of \cite{FinGaiLeb}, \cite{GQ},  \cite{GQ-1} and  \cite{GR} establishing the equivalence of optimal control
problems to certain IDLP problems (related results on linear programming formulations of optimal control problems in both deterministic and stochastic settings can be found
in \cite{Borkar1}, \cite{Borkar}, \cite{BhBo}, \cite{BGQ}, \cite{Evans}, \cite{F-V}, \cite{Goreac-Serea}, \cite{Her-Her-Lasserre}, \cite{Kurtz}, \cite{Lass-Trelat},
\cite{Rubio}, \cite{Stockbridge}, \cite{Stockbridge1} and \cite{Vinter}).

The paper is organized as follows. It consists of five parts.  Part I (Sections \ref{Sec-Contents} - \ref{Sec-Preliminaries})
is introductory. Section \ref{Sec-Contents} is this description of the contents of the paper. In Section \ref{Sec-Two-examples}, we consider two  examples of SP optimal control problems, in which fast oscillations lead to improvements of the performance indexes. Near optimal solutions of these problems obtained with the proposed technique are exhibited  later in the text (Section \ref{Sec-construction-SP-examples}). In Section \ref{Sec-Preliminaries} some notations and definitions used in the paper are introduced.

In Part II (Sections \ref{Sec-Augm-reduced} and \ref{Sec-Ave-Aug}), we build a foundation for the subsequent developments  by considering two problems that describe an asymptotic behavior of the IDLP
problem related to the SP optimal control problem (\ref{Vy-perturbed}). One is the augmented reduced IDLP problem obtained
via adding some extra constraints to the problem resulted from  equating of the small parameter to zero (Section \ref{Sec-Augm-reduced}) and the other
is the \lq\lq averaged" IDLP problem, which  is related to the averaged problem of optimal control (Section \ref{Sec-Ave-Aug}). We show that
these two problems are
equivalent   and that both of them characterize the limit behavior of the SP problem when $\epsilon\rightarrow 0$ provided that the slow dynamics of the SP system is approximated by the averaged system on finite time intervals (see Definition \ref{Def-Average-Approximation} and  Propositions \ref{Prop-ave-disc}, \ref{Prop-ave-disc-1},  \ref{Prop-SP-2}, \ref{Prop-present-3}).

In Part III (Sections \ref{Sec-ACG-nec-opt} - \ref{Sec-ACG-construction}), we introduce the concept of an
average control generating (ACG) family (the key building block of the paper), and we use duality results for IDLP problems involved and their semi infinite approximations to characterize and construct optimal and near optimal ACG families. More specifically,
in Section \ref{Sec-ACG-nec-opt}, the definitions of an ACG family and of  optimal/ near optimal ACG families are given (Definitions \ref{Def-ACG} and \ref{Def-ACG-opt}). Also in this section, averaged and associated dual problems are introduced and a necessary optimality
condition for an ACG family to be optimal   is  established under the assumption that solutions  of these duals exist (Proposition \ref{Prop-necessary-opt-cond}). In Section \ref{N-approx-dual-opt}, approximating averaged semi infinite LP problem and the corresponding approximating averaged and associated dual problems are introduced. In Section \ref{Sec-Existence-Controllability} it is proved that solutions of these approximating dual problems exist under natural controllability
conditions (Propositions \ref{Prop-existence-disc} and \ref{dual-existence-average}). In Section \ref{Sec-ACG-construction}, it is established that
solutions of the approximating averaged and associated dual problems can be used for construction of near optimal ACG families (Theorem \ref{Main-SP-Nemeric}).

In Part IV (Sections \ref{Sec-SP-ACG-theorem} - \ref{Sec-LP-based-algorithm}), we indicate a way how asymptotically optimal  (near optimal)  controls of SP problems can be constructed  on the basis of  optimal (near optimal) ACG families. In Section \ref{Sec-SP-ACG-theorem}, we describe the construction of a control of the SP system based on an ACG family and establish its asymptotic optimality/near optimality if the ACG family is optimal/near optimal (Theorem \ref{Prop-convergence-measures-discounted} and Corollary \ref{Cor-asym-near-opt}). In Section \ref{Sec-construction-SP-examples}, we discuss the process
of construction of asymptotically near optimal controls  using solutions
of the approximating  averaged and associated dual problems, and we illustrate the construction with two numerical examples.   A linear programming based algorithm allowing one to find solutions of approximating averaged problem and solutions of the corresponding approximating (averaged and associated) dual problems numerically is outlined in Section \ref{Sec-LP-based-algorithm}.

 Part V (Sections \ref{Sec-Phi-Map} - \ref{Sec-Main-AVE}) contains some  technical proofs. Namely, the proofs of Propositions \ref{Prop-ave-disc} and  \ref{Prop-SP-2} are given in Section \ref{Sec-Phi-Map}, and
the proofs of Theorems \ref{Main-SP-Nemeric} and \ref{Prop-convergence-measures-discounted}
are given in Sections \ref{Sec-Main-Main} and \ref{Sec-Main-AVE}, respectively.

\section{Two examples}\label{Sec-Two-examples}\

EXAMPLE 1. Consider the  optimal control problem
\begin{equation}\label{e:EXAMPL2-4}
\inf_{u(\cdot)}\int_0^{\infty}e^{-0.1t}(u_1(t)^2+u_2(t)^2+y_1(t)^2+y_2(t)^2+z^2(t))dt \BYDEF V_{di}^*(\epsilon , y_0,z_0) ,
\end{equation}
 with minimization being over the controls $(u_1(\cdot),u_2(\cdot)) $,
\begin{equation}\label{e:ex-4-1-rep-1-0}
(u_1(t),u_2(t))\in U\BYDEF \{(u_1,u_2)\ : \ |u_i|\leq 1, \ i=1,2\},
\end{equation}
   and the corresponding solutions $(y(\cdot), z(\cdot)) $ of the
SP  system
\begin{equation}\label{e:ex-4-2-repeat}
\epsilon y'_i(t)= -y_i(t) + u_i(t), \ \ \ i=1,2,
\end{equation}
\vspace{-.3in}
\begin{equation}\label{e:ex-4-1-rep}
z'(t)= -y_1(t)u_2(t) + y_2(t)u_1(t) \ ,
\end{equation}
where
\vspace{-.2in}
$$
y(0)=y_0 =(y_{1,0},y_{1,0}) =(0.5, 0.5), \ \ \ \ \ z(0)=z_0=2
$$
and
\vspace{-.2in}
$$
(y_1(t),y_2(t))\in Y=\{(y_1,y_2)\ : \ |y_i|\leq 1, \ i=1,2\}\ , \ \ \ \ z(t)\in
Z=\{z\ : \ |z|\leq 2.5 \}  \ .
$$
By taking $\epsilon = 0$ in (\ref{e:ex-4-2-repeat}), one obtains that $y_i(t) = u_i(t), \ i=1,2, $ and, thus, arrives at the equality
 \begin{equation}\label{e:ex-4-1-rep-1}
  -y_1(t)u_2(t)+y_2(t)u_1(t)= 0 \ \ \ \forall t ,
\end{equation}
which  makes the slow dynamics  uncontrolled and leads to the equality $\ z(t) = z_0=2 \ \forall t >0 $. The latter, in turn, implies that
$$
V_{di}^*(0, y_0,z_0) =\inf_{u(\cdot)}\int_0^{\infty}e^{-0.1t}(2u_1^2(t)+2u_2^2(t)+ z_0^2)dt= 10 z_0^2=40
$$
To see that this value is not even approximately optimal for small but non-zero $\epsilon $, let us consider the controls
$\
\bar{u}_1(t)= cos(\frac{t}{\epsilon}), \ \  \bar{u}_2(t)= sin(\frac{t}{\epsilon})
$.
The solution of the SP system (\ref{e:ex-4-2-repeat}), (\ref{e:ex-4-1-rep}) obtained with this control can be verified to be of the form
$$
\bar{y}_1(t)= \frac{1}{2}sin(\frac{t}{\epsilon}) - \frac{1}{2}cos(\frac{t}{\epsilon}) +O(e^{-\frac{t}{\epsilon}}), \ \ \ \ \ \bar{y}_2(t)= -\frac{1}{2}sin(\frac{t}{\epsilon}) - \frac{1}{2}cos(\frac{t}{\epsilon})+O(e^{-\frac{t}{\epsilon}}), \ \ \ \  \bar{z}(t)= 2 - \frac{1}{2}t + O(\epsilon),
$$
with the slow state variable $z(t)$ decreasing in time and reaching zero at the moment $\bar t(\epsilon)= 4 +O(\epsilon) $. The value of the objective function  obtained with using these controls until the moment when the slow component reaches zero and with applying \lq\lq zero controls" after that moment is equal to $10.3+ O(\epsilon)$.
Thus, in the given example,
$
\ \lim_{\epsilon\rightarrow 0}V_{di}^*(\epsilon, y_0,z_0) < V_{di}^*(0, y_0,z_0).
$

Controls $u_{i, \epsilon}(t), \ i=1,2, $ and the corresponding state components $y_{i, \epsilon}(t), \ i=1,2, $ and  $\ z_{ \epsilon}(t)$ (that are verified
to be \lq\lq near optimal" in the given example) have been numerically constructed  with the help of proposed technique
 for $\epsilon=0.1 $ (see Figures 1,2,3 and 7 in Section \ref{Sec-construction-SP-examples}) and for $\epsilon=0.01 $ (see Figures 4,5,6 and 8 in Section \ref{Sec-construction-SP-examples}). The corresponding  values of the objective function obtained with these two values of  $\epsilon $ are approximately equal to $8.56 $ and to $ 8.54 $ (respectively).

EXAMPLE 2. Assume that the fast dynamics and the controls are as in Example 1 (that is, they are described by (\ref{e:ex-4-1-rep-1-0})
and (\ref{e:ex-4-2-repeat})). Assume that the slow dynamics  is two-dimensional and is described by the equations
\begin{equation}\label{e:ex-4-1-rep-101-1}
z_1'(t)=z_2(t), \ \ \ \ \ \ \ z_2'(t)= - 4z_1(t) - 0.3z_2(t)-y_1(t)u_2(t)+y_2(t)u_1(t),
\end{equation}
\vspace{-.2in}
with
$$
(z_1(t), z_2(t))\in Z \BYDEF \{(z_1,z_2)\ : \ |z_1|\leq 2.5, \ \ |z_1|\leq 4.5\}.
$$
Consider the periodic optimization problem
\begin{equation}\label{e:ex-4-1-rep-101-2}
\inf_{T, u_1(\cdot),u_2(\cdot)} \frac{1}{T}\int_0^T (0.1u_1^2(t)+ 0.1u_2^2(t) -z_1^2(t))dt
= V_{per}^*(\epsilon),
\end{equation}
where minimization is over the length of the time interval $T$ and over the controls $u_i(\cdot), \ i=1,2, $ defined on this interval subject to
the periodicity conditions: $y_i(0)=y_i(T), \ i=1,2, $ and $z_i(0)=z_i(T), \ i=1,2 $.

As in Example 1, equating $\epsilon$ to zero leads to (\ref{e:ex-4-1-rep-1}), which makes the slow dynamics uncontrollable. The optimal periodic solution
of (\ref{e:ex-4-1-rep-101-1}) in this case is the \lq\lq trivial" steady state  one:
$ \ u_1(t)=u_2(t)=y_1(t)=y_2(t)=z_1(t)=z_2(t)= 0 \ \  \forall t $, which leads one to the conclusion that
 $V_{per}^*(0)=0 $.

Note that the \lq\lq slow subsystem" (\ref{e:ex-4-1-rep-101-1}) is equivalent to the second order differential equation
$\ z''(t) + 0.3z'(t) + 4z(t) =  -y_1(t)u_2(t)+y_2(t)u_1(t) \ $ that describes a linear oscillator influenced by the controls and the fast state variables. One can expect, therefore, that, if, due to a combined influence of the latter, some sort of  resonance oscillations of the slow state variables are achievable, then the value of the objective can be negative (see Example 1 in \cite{GR}).
This type of a near optimal oscillatory regime (with rapid oscillations of $u_{i, \epsilon}(t), \ i=1,2, $ and
$y_{i, \epsilon}(t), \ i=1,2, $ and with slow oscillations of $\ z_{i, \epsilon}(t), \ i=1,2, $) was obtained  with the use of the proposed technique.
The images of  state trajectories constructed numerically for $\epsilon=0.01 $ and $\epsilon=0.001 $  are depicted in Figures 9 and 10
 in Section \ref{Sec-construction-SP-examples}. The  values of the objective function for these two cases  are approximately equal $-1.177$.

\section{Some notations and definitions}\label{Sec-Preliminaries}
Given a compact metric space $W$, $\mathcal{B}(W)$ will stand for
the $\sigma$-algebra of its Borel subsets and $\mathcal{P}(W)$
will denote the set of probability measures defined on
$\mathcal{B}(W)$. The set $\mathcal{P}(W)$ will always be treated
as a compact metric space with a metric $\rho$,
 which is  consistent
 with its weak$^*$  topology. That is, a sequence
$\gamma^k \in \mathcal{P}(W), k =1,2,... ,$ converges to $\gamma
\in \mathcal{P}(W)$ in this metric if and only if
$$
 \lim_{k\rightarrow \infty}\int_{W} \phi(w) \gamma^k (dw) \ = \
 \int_{W} \phi(w) \gamma (dw),
$$
for any continuous $\phi(w): W \rightarrow \mathbb{R}^1$.
There are many ways of how such a metric $\rho$ can be defined. In this paper, we will use the following definition: $\forall \gamma',\gamma''  \in \mathcal{P}(W)$,
\begin{equation}\label{e:intro-2}
\rho(\gamma', \gamma'') \BYDEF
\sum^\infty_{l=1} \frac{1}{2^l} \; \mid \int_{W} q_l(w) \gamma'(dw)
-  \int_{W} q_l(w) \gamma''(dw)\mid \ ,
\end{equation}
where  $q_l(\cdot), l=1,2,... \ ,$ is a sequence
of Lipschitz continuous functions which is dense in the unit ball
of $C(W)$ (the space of continuous functions on $W$).

Using this metric $\rho$,
one can define the Hausdorff metric
$\rho_H$ on the set of subsets of $\mathcal{P}(W)$ as follows: $\forall \Gamma_i \subset \mathcal{P}(W) \ , \ i=1,2 \ ,$
\vspace{-0.15cm}
\begin{equation}\label{e:intro-3}
\rho_H(\Gamma_1, \Gamma_2) \stackrel{def}{=} \max \{\sup_{\gamma \in
\Gamma_1} \rho(\gamma,\Gamma_2), \sup_{\gamma \in \Gamma_2}
\rho(\gamma,\Gamma_1)\}, \
\end{equation}
where $\rho(\gamma, \Gamma_i) \BYDEF
\inf_{\gamma' \in \Gamma_i} \rho(\gamma,\gamma') \ .$
Note that, although, by some abuse of terminology,  we refer to
$\rho_H(\cdot,\cdot)$ as to a metric on the set of subsets of
${\cal P} (Y \times U)$, it is, in fact, a semi metric on this set
(since $\rho_H(\Gamma_1, \Gamma_2)=0$ is equivalent to  $\Gamma_1
= \Gamma_2$ if and only if $\Gamma_1$ and $\Gamma_2$ are closed).
It can be verified (see e.g. Lemma $\Pi$2.4 in \cite{Gai0},  p.205)
that, with the definition of the metric $\rho$  as in (\ref{e:intro-2}),
\begin{equation}\label{e:intro-3-1}
\rho_H(\bar{co} \Gamma_1, \bar{co} \Gamma_2) \leq \rho_H(\Gamma_1, \Gamma_2) \ ,
\end{equation}
where $\bar{co}$ stands for the closed convex hull of the corresponding set.

 Given a measurable function $w(\cdot): [0,\infty) \rightarrow W$, the {\it occupational measure} generated by
 this function on the interval $[0,S] $ is the probability measure $\gamma^{w(\cdot),S}\in \mathcal{P}(W)$ defined by the equation
 \begin{equation}\label{e:occup-S}
\gamma^{w(\cdot),S} (Q) \BYDEF \frac{1}{S} \int_0^{S}  1_Q(w(t))dt ,
 \ \ \forall Q \in \mathcal{B}(W),
\end{equation}
where $1_Q(\cdot) $ is the indicator function. The occupational measure  generated by
 this function on the interval $[0,\infty) $  is the probability measure $\gamma^{w(\cdot)}\in \mathcal{P}(W)$ defined as the limit (assumed to exist)
 \begin{equation}\label{e:occup-S-infty}
\gamma^{w(\cdot)} (Q) \BYDEF \lim_{S\rightarrow\infty}\frac{1}{S} \int_0^{S}  1_Q(w(t))dt ,
 \ \ \forall Q \in \mathcal{B}(W).
\end{equation}
Note that  (\ref{e:occup-S}) is equivalent to that
\begin{equation}\label{e:oms-0-1}
\int_{W} q(w) \gamma^{w(\cdot),S}(dw) = \frac{1}{S} \int _0 ^ S
q (w(t)) dt
\end{equation}
for any $q(\cdot)\in C(W)$, and
(\ref{e:occup-S-infty}) is equivalent to that
\begin{equation}\label{e:oms-0-1-infy}
\int_{W} q(w) \gamma^{w(\cdot)}(dw) =  \lim_{S\rightarrow\infty}\frac{1}{S} \int _0 ^ S q (w(t)) dt
\end{equation}
 for any $q(\cdot)\in C(W)$.

The {\it discounted occupational measure} generated by $w(\cdot) $ is the probability measure $\gamma_{di}^{w(\cdot)}\in \mathcal{P}(W)$  defined by the equation
\begin{equation}\label{e:occup-C}
\gamma_{di}^{w(\cdot)} (Q) \BYDEF C \int_0^{\infty} e^{-Ct} 1_Q(w(t))dt\ ,
 \ \ \forall Q \in \mathcal{B}(W),
\end{equation}
the latter being equivalent to that
\begin{equation}\label{e:oms-0-2}
\int_{W} q(w) \gamma_{di}^{w(\cdot)}(dw) = C \int _0 ^ \infty
e^{-C t} q (w(t) ) dt
\end{equation}
for any $q(\cdot)\in C(W)$.

\bigskip

{\bf II. Augmented reduced and averaged IDLP problems.}

\section{Family of IDLP problems related the SP problem  and the  augmented reduced IDLP problem}\label{Sec-Augm-reduced}
Denote by $\Gamma_{di}(\epsilon , y_0,z_0) $
 the set of discounted occupational measures generated by all controls
$u(\cdot) $ and the corresponding solutions $(y_{\epsilon}(\cdot),z_{\epsilon}(\cdot)) $ of the SP system (\ref{e:intro-0-1})-(\ref{e:intro-0-2}).
That is,
$$
\Gamma_{di}(\epsilon , y_0,z_0) \BYDEF \bigcup_{u(\cdot)}\{ \gamma^{u(\cdot), y_{\epsilon}(\cdot),z_{\epsilon}(\cdot)}_{di}\}
\subset \mathcal{P}(U\times Y\times Z),
$$
where $\gamma^{u(\cdot), y_{\epsilon}(\cdot),z_{\epsilon}(\cdot)}_{di} $ is the occupational measure generated by
$(u(\cdot), y_{\epsilon}(\cdot),z_{\epsilon}(\cdot)) $ and the union is over all  controls.

Based on (\ref{e:oms-0-2}), one can rewrite the
problem (\ref{Vy-perturbed}) in terms of minimization over measures from the set $\Gamma_{di}(\epsilon , y_0,z_0) $ as follows
\begin{equation}\label{e:oms-4}
 \inf_{\gamma\in \Gamma_{di}(\epsilon , y_0,z_0)}\int_{U\times Y\times Z} G(u,y,z)\gamma(du,dy,dz) = CV_{di}^*(\epsilon, y_0, z_0),
\end{equation}
the latter implying (due to linearity of the objective function in (\ref{e:oms-4})) that
\begin{equation}\label{e:oms-4-1}
 \min_{\gamma\in \bar{co}\Gamma_{di}(\epsilon , y_0,z_0)}\int_{U\times Y\times Z} G(u,y,z)\gamma(du,dy,dz) = CV_{di}^*(\epsilon, y_0, z_0).
\end{equation}
Let us also consider the parameterized by $\epsilon $ family of IDLP problems
\begin{equation}\label{SP-IDLP-di}
 \min _{\gamma \in \mathcal{D}_{di}(\epsilon , y_0, z_0)}\int_{U \times Y\times Z }G(u,
y,z)\gamma(du,dy,dz)\BYDEF G^*_{di}(\epsilon , y_0, z_0),
\end{equation}
where the set $\mathcal{D}_{di}(\epsilon ,  y_0, z_0)\subset \mathcal{P}(U \times Y\times Z) $ is defined by the equation
\begin{equation}\label{e:SP-W-1}
\begin{array}{c}
\displaystyle \mathcal{D}_{di}(\epsilon ,  y_0, z_0) \BYDEF  \{\gamma \in \mathcal{P}(U \times Y\times Z) \ :
\\[12pt]
\displaystyle \int_{U\times Y\times Z} [\nabla (\phi(y)\psi(z) )^T \chi_{\epsilon}(u,y,z) \
+ \ C (\phi(y_0)\psi(z_0)-\phi(y)\psi(z) )]\gamma(du,dy,dz) =  0
 \\[12pt]
\forall \phi(\cdot)\in C^1 (\R^m), \ \ \forall \psi(\cdot)\in C^1 (\R^n) \} , \ \ \ \ \ \ \ \ \ \chi_{\epsilon}(u,y,z)^T\BYDEF ( \frac{1}{\epsilon}\ f(u,y,z)^T,
\ g(u,y,z)^T).
\end{array}
\end{equation}
Note that both the objective and the constraints above are linear in the \lq\lq decision variable" $\gamma$ (that is, these problems are indeed belong to the class
of infinite dimensional LP problems; see \cite{And}).

From Proposition 2.2 of  \cite{GQ} it follows that
\begin{equation}\label{e:oms-6-0}
\bar{co}\Gamma_{di}(\epsilon , y_0,z_0)\subset \mathcal{D}_{di}(\epsilon ,  y_0, z_0) \ \ \ \ \ \Rightarrow \ \ \ \ \ C V_{di}^*(\epsilon , y_0, z_0) \geq
 G^*_{di}(\epsilon , y_0, z_0).
\end{equation}
Moreover, by Theorem 4.4 of \cite{GQ} (see also Theorem 2.2 in \cite{GQ-1}), under mild conditions, the
equality
\begin{equation}\label{e:oms-6}
\bar{co}\Gamma_{di}(\epsilon , y_0,z_0)= \mathcal{D}_{di}(\epsilon ,  y_0, z_0),
\end{equation}
is valid,
which implies that
 \begin{equation}\label{C-result-di-SP}
  C V_{di}^*(\epsilon , y_0, z_0) = G^*_{di}(\epsilon , y_0, z_0)
\end{equation}
(the validity of the latter for an arbitrary $G(u,y,z) $ being equivalent to (\ref{e:oms-6})).

Due to (\ref{e:oms-6-0}) and (\ref{e:oms-6}), (\ref{C-result-di-SP}), one can use the IDLP problem (\ref{SP-IDLP-di})
to obtain some asymptotic properties of (\ref{e:oms-4-1}). To this end, let us
 multiply the constraints in (\ref{e:SP-W-1}) by $\epsilon$
and take  into account the fact that
$$ \ \nabla (\phi(y)\psi(z) )^T  = (\psi(z)\nabla \phi(y)^T, \ \phi(y)\nabla \phi(z)^T ). $$
This will lead us to the following representation for the set $\mathcal{D}_{di}(\epsilon , y_0, z_0)$:
\begin{equation}\label{e:SP-W-1-1}
\begin{array}{c}
\displaystyle \mathcal{D}_{di}(\epsilon ,  y_0, z_0) \BYDEF  \{\gamma \in \mathcal{P}(U \times Y\times Z) \ :
\\[12pt]
\displaystyle \int_{U\times Y\times Z} [ \ \psi(z)\nabla \phi(y)^T f(u,y,z) + \epsilon \
[\ \phi (y)\nabla \psi(z)^T g(u,y,z)\
 \\[12pt]
+ \ C (\ \phi(y_0)\psi(z_0)-\phi(y)\psi(z) \ )\ ]\ ]\gamma(du,dy,dz) =  0 \ \ \ \ \ \forall \phi(\cdot)\in C^1 (\R^m), \ \ \forall \psi(\cdot)\in C^1 (\R^n) \}.
\end{array}
\end{equation}
Taking $\epsilon = 0 $ in the expression above, one arrives at the set
\begin{equation}\label{e:SP-W-3}
\begin{array}{c}
 \displaystyle \mathcal{D}\BYDEF \{\gamma \in \mathcal{P}(U \times Y\times Z) \ :
\\[12pt]
\displaystyle \int_{U\times Y\times Z} [ \ \psi(z)\nabla \phi(y)^T f(u,y,z)\ ]\gamma(du,dy,dz) =  0 \ \ \ \ \ \forall \phi(\cdot)\in C^1 (\R^m),
\ \ \forall \psi(\cdot)\in C^1 (\R^n) \}
\end{array}
\end{equation}
that does not depend on   the initial conditions $y_0, z_0 $ (as well as  on the value of the discount factor $C$).

It is easy to see that $\ \lim\sup_{\epsilon\rightarrow 0}\mathcal{D}_{di}(\epsilon , y_0, z_0)\subset \mathcal{D} $. In general case, however,
 $  \mathcal{D}_{di}(\epsilon ,  y_0, z_0)\nrightarrow \mathcal{D} $ when
$\epsilon\rightarrow 0 $, this being due to the fact that the equalities  defining the set $  \mathcal{D}_{di}(\epsilon ,  y_0, z_0)$ contain
some \lq\lq implicit" constraints that are getting lost with equating $\epsilon $ to  zero.
 In fact, by taking $\ \phi(y)=1  $,  one can see that, if $ \gamma$ satisfies
the equalities in (\ref{e:SP-W-1-1}), then it also satisfies the equality
\begin{equation}\label{e:SP-W-2-1-1}
 \int_{U\times Y\times Z}[\ \nabla \psi(z)^T g(u,y,z)\ + \ C (\ \psi(z_0)-\psi(z) \ )\ ]\gamma(du,dy,dz) =  0
 \ \ \forall \psi(\cdot)\in C^1 (\R^n)
\end{equation}
for any $\ \epsilon > 0$.
That is,
\begin{equation}\label{e:SP-W-2-1-1-0}
\mathcal{D}_{di}(\epsilon ,  y_0, z_0) = \mathcal{D}_{di}(\epsilon ,  y_0, z_0)\cap \mathcal{A}_{di}(z_0)\ \ \ \ \ \forall \epsilon > 0,
\end{equation}
where
\begin{equation}\label{e:SP-M}
\begin{array}{c}
 \displaystyle \mathcal{A}_{di}(z_0) \BYDEF\{\gamma \in \mathcal{P}(U \times Y\times Z) \ :
\\[10pt]
 \displaystyle \int_{U\times Y\times Z}[\ \nabla \psi(z)^T g(u,y,z)\ + \ C (\ \psi(z_0)-\psi(z) \ )\ ]\gamma(du,dy,dz) =  0
 \ \ \forall \psi(\cdot)\in C^1 (\R^n).
\end{array}
\end{equation}
Define the set $\mathcal{D}_{di}^{\mathcal{A}}(z_0)$ by the equation
\begin{equation}\label{e:SP-M-0}
\mathcal{D}_{di}^{\mathcal{A}}(z_0)\BYDEF \mathcal{D}\cap \mathcal{A}_{di}(z_0)
\end{equation}
and consider the IDLP problem
\begin{equation}\label{SP-IDLP-0}
  \min _{\gamma \in \mathcal{D}_{di}^{\mathcal{A}}(z_0) }\int_{U \times Y\times Z }G(u,
y,z)\gamma(du,dy,dz) \BYDEF G_{di}^{\mathcal{A}}(z_0).
\end{equation}
We will be referring to this problem as to {\it augmented reduced} IDLP problem (the term {\it  reduced} problem is commonly used for the problem
obtained from a perturbed family by equating the small parameter to zero).
\begin{Proposition}\label{Prop-SP-1}
The following relationships are valid:
\begin{equation}\label{SP-IDLP-convergence-1}
 \limsup_{\epsilon\rightarrow 0}\mathcal{D}_{di}(\epsilon , y_0, z_0)\subset \mathcal{D}_{di}^{\mathcal{A}}(z_0),
\end{equation}
\begin{equation}\label{SP-IDLP-convergence-1-1}
 \liminf_{\epsilon\rightarrow 0}G_{di}^*(\epsilon , y_0, z_0)\geq  G_{di}^{\mathcal{A}}(z_0).
\end{equation}
\end{Proposition}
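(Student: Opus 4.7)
The plan is to treat the two claims in order: the set inclusion (\ref{SP-IDLP-convergence-1}) comes first, then (\ref{SP-IDLP-convergence-1-1}) is extracted from it via a standard compactness/continuity argument on the linear objective. The crucial conceptual point (which I will need to exploit and which is already signalled by the text preceding the statement) is that the constraint defining $\mathcal{A}_{di}(z_0)$ is \emph{hidden} inside the constraints defining $\mathcal{D}_{di}(\epsilon, y_0, z_0)$ via the choice $\phi(y) \equiv 1$, so that it survives the passage $\epsilon \to 0$ even though it disappears from (\ref{e:SP-W-1-1}) if one naively sets $\epsilon = 0$. Without using this, one would only obtain $\limsup_{\epsilon \to 0}\mathcal{D}_{di}(\epsilon, y_0, z_0) \subset \mathcal{D}$, which is strictly weaker.

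For (\ref{SP-IDLP-convergence-1}), I would take any $\gamma \in \limsup_{\epsilon \to 0}\mathcal{D}_{di}(\epsilon, y_0, z_0)$, realized as a weak$^*$ limit of some $\gamma_{\epsilon_k} \in \mathcal{D}_{di}(\epsilon_k, y_0, z_0)$ with $\epsilon_k \to 0$. Fixing arbitrary test functions $\phi \in C^1(\R^m)$, $\psi \in C^1(\R^n)$, I pass to the limit in the defining equality of $\mathcal{D}_{di}(\epsilon_k, y_0, z_0)$ written in the form (\ref{e:SP-W-1-1}). The first term $\int \psi(z)\nabla\phi(y)^T f(u,y,z)\,\gamma_{\epsilon_k}(du,dy,dz)$ converges by weak$^*$ convergence (the integrand is continuous on the compact set $U \times Y \times Z$); the remaining terms are uniformly bounded in $\gamma_{\epsilon_k}$ because $\phi, \psi, \nabla\psi, g$ are all bounded on the relevant compact sets, so multiplication by $\epsilon_k$ forces them to vanish. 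This yields $\int \psi(z)\nabla\phi(y)^T f(u,y,z)\,\gamma(du,dy,dz) = 0$ for all admissible $\phi, \psi$, hence $\gamma \in \mathcal{D}$. Next, I invoke (\ref{e:SP-W-2-1-1-0}): each $\gamma_{\epsilon_k}$ already lies in $\mathcal{A}_{di}(z_0)$. Since $\mathcal{A}_{di}(z_0)$ is defined by equalities of the form $\int h\,d\gamma = 0$ with $h(u,y,z) = \nabla\psi(z)^T g(u,y,z) + C(\psi(z_0) - \psi(z))$ continuous on $U \times Y \times Z$, it is weak$^*$ closed, so $\gamma \in \mathcal{A}_{di}(z_0)$ as well. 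Combining gives $\gamma \in \mathcal{D}\cap\mathcal{A}_{di}(z_0) = \mathcal{D}_{di}^{\mathcal{A}}(z_0)$.

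For (\ref{SP-IDLP-convergence-1-1}), let $\epsilon_k \to 0$ be a sequence realizing $\liminf_{\epsilon \to 0} G_{di}^*(\epsilon, y_0, z_0)$, and pick $\gamma_{\epsilon_k} \in \mathcal{D}_{di}(\epsilon_k, y_0, z_0)$ with $\int G\,d\gamma_{\epsilon_k} \leq G_{di}^*(\epsilon_k, y_0, z_0) + 1/k$ (either minimizers, which exist by weak$^*$ compactness and the weak$^*$ closedness just established, or near-minimizers). By weak$^*$ compactness of $\mathcal{P}(U\times Y\times Z)$ I extract a subsequence with $\gamma_{\epsilon_k} \to \gamma^*$; by the inclusion just proved, $\gamma^* \in \mathcal{D}_{di}^{\mathcal{A}}(z_0)$. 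Continuity of $G$ on the compact set $U\times Y\times Z$ gives $\int G\,d\gamma_{\epsilon_k} \to \int G\,d\gamma^* \geq G_{di}^{\mathcal{A}}(z_0)$, which delivers the inequality.

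The arguments are all soft: weak$^*$ convergence, boundedness on compacts, and closedness of constraint sets. The only real subtlety — and what I expect a referee to look at — is the step that registers the $\mathcal{A}_{di}(z_0)$-constraint as part of the limit set; this is where (\ref{e:SP-W-2-1-1-0}) (and the $\phi\equiv 1$ trick behind it) does the essential work, so I would state that step carefully rather than bury it in the calculation.
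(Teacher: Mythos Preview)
Your proof is correct and follows the same approach as the paper's own proof, just with the details spelled out: the paper's proof is the single sentence ``The validity of (\ref{SP-IDLP-convergence-1}) is implied by (\ref{e:SP-W-2-1-1-0}), and the validity of (\ref{SP-IDLP-convergence-1-1}) follows from (\ref{SP-IDLP-convergence-1})'', which compresses precisely the argument you gave (the inclusion $\limsup_{\epsilon\to 0}\mathcal{D}_{di}(\epsilon,y_0,z_0)\subset\mathcal{D}$ is noted just before the proposition, and the $\mathcal{A}_{di}(z_0)$ constraint is carried over via (\ref{e:SP-W-2-1-1-0}) and weak$^*$ closedness).
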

\begin{proof}
The validity of (\ref{SP-IDLP-convergence-1}) is implied by (\ref{e:SP-W-2-1-1-0}), and the validity of (\ref{SP-IDLP-convergence-1-1})
 follows from (\ref{SP-IDLP-convergence-1}).
 \end{proof}

 Note that the set $\mathcal{D}_{di}^{\mathcal{A}}(z_0)$ allows another representation which makes use of the fact that
an arbitrary
 $\gamma\in\mathcal{P}(U \times Y\times Z)$ can be  \lq\lq disintegrated"
  as follows
\begin{equation}\label{e:SP-W-4-extra}
\gamma(du,dy,dz) = \mu(du,dy|z)\nu(dz),
\end{equation}
where  $ \ \nu(dz)\BYDEF \gamma(U\times Y,dz)\ $ and where $\ \mu(\cdot|z)$ is a probability measure on
Borel subsets of $U\times Y $ uniquely defined  for $\nu$-almost all $z\in Z$ (with $\mu(A|z) $ being
Borel measurable on $Z$ for any $A\subset U\times Y $).

\begin{Proposition}\label{Prop-present-2}
The  set $\mathcal{D}_{di}^{\mathcal{A}}(z_0)$ can be represented in the form:
\begin{equation}\label{tilde-W-1}
\begin{array}{c}
\mathcal{D}_{di}^{\mathcal{A}}(z_0)= \{\gamma = \mu(du,dy|z)\nu(dz) \ : \ \mu(\cdot  |z)\in W(z) \ \ for \ \  \nu - almost \ all \ z\in Z,
\\[12pt]
 \int_{Z}\ [\ \nabla \psi(z)^T \tilde g(\mu(\cdot |z),z) + C (\ \psi(z_0)-\psi(z) \ ) ]\nu(dz)
 = 0 \ \ \forall   \ \psi(\cdot)\in C^1 (\R^n)\},
\end{array}
\end{equation}
where $W(z)\subset \mathcal{P}(U \times Y) $ is defined by the equation
\begin{equation}\label{e:2.4}
\begin{array}{c}
\displaystyle
W(z) \BYDEF  \{\mu \in \mathcal{P}(U \times Y) \ : \
\int_{U\times Y} \nabla\phi(y)^{T} f(u,y,z)
\mu(du,dy) =  0 \ \ \forall \phi(\cdot) \in C^1(\R^m) \} \
\end{array}
\end{equation}
and where  $\  \tilde g(\mu(\cdot |z),z)= \int_{U\times Y}g(u,y,z)\mu(du,dy |z)$.
\end{Proposition}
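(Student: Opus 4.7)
The plan is to prove Proposition \ref{Prop-present-2} by disintegrating an arbitrary $\gamma \in \mathcal{P}(U\times Y\times Z)$ as in (\ref{e:SP-W-4-extra}) and then showing that the two constraints defining $\mathcal{D}_{di}^{\mathcal{A}}(z_0) = \mathcal{D} \cap \mathcal{A}_{di}(z_0)$ correspond, after disintegration, to the two constraints in the claimed representation. Since $Z$ is a compact metric space and hence Polish, the disintegration is guaranteed to exist, with $\mu(\cdot\,|z)$ a probability measure on $U\times Y$ for $\nu$-almost all $z$ and $z\mapsto \mu(A|z)$ Borel measurable for every $A\in\mathcal{B}(U\times Y)$.

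First I would handle the constraint coming from membership in $\mathcal{D}$. Taking test functions of the form $\phi(y)\psi(z)$ in (\ref{e:SP-W-3}) and using the disintegration, the constraint becomes
\begin{equation*}
\int_Z \psi(z)\Bigl[\int_{U\times Y}\nabla\phi(y)^{T} f(u,y,z)\,\mu(du,dy|z)\Bigr]\nu(dz)=0
\end{equation*}
for every $\phi\in C^{1}(\R^{m})$ and every $\psi\in C^{1}(\R^{n})$. Fix a countable family $\{\phi_k\}_{k\ge 1}\subset C^{1}(\R^{m})$ that is dense (together with its gradients) in the space of test functions needed. For each $\phi_k$ the map $h_k(z)\BYDEF \int_{U\times Y}\nabla\phi_k(y)^{T} f(u,y,z)\,\mu(du,dy|z)$ is Borel measurable in $z$ (by the measurability of the disintegration and the continuity of $f$), and the display above says that $\int_Z \psi(z)h_k(z)\nu(dz)=0$ for all $\psi\in C^{1}(\R^{n})$. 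Since $C^{1}(\R^{n})$ is dense in $C(Z)$, it follows that $h_k(z)=0$ for $\nu$-almost all $z$. Taking the countable union of the exceptional null sets, we obtain a single $\nu$-null set off of which $h_k(z)=0$ simultaneously for every $k$; density of $\{\phi_k\}$ then gives that for $\nu$-a.e. $z$ the measure $\mu(\cdot\,|z)$ satisfies the defining equation of $W(z)$ in (\ref{e:2.4}), i.e.\ $\mu(\cdot\,|z)\in W(z)$.

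Next I would deal with the constraint from $\mathcal{A}_{di}(z_0)$ in (\ref{e:SP-M}). Applying the disintegration to each term and interchanging the order of integration in the term involving $g$, one obtains
\begin{equation*}
\int_Z \Bigl[\nabla\psi(z)^{T}\tilde{g}(\mu(\cdot\,|z),z)+ C\bigl(\psi(z_0)-\psi(z)\bigr)\Bigr]\nu(dz)=0 \quad \forall\,\psi\in C^{1}(\R^{n}),
\end{equation*}
where $\tilde{g}(\mu(\cdot\,|z),z)=\int_{U\times Y}g(u,y,z)\mu(du,dy|z)$; this is exactly the $\nu$-constraint appearing in (\ref{tilde-W-1}). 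For the converse inclusion, I would simply retrace these steps: given $\gamma$ of the form in (\ref{tilde-W-1}), the fiberwise condition $\mu(\cdot\,|z)\in W(z)$ together with the disintegration shows that $\gamma\in\mathcal{D}$, while the $\nu$-constraint gives $\gamma\in\mathcal{A}_{di}(z_0)$.

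The only mildly delicate step is the first one, where one must extract a single $\nu$-null exceptional set valid for all test functions $\phi\in C^{1}(\R^{m})$ simultaneously; this is done standardly by using a countable dense family and the measurability of the maps $h_k$. Everything else is a matter of carefully applying Fubini/disintegration, which is routine given the Polish structure of $U\times Y\times Z$ and the boundedness and continuity of $f$ and $g$.
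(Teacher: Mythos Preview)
Your proposal is correct and follows essentially the same disintegration argument as the paper's own proof: both directions are obtained by writing $\gamma=\mu(du,dy|z)\nu(dz)$ and matching the $\mathcal{D}$-constraint with the fiberwise condition $\mu(\cdot|z)\in W(z)$ and the $\mathcal{A}_{di}(z_0)$-constraint with the $\nu$-equation. The only difference is that you are more explicit than the paper about passing from ``for each $\phi$, the inner integral vanishes $\nu$-a.e.'' to ``$\mu(\cdot|z)\in W(z)$ for $\nu$-a.e.\ $z$'' via a countable dense family of test functions, which the paper leaves implicit.
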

\begin{proof}
Let $\gamma$ belong to the right hand side  of (\ref{tilde-W-1}).  Then, by (\ref{e:SP-W-4-extra}),
$$
\int_{U\times Y\times Z} \psi(z)\nabla \phi(y)^T f(u,y,z)\gamma(du,dy,dz)
$$
$$
=
\int_{Z}\ [\psi(z)\int_{U\times Y} \nabla \phi(y)^T f(u,y,z)\mu(du,dy|z)\ ]\nu(dz)\ = \ 0
$$
 for any $\phi(\cdot)\in C^1 (\R^m)$ and for any $ \psi(\cdot)\in C^1 (\R^n)$ (the equality to zero being due to the fact that $ \mu(\cdot , \cdot |z)\in W(z)$ for $\nu$- almost all $z\in Z$).
 Also,
 $$
 \int_{U\times Y\times Z}[\ \nabla \phi(z)^T g(u,y,z)\ + \ C (\ \psi(z_0)-\psi(z) \ )\ ]\gamma(du,dy,dz)
 $$
$$
= \int_{Z}\ [\ \nabla \phi(z)^T \int_{U\times Y}  g(u,y,z) \mu(du,dy|z)\ + \ C (\ \psi(z_0)-\psi(z) \ )\ ]\nu(dz) \ = \ 0.
$$
These imply that $ \ \gamma\in\mathcal{D}_{di}^{\mathcal{A}}(z_0)$.
Assume now that $\gamma\in \mathcal{D}_{di}^{\mathcal{A}}(z_0) $. That is, $\ \gamma\in \mathcal{D}$  and
$\gamma\in \mathcal{A}_{di}(z_0)  $ (see (\ref{e:SP-W-3}),  (\ref{e:SP-M}) and (\ref{e:SP-M-0})). Using the fact
that   $ \gamma\in \mathcal{D}$ (and taking into account the disintegration (\ref{e:SP-W-4-extra})), one can obtain that
\begin{equation}\label{e:SP-W-5}
\int_{z\in Z} [\psi(z)\ \int_{U\times Y} \nabla \phi(y)^T f(u,y,z)\mu(du,dy|z)\ ]  \nu(dz) =  0,
\end{equation}
the latter implying  that
$ \ \int_{U\times Y} \nabla \phi(y)^T f(u,y,z)\mu(du,dy|z) = 0 \ $ for $\nu$-almost all $z\in Z$ (due to the fact that $\psi(z)$ is an arbitrary continuously differentiable function). That is,
$\mu(\cdot |z)\in W(z) $ for $\nu$-almost all $z\in Z$. This, along with the inclusion $\gamma\in \mathcal{A}(z_0) $,
imply that
$\ \gamma$ belongs to the right hand side of (\ref{tilde-W-1}). This proves (\ref{tilde-W-1}).
\end{proof}

REMARK II.1. A possibility of the  presence of implicit constraints in families of finite-dimensional LP problems depending on
 a small parameter was noted in \cite{PG88}, where such families were called singularly perturbed. In \cite{PG88} it has been also shown that,
 under certain conditions,  \lq\lq the true limits" of the optimal value and of the optimal solutions set of such SP families of LP problems
 can be obtained by adding these implicit constraints to the set of constrains defining the feasible set with the
 zero value of the parameter (see Theorem 2.3, p. 149 in \cite{PG88} and also recent results in \cite{ABFG}).
  The IDLP problem (\ref{SP-IDLP-0}) is also constructed by augmenting  the set of constraints defining the set $\mathcal{D} $
  with the constraints defining the set $\mathcal{A}_{di}(z_0) $. A similar constraints augmentation was proposed in \cite{Bor-Gai-2} in
  dealing with the IDLP problem related to a problem of optimal control of SP stochastic differential equations, where a result similar
  to Proposition \ref{Prop-SP-1} has been established (Theorem 5.1 in \cite{Bor-Gai-2})  and where also it was shown, that under certain
  conditions (including the assumption that the matrices of coefficients near Brownian motions  are non-degenerate), the optimal value of the SP
  problem converges to the optimal value of the augmented reduced IDLP problem (Theorem 7.1 in \cite{Bor-Gai-2}). In the next section, we show that,
 in the purely deterministic setting  we are dealing with in this paper, the inclusion (\ref{SP-IDLP-convergence-1})
  and the inequality
  (\ref{SP-IDLP-convergence-1-1}) are replaced by equalities under the assumption that the averaged system
  approximates the SP system on finite time intervals (see Definition \ref{Def-Average-Approximation} and Proposition \ref{Prop-present-3}).


\section{Averaged optimal control problem; equivalence of the augmented reduced and the averaged IDLP problems }\label{Sec-Ave-Aug}
The system
 \begin{equation}\label{e:intro-0-3}
y'(\tau) = f(u(\tau),y(\tau), z)\ , \ \ \ \ \ \ z =
\mbox{const} \ ,
\end{equation}
is called {\it associated
system} (with respect to the SP system (\ref{e:intro-0-1})-(\ref{e:intro-0-1})).
Note that the associated system  looks similar to the \lq\lq fast" subsystem (\ref{e:intro-0-1}) but, in contrast
to (\ref{e:intro-0-1}), it is evolving in the \lq\lq stretched" time
scale $\tau=\frac{t}{\epsilon}$, with   $z$ being a vector of
fixed parameters. Everywhere in what follows, it is assumed that the associated system is  viable in $Y$ (see \cite{aubin0}).

 \begin{Definition}\label{Def-adm-associate}
 A pair $(u(\cdot), y(\cdot))$ will be called
{\it admissible for the associated system} if
(\ref{e:intro-0-3}) is satisfied for almost all $\tau $ ($u(\cdot) $ being measurable and $y(\cdot) $ being absolutely continuous functions) and if
 \begin{equation}\label{e:intro-0-3-1}
u(\tau)\in U, \ \ \ \ y(\tau)\in Y.
\end{equation}
\end{Definition}

Denote by $\mathcal{M}(z,S,y) $ the set of occupational measures generated on the interval $[0,S] $ by the admissible pairs of the
associated system that satisfy the initial
conditions $y(0)=y $  and denote by  $\mathcal{M}(z,S) $ the union of $\mathcal{M}(z,S,y) $ over all $y\in Y $. In \cite{Gai8} it has been established
 that, under mild conditions,
\begin{equation}\label{e:intro-0-3-2}
\lim_{S\rightarrow\infty}\rho_H(\bar{co}\mathcal{M}(z,S), W(z))=0,
\end{equation}
where $W(z)$ is defined in (\ref{e:2.4}) (see Theorem 2.1(i) in \cite{Gai8}), and also that, under some additional conditions (see Theorem 2.1(ii),(iii) and Proposition 4.1 in \cite{Gai8})),
\begin{equation}\label{e:intro-0-3-3}
\lim_{S\rightarrow\infty}\rho_H(\mathcal{M}(z,S,y), W(z))=0 \ \ \ \forall \ y\in Y,
\end{equation}
with the convergence being uniform with respect to $y\in Y $.

Define the function $\tilde g(\mu,z): \mathcal{P}(U \times Y)\times Z\rightarrow \R^n  $ by the equation
\begin{equation}\label{e:g-tilde}
\tilde{g}(\mu ,z) \BYDEF  \int_{U\times Y} g(u,y,z)
\mu(du,dy) \ \ \ \forall \mu\in \mathcal{P}(U \times Y)
 \end{equation}
and consider the system
\begin{equation}\label{e:intro-0-4}
z'(t)=\tilde{g}(\mu (t),z(t)) ,
\end{equation}
in which the role of controls  is played by measure valued functions $\mu(\cdot)$ that satisfy the inclusion
\begin{equation}\label{e:intro-0-5}
\mu (t) \in W(z(t)) ,
\end{equation}

The system (\ref{e:intro-0-4}) will be referred to as {\it the averaged
system}. In what follows, it is assumed that the averaged system is  viable in $Z$.
\begin{Definition}\label{Def-adm-averaged}
A pair $(\mu(\cdot), z(\cdot))$ will be referred to as
{\it admissible for the averaged system} if
(\ref{e:intro-0-4}) and (\ref{e:intro-0-5}) are satisfied for almost all $t$ ($\mu(\cdot) $ being measurable and $z(\cdot) $ being absolutely
continuous functions) and if
 \begin{equation}\label{e:intro-0-3-6}
 z(t)\in Z.
\end{equation}
\end{Definition}
From Theorem 2.6 of \cite{Gai-Ng} (see also Corollary 3.1 in  \cite{Gai8}) it follows that, under the assumption that (\ref{e:intro-0-3-3})
is satisfied (and under other technical assumptions including the Lipschitz continuity of the multi-valued map
$V(z)\BYDEF \cup_{\mu\in W(z)}\{\tilde g(\mu,z) \} $),
the averaged system approximates the SP dynamics  in the  sense that the following two statements are valid on any finite time interval $[0,T] $:

{\it (i) Let $u(\cdot) $ be a control and $(y_{\epsilon}(\cdot),z_{\epsilon}(\cdot))$ be the corresponding solution of the SP system  that
satisfies the initial condition (\ref{e-initial-SP}). There exists an admissible pair of the averaged system $(\mu(\cdot),z(\cdot)) $  satisfying the initial
condition
\begin{equation}\label{e:intro-0-3-6-1}
 z(0)=z_0
\end{equation}
such that
\begin{equation}\label{e:intro-0-3-7}
 \max_{t\in [0,T]}||z_{\epsilon}(t) -z(t)||\leq \alpha (\epsilon , T), \ \ \ \ \ {\rm where} \ \ \ \ \ \lim_{\epsilon\rightarrow 0}\alpha (\epsilon , T) = 0
\end{equation}
and such that, for any Lipschitz continuous functions $q(u,y,z,t) $,
\begin{equation}\label{e:intro-0-3-8}
 |\int_0^T q(u(t),y_{\epsilon}(t),z_{\epsilon}(t),t)dt - \int_0^T\tilde q(\mu(t),z(t),t)dt| \leq \alpha_q (\epsilon , T), \ \ \ \ \ {\rm where} \ \ \ \ \
 \lim_{\epsilon\rightarrow 0}\alpha_q (\epsilon , T) = 0
\end{equation}
and}
\begin{equation}\label{e:intro-0-3-9}
 \tilde{q}(\mu ,z,t) \BYDEF  \int_{U\times Y} q(u,y,z,t)
\mu(du,dy) \ \ \ \forall \mu\in \mathcal{P}(U \times Y).
\end{equation}
{\it (ii) Let $(\mu(\cdot),z(\cdot)) $ be an admissible pair of the averaged system satisfying the initial
condition (\ref{e:intro-0-3-6-1}). There exists a control $u(\cdot)  $ such that the solution  $(y_{\epsilon}(\cdot),z_{\epsilon}(\cdot))$
of the SP system obtained with this control and with the initial condition (\ref{e-initial-SP})   satisfies the relationships
 (\ref{e:intro-0-3-7}) and (\ref{e:intro-0-3-8}).}

 Note  that the validity of the statements (i) and (ii) was established in \cite{Gai-Ng} for the case when
 $q(u,y,z,t)=q(u,y,z) $. The  dependence on $t$ does not, however, affect the validity of the result (see
the proof of Theorem 2.6 in \cite{Gai-Ng}).
 Without going into technical details, let us  introduce the following definition that will be used in the sequel.

\begin{Definition}\label{Def-Average-Approximation} The averaged system will be said to approximate the SP system on  finite time intervals
 if the statements (i) and (ii) are valid on any interval $[0,T] $.
\end{Definition}

 Consider the problem
 \begin{equation}\label{Vy-ave-opt}
  \inf _{(\mu(\cdot), z(\cdot)) }\int_0 ^ { + \infty }e^{- C t}\tilde{G}(\mu(t), z(t))dt
 \stackrel{\rm def}{=} \tilde{V}^*_{di}(z_0),
\end{equation}
where
\begin{equation}\label{e:G-tilde}
\tilde{G}(\mu ,z) \BYDEF  \int_{U\times Y} G(u,y,z)
\mu(du,dy)
 \end{equation}
and $inf $ is sought over all admissible pairs of the averaged system that satisfy the initial
condition (\ref{e:intro-0-3-6-1}). This will be referred to as {\it the averaged problem}.

Denote by $\tilde{\Gamma}_{di}(z_0) $ the set of discounted occupational measures generated by the admissible pairs
of the averaged system satisfying the initial
condition (\ref{e:intro-0-3-6-1}). That is,
$$
\tilde{\Gamma}_{di}(z_0) \BYDEF \bigcup_{(\mu(\cdot), z(\cdot))}\{ p^{\mu(\cdot), z(\cdot)}_{di}\}
\subset \mathcal{P}(F),
$$
where $p^{\mu(\cdot), z(\cdot)}_{di} $ is the discounted occupational measure generated by
$(\mu(\cdot), z(\cdot))$ and the union is over all admissible pairs of the averaged system, $F $ being the graph of $W(\cdot) $
(see (\ref{e:2.4})):
\begin{equation}\label{e:graph-w}
F\BYDEF \{(\mu , z) \ : \ \mu\in W(z), \ \ z\in Z \} \ \subset \mathcal{P}(U\times Y)\times Z \ .
 \end{equation}

Using (\ref{e:oms-0-2}), one can rewrite the averaged
problem (\ref{Vy-ave-opt}) in terms of minimization over measures from the set $\tilde{\Gamma}_{di}(z_0) $ as follows
\begin{equation}\label{e:oms-4-di-ave}
 \inf_{p\in \tilde{\Gamma}_{di}(z_0)}\int_{F} \tilde G(\mu,z)p(d\mu,dz) = C \tilde{V}^*_{di}(z_0).
\end{equation}

To establish the relationships between the SP   and the averaged optimal control problems, let us introduce the
map $\Phi(\cdot):\mathcal{P}(F) \rightarrow \mathcal{P}(U \times Y \times Z)$
defined as follows.
 For any $p \in \mathcal{P}(F) $, let   $\Phi(p)\in \mathcal{P}(U \times Y \times Z)$ be such that
 \begin{equation}\label{e:h&th-1}
  \int_{U\! \times Y \times Z} \!q(u,y,z) \Phi(p) (du,dy,dz) =
\int_{F} \!\tilde q (\mu ,z)
p(d\mu,dz)\ \ \ \ \ \forall q(\cdot)\in \ C(U\times Y\times Z),
\end{equation}
where $ \tilde q (\mu,z) $ is as in (\ref{e:intro-0-3-9}) (this definition is legitimate since the right-hand side of the
above expression defines a linear continuous functional on $C(U\times Y\times Z) $, the latter being associated with an element
of $\mathcal{P}(U \times Y \times Z) $ that makes the equality (\ref{e:h&th-1}) valid).
Note that the map $\Phi(\cdot):\mathcal{P}(F) \rightarrow \mathcal{P}(U \times Y \times Z)$ is linear and it is continuous in the sense that
\begin{equation}\label{e-continuity-Psi}
\lim_{p_l\rightarrow p}\Phi(p_l)=\Phi(p),
\end{equation}
with $p_l$ converging to $p$ in the weak$^*$ topology of $\mathcal{P}(F)$ and $\Phi(p_l)$ converging to $\Phi(p)$
in the weak$^*$ topology of $\mathcal{P}(U \times Y \times Z)$
 (see Lemma 4.3 in \cite{Gai-Ng}).

 \begin{Proposition}\label{Prop-ave-disc}
If the averaged system approximates the SP system on finite time intervals, then
\begin{equation}\label{e-OccupSet-Convergence-Dis}
\lim_{\epsilon\rightarrow 0}\rho_H(cl \Gamma_{di}(\epsilon , y_0,z_0), cl \Phi(\tilde{\Gamma}_{di}(z_0)) ) = 0,
\end{equation}
where $cl$ stands for the closure of the corresponding set and
 $\ \Phi(\tilde{\Gamma}_{di}(z_0))\BYDEF \{\gamma \ : \ \gamma =\Phi(p), \ \ \ p\in \tilde{\Gamma}_{di}(z_0) \} $. Also,
\begin{equation}\label{e-Objective-Convergence-Dis}
\lim_{\epsilon\rightarrow 0}V_{di}^*(\epsilon , y_0, z_0)  = \tilde{V}^*_{di}(z_0).
\end{equation}
 \end{Proposition}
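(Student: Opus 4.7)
The plan is to prove both claims by exploiting the averaging approximation on finite time intervals (statements (i) and (ii) preceding Definition \ref{Def-Average-Approximation}), combined with the exponential decay of the discount factor, which makes tails of integrals over $[T,\infty)$ small for large $T$. This reduces the infinite-horizon discounted comparison to a finite-horizon one where the averaging results apply directly.

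For the Hausdorff convergence (\ref{e-OccupSet-Convergence-Dis}), I would establish both one-sided inclusions. For the first, fix any $\gamma \in \Gamma_{di}(\epsilon , y_0,z_0)$ generated by some triple $(u(\cdot), y_{\epsilon}(\cdot), z_{\epsilon}(\cdot))$. Given $T>0$, apply statement (i) to produce an admissible averaged pair $(\mu(\cdot), z(\cdot))$ satisfying (\ref{e:intro-0-3-6-1}) and the approximation estimates on $[0,T]$; extend it to $[0,\infty)$ preserving admissibility (feasible by viability of the averaged system in $Z$) to obtain $p \in \tilde{\Gamma}_{di}(z_0)$. For each test function $q_l$ from the dense family used in the metric (\ref{e:intro-2}), the auxiliary function $\hat q(u,y,z,t) := e^{-Ct}q_l(u,y,z)$ is Lipschitz on $U\times Y\times Z\times[0,T]$, so (\ref{e:intro-0-3-8}) yields
\begin{equation*}
\Bigl|\int_0^T e^{-Ct}\bigl[q_l(u(t),y_{\epsilon}(t),z_{\epsilon}(t)) - \tilde q_l(\mu(t),z(t))\bigr] dt\Bigr| \leq \alpha_{\hat q}(\epsilon, T).
\end{equation*}
Multiplying by $C$, adding the tail bound $2\|q_l\|_\infty e^{-CT}$, and using the defining identity (\ref{e:h&th-1}) of $\Phi$ gives $|\int q_l\, d\gamma - \int q_l\, d\Phi(p)| \leq C\alpha_{\hat q}(\epsilon, T) + 2\|q_l\|_\infty e^{-CT}$.

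The second inclusion is symmetric: given $p\in \tilde{\Gamma}_{di}(z_0)$ generated by $(\mu(\cdot),z(\cdot))$, apply statement (ii) to produce, for each $T$, a control $u(\cdot)$ whose SP trajectory approximates $(\mu,z)$ on $[0,T]$, and run the same tail estimate to place the resulting element of $\Gamma_{di}(\epsilon,y_0,z_0)$ close to $\Phi(p)$. Summing the bounds against the weights $2^{-l}$ in (\ref{e:intro-2}) and choosing $T = T(\epsilon)\to\infty$ slowly enough yields $\rho(\gamma, \Phi(\tilde{\Gamma}_{di}(z_0))) \to 0$ uniformly in $\gamma\in \Gamma_{di}(\epsilon,y_0,z_0)$ and vice versa, which is (\ref{e-OccupSet-Convergence-Dis}). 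The objective convergence (\ref{e-Objective-Convergence-Dis}) then follows from (\ref{e:oms-4}) and (\ref{e:oms-4-di-ave}): both $V_{di}^*$ and $\tilde V_{di}^*$ are infima of the same continuous linear functional $\gamma\mapsto \int G\, d\gamma$ over the two compared sets, and $\rho_H$-convergence together with continuity of $G$ on the compact set $U\times Y\times Z$ forces these infima to agree in the limit.

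The main obstacle is reconciling the finite-horizon character of the averaging approximation with the infinite-horizon nature of the discounted occupational measures: the error $\alpha_q(\epsilon, T)$ in statements (i) and (ii) need not tend to zero uniformly in $T$, so one cannot independently send $T \to \infty$ and $\epsilon \to 0$. The resolution is a diagonal choice: given $\delta>0$, first pick $T_0$ so that $2\|q_l\|_\infty e^{-CT_0} < \delta/2$, then shrink $\epsilon$ until $C\alpha_{\hat q}(\epsilon, T_0) < \delta/2$. A secondary subtlety is that the averaged pair produced by statement (i) depends on the specific $(u,y_\epsilon,z_\epsilon)$, but the bound $\alpha_{\hat q}(\epsilon, T)$ depends only on $\epsilon$, $T$, and the Lipschitz constant of $\hat q$ (cf. Theorem 2.6 in \cite{Gai-Ng}), which keeps the estimate uniform across $\Gamma_{di}(\epsilon,y_0,z_0)$ and ensures the Hausdorff distance itself, not merely pointwise distances, tends to zero.
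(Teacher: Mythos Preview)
Your proposal is correct and follows essentially the same strategy as the paper's proof: combine the finite-horizon averaging estimates (statements (i) and (ii)) with the exponential tail bound $e^{-CT}$, and resolve the non-uniformity of $\alpha_q(\epsilon,T)$ in $T$ by a diagonal choice $T=T(\epsilon)\to\infty$ (the paper formalizes this via an auxiliary lemma constructing such $T(\epsilon)$). The only presentational difference is that the paper packages the argument through finite-dimensional images $\Theta_N(\epsilon)=\bigcup\{\int h^N\,d\gamma\}$ for a fixed finite family $h_1,\dots,h_N$ and then invokes a cited corollary to lift $d_H(\Theta_N(\epsilon),\Theta_N)\to 0$ for all $N$ to the weak$^*$ Hausdorff convergence, whereas you work directly with the metric $\rho$ and the dense sequence $q_l$; these are equivalent once you truncate the sum $\sum 2^{-l}$ at finite $L$.
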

 \begin{proof} The proof is given in Section \ref{Sec-Phi-Map}.
 \end{proof}

Define  the set $\tilde{\mathcal{D}}_{di}(z_0)$ by the equation
\begin{equation}\label{D-SP-new}
\tilde{\mathcal{D}}_{di}(z_0)\BYDEF \{ p \in {\cal P} (F): \;
 \int_{F }(\nabla\psi(z)^T \tilde{g}(\mu,z) + C (\psi (z_0) - \psi (z) )) p(d\mu,dz) = 0 \ \ \ \forall \psi(\cdot) \in C^1(R^n)\}
  \end{equation}
and consider the IDLP problem
\begin{equation}\label{e-ave-LP-opt-di}
\min_{p\in \tilde{\mathcal{D}}_{di}(z_0)}\int_{F}\tilde G(\mu,z)p(d\mu , dz)\BYDEF \tilde{G}^*_{di}(z_0).
\end{equation}
This problems plays an important role in our consideration  and, for convenience,
we will be referring to it as to {\it the averaged} IDLP problem.

From Lemma 2.1 of \cite{GQ-1} it follows that
$$
\bar{co}\tilde{\Gamma}_{di}(z_0) \subset \tilde{\mathcal{D}}_{di}(z_0) \ \ \ \ \ \ \Rightarrow \ \ \ \ \ \ C \tilde{V}^*_{di}(z_0)\geq \tilde{G}^*_{di}(z_0).
$$
Also from Theorem 2.2 of \cite{GQ-1} it follows that, under certain conditions, the equality is valid
\begin{equation}\label{e-ave-LP-sets-di}
\bar{co}\tilde{\Gamma}_{di}(z_0)= \tilde{\mathcal{D}}_{di}(z_0),
\end{equation}
the latter implying  the equality
\begin{equation}\label{C-result-di-ave}
 C \tilde{V}_{di}^*(z_0) = \tilde{G}^*_{di}( z_0) .
\end{equation}

\begin{Proposition}\label{Prop-ave-disc-1}
Let the averaged system approximate the SP system on finite time intervals and let (\ref{e-ave-LP-sets-di}) be valid. Then
\begin{equation}\label{e-OccupSet-Convergence-Dis-LP}
\lim_{\epsilon\rightarrow 0}\rho_H(\bar{co} \Gamma_{di}(\epsilon , y_0,z_0), \Phi( \tilde{\mathcal{D}}_{di}(z_0)) = 0
\end{equation}
and
\begin{equation}\label{e-Objective-Convergence-Dis-LP}
C \lim_{\epsilon\rightarrow 0}V_{di}^*(\epsilon , y_0, z_0)  = \tilde{G}^*_{di}( z_0) .
\end{equation}
\end{Proposition}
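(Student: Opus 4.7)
The plan is to derive Proposition \ref{Prop-ave-disc-1} as an essentially formal consequence of Proposition \ref{Prop-ave-disc}, the hypothesis (\ref{e-ave-LP-sets-di}), and the structural properties of the map $\Phi$ (linearity and weak$^*$ continuity, the latter being (\ref{e-continuity-Psi})).

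For the Hausdorff convergence (\ref{e-OccupSet-Convergence-Dis-LP}), I would first apply (\ref{e:intro-3-1}) to the convergence (\ref{e-OccupSet-Convergence-Dis}) provided by Proposition \ref{Prop-ave-disc}. Since the closed convex hull of a set coincides with the closed convex hull of its closure, this yields
\begin{equation*}
\lim_{\epsilon \to 0}\rho_H\bigl(\bar{co}\,\Gamma_{di}(\epsilon,y_0,z_0),\ \bar{co}\,\Phi(\tilde{\Gamma}_{di}(z_0))\bigr) = 0.
\end{equation*}
The heart of the argument is then to identify $\bar{co}\,\Phi(\tilde{\Gamma}_{di}(z_0))$ with $\Phi(\tilde{\mathcal{D}}_{di}(z_0))$. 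One direction, $\bar{co}\,\Phi(\tilde{\Gamma}_{di}(z_0)) \subset \Phi(\bar{co}\,\tilde{\Gamma}_{di}(z_0))$, follows because $\Phi$ is linear (so convex combinations pass through $\Phi$) and continuous in the sense of (\ref{e-continuity-Psi}) (so weak$^*$ limits pass through $\Phi$). For the reverse inclusion, any $\Phi(p)$ with $p \in \bar{co}\,\tilde{\Gamma}_{di}(z_0)$ is the $\Phi$-image of a weak$^*$ limit of convex combinations of elements of $\tilde{\Gamma}_{di}(z_0)$; applying linearity and (\ref{e-continuity-Psi}) once more shows $\Phi(p)$ is itself a weak$^*$ limit of convex combinations of elements of $\Phi(\tilde{\Gamma}_{di}(z_0))$, hence lies in $\bar{co}\,\Phi(\tilde{\Gamma}_{di}(z_0))$. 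Invoking the hypothesis (\ref{e-ave-LP-sets-di}), $\bar{co}\,\tilde{\Gamma}_{di}(z_0) = \tilde{\mathcal{D}}_{di}(z_0)$, completes the identification and yields (\ref{e-OccupSet-Convergence-Dis-LP}).

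For the value convergence (\ref{e-Objective-Convergence-Dis-LP}), I would simply combine the convergence of optimal values (\ref{e-Objective-Convergence-Dis}) from Proposition \ref{Prop-ave-disc} with the equality (\ref{C-result-di-ave}), $C\,\tilde{V}_{di}^*(z_0) = \tilde{G}_{di}^*(z_0)$; as remarked in the text preceding the proposition, the latter equality is a direct consequence of the hypothesis (\ref{e-ave-LP-sets-di}) combined with (\ref{e:oms-4-di-ave}) (linearity of $\tilde G$ in the measure variable).

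The only genuinely substantive point is the commutation $\Phi(\bar{co}\,\tilde{\Gamma}_{di}(z_0)) = \bar{co}\,\Phi(\tilde{\Gamma}_{di}(z_0))$; this is where one really uses both linearity \emph{and} the weak$^*$ continuity of $\Phi$. Fortunately, since both $\mathcal{P}(F)$ and $\mathcal{P}(U \times Y \times Z)$ are compact metrizable in their weak$^*$ topologies, topological closures coincide with sequential closures and the sequential argument sketched above suffices. Once this commutation is in hand, both conclusions of the proposition are immediate.
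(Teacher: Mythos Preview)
Your proposal is correct and follows essentially the same route as the paper's proof: apply (\ref{e:intro-3-1}) to (\ref{e-OccupSet-Convergence-Dis}), use linearity and weak$^*$ continuity of $\Phi$ to obtain $\bar{co}\,\Phi(\tilde{\Gamma}_{di}(z_0)) = \Phi(\bar{co}\,\tilde{\Gamma}_{di}(z_0))$, then invoke (\ref{e-ave-LP-sets-di}); for the values, combine (\ref{e-Objective-Convergence-Dis}) with (\ref{C-result-di-ave}). The paper states the commutation of $\Phi$ with $\bar{co}$ in one line, while you spell out both inclusions, but the argument is the same.
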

\begin{proof} From (\ref{e:intro-3-1}) and (\ref{e-OccupSet-Convergence-Dis}) it follows that
\begin{equation}\label{e-OccupSet-Convergence-Dis-LP-1}
\lim_{\epsilon\rightarrow 0}\rho_H(\bar{co} \Gamma_{di}(\epsilon , y_0,z_0), \bar{co} \Phi(\tilde{\Gamma}_{di}(z_0)) = 0
\end{equation}
Due to continuity and linearity of $\Phi(\cdot) $,
$$
\bar{co} \Phi(\tilde{\Gamma}_{di}(z_0)) =  \Phi(\bar{co}\tilde{\Gamma}_{di}(z_0)).
$$
Hence, (\ref{e-OccupSet-Convergence-Dis-LP}) is impled by (\ref{e-ave-LP-sets-di}) and (\ref{e-OccupSet-Convergence-Dis-LP-1}).
Also, (\ref{e-Objective-Convergence-Dis-LP}) is implied by (\ref{e-Objective-Convergence-Dis}) and (\ref{C-result-di-ave}).
\end{proof}

\medskip

The following result establishes  that
the averaged IDLP problem (\ref{e-ave-LP-opt-di}) is equivalent to the augmented reduced IDLP
problem (\ref{SP-IDLP-0}).

\begin{Proposition}\label{Prop-SP-2}
 {\it The averaged and the augmented reduced  IDLP problems are equivalent in the sense that }
\begin{equation}\label{equality-1-SP-new}
\mathcal{D}_{di}^{\mathcal{A}}(z_0) =\Phi(\tilde{\mathcal{D}}_{di}(z_0)),
\end{equation}
\vspace{-.2in}
\begin{equation}\label{e-ave-LP-opt-1}
 G_{di}^{\mathcal{A}}(z_0)=\tilde{G}^*_{di}(z_0).
\end{equation}
Also, $\gamma = \Phi(p) $ is an optimal solution of the augmented reduced IDLP problem (\ref{SP-IDLP-0}) if and only if
$p$ is an optimal solution of the averaged IDLP problem (\ref{e-ave-LP-opt-di}).
\end{Proposition}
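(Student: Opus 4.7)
The plan is to establish the set equality (\ref{equality-1-SP-new}) by proving both inclusions, and then deduce (\ref{e-ave-LP-opt-1}) and the characterization of optimizers from the fact that the linear map $\Phi$ preserves the objective, i.e.\ $\int_{U\times Y\times Z} G(u,y,z)\Phi(p)(du,dy,dz) = \int_{F}\tilde G(\mu,z)p(d\mu,dz)$, which is immediate from the defining equation (\ref{e:h&th-1}) of $\Phi$ applied with $q=G$.

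For the inclusion $\Phi(\tilde{\mathcal{D}}_{di}(z_0))\subset \mathcal{D}_{di}^{\mathcal{A}}(z_0)$, I take $p\in \tilde{\mathcal{D}}_{di}(z_0)$ and verify that $\gamma\BYDEF\Phi(p)$ satisfies the constraints that define $\mathcal{D}\cap \mathcal{A}_{di}(z_0)$. To show $\gamma\in \mathcal{D}$, I apply (\ref{e:h&th-1}) with the test function $q(u,y,z)=\psi(z)\nabla\phi(y)^T f(u,y,z)$. Since $p$ is concentrated on $F$, the resulting integrand $\tilde q(\mu,z)=\psi(z)\int_{U\times Y}\nabla\phi(y)^T f(u,y,z)\mu(du,dy)$ vanishes $p$-a.e.\ by the definition (\ref{e:2.4}) of $W(z)$. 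To show $\gamma\in \mathcal{A}_{di}(z_0)$, I apply (\ref{e:h&th-1}) with $q(u,y,z)=\nabla\psi(z)^T g(u,y,z)+C(\psi(z_0)-\psi(z))$; since $\tilde q(\mu,z) = \nabla\psi(z)^T \tilde g(\mu,z)+C(\psi(z_0)-\psi(z))$, the integral against $p$ vanishes exactly by the constraint defining $\tilde{\mathcal{D}}_{di}(z_0)$ in (\ref{D-SP-new}).

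For the reverse inclusion $\mathcal{D}_{di}^{\mathcal{A}}(z_0)\subset \Phi(\tilde{\mathcal{D}}_{di}(z_0))$, I take $\gamma\in \mathcal{D}_{di}^{\mathcal{A}}(z_0)$ and use the disintegration representation of Proposition~\ref{Prop-present-2}: $\gamma(du,dy,dz)=\mu(du,dy|z)\nu(dz)$ with $\mu(\cdot|z)\in W(z)$ for $\nu$-almost all $z$. I then define $p\in \mathcal{P}(F)$ as the pushforward of $\nu$ under the map $z\mapsto (\mu(\cdot|z),z)$; more concretely, $\int_{F}\Psi(\mu,z)\,p(d\mu,dz)\BYDEF \int_{Z}\Psi(\mu(\cdot|z),z)\,\nu(dz)$ for every continuous $\Psi$ on $\mathcal{P}(U\times Y)\times Z$. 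A direct check using this definition gives, for any $q\in C(U\times Y\times Z)$,
\begin{equation*}
\int_{F}\tilde q(\mu,z)\,p(d\mu,dz) = \int_{Z}\!\!\int_{U\times Y} q(u,y,z)\,\mu(du,dy|z)\,\nu(dz) = \int_{U\times Y\times Z}\! q\,d\gamma,
\end{equation*}
so by (\ref{e:h&th-1}) we have $\Phi(p)=\gamma$. The constraint defining $\tilde{\mathcal{D}}_{di}(z_0)$ then becomes, after substituting the pushforward definition, exactly the slow-flow equation appearing in the representation (\ref{tilde-W-1}), which $\gamma$ satisfies by membership in $\mathcal{D}_{di}^{\mathcal{A}}(z_0)$; hence $p\in \tilde{\mathcal{D}}_{di}(z_0)$.

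The main obstacle I anticipate is the measurability required to legitimately define $p$ as a pushforward on $F\subset \mathcal{P}(U\times Y)\times Z$: one must argue that the selector $z\mapsto \mu(\cdot|z)$ given by disintegration is Borel measurable into the weak$^*$-metric space $\mathcal{P}(U\times Y)$. This is standard (Borel measurability of $z\mapsto \mu(A|z)$ for every Borel $A$ together with the metric (\ref{e:intro-2}) implies continuity/measurability of the required integrals), so I will simply invoke it and reference the disintegration already used in Proposition~\ref{Prop-present-2}. Once the set equality (\ref{equality-1-SP-new}) is in hand, (\ref{e-ave-LP-opt-1}) and the bijective correspondence of optimizers follow because $\Phi$ maps $\tilde{\mathcal{D}}_{di}(z_0)$ onto $\mathcal{D}_{di}^{\mathcal{A}}(z_0)$ while preserving the value of the objective, so $\gamma=\Phi(p)$ is optimal for (\ref{SP-IDLP-0}) exactly when $p$ is optimal for (\ref{e-ave-LP-opt-di}).
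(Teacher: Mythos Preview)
Your proof is correct, and your treatment of the inclusion $\Phi(\tilde{\mathcal{D}}_{di}(z_0))\subset \mathcal{D}_{di}^{\mathcal{A}}(z_0)$ and of the consequences for optimal values and optimizers coincides with the paper's. The one genuine difference is in the reverse inclusion. You construct $p$ directly as the pushforward of $\nu$ under the (Borel measurable) map $z\mapsto(\mu(\cdot\,|z),z)$, relying on the fact that measurability of $z\mapsto\mu(A|z)$ for Borel $A$ implies Borel measurability of $z\mapsto\mu(\cdot\,|z)$ into the Polish space $\mathcal{P}(U\times Y)$. The paper instead avoids any appeal to kernel measurability: it defines the positive linear functional $p_{\mathcal{L}}(\tilde q)\BYDEF\int_Z\tilde q(\mu(\cdot|z),z)\,\nu(dz)=\int q\,d\gamma$ only on the subspace $\mathcal{L}=\{\tilde q:\,q\in C(U\times Y\times Z)\}\subset C(F)$, then invokes the Kantorovich extension theorem to obtain a positive extension to all of $C(F)$, and finally identifies that extension with a probability measure $p\in\mathcal{P}(F)$ via Riesz representation.

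Your route is more elementary and gives a canonical $p$ (the image measure), at the price of the measurability check you flagged; the paper's route is more abstract but sidesteps that check entirely, and in particular does not need to modify the disintegration on a $\nu$-null set in a way that still lands in $W(z)$. One small point: in the final ``if and only if'' you use that $\Phi(p)\in\mathcal{D}_{di}^{\mathcal{A}}(z_0)$ forces $p\in\tilde{\mathcal{D}}_{di}(z_0)$ for $p\in\mathcal{P}(F)$; this is immediate from the same computation you did for the first inclusion (the $\mathcal{A}_{di}(z_0)$ constraint on $\Phi(p)$ is \emph{equivalent} to the $\tilde{\mathcal{D}}_{di}(z_0)$ constraint on $p$ via (\ref{e:h&th-1})), so your argument is complete.
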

\begin{proof} The proof is given in  Section \ref{Sec-Phi-Map}.
\end{proof}

\begin{Corollary}\label{Cor-Important-Inequality}
The  inequality
\begin{equation}\label{e-imp-inequality-1}
\liminf_{\epsilon\rightarrow 0}V_{di}^*(\epsilon, y_0, z_0)\geq \frac{1}{C}\tilde{G}^*_{di}(z_0)
\end{equation}
is valid.
\end{Corollary}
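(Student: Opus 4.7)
The plan is to obtain the claimed inequality by a direct chaining of three items already established earlier in the excerpt: the LP lower bound \eqref{e:oms-6-0} for the SP optimal control problem, the asymptotic inequality \eqref{SP-IDLP-convergence-1-1} from Proposition \ref{Prop-SP-1}, and the identification of optimal values \eqref{e-ave-LP-opt-1} from Proposition \ref{Prop-SP-2}. No additional analytic machinery should be needed.

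Concretely, I would first invoke \eqref{e:oms-6-0}, which asserts that for every $\epsilon>0$,
\begin{equation*}
C\,V_{di}^*(\epsilon, y_0, z_0)\ \geq\ G_{di}^*(\epsilon, y_0, z_0).
\end{equation*}
Since $C>0$ is a fixed constant independent of $\epsilon$, passing to the lower limit as $\epsilon\to 0$ on both sides preserves the inequality:
\begin{equation*}
C\,\liminf_{\epsilon\rightarrow 0} V_{di}^*(\epsilon, y_0, z_0)\ \geq\ \liminf_{\epsilon\rightarrow 0} G_{di}^*(\epsilon, y_0, z_0).
\end{equation*}

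Next, I would apply the second conclusion of Proposition \ref{Prop-SP-1}, namely \eqref{SP-IDLP-convergence-1-1}, which states that $\liminf_{\epsilon\rightarrow 0} G_{di}^*(\epsilon, y_0, z_0)\geq G_{di}^{\mathcal{A}}(z_0)$. Substituting this into the previous chain and then using the equivalence of the augmented reduced and the averaged IDLP problems recorded in Proposition \ref{Prop-SP-2} (specifically the identity $G_{di}^{\mathcal{A}}(z_0)=\tilde{G}^*_{di}(z_0)$ from \eqref{e-ave-LP-opt-1}) gives
\begin{equation*}
C\,\liminf_{\epsilon\rightarrow 0} V_{di}^*(\epsilon, y_0, z_0)\ \geq\ G_{di}^{\mathcal{A}}(z_0)\ =\ \tilde{G}^*_{di}(z_0).
\end{equation*}
Dividing through by the positive constant $C$ yields exactly \eqref{e-imp-inequality-1}.

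There is really no hard step here; the corollary is just the concatenation of three previously established facts, and it is worth noting that no approximation or controllability hypothesis is invoked beyond what is already built into \eqref{e:oms-6-0} and Proposition \ref{Prop-SP-1}. In particular, I would not need to assume that the averaged system approximates the SP system on finite time intervals, nor the LP equality \eqref{e-ave-LP-sets-di}, so the lower bound holds in substantially greater generality than the two-sided convergence results of Proposition \ref{Prop-ave-disc-1}.
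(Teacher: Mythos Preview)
Your proof is correct and follows exactly the same approach as the paper, which simply cites \eqref{e:oms-6-0}, \eqref{SP-IDLP-convergence-1-1}, and \eqref{e-ave-LP-opt-1} in a single line. Your write-up merely spells out the chaining of these three facts in more detail.
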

\begin{proof} The proof follows from (\ref{e:oms-6-0}), (\ref{SP-IDLP-convergence-1-1}) and (\ref{e-ave-LP-opt-1}).
\end{proof}

Note that  Proposition \ref{Prop-SP-2} does not assume that the averaged system approximates  the
SP system on finite time intervals. If this assumption is made, then Proposition \ref{Prop-ave-disc-1}
in combination with Proposition \ref{Prop-SP-2} imply that the augmented problem (\ref{SP-IDLP-0})
defines the \lq\lq true limit" for the perturbed IDLP problem (\ref{SP-IDLP-di}) in the sense that the
 following statement strengthening Proposition
\ref{Prop-SP-1} is valid

\begin{Proposition}\label{Prop-present-3}
Let the averaged system approximate the SP system on finite time intervals and let the equalities (\ref{e:oms-6}), (\ref{e-ave-LP-sets-di}) be valid. Then
\begin{equation}\label{SP-IDLP-convergence-1-100}
 \lim_{\epsilon\rightarrow 0}\rho_H(\mathcal{D}_{di}(\epsilon , y_0, z_0),\mathcal{D}_{di}^{\mathcal{A}}(z_0))=0,
\end{equation}
\vspace{-.2in}
\begin{equation}\label{SP-IDLP-convergence-1-1-101}
 \lim_{\epsilon\rightarrow 0}G_{di}^*(\epsilon , y_0, z_0)=  G_{di}^{\mathcal{A}}(z_0).
\end{equation}
\end{Proposition}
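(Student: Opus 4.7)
The plan is to derive both displayed equalities in Proposition~\ref{Prop-present-3} by assembling results that have already been proved in the excerpt, so the proof is essentially a composition of identities/limits rather than new analysis.

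First I would prove (\ref{SP-IDLP-convergence-1-100}). Under the standing assumption (\ref{e:oms-6}) the set $\mathcal{D}_{di}(\epsilon,y_0,z_0)$ coincides with $\bar{co}\,\Gamma_{di}(\epsilon,y_0,z_0)$ for every $\epsilon>0$. Since the averaged system approximates the SP system on finite time intervals and (\ref{e-ave-LP-sets-di}) holds, Proposition~\ref{Prop-ave-disc-1} yields
\begin{equation*}
\lim_{\epsilon\to 0}\rho_H\bigl(\bar{co}\,\Gamma_{di}(\epsilon,y_0,z_0),\,\Phi(\tilde{\mathcal{D}}_{di}(z_0))\bigr)=0.
\end{equation*}
Combining this with Proposition~\ref{Prop-SP-2}, which identifies $\Phi(\tilde{\mathcal{D}}_{di}(z_0))=\mathcal{D}_{di}^{\mathcal{A}}(z_0)$, immediately gives (\ref{SP-IDLP-convergence-1-100}).

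Next I would obtain (\ref{SP-IDLP-convergence-1-1-101}). By (\ref{C-result-di-SP}) one has $G_{di}^*(\epsilon,y_0,z_0)=C\,V_{di}^*(\epsilon,y_0,z_0)$, and the second conclusion of Proposition~\ref{Prop-ave-disc-1} states that $C\,V_{di}^*(\epsilon,y_0,z_0)\to\tilde{G}^*_{di}(z_0)$ as $\epsilon\to 0$. The equality $\tilde{G}^*_{di}(z_0)=G_{di}^{\mathcal{A}}(z_0)$ from Proposition~\ref{Prop-SP-2} then delivers the convergence of optimal values. Alternatively, one could derive the value convergence directly from (\ref{SP-IDLP-convergence-1-100}) by noting that the objective functional $\gamma\mapsto\int G\,d\gamma$ is linear and continuous on the compact metric space $\mathcal{P}(U\times Y\times Z)$, so Hausdorff convergence of the feasible sets forces convergence of the minima; I would mention this as an alternative derivation for (\ref{SP-IDLP-convergence-1-1-101}).

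There is no substantive obstacle here: every piece is already in place. The only thing I would double-check is that the various equalities invoked (notably (\ref{e:oms-6}), (\ref{C-result-di-SP}), and (\ref{e-ave-LP-sets-di})) indeed hold under the assumptions stated in Proposition~\ref{Prop-present-3}. The assumptions listed explicitly ensure (\ref{e:oms-6}) and (\ref{e-ave-LP-sets-di}); the identity (\ref{C-result-di-SP}) follows from (\ref{e:oms-6}) as already noted after its statement in Section~\ref{Sec-Augm-reduced}. With these verifications, the two limits follow from a clean concatenation of Propositions~\ref{Prop-ave-disc-1} and \ref{Prop-SP-2}, so the proof will be quite short.
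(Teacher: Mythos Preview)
Your proposal is correct and follows essentially the same approach as the paper, whose proof consists of the single sentence ``The proof follows from Propositions~\ref{Prop-ave-disc-1} and~\ref{Prop-SP-2}.'' You have simply spelled out in detail how those two propositions, together with the assumed equalities (\ref{e:oms-6}) and (\ref{e-ave-LP-sets-di}), combine to yield both conclusions.
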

\begin{proof}
The proof follows from Propositions \ref{Prop-ave-disc-1} and \ref{Prop-SP-2}.
\end{proof}

REMARK II.2. Results of this and the previous sections have their counterparts in dealing with the periodic
optimization problem
\begin{equation}\label{Vy-perturbed-per-new-1}
\displaystyle{
  \inf _{T, u(\cdot) }\frac{1}{T}\int_0 ^ {T}G(u(t), y_{\epsilon}(t),
z_{\epsilon}(t))dt} \stackrel{\rm def}{=}V_{per}^*(\epsilon) ,
\end{equation}
where $inf$ is sought over the length of the time interval $T$ and over the (defined on this interval ) controls $u(\cdot)$
 such that the corresponding solutions of the SP system (\ref{e:intro-0-1})-(\ref{e:intro-0-2})  satisfy the periodicity condition
$\ (y_{\epsilon}(0),
z_{\epsilon}(0)) =(y_{\epsilon}(T),
z_{\epsilon}(T))$. It is known (see Corollaries 3, 4 in \cite{GR} and Lemma 3.5 in \cite{FinGaiLeb}) that in the general case
\begin{equation}\label{Vy-perturbed-per-new-2}
G^*(\epsilon)\leq V_{per}^*(\epsilon)
\end{equation}
and, under certain additional assumptions,
\begin{equation}\label{Vy-perturbed-per-new-3}
G^*(\epsilon)= V_{per}^*(\epsilon),
\end{equation}
where $G^*(\epsilon) $ is the optimal value of the following IDLP problem
\begin{equation}\label{Vy-perturbed-per-new-4}
G^*(\epsilon )\BYDEF  \min _{\gamma \in \mathcal{D}(\epsilon)}\int_{U \times Y\times Z }G(u,
y,z)\gamma(du,dy,dz),
\end{equation}
in which
\vspace{-.2in}
\begin{equation}\label{Vy-perturbed-per-new-5}
\begin{array}{c}
\displaystyle \mathcal{D}(\epsilon ) \BYDEF  \{\gamma \in \mathcal{P}(U \times Y\times Z) \ :
\\[12pt]
\displaystyle \int_{U\times Y\times Z} \nabla (\phi(y)\psi(z) )^T \chi_{\epsilon}(u,y,z)
\gamma(du,dy,dz) =  0 \ \ \ \forall \phi(\cdot)\in C^1 (\R^m), \ \ \forall \psi(\cdot)\in C^1 (\R^n) \},
\end{array}
\end{equation}
with $ \ \chi_{\epsilon}(u,y,z) $ being as in (\ref{e:SP-W-1}) (that is, $\chi_{\epsilon}(u,y,z)^T\BYDEF ( \frac{1}{\epsilon}\ f(u,y,z)^T,
\ g(u,y,z)^T) \ $). Note that $\ \mathcal{D}(\epsilon) $ can be formally obtained from  $\ \mathcal{D}_{di}(\epsilon ,  y_0, z_0)$
by  taking $C=0$ in the expression for the latter (see (\ref{e:SP-W-1}); note the disappearance of the dependence on $z_0$ and $y_0$ in (\ref{Vy-perturbed-per-new-5})).

Similarly to Proposition \ref{Prop-SP-1}, one can come to the conclusion that
\begin{equation}\label{Vy-perturbed-per-new-6}
 \liminf_{\epsilon\rightarrow 0}G^*(\epsilon )\geq  G^{\mathcal{A}},
\end{equation}
where $ G^{\mathcal{A}}$ is the optimal value of the augmented reduced problem
\begin{equation}\label{Vy-perturbed-per-new-7}
 G^{\mathcal{A}}\BYDEF \min _{\gamma \in \mathcal{D}\cap \mathcal{A} }\int_{U \times Y\times Z }G(u,
y,z)\gamma(du,dy,dz),
\end{equation}
the set $\mathcal{A}$ being defined by the right-hand side of (\ref{e:SP-M}) taken with $C=0$ (note that the dependence on  $z_0$ disappears here as well).
Also, similarly to Proposition \ref{Prop-SP-2}, it can be established that the problem (\ref{Vy-perturbed-per-new-7}) is equivalent
to the IDLP problem
\begin{equation}\label{Vy-perturbed-per-new-8}
\min_{p\in \tilde{\mathcal{D}}}\int_{F}\tilde G(\mu,z)p(d\mu , dz)\BYDEF \tilde{G}^*,
\end{equation}
where $\tilde{\mathcal{D}} $ is defined by the right-hand side of (\ref{D-SP-new}) taken with $C=0$.
The equivalence between these two  problems  includes, in particular, the equality
of the optimal values (see Proposition \ref{Prop-SP-2}),
$\
 G^{\mathcal{A}}= \tilde{G}^*,
$
  implying (by (\ref{Vy-perturbed-per-new-2}) and (\ref{Vy-perturbed-per-new-6})) that
  \vspace{-.1in}
\begin{equation}\label{Vy-perturbed-per-new-9}
  \liminf_{\epsilon\rightarrow 0}V_{per}^*(\epsilon )\geq  \tilde{G}^*.
\end{equation}
Note that the problem (\ref{Vy-perturbed-per-new-8}) is the IDLP problem related to the  periodic optimization problem
\begin{equation}\label{Vy-perturbed-per-new-10}
\inf_{T,(\mu(\cdot),z(\cdot))}\frac{1}{T}\int_0^T \tilde G(\mu(t),z(t))dt\BYDEF \tilde{V}_{per}^*,
\end{equation}
where $inf $ is sought over the length of the time interval $T$ and over the admissible pairs of the averaged system (\ref{e:intro-0-4}) that satisfy the periodicity condition $z(0)=z(T) $. In particular, under certain conditions,
\begin{equation}\label{Vy-perturbed-per-new-10-1-1}
\tilde{V}_{per}^* = \tilde{G}^*.
\end{equation}
The latter, under the assumption that the averaged problem (\ref{Vy-perturbed-per-new-10})
approximates the SP problem (\ref{Vy-perturbed-per-new-1}) in the sense that $\ \lim_{\epsilon\rightarrow 0}V_{per}^*(\epsilon)= \tilde{V}_{per}^* \ $
(sufficient conditions for this  can be found in \cite{Gai-Ng}) leads to the equality
\begin{equation}\label{Vy-perturbed-per-new-11}
  \liminf_{\epsilon\rightarrow 0}V_{per}^*(\epsilon )= \tilde{G}^*.
\end{equation}

{\bf III. Average control generating (ACG) families.}

\section{ACG families; averaged and associated dual problems; necessary  optimality condition}\label{Sec-ACG-nec-opt}
The validity of the representation (\ref{tilde-W-1}) for the set $\mathcal{D}_{di}^{\mathcal{A}}(z_0)$
 motivates the definition of {\it average control generating family} given below.
For any $z\in Z$, let  $(u_z(\cdot),y_z(\cdot))$ be an admissible pair
of  the associated system (\ref{e:intro-0-3})
and  $\mu(du,dy|z)$ be the
occupational measure generated by this pair on $[0,\infty) $ (see (\ref{e:oms-0-1-infy})),
 with the integral
$\int_{U\times Y}q(u,y,z)\mu(du,dy|z) $ being a measurable function of $z$ and
\begin{equation}\label{e-opt-OM-1-0}
 | S^{-1}\int_0^S q(u_{z}(\tau),y_{z}(\tau),z)d\tau - \int_{U\times Y}q(u,y,z)\mu(du,dy|z))|\leq \phi_q(S)\ \ \forall z\in Z, \ \  \ \ \ \ \ \ \
 \lim_{S\rightarrow\infty}\phi_q(S)
\end{equation}
 for any
continuous $q(u,y,z) $.
Note that the estimate (\ref{e-opt-OM-1-0}) is valid if $(u_z(\cdot),y_z(\cdot)$ is $T_z$-periodic,
with $T_z $ being uniformly bounded on $ Z$.

\begin{Definition}\label{Def-ACG}
The family $(u_z(\cdot),y_z(\cdot)$ will be called {\it average control generating} (ACG) if the system
\begin{equation}\label{e-opt-OM-1}
z'(t)=\tilde g_{\mu}(z(t)), \ \ \  \  z(0) = z_0 ,
\end{equation}
where
\begin{equation}\label{e-opt-OM-1-101}
\tilde g_{\mu}(z)\BYDEF \tilde g(\mu(\cdot |z),z)= \int_{U\times Y}g(u,y,z)\mu(du,dy |z),
\end{equation}
has a unique solution  $z(t)\in  Z\ \forall t\in [0,\infty)$.
 \end{Definition}

Note that, according to this definition, if $(u_z(\cdot),y_z(\cdot))$ is an ACG family,
with  $\mu(du,dy|z) $ being the family of occupational measures
 generated by this family, and if $ z(\cdot)$ is the corresponding solution of (\ref{e-opt-OM-1}),
 then the  pair $(\mu(\cdot), z(\cdot)) $ (with $\mu(t)\BYDEF \mu(du,dy|z(t)) $) is an admissible pair of the averaged
 system. For convenience, this admissible pair will also be referred to as one generated by the ACG family.

\begin{Proposition}\label{Prop-clarification-1}
Let  $(u_z(\cdot),y_z(\cdot))$ be an ACG family and let $\mu(du,dy|z) $ and $(\mu(\cdot), z(\cdot)) $ be, respectively, the family of occupational measures
 and the admissible pair of the averaged system generated by this family.  Let $\nu_{di}(dz) $ be the discounted occupational measure
generated by $ z(\cdot)$,
\begin{equation}\label{e-opt-OM-1-extra-101}
\nu_{di}(Q)= C \int_0^{\infty} e^{-Ct} 1_Q(z(t))dt \ \ \ \ \forall \ \ {\rm Borel} \ \ Q\subset Z,
\end{equation}
and let
\begin{equation}\label{e-opt-OM-2}
\gamma(du,dy,dz)\BYDEF \mu(du,dy|z)\nu_{di}(dz).
\end{equation}
Then
\begin{equation}\label{e-opt-OM-2-1}
\gamma(du,dy,dz) =\Phi (p_{di}^{\mu(\cdot), z(\cdot)}).
\end{equation}
where $\ p_{di}^{\mu(\cdot), z(\cdot)} $ is the discounted occupational measure generated by $(\mu(\cdot), z(\cdot)) $.
\end{Proposition}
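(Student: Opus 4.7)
The plan is to verify the defining property of $\Phi$ directly by evaluating both sides against an arbitrary continuous test function $q(u,y,z)$ on $U\times Y \times Z$. Since $\Phi(p_{di}^{\mu(\cdot),z(\cdot)})$ is uniquely characterized by the equation (\ref{e:h&th-1}), it suffices to show that
\begin{equation*}
\int_{U\times Y\times Z} q(u,y,z)\, \gamma(du,dy,dz) \;=\; \int_F \tilde q(\mu,z)\, p_{di}^{\mu(\cdot),z(\cdot)}(d\mu,dz) \quad \forall q(\cdot)\in C(U\times Y\times Z).
\end{equation*}

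For the left-hand side, I will use the disintegration (\ref{e-opt-OM-2}) to write
\begin{equation*}
\int_{U\times Y\times Z} q(u,y,z)\,\gamma(du,dy,dz) \;=\; \int_Z \Bigl[\int_{U\times Y} q(u,y,z)\,\mu(du,dy|z)\Bigr] \nu_{di}(dz) \;=\; \int_Z \tilde q(\mu(\cdot|z),z)\,\nu_{di}(dz),
\end{equation*}
where the measurability of the inner integral as a function of $z$ (needed so that the iterated integral makes sense) is part of the data of an ACG family per the discussion preceding Definition \ref{Def-ACG}. Then I apply the definition (\ref{e-opt-OM-1-extra-101}) of $\nu_{di}$ — equivalently, the representation (\ref{e:oms-0-2}) of a discounted occupational measure — to the continuous function $z\mapsto \tilde q(\mu(\cdot|z),z)$ to obtain
\begin{equation*}
\int_Z \tilde q(\mu(\cdot|z),z)\,\nu_{di}(dz) \;=\; C\int_0^{\infty} e^{-Ct}\, \tilde q(\mu(\cdot|z(t)), z(t))\, dt \;=\; C\int_0^{\infty} e^{-Ct}\, \tilde q(\mu(t), z(t))\, dt,
\end{equation*}
the last equality being simply the convention $\mu(t)=\mu(\cdot|z(t))$ stated after Definition \ref{Def-ACG}.

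For the right-hand side, I will apply (\ref{e:oms-0-2}) directly with $W=F$, $w(\cdot)=(\mu(\cdot),z(\cdot))$, and test function $(\mu,z)\mapsto \tilde q(\mu,z)$, which is continuous on $F\subset \mathcal{P}(U\times Y)\times Z$. This yields
\begin{equation*}
\int_F \tilde q(\mu,z)\, p_{di}^{\mu(\cdot),z(\cdot)}(d\mu,dz) \;=\; C\int_0^{\infty} e^{-Ct}\, \tilde q(\mu(t),z(t))\, dt,
\end{equation*}
and the two sides coincide, establishing (\ref{e-opt-OM-2-1}).

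The main (mild) obstacle is the measurability/continuity housekeeping: I need that $z\mapsto \tilde q(\mu(\cdot|z),z)$ is Borel measurable so that integration against $\nu_{di}$ is legitimate, and that $(\mu,z)\mapsto \tilde q(\mu,z)$ is continuous on $F$ so that (\ref{e:oms-0-2}) applies. The former follows from the measurability assumption built into the definition of an ACG family, and the latter is immediate from the weak$^*$ continuity of $\mu\mapsto \int q(u,y,z)\mu(du,dy)$ together with continuity of $q$ in $z$. Once these points are handled, the proposition reduces to the computation above.
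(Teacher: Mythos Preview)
Your proof is correct and follows essentially the same route as the paper: both verify the defining identity (\ref{e:h&th-1}) of $\Phi$ by testing against an arbitrary $q\in C(U\times Y\times Z)$, unwinding the definitions of $\nu_{di}$ and $p_{di}^{\mu(\cdot),z(\cdot)}$ to land on the common expression $C\int_0^\infty e^{-Ct}\tilde q(\mu(t),z(t))\,dt$. The only minor slip is calling $z\mapsto \tilde q(\mu(\cdot|z),z)$ ``continuous'' when in general it is only measurable (as you yourself note later); the change-of-variables identity for the pushforward $\nu_{di}$ holds for bounded measurable integrands, so the argument is unaffected.
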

\begin{proof}
For an arbitrary continuous function $q(u,y,z) $  and  $\tilde q(\mu,z) $  defined as in (\ref{e:intro-0-3-9}), one can write down
$$
\int_{F}\tilde q(\mu,z)p_{di}^{\mu(\cdot), z(\cdot)}(d\mu , dz)= C \int_0^{\infty}e^{-Ct}\tilde q (\mu(t),z(t))dt
= C \int_0^{\infty}e^{-Ct} \left(\int_{U\times Y}q(u,y,z)\mu(du,dy |z(t)) \right) dt
$$
\vspace{-.2in}
$$
= \int_Z \left(\int_{U\times Y}q(u,y,z)\mu(du,dy |z) \right) \nu_{di}(dz) = \int_{U\times Y\times Z} q(u,y,z)\gamma(du,dy,dz).
$$
By the definition of $\Phi(\cdot) $ (see (\ref{e:h&th-1})), the latter implies (\ref{e-opt-OM-2-1}).
\end{proof}

 \begin{Definition}\label{Def-ACG-opt}
 An ACG family  $(u_z(\cdot),y_z(\cdot))$   will be called  optimal  if the
admissible pair $(\mu(\cdot), z(\cdot)) $ generated by this family is optimal in the averaged problem (\ref{Vy-ave-opt}). That is,
\begin{equation}\label{e-opt-OM-2-101}
\int_0 ^ { + \infty }e^{- C t}\tilde{G}(\mu(t),z(t))dt
 = \tilde{V}^*_{di}(z_0).
 \end{equation}
An ACG family  $(u_z(\cdot),y_z(\cdot))$  will be called   {\it $\alpha $-near optimal} ($\alpha >0$)
  if
 \begin{equation}\label{e-opt-OM-2-101-1}
 \int_0 ^ { + \infty }e^{- C t}\tilde{G}(\mu(t),z(t))dt
 \leq \tilde{V}^*_{di}(z_0) + \alpha.
 \end{equation}
 \end{Definition}

 Note that, provided that the equality (\ref{C-result-di-ave}) is valid, an ACG family $(u_z(\cdot),y_z(\cdot))$ will be optimal (near optimal) if and only if
 the discounted occupational measure $\ p_{di}^{\mu(\cdot), z(\cdot)} $ generated by $(\mu(\cdot), z(\cdot)) $ is an optimal (near optimal) solution
of the averaged IDLP problem (\ref{e-ave-LP-opt-di}). Also, from (\ref{e-opt-OM-2-1}) it follows that
\begin{equation}\label{e-opt-OM-2-1000-1}
\int_{U\times Y\times Z} G(u,y,z)\gamma(du,dy,dz)= C\tilde{V}^*_{di}(z_0)
\end{equation}
if the ACG family is optimal  and that
\begin{equation}\label{e-opt-OM-2-1000-2}
\int_{U\times Y\times Z} G(u,y,z)\gamma(du,dy,dz)\leq C(\tilde{V}^*_{di}(z_0) + \alpha) .
\end{equation}
if the ACG family is $\alpha $-near optimal. Thus,
under the assumption that the equality (\ref{C-result-di-ave}) is valid, an ACG family
$(u_z(\cdot),y_z(\cdot))$
will be optimal (near optimal)
if and only if   $\gamma= \Phi(p_{di}^{\mu(\cdot), z(\cdot)})$  is
an optimal (near optimal) solution of the reduced augmented problem (\ref{SP-IDLP-0}).

Let $\tilde H(p,z)$ be the
 Hamiltonian of the averaged
system
\begin{equation}\label{e:H-tilde}
\tilde H(p,z)\BYDEF \min_{\mu\in W(z)} \{\tilde G(\mu, z)+p^T \tilde g (\mu , z)\},
\end{equation}
where $\tilde g(\mu, z) $ and $\tilde G(\mu , z) $ are defined by (\ref{e:g-tilde}) and (\ref{e:G-tilde}).
Consider the problem
\begin{equation}\label{e:DUAL-AVE}
\sup_{\zeta(\cdot)\in C^1} \{\theta : \theta\leq \tilde H(\nabla \zeta(z), z) + C(\zeta(z_0) - \zeta(z))\ \forall
z\in Z \}= \tilde{G}^*_{di}(z_0)  ,
\end{equation}
where $sup $ is sought over all continuously differentiable functions $\zeta(\cdot):\R^n\rightarrow \R^1 $. Note that
 the optimal value of the  problem (\ref{e:DUAL-AVE}) is equal to the optimal value of the averaged IDLP problem (\ref{e-ave-LP-opt-di}).
 The former is in fact dual with respect to the later, the equality of the optimal values
 being one of the duality relationships between the two (see Theorem 3.1 in \cite{GQ}).
 For brevity, (\ref{e:DUAL-AVE}) will be referred to as just {\it averaged dual problem}.
 Note that the averaged dual problem can be equivalently rewritten in the form
\begin{equation}\label{e:DUAL-AVE-0}
\sup_{\zeta(\cdot)\in C^1(\R^n)} \{\theta : \theta\leq  \tilde G(\mu , z)+ \nabla \zeta(z)^T \tilde g (\mu , z) + C (\zeta(z_0) - \zeta(z))\ \ \forall
(\mu ,z) \in F \}=\tilde{G}^*_{di}(z_0) , \ \ \ \ \
\end{equation}
where  $F$ is the graph of $W(\cdot) $ (see (\ref{e:graph-w})).
A function $\zeta^*(\cdot)\in C^1 $ will be called a solution of the averaged dual problem  if
\begin{equation}\label{e:DUAL-AVE-sol-1}
\tilde{G}^*_{di}(z_0) \leq \tilde H(\nabla \zeta^* (z), z) + C(\zeta^*(z_0) - \zeta^*(z))\ \ \forall
z\in Z \ ,
\end{equation}
or, equivalently, if
\begin{equation}\label{e:DUAL-AVE-sol-1-1001}
\tilde{G}^*_{di}(z_0) \leq \tilde G(\mu, z)+\nabla \zeta^*(z)^T \tilde g (\mu , z) + C(\zeta^*(z_0) - \zeta^*(z))\ \ \forall
(\mu ,z) \in F \ .
\end{equation}
Note that, if $\zeta^*(\cdot)\in C^1 $ satisfies (\ref{e:DUAL-AVE-sol-1}), then $\zeta^*(\cdot)+ const $ satisfies
(\ref{e:DUAL-AVE-sol-1}) as well.

Assume that a solution of the averaged dual problem  (that is, a functions $\zeta^*(\cdot) $ satisfying
(\ref{e:DUAL-AVE-sol-1})) exists and consider the problem in the right hand side of (\ref{e:H-tilde}) with $p=\nabla \zeta^* (z)$, rewriting it in the form
\begin{equation}\label{e:H-tilde-10-1}
 \min_{\mu\in W(z)} \{\int_{U\times Y}[G(u,y,z)+\nabla \zeta^* (z)^T  g (u,y , z)]\mu(du,dz)\} = \tilde H (\nabla \zeta^* (z),z).
\end{equation}
The latter is an IDLP problem, with the dual of it having
 the form
\begin{equation}\label{e:dec-fast-4}
\ \ \ \ \ \ \sup_{\eta(\cdot)\in C^1(\R^n)} \{\theta : \theta \leq  G(u,y,z)+\nabla \zeta^* (z)^T  g (u,y , z) + \nabla \eta (y)^T  f(u,y,z)
\ \ \forall
(u,y) \in U\times Y \}
\end{equation}
\vspace{-.3in}
$$
=\tilde H (\nabla \zeta^* (z),z),
$$
where $sup $ is sought over all continuously differentiable functions $\eta(\cdot): \R^m\rightarrow \R^1 $.
 The  optimal values of the problems (\ref{e:H-tilde-10-1}) and (\ref{e:dec-fast-4}) are equal,
  this being one of the duality relationships between these two problems
 (see Theorem 4.1 in \cite{FinGaiLeb}). The problem (\ref{e:dec-fast-4}) will be referred to as {\it associated dual problem}.
 A function $\eta^*_z(\cdot)\in C^1(\R^m) $ will be called a solution of the  problem (\ref{e:dec-fast-4}) if
\begin{equation}\label{e:H-tilde-10-3}
   \tilde H (\nabla \zeta^* (z),z)\leq G(u,y,z)+\nabla \zeta^* (z)^T  g (u,y , z) + \nabla \eta^*_z (y)^T  f(u,y,z) \ \ \forall (u,y)\in U\times Y.
\end{equation}
The following result gives a necessary  condition for an ACG family $(u_z(\cdot),y_z(\cdot))$ to be optimal provided that the latter is periodic, that is,
\begin{equation}\label{e:H-tilde-10-3-1001}
(u_z(\tau + T_z),y_z(\tau + T_z)) = (u_z(\tau ),y_z(\tau )) \ \ \ \ \forall \ \tau \geq 0
\end{equation}
for some $\ T_z > 0 $ (in fact, for the result to be valid, the periodicity is required only for
$z=z(t)$, where $z(t) $ is  the  solution of (\ref{e-opt-OM-1})).
\begin{Proposition}\label{Prop-necessary-opt-cond}
Assume that the equality (\ref{C-result-di-ave}) is valid. Assume also that a solution $\zeta^* (z) $ of the averaged dual problem exists and  a solution $\eta^*_z (y) $ of the associated dual problem
exists for any $z\in Z$. Then, for an ACG family $(u_z(\cdot),y_z(\cdot))$ satisfying (\ref{e:H-tilde-10-3-1001})  to be optimal, it is necessary
that
$$
u_{z(t)}(\tau)\in {\rm Argmin}_{u\in U}\{  G(u,y_{z(t)}(\tau),z(t))+\nabla \zeta^* (z(t))^T  g (u,y_{z(t)}(\tau) , z(t))
$$
\vspace{-.2in}
\begin{equation}\label{e:H-tilde-10-3-1002}
+ \nabla \eta^*_{z(t)} (y_{z(t)}(\tau))^T  f(u,y_{z(t)}(\tau) , z(t))  \}
\end{equation}
 for almost all $t\in [0,\infty)$ and for almost all $ \tau \in [0,T_{z(t)}]$, where $z(t) $ is  the  solution of (\ref{e-opt-OM-1}).
 \end{Proposition}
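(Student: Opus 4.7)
The plan is a two-stage LP complementary slackness argument. By Proposition \ref{Prop-clarification-1} and the standing identity (\ref{C-result-di-ave}), optimality of the ACG family is equivalent to the discounted occupational measure $p^{\mu(\cdot), z(\cdot)}_{di}$ of the generated admissible averaged pair being an optimal solution of the averaged IDLP problem (\ref{e-ave-LP-opt-di}). I will combine this primal optimality with the dual feasibility of $\zeta^*$ to pin down behaviour at the slow level, then use the dual feasibility of $\eta^*_z$ to descend to the fast level, and finally translate from measures back to trajectories via the periodicity hypothesis (\ref{e:H-tilde-10-3-1001}).

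Slow level: integrating (\ref{e:DUAL-AVE-sol-1-1001}) against $p^{\mu(\cdot), z(\cdot)}_{di}$, applying (\ref{D-SP-new}) with $\psi = \zeta^*$ to annihilate the $\zeta^*$-affine term, and using primal optimality to absorb the $\tilde G$-term into $\tilde G^*_{di}(z_0)$ gives that the nonnegative slack in (\ref{e:DUAL-AVE-sol-1-1001}) has zero integral and must therefore vanish $p^{\mu(\cdot), z(\cdot)}_{di}$-a.e. on $F$. Sandwiching the resulting equality between (\ref{e:DUAL-AVE-sol-1}) and the trivial bound $\tilde H(\nabla\zeta^*(z), z) \leq \tilde G(\mu, z) + \nabla\zeta^*(z)^T \tilde g(\mu, z)$ collapses it to $\tilde G(\mu, z) + \nabla\zeta^*(z)^T \tilde g(\mu, z) = \tilde H(\nabla\zeta^*(z), z)$ on the same full-measure set. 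Since the $Z$-marginal of $p^{\mu(\cdot), z(\cdot)}_{di}$ is $\nu_{di}$ from (\ref{e-opt-OM-1-extra-101}), which charges every nontrivial time interval via the factor $e^{-Ct}$, this identity holds with $z = z(t)$ for Lebesgue-a.e. $t \geq 0$.

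Fast level: for such $t$, integrate (\ref{e:H-tilde-10-3}) against $\mu(\cdot | z(t))$. Because $\mu(\cdot | z(t)) \in W(z(t))$, the $\nabla\eta^*_{z(t)}(y)^T f(u, y, z(t))$ contribution vanishes by (\ref{e:2.4}), so the $\mu(\cdot | z(t))$-average of the integrand in (\ref{e:H-tilde-10-3}) equals $\tilde G(\mu, z(t)) + \nabla\zeta^*(z(t))^T \tilde g(\mu, z(t))$, which by the slow-level equality equals $\tilde H(\nabla\zeta^*(z(t)), z(t))$. Since (\ref{e:H-tilde-10-3}) bounds the integrand pointwise below by this same quantity, the inequality must be tight $\mu(\cdot | z(t))$-a.e. on $U \times Y$. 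The periodicity (\ref{e:H-tilde-10-3-1001}) identifies $\mu(\cdot | z)$ with the pushforward of normalized Lebesgue measure on $[0, T_z]$ under $\tau \mapsto (u_z(\tau), y_z(\tau))$, so this $\mu$-a.e. statement rewrites as: for Lebesgue-a.e. $\tau \in [0, T_{z(t)}]$, the trajectory point $(u_{z(t)}(\tau), y_{z(t)}(\tau))$ is a joint minimizer over $U \times Y$ of the expression in (\ref{e:H-tilde-10-3}). A joint minimizer over $(u,y) \in U \times Y$ remains a minimizer when restricted to the slice $y = y_{z(t)}(\tau)$, which is precisely (\ref{e:H-tilde-10-3-1002}).

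The delicate step is the fast-level descent rather than any single inequality. Two ingredients must be carefully invoked: first, the vanishing of the $\eta^*_z$-term under the constraint $\mu(\cdot | z) \in W(z)$, which is what allows the associated dual to be reduced to the same Hamiltonian value that the slow-level computation produces; and second, the faithful representation of the conditional $\mu(\cdot | z(t))$ as an occupational measure of the $T_{z(t)}$-periodic orbit, which is exactly where the periodicity assumption (\ref{e:H-tilde-10-3-1001}) is needed so that a $\mu$-a.e. statement translates into a $\tau$-a.e. statement along the trajectory. The final passage from the joint argmin to the argmin in $u$ alone is immediate but worth recording, since $\tilde H$ is defined via the joint minimum and the statement (\ref{e:H-tilde-10-3-1002}) is about the marginal minimization in $u$.
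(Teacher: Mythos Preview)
Your argument is correct and follows the same two-level LP complementary-slackness scheme as the paper: first show that optimality of the generated averaged pair forces $\mu(t)$ to attain the Hamiltonian minimum at $z(t)$ for almost every $t$, then show that the associated dual inequality (\ref{e:H-tilde-10-3}) is tight along the periodic orbit, and finally read this off pointwise in $\tau$ via periodicity. The only difference is one of packaging: the paper obtains the slow-level conclusion by citing Proposition~2.1 of \cite{GRT} and the fast-level conclusion by recasting (\ref{e:opt-cond-AVE-1-proof-2}) as optimality in the periodic problem (\ref{e:dec-fast-6}) and then invoking Corollary~4.5 of \cite{FinGaiLeb}, whereas you unwind both steps directly as integrate-the-dual-inequality-against-the-primal-optimum arguments. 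Your version has the advantage of being self-contained and of making transparent exactly where each hypothesis (the validity of (\ref{C-result-di-ave}), the membership $\mu(\cdot|z)\in W(z)$ that kills the $\eta^*$-term, and periodicity (\ref{e:H-tilde-10-3-1001}) for the passage from $\mu$-a.e.\ to $\tau$-a.e.) is used; the paper's version is shorter but opaque without the cited references in hand.
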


 \begin{proof}
Let $(u_z(\cdot),y_z(\cdot))$ be an optimal ACG family. By definition, this means that the admissible pair $(\mu(\cdot), z(\cdot)) $ generated by this family is optimal in the
averaged problem (\ref{Vy-ave-opt}). Due to the assumed validity of (\ref{C-result-di-ave}), it follows (see Proposition 2.1 in \cite{GRT})
that
\begin{equation}\label{e:opt-cond-AVE-1}
\mu(t)\in {\rm Argmin}_{\mu\in W(z(t))} \{\tilde G(\mu , z(t))+\nabla \zeta^*(z(t))^T \tilde g (\mu , z(t))\ \}
\end{equation}
for almost all $t\in [0,\infty)$. That is,
\begin{equation}\label{e:opt-cond-AVE-1-proof-1}
\tilde G(\mu(t) , z(t))+\nabla \zeta^*(z(t))^T \tilde g (\mu(t) , z(t)) \ = \min_{\mu\in W(z(t))} \{\tilde G(\mu , z(t))+\nabla \zeta^*(z(t))^T \tilde g (\mu , z(t))\ \}
\end{equation}
 for almost all $t\in [0,\infty)$. By (\ref{e:H-tilde-10-3-1001}), the latter can be rewritten as
 $$
 \frac{1}{T_{z(t)}}\int_0^{T_{z(t)}}[G(u_{z(t)}(\tau),y_{z(t)}(\tau),z(t)) + \nabla \zeta^*(z(t))^T g(u_{z(t)}(\tau),y_{z(t)}(\tau),z(t))]d\tau
 $$
 \vspace{-.2in}
 \begin{equation}\label{e:opt-cond-AVE-1-proof-2}
 = \min_{\mu\in W(z(t))} \{\tilde G(\mu , z(t))+\nabla \zeta^*(z(t))^T \tilde g (\mu , z(t))\ \}.
 \end{equation}
 Since, for any admissible $T$-periodic pair $(u(\cdot),y(\cdot)) $ of the associated system (\ref{e:intro-0-3}) considered with $z=z(t)$,
 $$
\frac{1}{T} \int_0^{T}[G(u(\tau),y(\tau),z(t)) + \nabla \zeta^*(z(t))^T g(u(\tau),y(\tau),z(t))]d\tau
 $$
 \vspace{-.2in}
$$
\geq  \min_{\mu\in W(z(t))} \{\tilde G(\mu , z(t))+\nabla \zeta^*(z(t))^T \tilde g (\mu , z(t))\ \},
 $$
 (see, e.g., Corollary 3 in \cite{GR}) from (\ref{e:opt-cond-AVE-1-proof-2}) it follows that $(u_{z(t)}(\cdot),y_{z(t)}(\cdot)) $ is an optimal
 solution of the periodic optimization problem
 \begin{equation}\label{e:dec-fast-6}
\min_{T, (u(\cdot), y(\cdot))}\{\frac{1}{T}\int_0^T [G(u(\tau), y(\tau),z(t))+ \nabla \zeta^* (z(t))^T g(u(\tau), y(\tau),z(t))] d\tau  \} ,
\end{equation}
where  $min$ is taken over the length of the time interval $[0,T]$ and over the admissible
 pairs $(u(\cdot), y(\cdot))$ of the associated system (\ref{e:intro-0-3}) (considered with $z=z(t)$)
 that satisfy the periodicity conditions $y(0)=y(T) $ (with
 the optimal value in (\ref{e:dec-fast-6})
being equal to the right hand side in (\ref{e:opt-cond-AVE-1-proof-2})).
By Corollary 4.5 in \cite{FinGaiLeb}, this implies
 the statement of the proposition.
 \end{proof}

REMARK III.1.  It can be readily  verified that the concept of a solution of the averaged dual problem (see (\ref{e:DUAL-AVE-sol-1}))
is  equivalent to that of  a  smooth viscosity subsolution of the Hamilton-Jacobi-Bellman ({\bf HJB}) equation related to
 the averaged optimal control
problem (\ref{Vy-ave-opt}) (provided that  (\ref{C-result-di-ave}) is valid). It can be also understood
 that the concept of a solution of the associated dual problem (see (\ref{e:H-tilde-10-3})) is essentially
equivalent
to that of  a  smooth viscosity subsolution of the HJB equation related to the
periodic optimization problem (\ref{e:dec-fast-6}).
Note that the convergence of the optimal value function of a SP optimal control
problem  to the viscosity solution (not necessarily smooth) of the corresponding averaged HJB equation have been  studied
in \cite{Alv}, \cite{Alv-1}, \cite{AG} and \cite{G-DG} (see also references therein). In the consideration above,  we are
using solutions of the  averaged and associated dual problems (which can be interpreted as the inequality forms of the corresponding HJB equations)
to state a necessary  condition for an  ACG family to be optimal.
 The price for a possibility of doing it is, however, the
 assumption that
solutions of these problems, that is $C^1$ functions satisfying  (\ref{e:DUAL-AVE-sol-1}) and (\ref{e:H-tilde-10-3}), exist.
This is a   restrictive assumption, and we are not going to use it in the sequel. Instead, we will be considering simplified (\lq\lq approximating") versions
of the averaged and associate dual problems,  solutions of which
exist under natural controllability type conditions. We will  use those solutions instead of $\zeta^* (z) $
and $\eta^*_z (y) $  in (\ref{e:H-tilde-10-3-1002}) for the construction of near optimal ACG families, which, in turn, will be used  for the construction of asymptotically near optimal controls of  the SP problem. Note, in conclusion, that we also will not be assuming the periodicity of optimal or near optimal ACG families although in the numerical examples that we are considering in Sections \ref{Sec-Two-examples} and \ref{Sec-construction-SP-examples},  the constructed near optimal ACG families  appear to be periodic (since the system describing the fast dynamics in the examples is two-dimensional, it is consistent with the recent result of \cite{Art-Bright} establishing the sufficiency  of periodic regimes in dealing with log run average optimal control problems with two-dimensional dynamics; see also earlier developments in \cite{Col1} and \cite{Col2}).

\section{Approximating averaged IDLP problem and approximating averaged/associated  dual problems}\label{N-approx-dual-opt}
Let $\psi_i(\cdot)\in C^1(\R^n) \ , \ \ i=1,2,...,$ be  a sequence of functions such that any $\zeta(\cdot)\in C^1(\R^n)$ and its gradient are simultaneously approximated by a linear
combination of $\psi_i(\cdot)$ and their gradients. Also, let $\phi_i(\cdot)\in C^1(\R^m) \ , \ \ i=1,2,...,$ be  a sequence of functions such that any $\eta(\cdot)\in C^1(\R^m)$ and its gradient are simultaneously approximated by a linear
combination of $\phi_i(\cdot)$ and their gradients. Examples of such sequences are monomials
 $z_1^{i_1}...z_n^{i_n}$, $i_1,...,i_n =0,1,...$ and, respectively, $y_1^{i_1}...y_m^{i_m}$, $i_1,...,i_m =0,1,...$, with $z_k$, and
 $y_l $ standing for the components of $z$ and $y$ (see, e.g., \cite{Llavona}).

Let us introduce the following notations:
\begin{equation}\label{e:2.4-M}
W_M(z) \BYDEF  \{\mu \in \mathcal{P}(U \times Y) \ : \
\int_{U\times Y} \nabla\phi_i(y)^{T} f(u,y,z)
\mu(du,dy) =  0, \ \ \ i=1,...,M \} ,
\end{equation}
 \begin{equation}\label{e:graph-w-M}
F_M\BYDEF \{(\mu , z) \ : \ \mu\in W_M(z), \ \ z\in Z \} \ \subset \mathcal{P}(U\times Y)\times Z \ ,
 \end{equation}
and
 \begin{equation}\label{D-SP-new-MN}
\tilde{\mathcal{D}}_{di}^{N,M}(z_0)\BYDEF \{ p \in {\cal P} (F_M): \;
 \int_{F_M }(\nabla\psi_i(z)^T \tilde{g}(\mu,z) + C (\psi_i (z_0) - \psi_i (z) )) p(d\mu,dz) = 0, \ \ \ i=1,...,N\},
  \end{equation}
(compare with (\ref{e:2.4}), (\ref{e:graph-w}) and (\ref{D-SP-new}), respectively) and let us consider the following semi-infinite  LP problem
(compare with (\ref{e-ave-LP-opt-di}))
\begin{equation}\label{e-ave-LP-opt-di-MN}
\min_{p\in \tilde{\mathcal{D}}_{di}^{N,M}(z_0)}\int_{F_M}\tilde G(\mu,z)p(d\mu , dz)\BYDEF \tilde{G}^{N,M}_{di}(z_0).
\end{equation}
This problem will be referred to as {\it $(N,M)$-approximating averaged problem}.

It is obvious that
\begin{equation}\label{e:graph-w-M-103-0}
W_1(z)\supset W_2(z) \supset ...\supset W_M(z)\supset ...\supset W(z) \ \ \ \Rightarrow \ \ \ \ F_1\supset F_2 \supset ... \supset F_M\supset ... \supset F.
\end{equation}
Defining the set $\tilde{\mathcal{D}}_{di}^{N}(z_0)$ by the equation
 \begin{equation}\label{D-SP-new-N}
\tilde{\mathcal{D}}_{di}^{N}(z_0)\BYDEF \{ p \in {\cal P} (F): \;
 \int_{F }(\nabla\psi_i(z)^T \tilde{g}(\mu,z) + C (\psi_i (z_0) - \psi_i (z) )) p(d\mu,dz) = 0, \ \ \ i=1,...,N\},
  \end{equation}
one can also see that
\begin{equation}\label{e:graph-w-M-103-1}
\tilde{\mathcal{D}}_{di}^{N,M}(z_0)\supset
\tilde{\mathcal{D}}_{di}^{N}(z_0)\supset \tilde{\mathcal{D}}_{di}(z_0)\ \ \ \ \ \ \forall \ N, M =1,2,...
\end{equation}
(with $\mathcal{D}_{di}^{N,M}(z_0)$, $\tilde{\mathcal{D}}_{di}^{N}(z_0)$ and $\tilde{\mathcal{D}}_{di}(z_0)$ being considered as  subsets of
 $\mathcal{P}(\mathcal{P}(U \times Y)\times Z) $), the latter implying, in particular, that
 \begin{equation}\label{e:graph-w-M-103-101}
\tilde{G}^{N,M}_{di}(z_0)\leq \tilde{G}^*_{di}(z_0) \ \ \ \ \forall \ N, M =1,2,...\ .
\end{equation}

It can be readily verified that (see, e.g.,  the proof of Proposition 7 in \cite{GR}) that
\begin{equation}\label{e:graph-w-M-100}
\lim_{M\rightarrow\infty}W_M(z)= W(z),  \ \ \ \ \ \ \lim_{M\rightarrow\infty}F_M= F,
 \end{equation}
where, in the first case, the convergence is in the Housdorff metric generated by the weak convergence in $\mathcal{P}(U \times Y)$ and,
in the second, it is in the Housdorff metric generated by the weak$^*$ convergence in $\mathcal{P}(U \times Y)$  and the convergence in $Z$.

\begin{Proposition}\label{Prop-LM-convergence}
The following relationships are valid:
\begin{equation}\label{e:graph-w-M-101}
\lim_{M\rightarrow\infty}\tilde{\mathcal{D}}_{di}^{N,M}(z_0)= \tilde{\mathcal{D}}_{di}^{N}(z_0),
 \end{equation}
 \vspace{-.2in}
 \begin{equation}\label{e:graph-w-M-101-101}
 \lim_{N\rightarrow\infty}\tilde{\mathcal{D}}_{di}^{N}(z_0) = \tilde{\mathcal{D}}_{di}(z_0),
 \end{equation}
where the convergence in both cases is in Housdorff metric generated by the  weak$^*$ convergence  in $\mathcal{P}(\mathcal{P}(U \times Y)\times Z)$.
Also,
\begin{equation}\label{e:graph-w-M-102}
\lim_{N\rightarrow\infty} \lim_{M\rightarrow\infty}\tilde{G}^{N,M}_{di}(z_0)= \tilde{G}^{*}_{di}(z_0).
 \end{equation}
If the optimal solution $p^* $ of the averaged IDLP problem (\ref{e-ave-LP-opt-di}) is unique, then, for an
 an arbitrary optimal solution $p^{N,M} $ of the $(N,M) $-approximating problem (\ref{e-ave-LP-opt-di-MN}),
 \begin{equation}\label{e:graph-w-M-102-101}
\lim_{N\rightarrow\infty }\limsup_{M\rightarrow\infty}\rho(p^{N,M},p^*) = 0.
 \end{equation}
 \end{Proposition}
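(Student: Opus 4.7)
The plan is to dispatch the four assertions in succession, using weak$^*$ compactness of $\mathcal{P}(\mathcal{P}(U\times Y)\times Z)$ together with the inclusion chain $\tilde{\mathcal{D}}_{di}^{N,M}(z_0)\supset\tilde{\mathcal{D}}_{di}^{N}(z_0)\supset\tilde{\mathcal{D}}_{di}(z_0)$. The workhorse auxiliary functionals are
\[
\Psi_i(\mu,z):=\int_{U\times Y}\nabla\phi_i(y)^T f(u,y,z)\,\mu(du,dy),\qquad
\Xi_i(\mu,z):=\nabla\psi_i(z)^T\tilde g(\mu,z)+C\bigl(\psi_i(z_0)-\psi_i(z)\bigr),
\]
each of which is continuous on $\mathcal{P}(U\times Y)\times Z$; in particular $F_M=\bigcap_{i=1}^{M}\{\Psi_i=0\}$, $F=\bigcap_{i\ge 1}\{\Psi_i=0\}$, and the defining constraints of $\tilde{\mathcal{D}}_{di}^{N,M}(z_0)$ read $\int\Xi_i\,dp=0$ for $i\le N$.

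For (\ref{e:graph-w-M-101}) and (\ref{e:graph-w-M-101-101}) one half of the Hausdorff distance vanishes automatically from the chain above. For the reverse half, I would argue by weak$^*$ compactness: any sequence $p_M\in\tilde{\mathcal{D}}_{di}^{N,M}(z_0)$ has a subsequential weak$^*$ limit $p$. Since $\mathrm{supp}\,p_M\subset F_M$ gives $\int\Psi_i^2\,dp_M=0$ for all $i\le M$, continuity of $\Psi_i^2$ yields $\int\Psi_i^2\,dp=0$ for every fixed $i$; hence $p(\{\Psi_i=0\})=1$ for every $i$, so $p(F)=1$ and $\mathrm{supp}\,p\subset F$. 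The $N$ linear constraints $\int\Xi_i\,dp_M=0$ pass to the limit by weak$^*$ continuity, so $p\in\tilde{\mathcal{D}}_{di}^{N}(z_0)$, ruling out any uniform lower bound on $\rho(p_M,\tilde{\mathcal{D}}_{di}^{N}(z_0))$. For (\ref{e:graph-w-M-101-101}) the support step is free since each $p_N\in\mathcal{P}(F)$ already; the new ingredient is promoting ``$\int\Xi_i\,dp=0$ for every $i$'' to the analogous identity with $\psi_i$ replaced by an arbitrary $\psi\in C^1(\R^n)$. This I would obtain from the postulated simultaneous approximability of $(\psi,\nabla\psi)$ by linear combinations of $(\psi_j,\nabla\psi_j)$ in the uniform norm on the compact set $Z\cup\{z_0\}$, together with linearity in $\psi$ and continuity of the integrand in $(\psi,\nabla\psi)$.

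For (\ref{e:graph-w-M-102}), let $\tilde G^N_{di}(z_0)$ denote the minimum of $\int\tilde G\,dp$ over $\tilde{\mathcal{D}}_{di}^{N}(z_0)$. The inclusion chain immediately gives $\tilde G^{N,M}_{di}(z_0)\le\tilde G^N_{di}(z_0)\le\tilde G^*_{di}(z_0)$, so $\limsup_M\tilde G^{N,M}_{di}(z_0)\le\tilde G^N_{di}(z_0)$. For the matching lower bound, take an optimal $p^{N,M}$ and a weak$^*$ cluster point $p$; by (\ref{e:graph-w-M-101}) $p\in\tilde{\mathcal{D}}_{di}^{N}(z_0)$, and weak$^*$ continuity of $q\mapsto\int\tilde G\,dq$ gives $\liminf_M\tilde G^{N,M}_{di}(z_0)\ge\int\tilde G\,dp\ge\tilde G^N_{di}(z_0)$, whence $\lim_M\tilde G^{N,M}_{di}(z_0)=\tilde G^N_{di}(z_0)$. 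The same argument with (\ref{e:graph-w-M-101-101}) in place of (\ref{e:graph-w-M-101}) yields $\lim_N\tilde G^N_{di}(z_0)=\tilde G^*_{di}(z_0)$, proving (\ref{e:graph-w-M-102}).

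Finally for (\ref{e:graph-w-M-102-101}) I would split the argument in two. For fixed $N$, the previous paragraph shows that every weak$^*$ cluster point of $\{p^{N,M}\}_M$ lies in the set $\mathrm{Opt}^N\subset\tilde{\mathcal{D}}_{di}^{N}(z_0)$ of optimal solutions of the $N$-problem, so $\lim_{M\to\infty}\rho(p^{N,M},\mathrm{Opt}^N)=0$. A second compactness argument, using (\ref{e:graph-w-M-101-101}), continuity of the objective, and (\ref{e:graph-w-M-102}), shows that any cluster point of a sequence $q^N\in\mathrm{Opt}^N$ lies in $\tilde{\mathcal{D}}_{di}(z_0)$ and achieves the value $\tilde G^*_{di}(z_0)$, hence must equal $p^*$ by uniqueness; thus $\sup_{q\in\mathrm{Opt}^N}\rho(q,p^*)\to 0$ as $N\to\infty$. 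The triangle inequality $\rho(p^{N,M},p^*)\le\rho(p^{N,M},\mathrm{Opt}^N)+\sup_{q\in\mathrm{Opt}^N}\rho(q,p^*)$ then yields $\lim_N\limsup_M\rho(p^{N,M},p^*)=0$. The main technical delicacy throughout is the support-preservation step in the proof of (\ref{e:graph-w-M-101})---ensuring that weak$^*$ limits of measures supported on the shrinking closed sets $F_M$ remain supported on the intersection $F$---but once this is secured via the $\Psi_i^2$ device, all subsequent pieces are standard weak$^*$ compactness and semicontinuity arguments.
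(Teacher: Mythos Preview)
Your proposal is correct and follows essentially the same approach as the paper. In fact, the paper omits the proofs of (\ref{e:graph-w-M-101}) and (\ref{e:graph-w-M-101-101}) as ``standard'' and then derives (\ref{e:graph-w-M-102}) and (\ref{e:graph-w-M-102-101}) via exactly the compactness/subsequential-limit argument you give; your use of $\int\Psi_i^2\,dp_M=0$ to force the support of weak$^*$ limits into $F$ is a clean way to make the omitted step explicit, and your two-stage treatment of (\ref{e:graph-w-M-102-101}) via the sets $\mathrm{Opt}^N$ is a slightly more detailed version of the paper's sketch.
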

\begin{proof}
The proofs of (\ref{e:graph-w-M-101}) and (\ref{e:graph-w-M-101-101}) follow a standard argument and are omitted.
From  (\ref{e:graph-w-M-101})  it follows that
 \begin{equation}\label{e:graph-w-M-102-101-1}
 \lim_{M\rightarrow\infty}  \tilde{G}^{N,M}_{di}(z_0) =
 \min_{p\in \tilde{\mathcal{D}}_{di}^{N}(z_0)}\int_{F}\tilde G(\mu,z)p(d\mu , dz)\BYDEF \tilde{G}^{N}_{di}(z_0),
 \end{equation}
and from (\ref{e:graph-w-M-101-101}) it follows that
\begin{equation}\label{e:graph-w-M-102-101-2}
 \lim_{N\rightarrow\infty}\tilde{G}^{N}_{di}(z_0) =\tilde{G}^{*}_{di}(z_0).
 \end{equation}
The  above two relationships imply (\ref{e:graph-w-M-102}).
If the optimal solution $p^* $ of the averaged IDLP problem (\ref{e-ave-LP-opt-di}) is unique, then, by (\ref{e:graph-w-M-102-101-2}),
for any solution $p^N $ of the problem in the right-hand side of (\ref{e:graph-w-M-102-101-1}) there
exists the limit
\begin{equation}\label{e:graph-w-M-102-101-3}
 \lim_{N\rightarrow\infty}p^N =p^*.
 \end{equation}
Also,  if for an arbitrary optimal solution $p^{N,M} $ of the $(N,M) $ approximating problem
(\ref{e-ave-LP-opt-di-MN}) and for some $M'\rightarrow\infty $, there exists  $ \lim_{M'\rightarrow\infty}p^{N,M'} $,
then this limit is an optimal solution of the problem in the right-hand side of (\ref{e:graph-w-M-102-101-1}). This proves
(\ref{e:graph-w-M-102-101}).
\end{proof}

Define the finite dimensional space $\mathcal{Q}_N\subset C^1(\R^n) $ by the equation
\begin{equation}\label{e:HJB-8}
\mathcal{Q}_N\BYDEF \{\zeta(\cdot)\in C^1(\R^n): \zeta(z)= \sum_{i=1}^N \lambda_i \psi_i(z), \ \ \lambda=
(\lambda_i)\in \mathbb{R}^N\}
\end{equation}
 and consider the following problem
\begin{equation}\label{e:DUAL-AVE-0-approx-MN}
\sup_{\zeta(\cdot)\in \mathcal{Q}_N} \{\theta : \theta\leq  \tilde G(\mu , z)+ \nabla \zeta(z)^T \tilde g (\mu , z) + C (\zeta(z_0) - \zeta(z))\ \ \forall
(\mu ,z) \in F_M \}=\tilde{G}^{N,M}_{di}(z_0)
\end{equation}
This problem is dual with respect to the problem (\ref{e-ave-LP-opt-di-MN}),
the equality of the optimal values of these two problems being a part of the duality relationships. Note that the problem (\ref{e:DUAL-AVE-0-approx-MN})
looks similar to the averaged dual problem (\ref{e:DUAL-AVE-0}). However, in contrast to the latter, the sup is sought over the finite dimensional subspace $\ \mathcal{Q}_N$ of $\ C^1(\R^n)$ and $F_M $ is used instead of $F$.
 The problem (\ref{e:DUAL-AVE-0-approx-MN}) will be referred to as {\it $(N,M) $-approximating averaged dual problem}.
A function $\zeta^{N,M}(\cdot)\in  \mathcal{Q}_N$,
\begin{equation}\label{e:DUAL-AVE-0-approx-2}
\zeta^{N,M}(z) = \sum_{i=1}^N \lambda_i^{N,M} \psi_i(z),
\end{equation}
will be called a solution of the $(N,M)$-approximating  averaged dual problem if
\begin{equation}\label{e:DUAL-AVE-0-approx-2-1}
\tilde{G}^{N,M}_{di}(z_0)\leq  \tilde G(\mu , z)+ \nabla \zeta^{N,M}(z)^T \tilde g (\mu , z) + C (\zeta^{N,M}(z_0) - \zeta^{N,M}(z))\ \ \forall
(\mu ,z) \in F_M .
\end{equation}

Define the finite dimensional space $\mathcal{V}_M\subset C^1(\R^n) $ by the equation
\begin{equation}\label{e:HJB-8-Associated}
\mathcal{V}_M\BYDEF \{\eta(\cdot)\in C^1(\R^m): \eta(y)= \sum_{i=1}^M \omega_i \phi_i(y), \ \ \omega=
(\omega_i)\in \mathbb{R}^M\}
\end{equation}
 and, assuming that a solution $\zeta^{N,M}(z)$ of the $(N,M) $-approximating averaged dual problem exists,  consider the following problem
\begin{equation}\label{e:dec-fast-4-Associated}
\sup_{\eta(\cdot)\in \mathcal{V}_M} \{\theta : \theta \leq  G(u,y,z)+\nabla \zeta^{N,M} (z)^T  g (u,y , z) + \nabla \eta (y)^T  f(u,y,z) \ \ \forall
(u,y) \in U\times Y \}\BYDEF \sigma_{N,M}^*(z).
\end{equation}
While the problem (\ref{e:dec-fast-4-Associated}) looks similar to the associated dual problem
 (\ref{e:dec-fast-4}), it
differs from the latter, firstly, by that
$sup $ is sought over the finite dimensional subspace  $\mathcal{V}_M $ of   $C^1(\R^n) $ and, secondly, by that
a solution
$\zeta^{N,M} (z) $ of  (\ref{e:DUAL-AVE-0-approx-MN}) is used instead of a solution $\zeta^* (z) $ of (\ref{e:DUAL-AVE}) (the later
may not exist).  The problem (\ref{e:dec-fast-4-Associated}) will be referred to as
{\it $(N,M)$-approximating  associated dual problem}.
It can be shown that
it is, indeed, dual with respect to the semi-infinite LP problem
\begin{equation}\label{e:dec-fast-4-Associated-1}
\min_{\mu\in W_M(z)}\{\int_{U\times Y}[G(u,y,z)+\nabla \zeta^{N,M} (z)^T  g (u,y , z)]\mu(du,dy)= \sigma_{N,M}^*(z),
\end{equation}
the duality relationships including the equality of the optimal values (see Theorem 5.2(ii) in \cite{FinGaiLeb}).
A function $\eta^{N,M}_z(\cdot)\in \mathcal{V}_M$,
\begin{equation}\label{e:DUAL-AVE-0-approx-1-Associate-1}
\eta^{N,M}_z(y)= \sum_{i=1}^M \omega_{z,i}^{N,M} \phi_i(y),
\end{equation}
will be called a solution of the $(N,M)$-approximating  associated dual problem if
\begin{equation}\label{e:DUAL-AVE-0-approx-1-Associate-2}
\sigma_{N,M}^*(z) \leq G(u,y,z)+\nabla \zeta^{N,M} (z)^T  g (u,y , z) + \nabla \eta_z^{N,M} (y)^T  f(u,y,z) \ \ \forall (u,y)\in U\times Y.
\end{equation}
In the next section, we show that solutions of the $(N,M)$-approximating averaged and associated dual problems exist under natural controllability conditions.

\section{Controllability conditions sufficient for the existence of solutions of approximating averaged and associated dual problems}\label{Sec-Existence-Controllability}
In what follows it is assumed that, for any $N=1,2,...,$ and $M=1,2,..., $ the gradients $\nabla \psi_i(z), \ i =1,2,... N,$ and $\nabla \phi_i(y), \ i =1,2,... M, $  are linearly independent on
any open subset of $\R^N $ and, respectively, $\R^M $. That is, if $Q $ is an open subset of $\R^N$, then  the equality
$$
\ \sum_{i=1}^N v_i \nabla \psi_i(z) = 0 \ \ \forall z\in Q
$$
is valid only if $v_i=0, \ i=1,...,N $. Similarly, if $D $ is an open subset of $\R^M$, then the equality
$$
\ \sum_{i=1}^M w_i \nabla \phi_i(y) = 0 \ \forall y\in D
$$
is valid only if $w_i=0, \ i=1,...,M $.

Let $ \mathcal{R}_{z_0}$ stand
for the set of  points that are reachable  by the state components $z(t)$ of
the admissible pairs $(\mu(\cdot),z(\cdot)) $ of the averaged system (\ref{e:intro-0-4}). That is,
\begin{equation}\label{e:HJB-9-reachability-AVE}
\mathcal{R}_{z_0}\BYDEF \{z : z = z(t),  \ \ t\in [0,\infty), \ \ \ (\mu(\cdot),z(\cdot))-{\rm admissible\ in} \ (\ref{e:intro-0-4})\ \}
\end{equation}

The existence of a solution of the approximating averaged dual problem can be guaranteed
under the following controllability type assumption about the averaged system.

\begin{Assumption}\label{Ass-ave-disc-controllability}
The closure of the set $\mathcal{R}_{z_0}$ has a nonempty interior. That is,
\begin{equation}\label{e:HJB-10}
int (cl \mathcal{R}_{z_0}) \neq\emptyset .
\end{equation}
\end{Assumption}

\begin{Proposition}\label{Prop-existence-disc}
If Assumption \ref{Ass-ave-disc-controllability} is satisfied, then a solution of the $(N,M) $-approximating averaged
dual problem exists for any $N$ and $M$.
\end{Proposition}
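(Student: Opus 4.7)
The plan is to reformulate (\ref{e:DUAL-AVE-0-approx-MN}) as a finite-dimensional semi-infinite linear program in the unknowns $(\lambda,\theta)\in\R^N\times\R$, where $\zeta_\lambda(z):=\sum_{i=1}^N \lambda_i\psi_i(z)$, and to show that its supremum is attained. LP duality already gives $\theta\le \tilde G_{di}^{N,M}(z_0)<\infty$, so I would pick a maximizing sequence $(\lambda_k,\theta_k)$ with $\theta_k\to\tilde G_{di}^{N,M}(z_0)$ and argue that $\{\lambda_k\}\subset\R^N$ stays bounded; compactness of $F_M$ together with continuity of the constraint in $(\lambda,\theta)$ then lets me extract a convergent subsequence whose limit is feasible and yields the desired solution $\zeta^{N,M}$.

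Boundedness of $\{\lambda_k\}$ is the heart of the matter, and I would establish it by a recession/homogenization argument. Assume to the contrary that $\|\lambda_k\|\to\infty$ and set $\bar\lambda:=\lim_k \lambda_k/\|\lambda_k\|$ (along a subsequence), so $\|\bar\lambda\|=1$. Dividing the defining inequality
\begin{equation*}
\theta_k\le\tilde G(\mu,z)+\nabla\zeta_{\lambda_k}(z)^T\tilde g(\mu,z)+C(\zeta_{\lambda_k}(z_0)-\zeta_{\lambda_k}(z)),\quad (\mu,z)\in F_M,
\end{equation*}
by $\|\lambda_k\|$ and letting $k\to\infty$ produces the homogenized inequality
\begin{equation*}
C\bar\zeta(z)\le \nabla\bar\zeta(z)^T\tilde g(\mu,z)\quad\forall (\mu,z)\in F_M,
\end{equation*}
with $\bar\zeta:=\zeta_{\bar\lambda}-\zeta_{\bar\lambda}(z_0)$. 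Since $F\subset F_M$, this inequality holds along every admissible pair of the averaged system.

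Next I would exploit the inequality dynamically. Along any admissible pair $(\mu(\cdot),z(\cdot))$ of the averaged system, the chain rule gives $\tfrac{d}{dt}\bar\zeta(z(t))\ge C\bar\zeta(z(t))$, i.e.\ $\tfrac{d}{dt}[e^{-Ct}\bar\zeta(z(t))]\ge 0$. Continuity of $\bar\zeta$ on the compact set $Z$ and $e^{-Ct}\to 0$ force $\bar\zeta(z(0))\le 0$; applied to trajectories starting from an arbitrary $z^*\in Z$ (which exist by the standing viability assumption on the averaged system), this gives $\bar\zeta\le 0$ on $Z$. Conversely, for any $z^*\in\mathcal{R}_{z_0}$ realised as $z(t^*)$ along a trajectory starting at $z_0$, the same monotonicity combined with $\bar\zeta(z_0)=0$ yields $e^{-Ct^*}\bar\zeta(z^*)\ge 0$, hence $\bar\zeta\ge 0$ on $\mathcal{R}_{z_0}$. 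Together these give $\bar\zeta\equiv 0$ on $cl\,\mathcal{R}_{z_0}$.

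Assumption \ref{Ass-ave-disc-controllability} then produces the contradiction: since $cl\,\mathcal{R}_{z_0}$ has nonempty interior, $\sum_{i=1}^N \bar\lambda_i\nabla\psi_i(z)\equiv 0$ on a nonempty open subset of $\R^n$, and the standing linear-independence hypothesis on $\{\nabla\psi_i\}$ forces $\bar\lambda=0$, contradicting $\|\bar\lambda\|=1$. The step I expect to be the main obstacle is precisely this recession analysis, where the two opposite bounds on $\bar\zeta$—one pushed forward along trajectories starting from any $z^*\in Z$, the other pulled from $z_0$ along trajectories reaching $z^*$—must be combined before controllability and linear independence can be used to promote pointwise vanishing to vanishing on an open set and thence to $\bar\lambda=0$.
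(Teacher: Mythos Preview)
Your proposal is correct and follows essentially the same strategy as the paper: both reduce existence to showing that the homogenized constraint $0\le \nabla\psi(z)^T\tilde g(\mu,z)+C(\psi(z_0)-\psi(z))$ on $F_M$ forces the coefficient vector to vanish, via a recession/normalization argument (which the paper packages as a separate lemma). The only minor difference lies in the dynamical step: the paper uses the discounted integral identity $\int_0^\infty e^{-Ct}[\nabla\psi(z(t))^T\tilde g(\mu(t),z(t))+C(\psi(z_0)-\psi(z(t)))]\,dt=0$ along trajectories from $z_0$, so nonnegativity of the integrand forces it to vanish a.e., yielding the ODE $\tfrac{d}{dt}[\psi(z(t))-\psi(z_0)]=C[\psi(z(t))-\psi(z_0)]$ and hence $\psi\equiv\psi(z_0)$ on $\mathcal{R}_{z_0}$ directly. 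Your route---monotonicity of $e^{-Ct}\bar\zeta(z(t))$ giving $\bar\zeta\le 0$ on $Z$ (via trajectories from arbitrary points) and $\bar\zeta\ge 0$ on $\mathcal{R}_{z_0}$ (via trajectories from $z_0$)---reaches the same conclusion but uses viability from every point of $Z$; you could streamline it by noting that along any trajectory from $z_0$ the nondecreasing function $e^{-Ct}\bar\zeta(z(t))$ starts at $0$ and tends to $0$, hence is identically $0$, which already gives $\bar\zeta\equiv 0$ on $\mathcal{R}_{z_0}$ without the separate upper bound.
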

\begin{proof}
The proof is given at the end of this section (its idea being similar to that of the proof of Proposition 3.2 in \cite{GRT}).
\end{proof}

The existence of a solution of the approximating associated dual problem is guaranteed
by the following assumption about  controllability  properties  of the associated system.

\begin{Assumption}\label{Ass-associated-local-controllability}
There exists $\ Y^0(z)\subset Y $ such that the closure of $Y^0(z) $ has a nonempty interior
and
such that any two points in  $Y^0 (z)$ can be connected by an admissible trajectory of the associated system (that is, for
any $ y', y''\in Y^0(z)$ , there exists an admissible pair $(u(\cdot ), y(\cdot))$ of the associated system defined on some interval
$[0, S]$ such that $y(0) = y'$ and $y(S) = y''$).
\end{Assumption}

\begin{Proposition}\label{dual-existence-average}
If Assumption \ref{Ass-associated-local-controllability} is satisfied, then
 a solution  of the
$(N,M)$-approximating  associated dual problem exists.
\end{Proposition}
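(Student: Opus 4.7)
The plan is to extract a solution of the $(N,M)$-approximating associated dual problem (\ref{e:dec-fast-4-Associated}) from a maximizing sequence by establishing a coercivity bound on the coefficient vector $\omega$. First I would record that the semi-infinite primal (\ref{e:dec-fast-4-Associated-1}) is always solvable: $W_M(z)\supset W(z)$ is nonempty (viability of the associated system and the averaging result (\ref{e:intro-0-3-2}) guarantee $W(z)\neq\emptyset$), it is weak$^*$ compact as a closed subset of $\mathcal{P}(U\times Y)$, and the objective is continuous. Hence $\sigma_{N,M}^*(z)\in\mathbb{R}$. By the absence of duality gap stated as Theorem 5.2(ii) in \cite{FinGaiLeb}, the dual has optimal value $\sigma_{N,M}^*(z)$ as well, and the dual is trivially feasible (take $\omega=0$ and $\theta$ small). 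What remains is to show that the supremum in (\ref{e:dec-fast-4-Associated}) is attained.

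For this, pick a maximizing sequence $(\theta^k,\omega^k)$ with $\eta^k(y)=\sum_{i=1}^M \omega_i^k \phi_i(y)$ satisfying the dual constraint, so that $\theta^k\to \sigma_{N,M}^*(z)$. The core step is to show $\{\omega^k\}$ is bounded in $\mathbb{R}^M$. Suppose not; pass to a subsequence with $\alpha_k\BYDEF \|\omega^k\|\to\infty$ and $v^k\BYDEF \omega^k/\alpha_k\to v$, $\|v\|=1$. Dividing the dual constraint by $\alpha_k$ and letting $k\to\infty$ yields
\begin{equation}\label{e:plan-limit-ineq}
0 \leq \sum_{i=1}^M v_i\,\nabla\phi_i(y)^T f(u,y,z)\ \ \ \forall (u,y)\in U\times Y,
\end{equation}
since $\theta^k$ and $G+\nabla\zeta^{N,M}\cdot g$ are bounded on the compact set.

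Now I would invoke Assumption \ref{Ass-associated-local-controllability}. Pick any $y',y''\in Y^0(z)$ and an admissible pair $(u(\cdot),y(\cdot))$ of the associated system (\ref{e:intro-0-3}) on $[0,S]$ with $y(0)=y'$, $y(S)=y''$. Integrating (\ref{e:plan-limit-ineq}) along this trajectory and using $y'(\tau)=f(u(\tau),y(\tau),z)$ with the chain rule gives
\begin{equation}\label{e:plan-telescope}
0 \leq \int_0^S \sum_{i=1}^M v_i\,\nabla\phi_i(y(\tau))^T f(u(\tau),y(\tau),z)\,d\tau = \sum_{i=1}^M v_i\bigl(\phi_i(y'')-\phi_i(y')\bigr).
\end{equation}
Swapping the roles of $y'$ and $y''$ (controllability is symmetric in the sense that both endpoints are arbitrary in $Y^0(z)$) gives the reverse inequality, so $\sum_i v_i\phi_i$ is constant on $Y^0(z)$. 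Consequently $\sum_i v_i\nabla\phi_i(y)=0$ on the (nonempty) interior of $cl\,Y^0(z)$, and the linear-independence hypothesis on $\nabla\phi_1,\ldots,\nabla\phi_M$ stated at the start of Section \ref{Sec-Existence-Controllability} forces $v=0$, contradicting $\|v\|=1$.

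Thus $\{\omega^k\}$ is bounded; a convergent subsequence $\omega^k\to \omega_z^{N,M}$ yields, by passage to the limit in the dual constraint, a feasible $(\sigma_{N,M}^*(z),\omega_z^{N,M})$, which by (\ref{e:DUAL-AVE-0-approx-1-Associate-1})--(\ref{e:DUAL-AVE-0-approx-1-Associate-2}) is a solution of the $(N,M)$-approximating associated dual problem. The main technical obstacle is the telescoping/controllability step (\ref{e:plan-telescope}): one must convert a pointwise nonnegativity statement on $U\times Y$ into a rigidity statement for $\sum_i v_i\phi_i$ on an open set, which is precisely what Assumption \ref{Ass-associated-local-controllability} and the linear-independence assumption on the $\nabla\phi_i$'s are tailored to deliver.
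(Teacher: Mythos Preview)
Your proof is correct and follows essentially the same route as the paper: the paper packages your maximizing-sequence/boundedness argument into a general Lemma (Lemma \ref{Lemma-Important-simple-duality}) and then verifies its hypothesis exactly as you do, by integrating the limiting inequality along admissible trajectories connecting arbitrary points of $Y^0(z)$ and invoking the linear independence of $\nabla\phi_i$ on an open set. The only cosmetic difference is that you carry out the compactness-of-$\{\omega^k\}$ argument inline rather than citing the lemma.
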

\begin{proof}
The proof is at the end of this section. Its idea is similar to that used in the proof of Proposition 8(ii) in \cite{GR}.
\end{proof}

The proofs of Propositions \ref{Prop-existence-disc} and \ref{dual-existence-average} are based on the following lemma

\begin{Lemma}\label{Lemma-Important-simple-duality}
Let $X$ be a compact metric space and let $\Psi_i(\cdot): X\rightarrow \R^1, \ i=0,1,...,K, $ be continuous functional on $X $. Let
\begin{equation}\label{e:Important-simple-duality-Lemma-1}
\sigma^*\BYDEF \sup_{\{\lambda_i\}}\{\theta \ : \ \theta \leq \Psi_0(x) + \sum_{i=1}^{K}\lambda_i \Psi_i(x) \ \forall x\in X\} ,
\end{equation}
where $sup$ is sought over $\lambda\BYDEF \{\lambda_i\}\in \R^K $. A solution of the problem (\ref{e:Important-simple-duality-Lemma-1}), that is
$\lambda^*\BYDEF \{\lambda_i^*\}\in \R^K $ such that
\begin{equation}\label{e:Important-simple-duality-Lemma-2}
\sigma^* \leq \Psi_0(x) + \sum_{i=1}^{K}\lambda_i^* \Psi_i(x) \ \ \forall x\in X
\end{equation}
exists if the inequality
\begin{equation}\label{e:Important-simple-duality-Lemma-3}
0 \leq  \sum_{i=1}^{K}v_i \Psi_i(x) \ \forall x\in X
\end{equation}
is valid only with  $v_i = 0 , \ i=1,...,K $.
\end{Lemma}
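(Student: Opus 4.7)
The plan is to frame the dual as maximization of a linear functional over a closed convex subset of $\mathbb{R}^{K+1}$ and establish existence of the optimum by combining compactness with a scaling/normalization argument driven by the nondegeneracy hypothesis (\ref{e:Important-simple-duality-Lemma-3}). Concretely, I would introduce the feasible set
$$
E \BYDEF \{(\theta, \lambda)\in\mathbb{R}^{K+1} \ : \ \theta \leq \Psi_0(x) + \sum_{i=1}^{K}\lambda_i \Psi_i(x) \ \forall x\in X\},
$$
which is closed and convex because $X$ is compact and the $\Psi_i$ are continuous, so each linear inequality indexed by $x$ is a closed half-space. The goal is to find $(\sigma^*,\lambda^*)\in E$.

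Next I would verify feasibility and finiteness of $\sigma^*$. Taking $\lambda = 0$ yields $(\min_{x\in X}\Psi_0(x), 0)\in E$, so $\sigma^* \geq \min_{X}\Psi_0 > -\infty$, and in particular a maximizing sequence $(\theta_n,\lambda_n)\in E$ with $\theta_n\to\sigma^*$ exists (a priori $\sigma^*$ could be $+\infty$). The decisive step is to show that $\{\lambda_n\}$ is bounded. Suppose, for contradiction, that $\|\lambda_n\|\to\infty$ along a subsequence. Setting $v_n \BYDEF \lambda_n/\|\lambda_n\|$ and extracting a further subsequence, one may assume $v_n \to v$ with $\|v\|=1$. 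Dividing the defining inequality of $E$ by $\|\lambda_n\|$ gives
$$
\frac{\theta_n}{\|\lambda_n\|} \leq \frac{\Psi_0(x)}{\|\lambda_n\|} + \sum_{i=1}^{K} v_{n,i}\Psi_i(x) \qquad \forall x\in X.
$$
Since $\Psi_0$ is bounded on the compact $X$, the right-hand side tends to $\sum_i v_i\Psi_i(x)$; since $\theta_n\geq \min_X \Psi_0 -1$ eventually, the left-hand side tends to a nonnegative limit $\alpha$. Passing to the limit yields $0 \leq \alpha \leq \sum_{i=1}^{K}v_i \Psi_i(x)$ for every $x\in X$. By hypothesis (\ref{e:Important-simple-duality-Lemma-3}), this forces $v = 0$, contradicting $\|v\|=1$. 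Hence $\{\lambda_n\}$ is bounded.

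In particular, $\sigma^* < +\infty$: otherwise $\theta_n\to+\infty$ together with bounded $\|\lambda_n\|$ would contradict $\theta_n \leq \Psi_0(x) + \sum \lambda_{n,i}\Psi_i(x) \leq \mathrm{const}$. With $\{\lambda_n\}$ bounded, extract a convergent subsequence $\lambda_n\to\lambda^*$; by closedness of $E$, the limit point $(\sigma^*,\lambda^*)$ lies in $E$, which is precisely (\ref{e:Important-simple-duality-Lemma-2}). The main obstacle, and the only non-routine step, is the coercivity/boundedness of maximizing sequences in $\lambda$, which is exactly what the nondegeneracy assumption buys us through the homogeneous scaling argument — it plays the role of a Slater-type condition preventing the dual optimum from escaping to infinity in the $\lambda$ directions.
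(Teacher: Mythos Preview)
Your proof is correct and follows essentially the same approach as the paper: take a maximizing sequence, show the $\lambda$-components are bounded via the scaling/normalization argument (divide by $\|\lambda_n\|$, extract a unit limit $v$, and contradict the nondegeneracy hypothesis), then extract a convergent subsequence and pass to the limit. The only cosmetic differences are that the paper first dispatches finiteness of $\sigma^*$ directly from the hypothesis before taking the maximizing sequence (whereas you deduce it afterward from the boundedness of $\lambda_n$), and that you package the feasible region as the closed set $E$; neither changes the substance.
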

\begin{proof}
Assume that the inequality (\ref{e:Important-simple-duality-Lemma-3}) implies that $v_i = 0 , \ i=1,...,K $.
Note that from this assumption it immediately follows that $\sigma^*$ is bounded (since, otherwise, (\ref{e:Important-simple-duality-Lemma-1})
would imply that there exist $\{\lambda_i\}$ such that $\ \sum_{i=1}^{K}\lambda_i \Psi_i(x) > 0\  \forall x\in X  $).
For any $k=1,2,...$, let $\lambda^k = (\lambda^k_i) \in \R^K$
 be such that
\begin{equation}\label{e-new-proof-3-1-2}
\sigma^*- \frac{1}{k} \leq \Psi_0(x) + \sum_{i=1}^{K}\lambda_i^k \Psi_i(x) \ \ \forall x\in X
\end{equation}
 Show that the sequence $\lambda^k \ , k=1,2,... ,\  $ is  bounded. That is, there exists $\alpha > 0$ such that
\begin{equation}\label{e:Th4.3-proof-2}
||\lambda^k|| \leq \alpha \ , \ \ \ \ \ \ k=1,2,... \ .
\end{equation}
Assume that the sequence $\lambda^k \ , k=1,2,... ,\  $  is not bounded. Then there  exists a
subsequence $\lambda^{k'}$ such that
\begin{equation}\label{e:Th4.3-proof-3}
\lim_{k'\rightarrow\infty}||\lambda^{k'}|| = \infty \ , \ \ \ \ \ \
\lim_{k'\rightarrow\infty}\frac{\lambda^{k'} }{||\lambda^{k'}||}\BYDEF
v, \ \ \ \ \ \ ||v ||=1 \ .
\end{equation}
Dividing (\ref{e-new-proof-3-1-2}) by $||\lambda^k||$ and passing to the
limit along the subsequence $\{k'\}$, one can obtain  that
$$
0 \leq  \sum_{i=1}^{K}v_i^k \Psi_i(x) \ \ \forall x\in X,
$$
which,
by our assumption, implies that
$v=(v_i)=0$. The latter contradicts  (\ref{e:Th4.3-proof-3}). Thus, the validity of
(\ref{e:Th4.3-proof-2}) is established.
Due to (\ref{e:Th4.3-proof-2}), there exists a subsequence
$\{k'\}$ such that there exists a limit
\begin{equation}\label{e:Th4.3-proof-5}
\lim_{k'\rightarrow\infty}\lambda^{k'} \BYDEF \lambda^* \ .
\end{equation}
Passing  to the limit in
(\ref{e-new-proof-3-1-2}) along  this subsequence, one proves (\ref{e:Important-simple-duality-Lemma-2}).

\end{proof}

{\it Proof of Proposition \ref{Prop-existence-disc}}.
By Lemma \ref{Lemma-Important-simple-duality}, to prove the proposition, it is sufficient to show that,
under Assumption \ref{Ass-ave-disc-controllability}, the inequality
\begin{equation}\label{e:existence-NM-average-dual-1}
0\leq \sum_{i=1}^N v_i [\nabla\psi_i(z)^T\tilde g(\mu,z) +C (\psi_i(z_0) - \psi_i(z))] \ \ \forall (\mu, z)\in F_M
\end{equation}
can be valid only with $\ v_i=0, \ i=1,...,N $. Let us assume that (\ref{e:existence-NM-average-dual-1}) is valid
and let us rewrite it in the form
\begin{equation}\label{e:existence-NM-average-dual-1-1}
0\leq \nabla\psi(z)^T\tilde g(\mu,z) +C (\psi(z_0) - \psi(z)) \ \ \forall (\mu, z)\in F_M , \ \ \ \ \ \ {\rm where} \ \ \ \ \ \  \psi(z)\BYDEF \sum_{i=1}^N v_i \psi_i(z).
\end{equation}
It can be readily verified (using integration by part) that, for
an arbitrary admissible pair $(\mu(\cdot), z(\cdot)) $ of the averaged system,
\begin{equation}\label{e:existence-NM-average-dual-2}
\int_0^{\infty} e^{-Ct}[\nabla\psi(z(t))^T\tilde g(\mu(t),z(t)) +C (\psi(z_0) - \psi(z(t))]dt = 0.
\end{equation}
Also, by the definition of an admissible pair of the averaged system  (see Definition \ref{Def-adm-averaged}) and by (\ref{e:graph-w-M-103-0}),
$$\ (\mu(t),z(t))\in F\subset F_M \ \ \ \ {\rm a.e.} \ \ \ \  t\in [0,\infty). $$
Hence, from (\ref{e:existence-NM-average-dual-1-1}) it
follows that
$$
0\leq \nabla\psi(z(t))^T\tilde g(\mu(t),z(t)) +C (\psi(z_0) - \psi(z(t))  \ \ \ \ {\rm a.e.} \ \ \ \  t\in [0,\infty).
$$
The latter and (\ref{e:existence-NM-average-dual-2}) imply that
$$
0= \nabla\psi(z(t))^T\tilde g(\mu(t),z(t)) +C (\psi(z_0) - \psi(z(t))  \ \ \ \ {\rm a.e.} \ \ \ \  t\in [0,\infty),
$$
$$
\Rightarrow \ \ \ \ \ \frac{d}{dt}[ \psi(z(t)) -  \psi(z_0)]= C [ \psi(z(t)) - \psi(z_0)]
\ \ \ \ \ \ \Rightarrow \ \ \ \ \ \  \psi(z(t))=\psi(z_0) \ \ \forall \ t\in [0,\infty).
$$
Consequently, by the definition of $\mathcal{R}_{z_0} $ (see (\ref{e:HJB-10}))
$$
\psi(z)=\psi(z_0)  \ \ \forall z\in \mathcal{R}_{z_0}\ \ \ \ \ \ \Rightarrow \ \ \ \ \ \
\psi(z)=\psi(z_0) \ \ \ \forall z\in cl\mathcal{R}_{z_0}
$$
The latter implies that
$$
\nabla \psi(z) = \sum_{i=1}^N v_i \nabla \psi_i(z) = 0  \ \ \ \ \forall z\in int(cl\mathcal{R}_{z_0}),
$$
which, in turn, implies that $v_i=0, \ i=1,...,N $ (due to  linear independence of $\nabla \psi_i(\cdot)$).
\qquad
\endproof

{\it Proof of Proposition \ref{dual-existence-average}}.
By Lemma \ref{Lemma-Important-simple-duality}, to prove the proposition, it is sufficient to show that,
under Assumption \ref{Ass-associated-local-controllability}, the inequality
\begin{equation}\label{e:existence-NM-associated-dual-1}
0\leq \sum_{i=1}^M v_i [\nabla\phi_i(y)^Tf(u,y,z)]  \ \ \forall (u,y)\in U\times Y
\end{equation}
can be valid only with $\ v_i=0, \ i=1,...,M $ (remind that $z = {\rm constant}$). Let us assume that (\ref{e:existence-NM-associated-dual-1}) is valid
and let us rewrite it in the form
\begin{equation}\label{e:existence-NM-associated-dual-1-1}
0\leq \nabla\phi(y)^T f(u,y,z)  \ \ \forall (u,y)\in U\times Y , \ \ \ \ \ \ {\rm where} \ \ \ \ \ \  \phi(y)\BYDEF \sum_{i=1}^M v_i \phi_i(y).
\end{equation}
Let $ y', y''\in Y^0(z)$  and let an admissible pair $(u(\cdot ), y(\cdot))$ of the associated system be such that
 $y(0) = y'$ and $y(S) = y''$ for some $S>0$. Then, by (\ref{e:existence-NM-associated-dual-1-1}),
 $$
 \phi(y'') - \phi(y')= \int_0^S f(u(\tau),y(\tau),z)d\tau \geq 0 \ \ \ \ \ \Rightarrow \ \ \ \ \ \phi(y'') \geq \phi(y').
 $$
Since $ y', y''$ can be arbitrary points in $Y^0(z) $, it follows that
$$
\phi(y) = {\rm const} \ \ \forall y\in Y^0(z) \ \ \ \ \ \Rightarrow \ \ \ \ \ \phi(y) = {\rm const} \ \ \forall y\in cl Y^0(z).
$$
The latter implies that
$$
\nabla \phi(y) = \sum_{i=1}^M v_i \nabla \phi_i(y) = 0  \ \ \ \ \forall y\in int(clY^0(z)),
$$
which, in turn, implies that $v_i= 0, \ i=1,...,M $  (due to linear independence of $\nabla \phi_i(\cdot) $).
\qquad
\endproof

\section{Construction of near optimal ACG families}\label{Sec-ACG-construction}
 Let us assume that, for any $N$ and $ M$, a solution $\zeta^{N,M} (z) $ of the $(N,M)$-approximating averaged dual problem exists  and a solution
  $ \eta^{N,M}_z (y)$ of the
 $(N,M)$-approximating associated problem exists for any $z\in Z$ (as follows from Propositions \ref{Prop-existence-disc} and \ref{dual-existence-average}
 these  exist if Assumptions \ref{Ass-ave-disc-controllability}
  and \ref{Ass-associated-local-controllability} are satisfied)

 Define a control $u^{N,M}(y,z) $  as an optimal solution of the problem
\begin{equation}\label{e-NM-minimizer-0}
\min_{u\in U}\{G(u,y,z)+\nabla \zeta^{N,M} (z)^T  g (u,y , z) + \nabla \eta^{N,M}_z (y)^T  f(u,y,z)\}.
\end{equation}
That is,
\begin{equation}\label{e-NM-minimizer-1}
u^{N,M}(y,z)=argmin_{u\in U}\{G(u,y,z)+\nabla \zeta^{N,M} (z)^T  g (u,y , z) + \nabla \eta^{N,M}_z (y)^T  f(u,y,z)\}
\end{equation}
Assume that the system
\begin{equation}\label{e:opt-cond-AVE-1-fast-100-2-NM}
y'_z(\tau) = f(u^{N,M}(y_z(\tau),z),y_z(\tau), z), \ \ \ \ \ y_z(0) = y \in Y,
\end{equation}
has a unique  solution $y_z^{N,M}(\tau)\in Y$. Below, we  introduce
assumptions under which it will be established that
 $(u_z^{N,M}(\cdot),y_z^{N,M}(\cdot)) $, where  $u_z^{N,M}(\tau)  \BYDEF u_z^{N,M}(y_z^{N,M}(\tau),z) $, is
a near optimal ACG family (see Theorem \ref{Main-SP-Nemeric}).

\begin{Assumption}\label{SET-1} The following conditions are satisfied:

(i) The optimal solution $p^*$ of the  IDLP problem (\ref{e-ave-LP-opt-di}) is unique, and  the equality (\ref{C-result-di-ave}) is valid.

(ii)  The optimal solution of the averaged problem (\ref{Vy-ave-opt}) (that is, an admissible pair $(\mu^*(\cdot), z^*(\cdot))$ that delivers minimum in (\ref{Vy-ave-opt})) exists,  the optimal control $\mu^*(\cdot) $ is piecewise continuous   and, at every discontinuity point,
$\mu^*(\cdot) $ is either continuous from the left or it is continuous from the right.

(iii) For almost all $t\in [0,\infty) $, there exists
an  admissible pair $(u_t^{*}(\tau),y_t^{*}(\tau)) $ of the associated system (considered with $z=z^*(t)$) such that
 $\mu^*(t) $ is the occupational measure generated by this pair on the interval $[0,\infty) $. That is,  for any
continuous $q(u,y) $,
\begin{equation}\label{e-opt-OM-1-0-NM-star}
\lim_{S\rightarrow\infty} S^{-1}\int_0^S q(u_t^{*}(\tau),y_t^{*}(\tau))d\tau = \int_{U\times Y}q(u,y)\mu^*(t)(du,dy).
 \end{equation}
 Also, for almost all $\tau\in [0,\infty) $
 and for any $r>0$, the $\mu^*(t) $-measure
of the set
$$
\ B_r(u_t^{*}(\tau),y_t^{*}(\tau))\BYDEF \{(u,y)\ : \ ||u-u_t^{*}(\tau)|| + ||y - y_t^{*}(\tau)||< r\}
$$
 is not zero. That is,
\begin{equation}\label{e:convergence-important-7}
\mu^*(t)(B_r(u_t^{*}(\tau),y_t^{*}(\tau))) > 0\ \ \ \ \forall \ r > 0.
\end{equation}

(IV) The pair $(u_z^{N,M}(\tau),y_z^{N,M}(\tau)) $, where $y_z^{N,M}(\tau)$ is the solution of (\ref{e:opt-cond-AVE-1-fast-100-2-NM})
and $u_z^{N,M}(\tau) \\ = u_z^{N,M}(y_z^{N,M}(\tau),z) $,
 generates  the occupational measure
$\mu^{N,M}(du,dy|z)$ on the interval $[0,\infty) $, the latter being independent of the initial conditions $y_z(0)=y$ for $y$ in a neighbourhood of $y_t^{*}(\cdot) $. Also, for any
continuous $q(u,y) $, the integral
$\int_{U\times Y}q(u,y)\mu^{N,M}(du,dy|z) $ is a measurable function of $z$ and
\begin{equation}\label{e-opt-OM-1-0-NM}
 | S^{-1}\int_0^S q(u_{z}^{N,M}(\tau),y_{z}^{N,M}(\tau))d\tau - \int_{U\times Y}q(u,y)\mu^{N,M}(du,dy|z))|\leq \phi_q(S)\ \ \forall z\in Z, \ \  \ \ \ \ \ \ \
 \lim_{S\rightarrow\infty}\phi_q(S).
\end{equation}

\end{Assumption}

Note that, as can be readily verified, both (\ref{e-opt-OM-1-0-NM-star}) and (\ref{e:convergence-important-7}) are satisfied if 
$(u_t^{*}(\cdot), y_t^{*}(\cdot)) $ is periodic,
$$
(u_t^{*}(\tau), y_t^{*}(\tau)) = (u_t^{*}(\tau + T_t), y_t^{*}(\tau + T_t)),
$$ 
with the period $T_t > 0 $ being uniformly bounded on $[0,\infty) $ and if $u_t^{*}(\cdot)$ is
 piecewise continuous on $[0,T_t] $.

To state our next assumption, let us re-denote the occupational measure $\mu^{N,M}(du,dy|z)$ (introduced in Assumption \ref{SET-1}(IV) above)
as $\mu^{N,M}(z) $ (that is, $\ \mu^{N,M}(du,dy|z)=\mu^{N,M}(z) $).

\begin{Assumption}\label{SET-2} For almost all $t\in [0,\infty)$, there exists
  an open ball  $Q_t\subset \mathbb{R}^n $ centered at $z^*(t) $ such that:

(i) The occupational measure $\mu^{N,M}(z) $   is continuous on  $ Q_t$.
Namely, for any $z', z''\in  Q_t$,
\begin{equation}\label{e:HJB-1-17-1-per-cont-mu-NM-g-1}
  \rho(\mu^{N,M}(z'), \mu^{N,M}(z''))\leq \kappa(||z'-z''||),
  \end{equation}
   where $\kappa(\theta) $ is a function tending to zero when $\theta $ tends to zero ($\lim_{\theta\rightarrow 0}\kappa   (\theta)=0$). Also,
 for any $z', z''\in  Q_t$,
       \begin{equation}\label{e:HJB-1-17-1-per-Lip-g}
||\int g(u,y,z')\mu^{N,M}(z')(du,dy) -\int g(u,y,z'')\mu^{N,M}(z'')(du,dy) ||\leq L ||z'-z''||,
\end{equation}
where $L$ is a constant.

(ii) The system
     \begin{equation}\label{e-opt-OM-1-MN}
z'(t)=\tilde{g}(\mu^{N,M}(z(t)), z(t)) \ , \ \ \  \  z(0) = z_0 .
\end{equation}
has a unique solution $z^{N,M}(t)\in Z$  and, for any $t > 0 $,
 \begin{equation}\label{e:HJB-1-17-1-N}
\lim_{N\rightarrow\infty}\limsup_{M\rightarrow\infty }meas \{A_t(N,M)\}=0.
\end{equation}
 where
 \begin{equation}\label{e:intro-0-4-N-A-t}
 A_t(N,M)\BYDEF \{t'\in [0,t] \ : \ z^{N,M}(t')\notin Q_{t'}\}.
 \end{equation}
and $meas\{\cdot\} $ stands for the Lebesgue measure of the corresponding set.
\end{Assumption}

In addition  to Assumptions \ref{SET-1} and \ref{SET-2}, let us also introduce

\begin{Assumption}\label{SET-3}
For each $t\in [0,\infty)$ such that
  $Q_t\neq \emptyset $, the following conditions are satisfied:

(i) For almost all $\tau\in [0,\infty ) $, there exists an open ball $B_{t,\tau}\subset \mathbb{R}^m $ centered at $y_t^{*}(\tau) $
such  $u^{N,M}(y,z)  $ is uniquely defined (the problem (\ref{e-NM-minimizer-0}) has a unique solution) for $(y,z)\in B_{t,\tau}\times Q_t  $.

(ii) The function $u^{N,M}(y,z)  $
 satisfies Lipschitz conditions on $B_{t,\tau}\times Q_t$.  That is,
 \begin{equation}\label{e:HJB-1-17-0-per-Lip-u-NM}
||u^{N,M}(y',z')- u^{N,M}(y'',z'')||\leq L ( ||y'-y''|| + ||z'-z''||) \ \ \ \ \ \  \forall (y',z'), (y'',z'')\in B_{t,\tau}\times Q_t,
\end{equation}
where $L$ is a constant.

(iii) Let  $ y^{N,M}_{t} (\tau)\BYDEF y^{N,M}_{z^*(t)} (\tau)$ be the solution of the system  (\ref{e:opt-cond-AVE-1-fast-100-2-NM})
considered with $z=z^*(t) $ and with the initial condition
$y_z(0)=y_t^*(0) $. We assume that, for any $\tau >0 $,
\begin{equation}\label{e:HJB-1-17-1-N-z-const}
\lim_{N\rightarrow\infty}\limsup_{M\rightarrow\infty }meas \{P_{t,\tau}(N,M)\}=0,
\end{equation}
where
\begin{equation}\label{e:HJB-1-17-1-N-z-const-def}
P_{t,\tau}(N,M)\BYDEF \{\tau'\in [0,\tau] \ : \  y^{N,M}_{t}(\tau')\notin B_{t, \tau'}\}.
\end{equation}

\end{Assumption}

\begin{Theorem}\label{Main-SP-Nemeric}
Let Assumptions \ref{SET-1}, \ref{SET-2} and \ref{SET-3} be valid.
Then the family $(u^{N,M}_z(\cdot),  y^{N,M}_z(\cdot)) $ $($with $\ y^{N,M}_z(\tau)$ being the solution of (\ref{e:opt-cond-AVE-1-fast-100-2-NM})
and  $\ u_z^{N,M}(\tau) = u_z^{N,M}(y_z^{N,M}(\tau),z)\ )$ is
a $\beta(N,M) $-
near optimal ACG family, where
\begin{equation}\label{e:HJB-1-17-1-N-z-const-def-near-opt}
\lim_{N\rightarrow\infty}\limsup_{M\rightarrow\infty}\beta(N,M) = 0.
\end{equation}
\end{Theorem}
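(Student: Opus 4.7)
My plan is to exploit the fact that, by the very construction of $u^{N,M}(y,z)$ as the Hamiltonian minimizer, both approximating dual inequalities collapse to equalities along the generated trajectories; combining this with the averaged dual inequality and the convergence $\tilde G^{N,M}_{di}(z_0) \to \tilde G^*_{di}(z_0) = C\tilde V^*_{di}(z_0)$ furnished by Proposition \ref{Prop-LM-convergence} will pin the ACG value between two quantities both tending to $C\tilde V^*_{di}(z_0)$.

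Step 1 (fast averaging). Since $u^{N,M}(y,z)$ attains the min in (\ref{e-NM-minimizer-0}), the inequality (\ref{e:DUAL-AVE-0-approx-1-Associate-2}) becomes an equality at $(u^{N,M}_z(\tau), y^{N,M}_z(\tau), z)$ for every $\tau$. Integrating in $\tau$ over $[0,S]$, dividing by $S$ and sending $S\to\infty$ via (\ref{e-opt-OM-1-0-NM}), the contribution of $\nabla\eta^{N,M}_z(y^{N,M}_z(\tau))^T f$ telescopes to $(\eta^{N,M}_z(y^{N,M}_z(S))-\eta^{N,M}_z(y^{N,M}_z(0)))/S$, which vanishes by compactness of $Y$, yielding
$$\sigma^*_{N,M}(z) = \tilde G(\mu^{N,M}(z),z) + \nabla\zeta^{N,M}(z)^T\tilde g(\mu^{N,M}(z),z),$$
so the fast occupational measure $\mu^{N,M}(z)\in W(z)$ realises the inner Hamiltonian $\sigma^*_{N,M}(z)$.

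Step 2 (slow integration and lower bound). Substituting this identity into the ACG value and integrating the Lagrangian term $\nabla\zeta^{N,M}(z^{N,M}(t))^T\tilde g = \tfrac{d}{dt}\zeta^{N,M}(z^{N,M}(t))$ by parts against $Ce^{-Ct}$ produces, after the boundary term and the constant piece cancel, the exact identity
$$\int_0^\infty Ce^{-Ct}\tilde G(\mu^{N,M}(z^{N,M}(t)),z^{N,M}(t))\,dt = \int_0^\infty Ce^{-Ct}\bigl[\sigma^*_{N,M}(z^{N,M}(t)) + C(\zeta^{N,M}(z_0) - \zeta^{N,M}(z^{N,M}(t)))\bigr]dt.$$
The approximating averaged dual inequality (\ref{e:DUAL-AVE-0-approx-2-1}) applied at $(\mu^{N,M}(z),z)\in F_M$ (legitimate because $W(z)\subset W_M(z)$) gives a pointwise lower bound of $\tilde G^{N,M}_{di}(z_0)$ on the integrand, so the ACG value is at least $\tilde G^{N,M}_{di}(z_0)/C$, which by Proposition \ref{Prop-LM-convergence} tends to $\tilde V^*_{di}(z_0)$.

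Step 3 (matching upper bound and main obstacle). For the reverse inequality I would first note that evaluating the averaged dual inequality at the truly optimal pair $(\mu^*(t),z^*(t))$ supplied by Assumption \ref{SET-1}(ii) and integrating with the analogous telescoping along $z^*(\cdot)$ yields $\tilde G^{N,M}_{di}(z_0)\leq C\tilde V^*_{di}(z_0)$. Pulling this back to the ACG integrand requires $\sigma^*_{N,M}(z^{N,M}(t)) + C(\zeta^{N,M}(z_0)-\zeta^{N,M}(z^{N,M}(t)))$ to be close to $\tilde G^{N,M}_{di}(z_0)$ for most $t$; this is where Assumptions \ref{SET-2}--\ref{SET-3} enter. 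The measure-vanishing conditions (\ref{e:HJB-1-17-1-N}) and (\ref{e:HJB-1-17-1-N-z-const}) control the bad-time sets $A_t(N,M)$ and $P_{t,\tau}(N,M)$, while on the complementary sets the continuity (\ref{e:HJB-1-17-1-per-cont-mu-NM-g-1}), the Lipschitz bound (\ref{e:HJB-1-17-1-per-Lip-g}), the support condition (\ref{e:convergence-important-7}) and the uniqueness of $p^*$ (Assumption \ref{SET-1}(i)) together permit comparing $\sigma^*_{N,M}(z^{N,M}(t))$ with the corresponding value along the optimal trajectory up to a vanishing error. Truncating the time horizon via $\int_T^\infty Ce^{-Ct}dt \leq e^{-CT}$ and the boundedness of $G$ makes the tail uniform, and the iterated limit $\lim_N\limsup_M$ then delivers $\beta(N,M)$-near optimality in the sense of Definition \ref{Def-ACG-opt}. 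The real difficulty lies in the bookkeeping of this step: the slow-scale bad set $A_t(N,M)$ and the fast-scale bad set $P_{t,\tau}(N,M)$ vanish only in that iterated order, so one must arrange the fast and slow error estimates in the correct nested fashion so that they do not compound and spoil the limit.
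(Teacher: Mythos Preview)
Your Step~2 integration-by-parts identity is correct and the lower bound $C\cdot(\text{ACG value})\ge \tilde G^{N,M}_{di}(z_0)$ follows directly from (\ref{e:DUAL-AVE-0-approx-2-1}) applied at $(\mu^{N,M}(z^{N,M}(t)),z^{N,M}(t))\in F\subset F_M$; in fact this bound does not even need Step~1. But that lower bound is vacuous for near-optimality: since $(\mu^{N,M}(\cdot),z^{N,M}(\cdot))$ is an admissible pair of the averaged system, its value is automatically $\ge \tilde V^*_{di}(z_0)\ge \tilde G^{N,M}_{di}(z_0)/C$. Only the \emph{upper} bound carries content.

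There is a genuine gap in Step~1. Your claim that (\ref{e:DUAL-AVE-0-approx-1-Associate-2}) becomes an equality at $(u^{N,M}_z(\tau),y^{N,M}_z(\tau),z)$ for every $\tau$ is false in general: $u^{N,M}(y,z)$ minimises over $u$ for \emph{fixed} $y$, but $\sigma^*_{N,M}(z)$ is the minimum over $(u,y)$ jointly, and there is no reason the running state $y^{N,M}_z(\tau)$ should be a $y$-minimiser (it certainly is not at $\tau=0$ for a generic initial condition). After telescoping you therefore obtain only $\tilde G(\mu^{N,M}(z),z)+\nabla\zeta^{N,M}(z)^T\tilde g(\mu^{N,M}(z),z)\ge \sigma^*_{N,M}(z)$, not equality. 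Equality would amount to the verification statement that the feedback $u^{N,M}(\cdot,z)$ generates an \emph{optimal} occupational measure for the associated LP (\ref{e:dec-fast-4-Associated-1}); this would hold if $\eta^{N,M}_z$ solved the associated HJB equation, but $\eta^{N,M}_z\in\mathcal{V}_M$ is only an approximating dual solution satisfying the subsolution inequality. Without that equality, your Step~3 identity for the ACG value becomes an inequality in the wrong direction and gives no upper bound.

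The paper's route is quite different and avoids this issue entirely. Rather than a duality/verification argument, it proves \emph{trajectory convergence}: using the Dirac-mass structure of optimal solutions $p^{N,M}$ of (\ref{e-ave-LP-opt-di-MN}) (Lemmas~\ref{opt-1}--\ref{opt-2}), uniqueness of $p^*$, and the support condition (\ref{e:convergence-important-7}), it shows (Lemma~\ref{convergence-important}) that near every point $(u^*_t(\tau),y^*_t(\tau),z^*(t))$ of the optimal trajectory there is a concentration point of $p^{N,M}$, at which $u^{N,M}(\cdot,\cdot)$ takes the value $u^{N,M,k}_j$ by (\ref{e-NM-minimizer-proof-5}). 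Lipschitz continuity on the good sets then propagates this to $u^{N,M}(y^*_t(\tau),z^*(t))\to u^*_t(\tau)$ (Lemma~\ref{fast-convergence}), whence $\mu^{N,M}(z^*(t))\to\mu^*(t)$ and, via a Gronwall argument controlled by $meas\{A_t(N,M)\}$, $z^{N,M}(t)\to z^*(t)$. Value convergence then follows from continuity of $\tilde G$. Your Step~3 gestures at exactly these ingredients, but what you call ``bookkeeping'' is in fact the entire substance of the proof; the dual identities of Steps~1--2, even if Step~1 were repaired, would not shortcut it.
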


\begin{proof} The proof is given in Section \ref{Sec-Main-Main}. It is based on Lemma \ref{fast-convergence} stated at the end
of this section.    Note  that in
the process of the proof of the theorem it is established that
\begin{equation}\label{e:HJB-1-19-2-NM}
\lim_{N\rightarrow\infty}\limsup_{M\rightarrow\infty}\max_{t'\in [0,t]} ||z^{N,M}(t')- z^*(t')|| = 0 \ \ \ \forall t\in [0,\infty),
\end{equation}
where $z^{N,M}(\cdot) $ is the solution of (\ref{e-opt-OM-1-MN}).
Also, it is shown that
  \begin{equation}\label{e:HJB-1-19-1-NM}\  \lim_{N\rightarrow\infty}\limsup_{M\rightarrow\infty}\rho (\mu^{N,M}(z^{N,M}(t)),\mu^*(t)) = 0   \end{equation}
  for almost all  $t\in [0,\infty) $,
and
 \begin{equation}\label{e:HJB-19-NM}
\lim_{N\rightarrow\infty}\limsup_{M\rightarrow\infty}|\tilde V_{di}^{N,M}(z_0)- \tilde V^*_{di}(z_0)| = 0,
\end{equation}
where
\begin{equation}\label{e:HJB-16-NM}
\tilde V_{di}^{N,M}(z_0) \BYDEF \int_0^{ + \infty }e^{-Ct}\tilde G(\mu^{N,M}(z^{N,M}(t)),
z^{N,M}(t))dt .
\end{equation}
The relationship  (\ref{e:HJB-19-NM}) implies the statement of the theorem
 with
$$
\beta(N,M)\BYDEF | \tilde V^*_{di}(z_0) - V_{di}^{N,M}(z_0)|
$$
(see Definition \ref{Def-ACG-opt}).
 \end{proof}

\begin{Lemma}\label{fast-convergence}
Let the assumptions of Theorem \ref{Main-SP-Nemeric} be satisfied and
let $t\in [0,\infty) $ be such that $Q_t\neq\emptyset $. Then
\begin{equation}\label{e:HJB-1-17-1-N-z-const-fast-1}
\lim_{N\rightarrow\infty}\limsup_{M\rightarrow\infty }\max_{\tau'\in [0,\tau]}| y^{N,M}_{t} (\tau') - y_t^{*}(\tau') ||=0
\ \ \ \forall \tau \in [0,\infty)
\end{equation}
Also,
\begin{equation}\label{e:HJB-1-17-1-N-z-const-fast-2}
\lim_{N\rightarrow\infty}\limsup_{M\rightarrow\infty }||u^{N,M}(y^{N,M}_{t} (\tau), z^*(t) ) - u_t^{*}(\tau) ||=0
\end{equation}
for almost all $\tau\in [0,\infty) $.
\end{Lemma}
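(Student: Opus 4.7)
The plan is to establish \eqref{e:HJB-1-17-1-N-z-const-fast-1} first via a Gronwall-type argument on the fast subsystem at the frozen slow variable $z=z^*(t)$, and then deduce \eqref{e:HJB-1-17-1-N-z-const-fast-2} almost immediately from \eqref{e:HJB-1-17-1-N-z-const-fast-1} and the Lipschitz continuity of $u^{N,M}$ in Assumption \ref{SET-3}(ii). Throughout we work in the associated system with $z=z^*(t)$ fixed, so that both $y^{N,M}_t(\cdot)$ and $y^*_t(\cdot)$ are driven by $f(\cdot,\cdot,z^*(t))$ from the same initial value $y^*_t(0)$.

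For the trajectory estimate, I would subtract the two ODEs and use the Lipschitz continuity of $f$ in $y$ and $u$ to write
\begin{equation*}
\|y^{N,M}_t(\tau)-y^*_t(\tau)\|\le L\!\!\int_0^{\tau}\!\!\|y^{N,M}_t(s)-y^*_t(s)\|\,ds + L\!\!\int_0^{\tau}\!\!\|u^{N,M}(y^{N,M}_t(s),z^*(t))-u^*_t(s)\|\,ds.
\end{equation*}
On the complement of the set $P_{t,\tau}(N,M)$, Assumption \ref{SET-3}(ii) gives $\|u^{N,M}(y^{N,M}_t(s),z^*(t))-u^{N,M}(y^*_t(s),z^*(t))\|\le L\,\|y^{N,M}_t(s)-y^*_t(s)\|$, so absorbing this into the Gronwall kernel leaves the task of showing that both
\begin{equation*}
\mathrm{meas}\{P_{t,\tau}(N,M)\}\longrightarrow 0\qquad\text{and}\qquad \int_0^{\tau}\|u^{N,M}(y^*_t(s),z^*(t))-u^*_t(s)\|\,ds\longrightarrow 0
\end{equation*}
as $N,M\to\infty$ (in the iterated sense of the statement). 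The first is exactly Assumption \ref{SET-3}(iii); once the second is in hand, Gronwall yields \eqref{e:HJB-1-17-1-N-z-const-fast-1}, from which \eqref{e:HJB-1-17-1-N-z-const-fast-2} follows by Lipschitz continuity of $u^{N,M}$ in $y$ on $B_{t,\tau}\times Q_t$ combined once more with Assumption \ref{SET-3}(iii).

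The main obstacle is the pointwise convergence $u^{N,M}(y^*_t(\tau),z^*(t))\to u^*_t(\tau)$ for a.e.\ $\tau$, and this is where Assumption \ref{SET-1}(III)---in particular the nondegeneracy \eqref{e:convergence-important-7}---is essential. My plan is to argue it through the occupational measures: by construction $u^{N,M}(\cdot,z^*(t))$ is the a.s.\ unique minimizer of the Hamiltonian built from $\zeta^{N,M},\eta^{N,M}_{z^*(t)}$, hence by the duality between \eqref{e:dec-fast-4-Associated} and \eqref{e:dec-fast-4-Associated-1} the measure $\mu^{N,M}(z^*(t))$ generated along $(u^{N,M}_t,y^{N,M}_t)$ is an optimal solution of \eqref{e:dec-fast-4-Associated-1} at $z=z^*(t)$. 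Exploiting $W(z^*(t))\subset W_M(z^*(t))$ and Proposition \ref{Prop-LM-convergence}, one then shows $\mu^{N,M}(z^*(t))\rightharpoonup \mu^*(t)$ weakly; the support condition \eqref{e:convergence-important-7} together with the uniqueness of the argmin in \eqref{e-NM-minimizer-0} on $B_{t,\tau}\times Q_t$ upgrades this to a.e.\ convergence of the controls along the curve $\tau\mapsto (u^*_t(\tau),y^*_t(\tau))$. The technical difficulty is that solutions of the \emph{exact} averaged/associated dual problems need not exist, so one cannot directly identify a limit of $\nabla\zeta^{N,M},\nabla\eta^{N,M}_{z^*(t)}$; the argument must instead be routed through weak convergence of the optimal measures and the uniqueness of the minimizing trajectory, rather than through convergence of the dual multipliers themselves.
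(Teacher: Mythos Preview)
Your Gronwall skeleton is exactly the paper's: subtract the two associated ODEs at $z=z^*(t)$, absorb the Lipschitz part of the control mismatch on $[0,\tau]\setminus P_{t,\tau}(N,M)$ into the kernel, bound the $P_{t,\tau}(N,M)$ contribution via Assumption~\ref{SET-3}(iii), and reduce everything to the a.e.\ pointwise convergence $u^{N,M}(y^*_t(\tau),z^*(t))\to u^*_t(\tau)$; then dominated convergence and Gronwall give (\ref{e:HJB-1-17-1-N-z-const-fast-1}), and (\ref{e:HJB-1-17-1-N-z-const-fast-2}) follows by one more triangle inequality. The gap is your route to that pointwise convergence. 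You claim $\mu^{N,M}(z^*(t))$ is optimal in (\ref{e:dec-fast-4-Associated-1}) and then invoke Proposition~\ref{Prop-LM-convergence} to obtain $\mu^{N,M}(z^*(t))\rightharpoonup\mu^*(t)$. Neither step is available: pointwise minimization of the Hamiltonian by $u^{N,M}$ yields only $\int[G+\nabla\zeta^{N,M}(z)^{T}g]\,d\mu^{N,M}(z)\ge\sigma^*_{N,M}(z)$ via (\ref{e:DUAL-AVE-0-approx-1-Associate-2}), with no reason for tightness along this particular orbit; and Proposition~\ref{Prop-LM-convergence} concerns $p^{N,M}\to p^*$ for the \emph{averaged} IDLP, not the $z$-parametrized associated problems (\ref{e:dec-fast-4-Associated-1}), whose data $\nabla\zeta^{N,M}$ need not converge. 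In the paper the convergence $\mu^{N,M}(z^*(t))\rightharpoonup\mu^*(t)$ (see (\ref{e:HJB-16-NM-proof-2})) is in fact a \emph{consequence} of Lemma~\ref{fast-convergence}, so invoking it here would be circular.

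The paper obtains the pointwise control convergence by a different device, Lemma~\ref{convergence-important}, resting on Lemmas~\ref{opt-1}--\ref{opt-2}: the LP optimizer $p^{N,M}$ of (\ref{e-ave-LP-opt-di-MN}) can be taken as a finite sum of Diracs at points $(\mu_k^{N,M},z_k^{N,M})$, with each $\mu_k^{N,M}$ itself a finite sum of Diracs at points $(u_j^{N,M,k},y_j^{N,M,k})$ satisfying the key identity $u_j^{N,M,k}=u^{N,M}(y_j^{N,M,k},z_k^{N,M})$. Since $p^{N,M}\rightharpoonup p^*$ (uniqueness of $p^*$, Assumption~\ref{SET-1}(i)) and $p^*$ charges every neighborhood of $(\mu^*(t),z^*(t))$ while $\mu^*(t)$ charges every neighborhood of $(u^*_t(\tau),y^*_t(\tau))$ by (\ref{e:convergence-important-7}), one can select concentration points with $z_{k^{N,M}}^{N,M}\to z^*(t)$ and $(u_{j_{N,M}},y_{j_{N,M}})\to(u^*_t(\tau),y^*_t(\tau))$. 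The identity $u_{j_{N,M}}=u^{N,M}(y_{j_{N,M}},z_{k^{N,M}}^{N,M})$ together with the Lipschitz bound (\ref{e:HJB-1-17-0-per-Lip-u-NM}) and the triangle inequality then give $u^{N,M}(y^*_t(\tau),z^*(t))\to u^*_t(\tau)$ directly. The essential point is that the relation $u=u^{N,M}(y,z)$ is certified on the concentration points of the LP optimizer $p^{N,M}$, whose convergence \emph{is} guaranteed, rather than on the trajectory measure $\mu^{N,M}(z^*(t))$.
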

\begin{proof}
The proof is given in Section  \ref{Sec-Main-Main}.
\end{proof}

REMARK III.2. Results of Sections \ref{Sec-ACG-nec-opt} - \ref{Sec-ACG-construction} can be extended to the case when the periodic
optimization problem (\ref{Vy-perturbed-per-new-1}) is under consideration. In particular, one can introduce the $(N,M)$-approximating averaged
 problem
 \begin{equation}\label{e-ave-LP-opt-di-MN-ave}
\min_{p\in \tilde{\mathcal{D}}^{N,M}}\int_{F_M}\tilde G(\mu,z)p(d\mu , dz)\BYDEF \tilde{G}^{N,M},
\end{equation}
where $\ \tilde{\mathcal{D}}^{N,M}$ is defined by the right-hand side of (\ref{D-SP-new-MN}) taken with $C=0$. The optimal
value $\tilde{G}^{N,M}$  of this problem  is related to the optimal value $\tilde{G}^{*}$ of the problem (\ref{Vy-perturbed-per-new-8}) by the inequalities
 \begin{equation}\label{e:graph-w-M-103-101-ave }
\tilde{G}^{N,M}\leq \tilde{G}^* \ \ \ \ \forall \ N, M =1,2,...\ , \ \ \ \ \ \ \
\end{equation}
and, in addition,
\begin{equation}\label{e:graph-w-M-102-ave}
\lim_{N\rightarrow\infty} \lim_{M\rightarrow\infty}\tilde{G}^{N,M}= \tilde{G}^{*}
 \end{equation}
 (compare with (\ref{e:graph-w-M-103-101}) and (\ref{e:graph-w-M-102})).
One can also introduce the $(N,M)$-approximating averaged dual problem as in  (\ref{e:DUAL-AVE-0-approx-MN}) (with $C=0$) and introduce the $(N,M)$-approximating associated dual problem as in (\ref{e:dec-fast-4-Associated}). Assuming that solutions of these problems exist,
one can define a control $u^{N,M}(y,z)$ as a minimizer   (\ref{e-NM-minimizer-1}). It can be shown that, under certain assumptions (some of which are similar to those used in Theorem \ref{Main-SP-Nemeric} and some are  specific for the periodic optimization case), the control  $u^{N,M}(y,z)$ allows one to construct an ACG family that generates a near optimal solution of the averaged periodic optimization problem (\ref{Vy-perturbed-per-new-10}). While we do not give a precise result justifying such a procedure in the present paper,
we demonstrate that it can be efficient in dealing with SP periodic optimization problems by considering a numerical example (see Example 2 in Sections \ref{Sec-Two-examples}
and \ref{Sec-construction-SP-examples}).

\medskip

{\bf IV. Asymptotically near optimal controls of SP problems.}

\section{Asymptotically optimal/near optimal controls based on optimal/near optimal ACG families}\label{Sec-SP-ACG-theorem}
In this section we describe  a way how an asymptotically optimal (near optimal)
control of the SP optimal control problem (\ref{Vy-perturbed}) can be constructed
given that an asymptotically optimal (near optimal)
ACG family is known (a way of construction of the latter has been discussed in Section \ref{Sec-ACG-construction} ).

\begin{Definition}\label{Def-asympt-opt}
A control $u_{\epsilon}(\cdot) $  will be called asymptotically optimal in the SP problem  (\ref{Vy-perturbed}) if
\begin{equation}\label{e-opt-convergence-1}
\lim_{\epsilon\rightarrow 0}\int_0 ^ { + \infty }e^{- C t}G(u_{\epsilon}(t), y_{\epsilon}(t),z_{\epsilon}(t)) dt=
\lim_{\epsilon\rightarrow 0}V_{di}^*(\epsilon, y_0, z_0),
\end{equation}
where $(y_{\epsilon}(\cdot),z_{\epsilon}(\cdot)) $ is the solution of the system (\ref{e:intro-0-1})-(\ref{e:intro-0-2}) obtained with the control
$u_{\epsilon}(\cdot) $ and with the initial condition (\ref{e-initial-SP}).
A control $u_{\epsilon}(\cdot) $  will be called asymptotically $\alpha$-near optimal ($\alpha>0$)
in the SP problem  (\ref{Vy-perturbed}) if
\begin{equation}\label{e-opt-convergence-2}
\lim_{\epsilon\rightarrow 0}\int_0 ^ { + \infty }e^{- C t}G(u_{\epsilon}(t), y_{\epsilon}(t),z_{\epsilon}(t)) dt\leq
\liminf_{\epsilon\rightarrow 0}V_{di}^*(\epsilon, y_0, z_0)+ \alpha .
\end{equation}
\end{Definition}

  We will need a couple of more definitions and assumptions.

\begin{Definition}\label{Def-steering-1}
Let $\mu\in W(z)$   and $y\in Y$. We shall say that $\mu$ is {\it attainable} by  the associated system  from the initial conditions $y$
 if there exists an admissible pair $(u_{y,z}(\cdot),y_{y,z}(\cdot))$  of the associated system (see Definition \ref{Def-adm-associate})
 satisfying the initial condition $y(0)=y $
  such that the occupational measure $\mu^{u_{y,z}(\cdot), y_{y,z}(\cdot), S}$
  generated by the pair $(u_{y,z}(\cdot), y_{y,z}(\cdot))$ on the interval $[0,S]$ converges to $\mu$ as $S\rightarrow\infty$. That is,
\begin{equation}\label{e-opt-OM-2-103}
\lim_{S\rightarrow\infty}\rho(\mu^{u_{y,z}(\cdot), y_{y,z}(\cdot), S},\mu) =0.
 \end{equation}
The control $u_{y,z}(\cdot) $ will be referred to as one {\it steering the associated system to $\mu $}.
\end{Definition}

\begin{Definition}\label{Def-steering-uniform}
Let $(u_z(\cdot),y_z(\cdot))$  be  an ACG   family  and let
$\mu(du,dy|z)$  be the  family of occupational measures generated by the former. We shall say that the family $\ \mu(du,dy|\cdot)$ is
{\it uniformly attainable}
 if the measure $\mu(du,dy|z)$ is  attainable by the associated system from any $y\in Y$ for any  $z\in Z$.
Moreover, the convergence in (\ref{e-opt-OM-2-103}) considered with $\mu = \mu(du,dy|z) $ is uniform with respect to $z\in Z$ and $y\in Y $. That is,
\begin{equation}\label{e-opt-OM-2-105}
\rho(\mu^{u_{y,z}(\cdot), y_{y,z}(\cdot), S},\mu(du,dy|z)) \leq \phi(S), \ \ \ \ \forall z\in Z  , \  \forall y\in Y,
 \end{equation}
 with $\ \lim_{S\rightarrow\infty}\phi(S) = 0 $.
\end{Definition}

Note that the family of  measures  $\ \mu(du,dy|z)$ generated by an arbitrary
ACG family $(u_z(\cdot),y_z(\cdot))$
is uniformly attainable by the associated system
if the convergence in (\ref{e:intro-0-3-3}) is uniform with respect to $z\in Z $ and $y\in Y $. Sufficient
conditions for this  can be found in \cite{Dont}, \cite{Gai1}, \cite{Gai8}, \cite{Gai-Leiz} and \cite{Gra} (see also
 Remark IV.1 at the end of this section).

Let $(u_z(\cdot),y_z(\cdot))$ be an ACG family and  let the  family of measures $\ \mu(du,dy|z)$ generated by this ACG
family be uniformly attainable by the associated system.
Define
 a control $u_{\epsilon}(\cdot) $ as follows.

 Let
 \begin{equation}\label{e:contr-rev-100-0-1}
 \Delta(\epsilon)\BYDEF \frac{\epsilon}{2L_f}\ln \frac{1}{\epsilon},
\end{equation}
where $L_f $ is a Lipschitz constant of $f(u,y,z)$ on  $Y\times Z$.
Partition the interval $[0,\infty) $ by the points
\begin{equation}\label{e:contr-rev-100-1}
t_l = l\Delta(\epsilon), \ l=0,1,....
\end{equation}
Note that
 \begin{equation}\label{e:contr-rev-100-2}
\lim_{\epsilon\rightarrow 0}\Delta(\epsilon)=0, \ \ \ \ \  \ \ \ \lim_{\epsilon\rightarrow 0}\frac{\Delta(\epsilon)}{\epsilon}=\infty .
\end{equation}
Define $u_{\epsilon}(t) $ on the interval $[0,t_1)  $ by the equation
\begin{equation}\label{e:contr-rev-100-3}
u_{\epsilon}(t) = u_{y_0,z_0}(\frac{t}{\epsilon}) \ \ \ \forall \ t\in [0,t_1),
\end{equation}
where $u_{y_0,z_0}(\tau)$ is the control steering the associated system (\ref{e:intro-0-3})
to
$\mu(du,dy|z_0)$ from the initial condition $y(0)= y_0$, the associated system being considered with
$z=z_0 $, where $(y_0,z_0)$ are as in  (\ref{e-initial-SP}).

Let us assume that the control $u_{\epsilon}(t)$ has been defined on the interval $t\in[0,t_l) $ and let $(y_{\epsilon}(t),z_{\epsilon}(t)) $ be
the corresponding solution of the SP system (\ref{e:intro-0-1})-(\ref{e:intro-0-2}) on this interval.
Extend  the definition of $u_{\epsilon}(t) $ to the interval $[0,t_{l+1}] $ by taking
\begin{equation}\label{e:contr-rev-100-4}
u_{\epsilon}(t) = u_{y_{\epsilon}(t_l),z_{\epsilon}(t_l)}(\frac{t-t_l}{\epsilon}) \ \ \ \forall \ t\in [t_l,t_{l+1}), \ \ \ l=0,1,... \ ,
\end{equation}
where $u_{y_{\epsilon}(t_l),z_{\epsilon}(t_l)}(\tau)$ is the control steering the associated system (\ref{e:intro-0-3}) to
$\mu(du,dy|z_{\epsilon}(t_l))$ from the initial condition $y(0)= y_{\epsilon}(t_l)$. Note that, in this instance, the associated system is considered
 with $z=z_{\epsilon}(t_l) $.

Below we establish that, under an additional technical assumption, the control $u_{\epsilon}(\cdot)$
constructed above is asymptotically optimal (near optimal) if the ACG family is optimal (near optimal). The needed assumption
 is introduced with the help of the following definition.

  \begin{Definition}\label{ASS-locally-Lipschitz}
 We will say that an ACG family  $(u_z(\cdot),y_z(\cdot))$ is weakly piecewise Lipschitz continuous  in a neighborhood of $z(\cdot)$
 if, for any Lipschitz continuous function $q(u,y,z)$, the function
$$
 \tilde q_{\mu}(z)  \BYDEF \int_{U\times Y}q(u,y,z)\mu(du,dy|z)
$$
 is piecewise Lipschitz continuous in a neighborhood of $z(\cdot)$, where $\mu(du,dy|z)$ is the family of measures generated by
 $(u_z(\cdot),y_z(\cdot))$ and
$z(\cdot) $ is the solution of (\ref{e-opt-OM-1}). The piecewise Lipschitz continuity of $ \tilde q_{\mu}(\cdot) $
  in a neighborhood of $z(\cdot)$ is understood in the following sense.
 For any $T>0$, there exists no more than a finite number of points $\bar t_i\in [0,T], \ i=1,...,k$ ($\bar t_i<\bar t_{i+1} $),
 such that $\tilde{g}_{\mu}(\cdot)$ satisfies Lipschitz condition on $ z(t) + a_t B$ for $t\neq\bar t_i  $,
 where $a_t > 0 $ and $B $ is the open unit ball in $\ R^n $.
 Moreover, $r_{\delta}$ defined by the equation
 $$
 r_{\delta} \BYDEF \inf \{ a_t \ : \ t\notin\cup_{i=1}^k ( \bar t_i - \delta , \bar t_i+ \delta )\}
 $$
is a positive
right-continuous function of $\delta $ (which
  may tend to zero when  $\delta $ tends to zero).
 \end{Definition}

\begin{Theorem}\label{Prop-convergence-measures-discounted}
Let the family of measures $\mu(du,dy|z) $
generated by an ACG family $(u_z(\cdot),y_z(\cdot))$ be uniformly attainable
 by the associated system and let this ACG family be
weakly piecewise Lipschitz continuous in a neighborhood of $z(\cdot)$
($z(\cdot)$ being the solution of (\ref{e-opt-OM-1})). Let also, for any $T>0 $, there exists
 $\ \bar \delta_T(\epsilon) > 0$, $\ \lim_{\epsilon\rightarrow 0}\bar \delta_T(\epsilon) = 0 $, such that the solution
$(y_{\epsilon}(\cdot),z_{\epsilon}(\cdot)) $ of the SP system (\ref{e:intro-0-1})-(\ref{e:intro-0-2}) obtained
with the control $u_{\epsilon}(\cdot)$ defined by (\ref{e:contr-rev-100-2}), (\ref{e:contr-rev-100-3})  satisfies the condition
 \begin{equation}\label{e:convergence-to-gamma-extra-condition}
z_{\epsilon}(t)\in z(t)+ a_tB \ \ \ \forall t\in [0,T] \setminus \cup_{i=1}^k ( \bar t_i - \bar \delta_T(\epsilon),
\bar t_i+ \bar \delta_T(\epsilon) ),
\end{equation}
where $a_t \ $ and $\ \bar t_i $ are as in Definition \ref{ASS-locally-Lipschitz}.
Then, for any $T>0 $,
 \begin{equation}\label{e:convergence-to-gamma-z-estimate}
max_{t\in [0,T]}||z_{\epsilon}(t)-z(t)||\leq \beta_T(\epsilon), \ \ \ \ \ \ \lim_{\epsilon\rightarrow 0}\beta_T(\epsilon) = 0,
\end{equation}
and
 the discounted occupational measure $\gamma^{\epsilon}_{di} $ generated by
  $(u_{\epsilon}(\cdot), y_{\epsilon}(\cdot),z_{\epsilon}(\cdot))$,  converges to $\gamma$ defined
 by the ACG family
 according to (\ref{e-opt-OM-2}). That is,
 \begin{equation}\label{e:convergence-to-gamma}
\lim_{\epsilon\rightarrow 0}\rho(\gamma^{\epsilon}_{di},\gamma)= 0.
\end{equation}
 \end{Theorem}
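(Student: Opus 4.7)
The plan is to analyze the behavior of $(y_\epsilon(\cdot),z_\epsilon(\cdot))$ interval by interval on the partition $\{t_l\}$, exploiting the double scaling $\Delta(\epsilon)\to 0$ but $\Delta(\epsilon)/\epsilon\to\infty$ baked into (\ref{e:contr-rev-100-0-1}).

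First, I would fix an interval $[t_l,t_{l+1})$ and pass to the stretched time $\tau=(t-t_l)/\epsilon$, so that the fast equation becomes $\tilde y'(\tau)=f(u_\epsilon(t_l+\epsilon\tau),\tilde y(\tau),z_\epsilon(t_l+\epsilon\tau))$ on $\tau\in[0,\Delta(\epsilon)/\epsilon]$. Comparing this with the associated system $\hat y'(\tau)=f(u_{y_\epsilon(t_l),z_\epsilon(t_l)}(\tau),\hat y(\tau),z_\epsilon(t_l))$ started at $y_\epsilon(t_l)$, and using that $\|z_\epsilon(t)-z_\epsilon(t_l)\|\leq C\Delta(\epsilon)$ on the interval, Gronwall's inequality yields a bound of order $C\Delta(\epsilon)\,e^{L_f\Delta(\epsilon)/\epsilon}$. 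Because $\Delta(\epsilon)=\frac{\epsilon}{2L_f}\ln\frac{1}{\epsilon}$, this factor is $O(\sqrt{\epsilon}\ln(1/\epsilon))\to 0$. Consequently, by uniform attainability (Definition \ref{Def-steering-uniform}) applied to the associated trajectory, the occupational measure $\gamma^{u_\epsilon(\cdot),y_\epsilon(\cdot),\Delta(\epsilon)}$ on $[t_l,t_{l+1})$ is $\phi(\Delta(\epsilon)/\epsilon)$-close to $\mu(du,dy|z_\epsilon(t_l))$, uniformly in $l$.

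Second, integrating $z'_\epsilon=g(u_\epsilon,y_\epsilon,z_\epsilon)$ over $[t_l,t_{l+1})$ and using Lipschitz continuity of $g$ in $z$ together with the measure approximation from the previous step gives
\begin{equation*}
z_\epsilon(t_{l+1})-z_\epsilon(t_l)=\Delta(\epsilon)\,\tilde g_\mu(z_\epsilon(t_l))+\Delta(\epsilon)\,o_\epsilon(1),
\end{equation*}
where $o_\epsilon(1)\to 0$ uniformly in $l$ (so long as $z_\epsilon(t_l)$ lies in the region where $\tilde g_\mu$ is Lipschitz). This is exactly an Euler-type discretization of $z'=\tilde g_\mu(z)$ with vanishing step and vanishing one-step error. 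Applied in conjunction with the weak piecewise Lipschitz hypothesis and the hypothesis (\ref{e:convergence-to-gamma-extra-condition}) that $z_\epsilon(t)$ stays inside the Lipschitz ball $z(t)+a_tB$ outside shrinking neighborhoods $(\bar t_i-\bar\delta_T(\epsilon),\bar t_i+\bar\delta_T(\epsilon))$ of the discontinuity points, a discrete Gronwall argument on each Lipschitz sub-interval between consecutive $\bar t_i$'s yields the uniform estimate (\ref{e:convergence-to-gamma-z-estimate}), with the errors at the crossings of the $\bar t_i$ shrinking because $\bar\delta_T(\epsilon)\to 0$ and $\tilde g_\mu$ is bounded.

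Third, to pass from the trajectory convergence (\ref{e:convergence-to-gamma-z-estimate}) to the measure convergence (\ref{e:convergence-to-gamma}), it suffices by (\ref{e:intro-2}) to show that for every Lipschitz continuous $q(u,y,z)$,
\begin{equation*}
C\int_0^\infty e^{-Ct}q(u_\epsilon(t),y_\epsilon(t),z_\epsilon(t))\,dt\;\longrightarrow\;\int_Z\!\!\int_{U\times Y}q(u,y,z)\mu(du,dy|z)\,\nu_{di}(dz).
\end{equation*}
I would split the outer integral at some large $T$ (handling the tail by a uniform $e^{-CT}$ bound), then on $[0,T]$ aggregate the interval-wise approximations from the first step (which replace the $(u,y)$-average over $[t_l,t_{l+1})$ by integration against $\mu(\cdot|z_\epsilon(t_l))$) and use (\ref{e:convergence-to-gamma-z-estimate}) combined with the weak piecewise Lipschitz continuity of $z\mapsto\tilde q_\mu(z)$ to replace $z_\epsilon(t_l)$ by $z(t)$. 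The right-hand side is then recognised as $\int_F\tilde q(\mu,z)\,p_{di}^{\mu(\cdot),z(\cdot)}(d\mu,dz)=\int q\,d\gamma$ by Proposition \ref{Prop-clarification-1}.

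The main obstacle is the third step's handling of the weak piecewise Lipschitz assumption: the Lipschitz constant of $\tilde g_\mu$ (and of $\tilde q_\mu$) may blow up as one approaches a discontinuity point $\bar t_i$, so one must carefully balance the length $\bar\delta_T(\epsilon)$ of the excluded windows against the radius $r_\delta$ inside which Lipschitz continuity holds, using the right-continuity of $r_\delta$ in $\delta$ guaranteed by Definition \ref{ASS-locally-Lipschitz} together with hypothesis (\ref{e:convergence-to-gamma-extra-condition}), to ensure that the discrete Gronwall estimate closes.
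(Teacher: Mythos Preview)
Your proposal is correct and follows essentially the same route as the paper: interval-by-interval comparison of $y_\epsilon$ with the associated trajectory via Gronwall (the paper obtains the bound $\epsilon^{1/4}$ in place of your $O(\sqrt\epsilon\ln(1/\epsilon))$, but this is the same computation), the Euler-type discrete recursion for $\|z_\epsilon(t_l)-z(t_l)\|$ handled by a discrete Gronwall lemma with two regimes (inside/outside the shrinking windows around the $\bar t_i$), and the passage to measure convergence by testing against Lipschitz $q$ with a tail cut at $T$. The paper makes explicit the balancing you flag as the main obstacle by defining $\delta(\epsilon)$ through the relation $r_{\delta(\epsilon)}\geq\Delta^{1/2}(\epsilon)$ (using the right-continuity of $r_\delta$), which is exactly the mechanism you anticipate.
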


\begin{proof}
The proof is given in Section \ref{Sec-Main-AVE}. Note only here that it exploits ideas similar to those used in establishing
 earlier results on averaging of SP control systems (see, e.g., \cite{Art1}, \cite{Dont}, \cite{Gai1}, \cite{Gra}, \cite{QW}).
\end{proof}

Note that from (\ref{e:convergence-to-gamma}) (and from (\ref{e-opt-OM-2-1000-1}) and (\ref{e-opt-OM-2-1000-2})) it follows that
\begin{equation}\label{e:convergence-to-gamma-11}
C\lim_{\epsilon\rightarrow 0}\int_0^{\infty}e^{-Ct}G(u_{\epsilon}(t), y_{\epsilon}(t),z_{\epsilon}(t))dt =
\int_{U\times Y\times Z}G(u,y,z)\gamma(du,dy,dz)=C\tilde{V}^*_{di}(z_0)
\end{equation}
if the ACG family is optimal and that
\begin{equation}\label{e:convergence-to-gamma-12}
C\lim_{\epsilon\rightarrow 0} \int_0^{\infty}e^{-Ct}G(u_{\epsilon}(t), y_{\epsilon}(t),z_{\epsilon}(t))dt =
\int_{U\times Y\times Z}G(u,y,z)\gamma(du,dy,dz)\leq C(\tilde{V}^*_{di}(z_0) + \alpha)
\end{equation}
if the ACG family is $\alpha$-near optimal. These lead to the following corollary.

\begin{Corollary}\label{Cor-asym-near-opt}
Let an ACG family $(u_z(\cdot),y_z(\cdot))$ be optimal ($\alpha $-near optimal) and let
the assumptions of Theorem \ref{Prop-convergence-measures-discounted} be satisfied. Let also the equality (\ref{e-Objective-Convergence-Dis})
or  the equality (\ref{C-result-di-ave}) (or both of them) be valid. Then the control defined by (\ref{e:contr-rev-100-2}) and (\ref{e:contr-rev-100-3})
is asymptotically optimal ($\alpha $-near optimal) in the SP problem  (\ref{Vy-perturbed}).
\end{Corollary}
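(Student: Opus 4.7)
The plan is to combine Theorem \ref{Prop-convergence-measures-discounted} with the identities relating the cost functional to occupational measures, and then to squeeze the constructed cost between the optimal value lower bound from Corollary \ref{Cor-Important-Inequality} and the averaged value $\tilde{V}^*_{di}(z_0)$.

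First, I would invoke Theorem \ref{Prop-convergence-measures-discounted} to conclude that the discounted occupational measure $\gamma_{di}^\epsilon$ generated by $(u_\epsilon(\cdot),y_\epsilon(\cdot),z_\epsilon(\cdot))$ converges in $\rho$ (and hence weakly$^*$) to $\gamma=\mu(du,dy|z)\nu_{di}(dz)=\Phi(p_{di}^{\mu(\cdot),z(\cdot)})$, the measure associated with the ACG family. Since $G(\cdot)$ is continuous on the compact set $U\times Y\times Z$, the identity (\ref{e:oms-0-2}) gives
\begin{equation*}
\lim_{\epsilon\rightarrow 0} C\int_0^{+\infty}e^{-Ct}G(u_\epsilon(t),y_\epsilon(t),z_\epsilon(t))\,dt = \int_{U\times Y\times Z} G(u,y,z)\,\gamma(du,dy,dz).
\end{equation*}
Then (\ref{e:convergence-to-gamma-11}) (if the ACG family is optimal) or (\ref{e:convergence-to-gamma-12}) (if it is $\alpha$-near optimal) shows that the right-hand side equals $C\tilde{V}^*_{di}(z_0)$, respectively is bounded above by $C(\tilde{V}^*_{di}(z_0)+\alpha)$.

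Next I would turn the upper-bound side into a comparison with $V^*_{di}(\epsilon,y_0,z_0)$. This is where the two alternative hypotheses enter. If (\ref{e-Objective-Convergence-Dis}) holds, then $\lim_{\epsilon\to 0}V^*_{di}(\epsilon,y_0,z_0)=\tilde{V}^*_{di}(z_0)$ already, and comparing with the displayed limit yields the defining equalities (\ref{e-opt-convergence-1}) or inequality (\ref{e-opt-convergence-2}) directly. If instead (\ref{C-result-di-ave}) is assumed, I would apply Corollary \ref{Cor-Important-Inequality} to get
\begin{equation*}
\liminf_{\epsilon\rightarrow 0} V^*_{di}(\epsilon,y_0,z_0)\geq \tfrac{1}{C}\tilde{G}^*_{di}(z_0)=\tilde{V}^*_{di}(z_0),
\end{equation*}
while the opposite direction follows from the fact that $u_\epsilon(\cdot)$ is an admissible control, so its cost bounds $V^*_{di}(\epsilon,y_0,z_0)$ from above. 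In the optimal case this sandwich forces $\lim V^*_{di}(\epsilon,y_0,z_0)=\tilde{V}^*_{di}(z_0)$ and coincides with the cost limit, giving (\ref{e-opt-convergence-1}); in the $\alpha$-near optimal case one gets
\begin{equation*}
\lim_{\epsilon\rightarrow 0}\int_0^{+\infty}e^{-Ct}G(u_\epsilon,y_\epsilon,z_\epsilon)\,dt\leq \tilde{V}^*_{di}(z_0)+\alpha\leq \liminf_{\epsilon\rightarrow 0}V^*_{di}(\epsilon,y_0,z_0)+\alpha,
\end{equation*}
which is (\ref{e-opt-convergence-2}).

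There is no real analytical obstacle: the heavy lifting — convergence of occupational measures, existence of an admissible averaged pair generated by the ACG family, and the IDLP duality identities — has already been carried out in Theorem \ref{Prop-convergence-measures-discounted}, Proposition \ref{Prop-clarification-1}, and Corollary \ref{Cor-Important-Inequality}. The only point requiring a little care is the bookkeeping for the two alternative hypotheses (\ref{e-Objective-Convergence-Dis}) and (\ref{C-result-di-ave}): one provides the equality $\lim V^*_{di}(\epsilon)=\tilde{V}^*_{di}(z_0)$ directly, while the other forces it via the sandwich argument using admissibility of $u_\epsilon(\cdot)$ and the $\liminf$ bound. Handling both simultaneously is essentially just remarking that $\tilde{V}^*_{di}(z_0)$ serves both as the limit of the constructed cost and as a lower bound for $\liminf V^*_{di}(\epsilon,y_0,z_0)$.
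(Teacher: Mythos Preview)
Your proposal is correct and follows essentially the same approach as the paper's proof: under hypothesis (\ref{e-Objective-Convergence-Dis}) you combine (\ref{e:convergence-to-gamma-11})/(\ref{e:convergence-to-gamma-12}) with $\lim V_{di}^*(\epsilon)=\tilde V_{di}^*(z_0)$, and under hypothesis (\ref{C-result-di-ave}) you invoke Corollary~\ref{Cor-Important-Inequality}, which is exactly the package of (\ref{e:oms-6-0}), (\ref{SP-IDLP-convergence-1-1}) and (\ref{e-ave-LP-opt-1}) that the paper cites directly. Your explicit sandwich using admissibility of $u_\epsilon(\cdot)$ to force $\limsup V_{di}^*(\epsilon)\leq \tilde V_{di}^*(z_0)$ in the optimal case is a detail the paper leaves implicit but which is indeed needed for (\ref{e-opt-convergence-1}) when only (\ref{C-result-di-ave}) is assumed.
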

\begin{proof}
If (\ref{e-Objective-Convergence-Dis}) is valid, then the relationships (\ref{e-opt-convergence-1}) and (\ref{e-opt-convergence-2}) follow
from (\ref{e:convergence-to-gamma-11}) and (\ref{e:convergence-to-gamma-12}). If (\ref{C-result-di-ave}) is true, then these relationships
are implied by (\ref{e:oms-6-0}), (\ref{SP-IDLP-convergence-1-1}) and (\ref{e-ave-LP-opt-1}).
\end{proof}

REMARK IV.1.
As can be readily seen, the validity of
(\ref{e:convergence-to-gamma-extra-condition}) is  a necessary condition for the estimate (\ref{e:convergence-to-gamma-z-estimate}) to be valid
(that is, (\ref{e:convergence-to-gamma-extra-condition}) is satisfied if (\ref{e:convergence-to-gamma-z-estimate}) is true).
The validity
of (\ref{e:convergence-to-gamma-extra-condition})  can be directly verified to be true in case the associated system has the following property:
for any $z\in Z$, any $S>0$, any absolutely continuous function $\bar z(\tau): [0,S]\rightarrow Z $ and any control
 $u(\tau): [0,S]\rightarrow U $,
 \begin{equation}\label{e:Ass 2-Lip}
 \max_{\tau\in [0,S] } ||y_z(\tau) -\bar y(\tau)||\leq L \max_{\tau\in [0,S]}||z -\bar z(\tau)||, \ \ \ \ \ \ \ L={\rm const},
 \end{equation}
where $y_z(\cdot) $ is the solution of the system (\ref{e:intro-0-3}) obtained with the given control and  initial conditions
$y_z(0)\in Y $, and $\bar y(\tau)$ is the solution of the same system obtained with the same control, the same initial conditions
but with the replacement of $z$ by $\bar z(\tau) $. The associated system can be shown to have this property
  if the following stability type condition is satisfied(see Lemma 4.1 in \cite{Gai1}):
 for any $z\in Z$, the solutions $y_z(\tau, u(\cdot), y_1)$ and $y_z(\tau, u(\cdot), y_2)$ of
the  system (\ref{e:intro-0-3})  obtained with an arbitrary control $u(\cdot)$
and with  initial values $y_z(0)= y_1 $ and  $ y_z(0)= y_2$ ($ y_1 $ and  $ y_2 $ being arbitrary vectors in $Y$) satisfy the inequality
\begin{equation}\label{e-opt-OM-2-106}
||y_z(\tau, u(\cdot), y_1) - y_z(\tau, u(\cdot), y_2)||\leq \xi (\tau)||y_1-y_2||, \ \ \ \ \ \lim_{\tau\rightarrow\infty}\xi (\tau)=0.
 \end{equation}
Note that the above conditions is also sufficient for the convergence in (\ref{e:intro-0-3-3}) to be uniform with respect to $z\in Z $ and $y\in Y $ (see
Theorem 4.1(i) in \cite{Gai1} and Proposition 4.1 in \cite{Gai8}).

\section{Construction of asymptotically near optimal controls; Examples 1 and 2 (continued)}\label{Sec-construction-SP-examples}\
Provided that Assumption \ref{SET-1}(IV) is satisfied, the control $u_z^{N,M}(y_z^{N,M}(\tau),z) $ (with $u^{N,M}(y,z) $ being the minimizer in (\ref{e-NM-minimizer-1}) and $y_z^{N,M}(\tau)$ being the solution of  (\ref{e:opt-cond-AVE-1-fast-100-2-NM}))
steers the associated system to  $ \mu^{N,M}(du,dy|z)$ (see Definition \ref{Def-steering-1}). Therefore, one can follow (\ref{e:contr-rev-100-3})-(\ref{e:contr-rev-100-4}) to
construct a control $u^{N,M}_{\epsilon}(\cdot) $ that is asymptotically near optimal in the SP problem (\ref{Vy-perturbed}).
Namely, one can define the control $u^{N,M}_{\epsilon}(\cdot) $  on the interval $t\in [0,t_1) $ by the equation
\begin{equation}\label{e:near-opt-SP-10-1}
u^{N,M}_{\epsilon}(t) \BYDEF u^{N,M}(y^{N,M}_{z_0}(\frac{t}{\epsilon}), z_0)\ \ \  \ \ \forall \ t\in [0,t_1),
\end{equation}
where $y^{N,M}_{z_0}(\tau) $ is the solution of (\ref{e:opt-cond-AVE-1-fast-100-2-NM})
 obtained with $z=z_0 $ and with the initial condition $y^{N,M}_{z_0}(0)=y_0 $.
    Then, assuming that the control $u^{N,M}_{\epsilon}(t)$ has been defined on the interval $t\in[0,t_l) $ and that \\   $(y^{N,M}_{\epsilon}(t),z^{N,M}_{\epsilon}(t)) $ is
the corresponding solution of the SP system (\ref{e:intro-0-1})-(\ref{e:intro-0-2}) on this interval, one can
extend  the definition of $u^{N,M}_{\epsilon}(t) $ to the interval $[0,t_{l+1}] $ by taking
 \begin{equation}\label{e:near-opt-SP-10-2}
u^{N,M}_{\epsilon}(t) \BYDEF u^{N,M}(y^{N,M}_{z^{N,M}_{\epsilon}(t_l)}(\frac{t-t_l}{\epsilon}), z^{N,M}_{\epsilon}(t_l))\ \ \  \ \ \forall \ t\in [t_l,t_{1+1}),  \ \ \ l=0,1,... \ ,
\end{equation}
  where  $y^{N,M}_{z^{N,M}_{\epsilon}(t_l)}(\tau) $ is the solution of (\ref{e:opt-cond-AVE-1-fast-100-2-NM})
 obtained with $z=z^{N,M}_{\epsilon}(t_l) $ and with the initial condition $y^{N,M}_{z^{N,M}_{\epsilon}(t_l)}(0)=y^{N,M}_{\epsilon}(t_l) $.
The control
 $u^{N,M}_{\epsilon}(\cdot)$ will be asymptotically  $\beta(N,M)$-near optimal in the SP problem (\ref{Vy-perturbed}), with $\beta(N,M)$ satisfying (\ref{e:HJB-1-17-1-N-z-const-def-near-opt}), if all assumptions of Theorems \ref{Main-SP-Nemeric} and \ref{Prop-convergence-measures-discounted}  are satisfied.

 Note that the assumptions of Theorems \ref{Main-SP-Nemeric} and \ref{Prop-convergence-measures-discounted}  do not need to be
verified for one to be able to construct the control $u^{N,M}_{\epsilon}(t)$ defined by (\ref{e:near-opt-SP-10-1})-(\ref{e:near-opt-SP-10-2}).
 The latter can be constructed  as soon as an optimal solution of the $(N,M)$-approximating averaged problem (\ref{e-ave-LP-opt-di-MN}), its optimal value $\tilde{G}^{N,M}_{di}(z_0) $, and  solutions
 $\zeta^{N,M}(z)$, $\ \eta^{N,M}_z(y)$ of the  $(N,M)$-approximating averaged and associated dual problems
 are found for some $N$ and $M$ (a LP based algorithm for finding the latter is described in Section \ref{Sec-LP-based-algorithm}). Once
 the control $u^{N,M}_{\epsilon}(\cdot)$ is constructed, one can integrate the system (\ref{e:intro-0-1})-(\ref{e:intro-0-2})
 and find the value of the objective function $V_{di}^{N,M}(\epsilon, y_0, z_0)$ obtained with this control,
 \begin{equation}\label{e:near-opt-SP-10-3}
V_{di}^{N,M}(\epsilon, y_0, z_0)\BYDEF \int_0^ { + \infty }e^{-C t}G(u^{N,M}_{\epsilon}(t), y^{N,M}_{\epsilon}(t),
z^{N,M}_{\epsilon}(t))dt.
\end{equation}
Since (by (\ref{e-imp-inequality-1}) and (\ref{e:graph-w-M-103-101})),
$$
\limsup_{\epsilon\rightarrow 0}[V_{di}^{N,M}(\epsilon, y_0, z_0)-V_{di}^{*}(\epsilon, y_0, z_0)]\ \leq \
 \limsup_{\epsilon\rightarrow 0}V_{di}^{N,M}(\epsilon, y_0, z_0) - \liminf_{\epsilon\rightarrow 0}V_{di}^{*}(\epsilon, y_0, z_0)
$$
\vspace{-.2in}
\begin{equation}\label{e:near-opt-SP-10-4}
\leq \ \limsup_{\epsilon\rightarrow 0}V_{di}^{N,M}(\epsilon, y_0, z_0) - \frac{1}{C}\tilde{G}^{*}_{di}(z_0)\ \leq \ \limsup_{\epsilon\rightarrow 0}V_{di}^{N,M}(\epsilon, y_0, z_0) - \frac{1}{C}\tilde{G}^{N,M}_{di}(z_0),
\end{equation}
the difference $\ |V_{di}^{N,M}(\epsilon, y_0, z_0) - \frac{1}{C}\tilde{G}^{N,M}_{di}(z_0)| $ can serve as
a measure \lq\lq asymptotic near optimality" of the control $u^{N,M}_{\epsilon}(\cdot) $.

Let us resume the above in the form of steps that  one may follow to find an asymptotically near optimal control.

{\it (1) Choose test functions $\ \psi_i(z), \ i=1,...N $, $\ \ \phi_j(y), \ j=1,...M$, and construct the $(N,M)$-approximating averaged problem (\ref{e-ave-LP-opt-di-MN}) for some $N$ and $M$.

(2) Use the LP based algorithm of Section \ref{Sec-LP-based-algorithm} to find an optimal solution and the optimal value $\tilde{G}^{N,M}_{di}(z_0) $ of the  problem (\ref{e-ave-LP-opt-di-MN}), as well as  a solution  $\zeta^{N,M}(z)$ of the  $(N,M)$-approximating averaged
dual problem (\ref{e:DUAL-AVE-0-approx-MN})
and a solution $\ \eta^{N,M}_z(y)$ of the  $(N,M)$-approximating  associated dual problem (\ref{e:dec-fast-4-Associated});

(3) Define $u^{N,M}(y,z) $ according to  (\ref{e-NM-minimizer-1}) and construct the control $u^{N,M}_{\epsilon}(\cdot)$
according to (\ref{e:near-opt-SP-10-1})-(\ref{e:near-opt-SP-10-2});

(4) Substitute the control $u^{N,M}_{\epsilon}(\cdot)$ into the system (\ref{e:intro-0-1})-(\ref{e:intro-0-2}) and integrate the obtained ODE with MATLAB.
Also, use MATLAB to evaluate the objective function $V_{di}^{N,M}(\epsilon, y_0, z_0)$;

(5) Assess  the proximity of the found solution to the optimal one by evaluating the difference\\ $\ |V_{di}^{N,M}(\epsilon, y_0, z_0) - \frac{1}{C}\tilde{G}^{N,M}_{di}(z_0)| $.}

\medskip

EXAMPLE 1 (Continued). Consider the   SP optimal control problem defined by the equations (\ref{e:EXAMPL2-4})-(\ref{e:ex-4-1-rep}).
The $(N,M)$-approximating averaged problem (\ref{e-ave-LP-opt-di-MN}) for this example
was constructed with the use of  powers $\ z^i$ as $\ \psi_i(z) $
and monomials $\ y_1^{i_1}y_2^{i_2}$ as $\ \phi_{i_1,i_2}(y)$ and with
$N=15$, $M=35$ (note the change in the indexation of the test functions and recall that $N$ stands for the number of constraints in (\ref{D-SP-new-MN})
and $ M$ stands for the number of constraints in (\ref{e:2.4-M})),
\begin{equation}\label{e:near-opt-SP-10-5}
\psi_i(z)\BYDEF z^i, \ \ i=1,..., 15, \ \ \ \ \ \ \ \ \ \ \phi_{i_1,i_2}(y)=y_1^{i_1}y_2^{i_2}, \ \  \ \  1 \leq i_1+ i_2\leq  5.
\end{equation}
This problem was solved with the algorithm of Section  \ref{Sec-LP-based-algorithm}. The optimal
value of the problem was obtained to be approximately equal to $0.853$:
\begin{equation}\label{e:near-opt-SP-10-6}
\tilde{G}^{15,35}_{di}(z_0)\approx 0.853.
\end{equation}
The expansions (\ref{e:DUAL-AVE-0-approx-2}) and (\ref{e:DUAL-AVE-0-approx-1-Associate-1}) defining
solutions of the  $(N,M)$-approximating averaged and dual problems take the form
\begin{equation}\label{e:near-opt-SP-10-7}
\zeta^{15,35}(z)=\sum_{i=1}^{15} \lambda_i^{15,35}z^i , \ \ \ \ \ \ \ \ \ \ \eta^{15,35}_z(y)=\sum_{1\leq i_1+i_2\leq 5} \omega_{z,i_1,i_2}^{15,35} \ y_1^{i_1}y_2^{i_2},
\end{equation}
where the coefficients $\ \{ \lambda_i^{15,35} \}$  and $\{  \omega_{z,i_1,i_2}^{15,35}\} $ are  obtained as a part of the solution of the problem
(\ref{e-ave-LP-opt-di-MN}) with the algorithm of Section  \ref{Sec-LP-based-algorithm}.

Using $\zeta^{15,35}(z)$ and $\eta^{15,35}_z(y) $, one can compose the problem (\ref{e-NM-minimizer-0}), which in this case takes  the form
\begin{equation}\label{e-NM-minimizer-0-example}
\min_{u_i\in [-1,1]}\{u_1^2+u_2^2+y_1^2+y_2^2+z^2+ \frac{d\zeta^{15,35}(z)}{dz}(-y_1u_2 + y_2u_1) + \frac{\partial\eta^{15,35}_z(y)}{\partial y_1}(-y_1 +u_1)
+\frac{\partial\eta^{15,35}_z(y)}{\partial y_2}(-y_2 +u_2)\}. \ \ \ \
\end{equation}
The solution of the problem (\ref{e-NM-minimizer-0-example}) is as follows
\begin{equation}\label{feedbackFinalVel-SP}
u^{15,35}_i(y,z)=\left\{
\begin{array}{rrrl}
- \frac{1}{2} a_i^{15,35}(y,z) \ & \ \ \ \ \ if&  \ \ \ \ \ \ \ \ \ \ \ \  |\frac{1}{2} a_i^{15,35}(y,z)|\leq 1,  
\\
-1  \ & \ \ \ \  \  if&  \ \ \ \ \ \ \ \ \ \ \ \  \ -\frac{1}{2} a_i^{15,35}(y,z) < -1,
\\
1  \ & \ \ \ \ \  if&  \ \ \ \ \ \ \ \ \ \ \ \  \
-\frac{1}{2} a_i^{15,35}(y,z) > 1,
\\
\end{array}
\right\},
\ \ \ i=1,2,
\end{equation}
where
$\ a_1^{15,35}(y,z)\BYDEF \frac{d\zeta^{15,35}(z)}{dz} y_2 + \frac{\partial\eta^{15,35}_z(y)}{\partial y_1} \ $
and $\ a_2^{15,35}(y,z)\BYDEF -\frac{d\zeta^{15,35}(z)}{dz} y_1 + \frac{\partial\eta^{15,35}_z(y)}{\partial y_2}\ $.

Construct the control $u_{\epsilon}^{15,35}(t) $ according to (\ref{e:near-opt-SP-10-1})-(\ref{e:near-opt-SP-10-2}). Note that, since, in the given example, the equations
describing the fast dynamics do not depend on the slow component $z(\cdot)$, the control $u_{\epsilon}^{15,35}(t)=(u^{15,35}_{1, \epsilon}(t),u^{15,35}_{2, \epsilon}(t )) $ can be presented in a more explicit feedback form
\begin{equation}\label{e:near-opt-SP-10-8}
 u^{15,35}_{i, \epsilon}(t) \BYDEF u^{15,35}_i( y^{15,35}_{\epsilon}(t),  z^{15,35}_{\epsilon}(t_l))\ \ \
 \ \ \forall \ t\in [t_l,t_{1+1}), \ \ \ l=0,1,... \ ,\ \ \ \ \ \ \ \ i=1,2,
\end{equation}
where $(y^{15,35}_{\epsilon}(\cdot),  z^{15,35}_{\epsilon}(\cdot)) $ is the solution of the system (\ref{e:ex-4-2-repeat})-(\ref{e:ex-4-1-rep})
obtained with the control $\ u^{15,35}_{ \epsilon}(\cdot)$
(for convenience,  we omit the superscripts below and write $u_{i, \epsilon}(t)$, $y_{i, \epsilon}(t)$ and $z_{\epsilon}(t)$ instead of
$u^{15,35}_{i, \epsilon}(t)$, $y^{15,35}_{i, \epsilon}(t)$ and $z^{15,35}_{\epsilon}(t)$).

The graphs of $u_{i, \epsilon}(t)$, $y_{i, \epsilon}(t)$ and $z_{\epsilon}(t)$ obtained via
the integration with MATLAB of the system (\ref{e:ex-4-2-repeat})-(\ref{e:ex-4-1-rep}) considered  with $\epsilon=0.1 $  and $\epsilon=0.01 $ are depicted in Figures 1,2,3 and 4,5,6 (respectively). Note that in the process of integration the lengths of the intervals $[t_l,t_{1+1}) $ were chosen experimentally
 to optimize the results (that is, they were not automatically taken to be equal to $\Delta(\epsilon) $ as in (\ref{e:contr-rev-100-1})). The values of the
objective function  were obtained to be approximately equal to $ 8.56$ and $ 8.54$ (respectively), both values being close to $\  \frac{1}{C}\tilde{G}^{15,35}_{di}(z_0)=8.53\ $ (see (\ref{e:near-opt-SP-10-6}); recall that
  $\ C=0.1\ $ in this case). Hence, the constructed control can be considered to be a \lq\lq good candidate" for being asymptotically near optimal.


\begin{center}
{\it Controls and state components as functions of time for $\epsilon=0.1 $}
\end{center}
\begin{center}
\vspace{-.1in}
\includegraphics[scale=0.30]{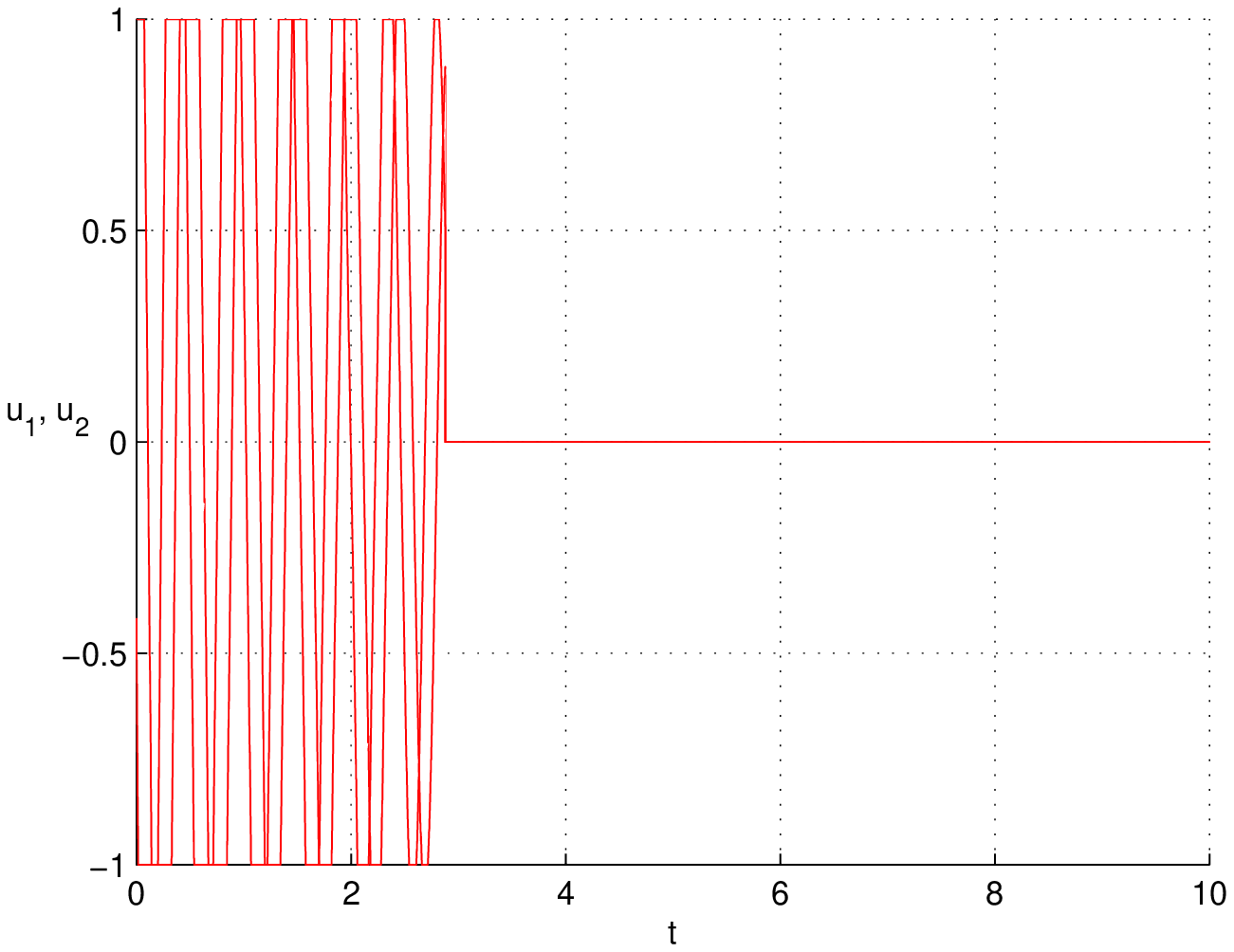}
\includegraphics[scale=0.30]{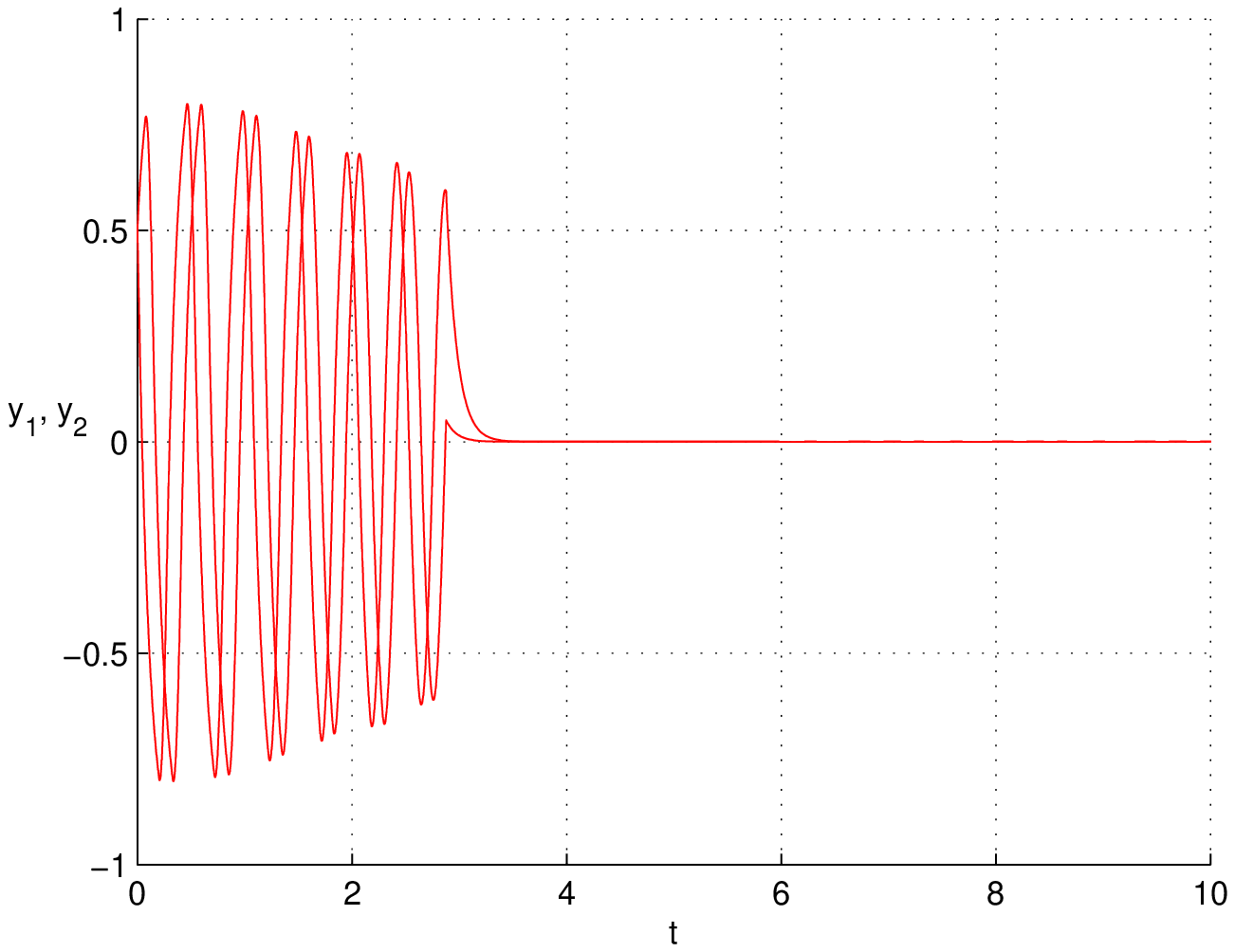}
\includegraphics[scale=0.30]{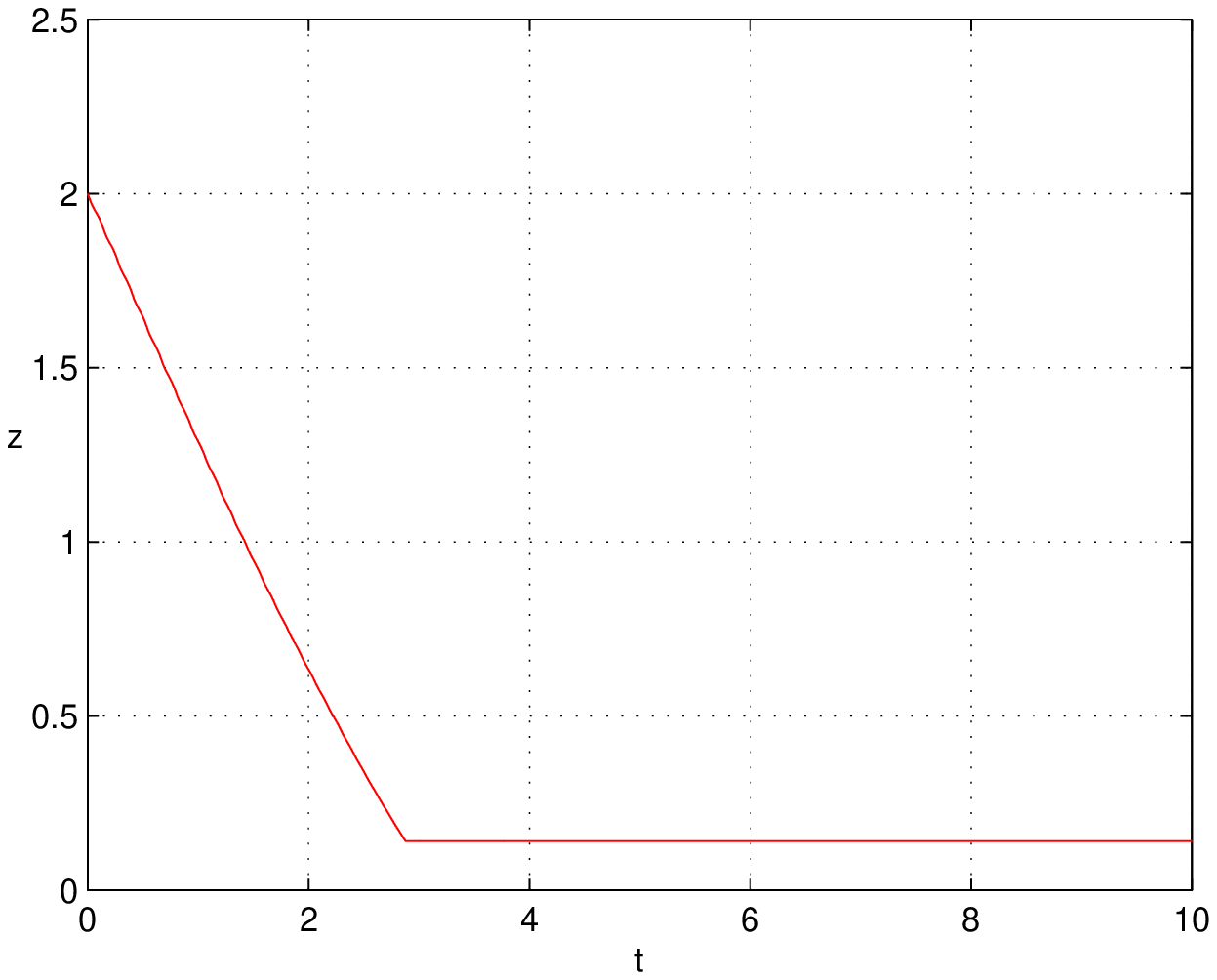}
\end{center}
\vspace{-.1in}
\ \ \ \ \ \ \ \ \ \ \ \ \ \ \ \ \ Fig. 1: \  $u_{1, \epsilon}(t),u_{2, \epsilon}(t )$
 \ \ \ \ \ \ \ \ \  Fig. 2: \ $y_{1, \epsilon}(t),y_{2, \epsilon}(t )$ \ \ \ \ \ \ \ \ \ \ \ \ \ \ \ \ \  Fig. 3: \ $z_{ \epsilon}(t)$

\bigskip

\begin{center}
{\it Controls and state components as functions of time for $\epsilon=0.01 $}
\end{center}
\begin{center}
\vspace{-.1in}
\includegraphics[scale=0.30]{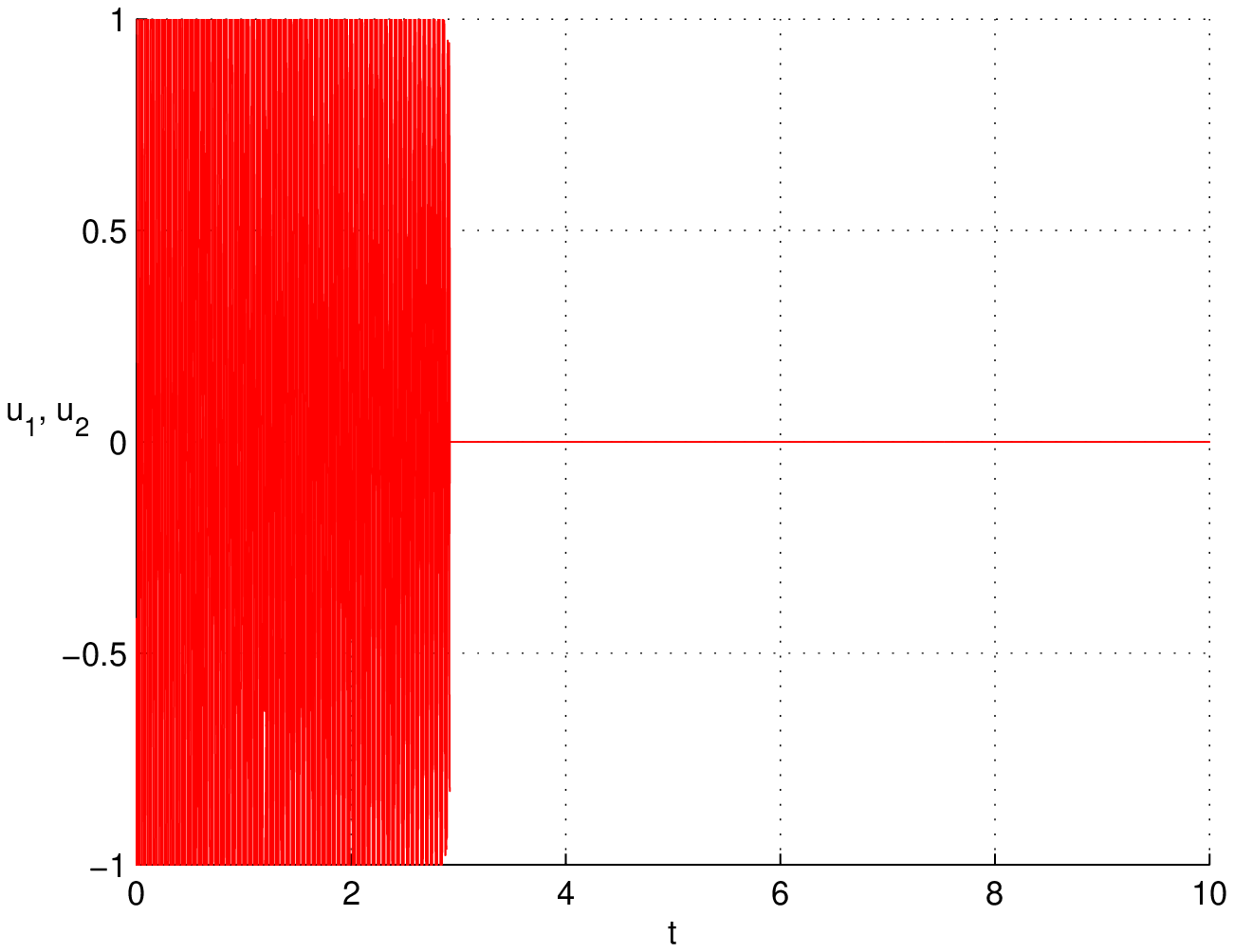}
\includegraphics[scale=0.30]{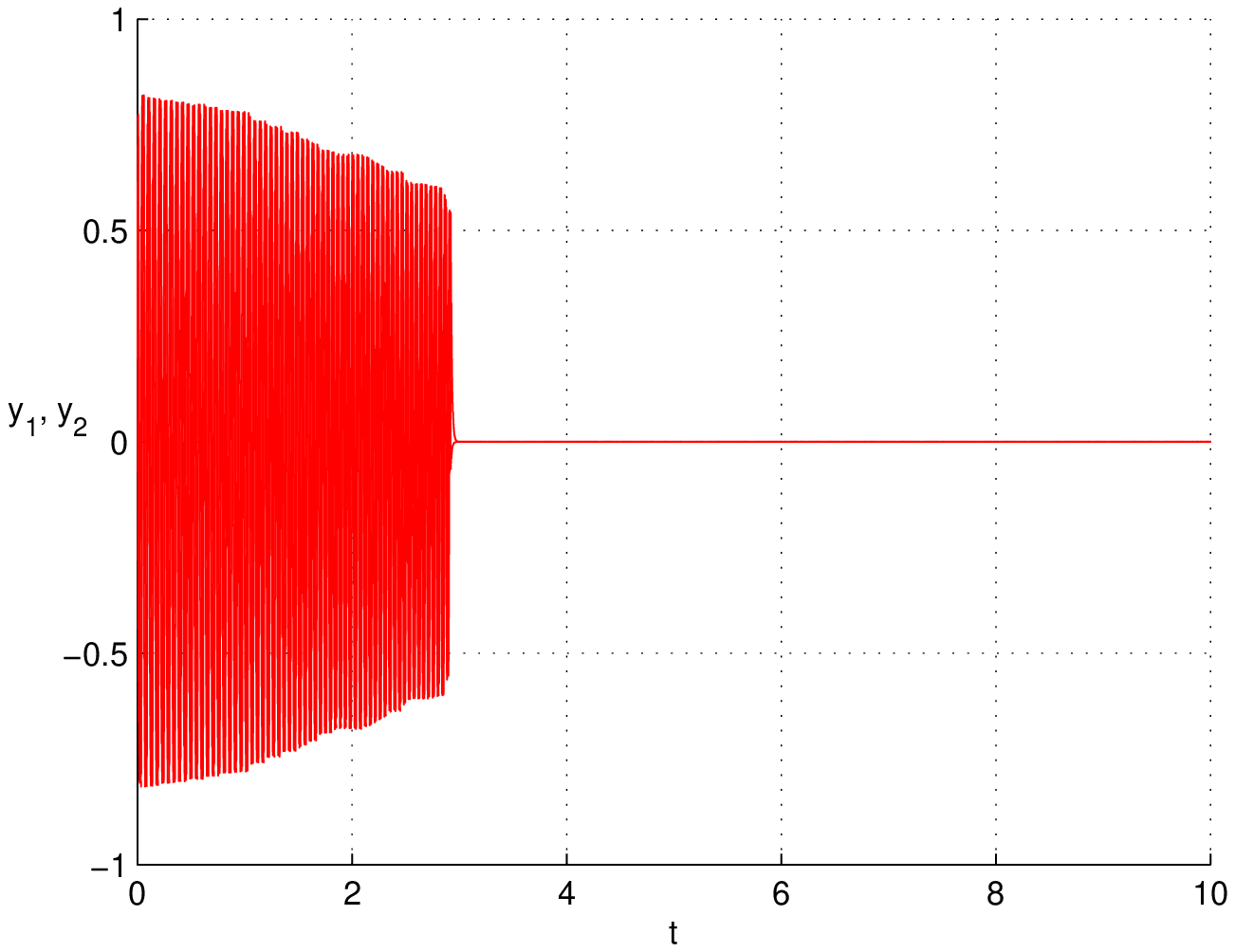}
\includegraphics[scale=0.30]{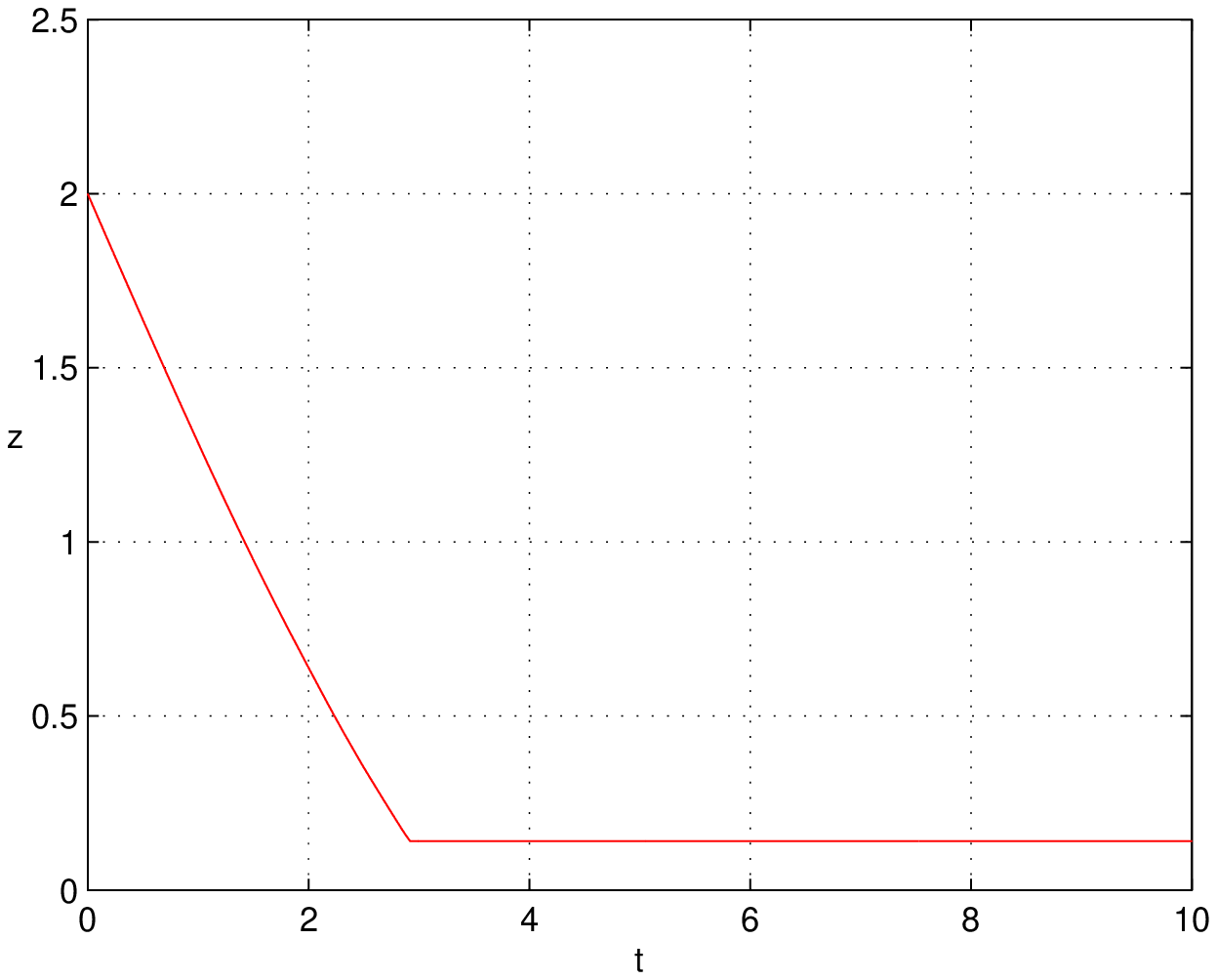}
\end{center}
\vspace{-.1in}
\ \ \ \ \ \ \ \ \ \ \ \ \ \ \ \ \ Fig. 4: \  $u_{1, \epsilon}(t),u_{2, \epsilon}(t )$
 \ \ \ \ \ \ \ \ \  Fig. 5: \ $y_{1, \epsilon}(t),y_{2, \epsilon}(t )$ \ \ \ \ \ \ \ \ \ \ \ \ \ \ \ \ \  Fig. 6: \ $z_{ \epsilon}(t)$

\bigskip

 The state trajectories corresponding to the two cases ($\epsilon=0.1 $  and $\epsilon=0.01 $) are depicted in
 Figures 7 and 8. As can be seen, the  fast state  variables $y_{i,\epsilon}(\cdot), \ i=1,2, $ move along a square like figure that  gradually changes its shape
 while the slow variable  $z_{\epsilon}(\cdot) $ is decreasing  from the initial level $z(0)=2 $
  to the level $z\approx 0.14$, which is reached at the moment $t\approx 2.9 $. After this moment, the zero controls
are applied and the fast variables are rapidly converging to zero, with the slow variable stabilizing and remaining  approximately equal to $0.14 $.

\begin{center}
{\it State trajectories  for $\ \epsilon = 0.1$  and $\ \epsilon = 0.01$  }
\end{center}
\begin{center}
\vspace{-.1in}
\includegraphics[scale=0.36]{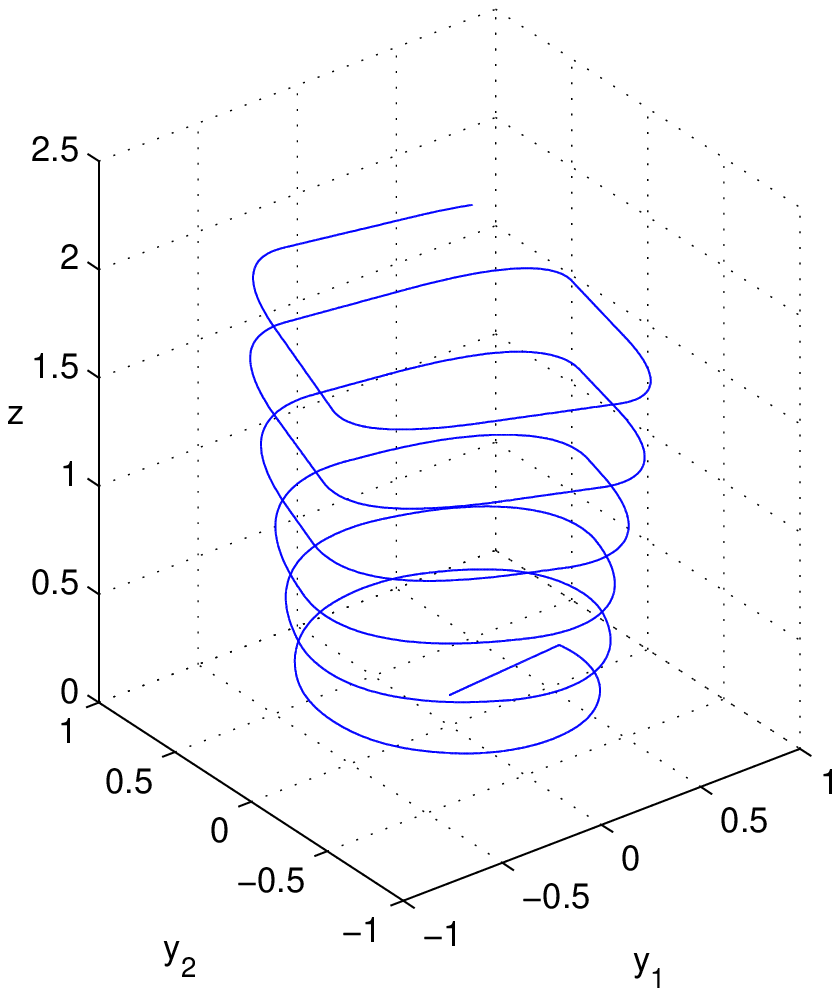}
\hspace{+.12in}
\includegraphics[scale=0.36]{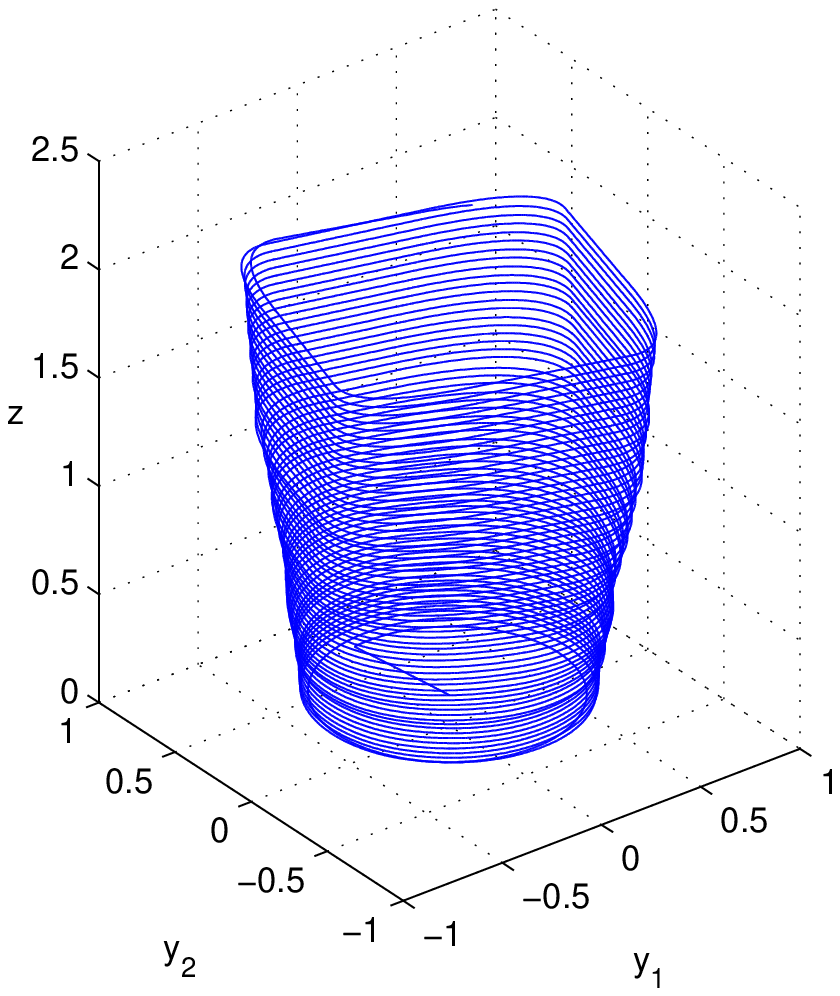}
\end{center}
 \vspace{-.1in}
\ \ \ \ \ \ \ \ \ \   \ \ \ \ \ \ \ \ \  \ \ \ \ \ Fig. 7:  $\ (y_{ \epsilon}(t),z_{\epsilon}(t))\  $  for $\epsilon = 0.1$ \ \ \ \ \ \ \  \ \ \ \ \ \
 Fig.8: \ $\ (y_{ \epsilon}(t),z_{\epsilon}(t))$ for $\epsilon = 0.01$

\bigskip

REMARK IV.2. As has been mentioned earlier (see Remarks II.2 and III.2), many results obtained for the SP optimal control
problem with time discounting have their counterparts in the periodic optimization setting. This remains valid also
for the consideration of this section (as well as  that of Section \ref{Sec-SP-ACG-theorem}). In particular, the process of construction of a control that is asymptotically
near optimal in the SP periodic optimization problem  (\ref{Vy-perturbed-per-new-1}) on the basis of an ACG family that generates a near optimal
solution the averaged problem (\ref{Vy-perturbed-per-new-10}) is similar to that outlined in steps $(1)$ to $(5)$ above,
with $u^{N,M}(y,z)$ being a minimizer  in (\ref{e-NM-minimizer-1}) (and with $\zeta^{N,M}(z)$ and $\eta^{N,M}_z (y)$ being solutions of the corresponding
$(N,M) $-approximating  averaged and associated dual problems; see Remark III.2).
Denote thus obtained control as $\ u^{N,M}_{\epsilon}(t)$ and  the corresponding periodic solution of the system  (\ref{e:intro-0-1})-(\ref{e:intro-0-2})
(assume that it exists) as $(y^{N,M}_{\epsilon}(t), z^{N,M}_{\epsilon}(t))$, the period of the latter being denoted as $T^{N,M}$. Denote also
by $V_{per}^{N,M}(\epsilon)$ the corresponding value of the objective function:
 \begin{equation}\label{e:near-opt-SP-10-3-ave}
V_{per}^{N,M}(\epsilon)\BYDEF
 \frac{1}{T^{N,M}}\int_0^{T^{N,M}} G(u^{N,M}_{\epsilon}(t), y^{N,M}_{\epsilon}(t),
z^{N,M}_{\epsilon}(t))dt.
\end{equation}
By (\ref{Vy-perturbed-per-new-9}) and by (\ref{e:graph-w-M-103-101-ave }),
$$
\limsup_{\epsilon\rightarrow 0}[V_{per}^{N,M}(\epsilon)-V_{per}^{*}(\epsilon)]\ \leq \
 \limsup_{\epsilon\rightarrow 0}V_{per}^{N,M}(\epsilon) - \liminf_{\epsilon\rightarrow 0}V_{per}^{*}(\epsilon)
$$
\vspace{-.2in}
\begin{equation}\label{e:near-opt-SP-10-4-ave}
\leq \ \limsup_{\epsilon\rightarrow 0}V_{per}^{N,M}(\epsilon) - \tilde{G}^{*}\ \leq \ \limsup_{\epsilon\rightarrow 0}V_{per}^{N,M}(\epsilon) - \tilde{G}^{N,M},
\end{equation}
where $\tilde{G}^{*} $ is the optimal of the IDLP problem (\ref{Vy-perturbed-per-new-8}) and $\tilde{G}^{N,M} $
is the optimal value of the $(N,M)$-approximating problem (\ref{e-ave-LP-opt-di-MN-ave}) (compare with (\ref{e:near-opt-SP-10-4})). That is,
the difference $\ |V_{per}^{N,M}(\epsilon) - \tilde{G}^{N,M}| $ can serve as
a measure of the asymptotic near optimality of the control $u^{N,M}_{\epsilon}(\cdot) $ in the  periodic optimization case.
Thus,  one may conclude that, to find an asymptotically near optimal control for the SP periodic
optimization problem (\ref{Vy-perturbed-per-new-1}), one may follow the steps similar to $(1)-(5)$, with the differences being as follows: {\it (i) A solutions of the $(N,M)$ approximating problem (\ref{e-ave-LP-opt-di-MN-ave}) and  the corresponding averaged and associated dual problems should be used  instead of solutions of the problem (\ref{e-ave-LP-opt-di-MN}) and its corresponding duals; (ii) A periodic solution of the SP system
(\ref{e:intro-0-1})-(\ref{e:intro-0-2})  should be sought instead of one satisfying the initial condition (\ref{e-initial-SP}); (iii) The difference $\ |V_{per}^{N,M}(\epsilon) - \tilde{G}^{N,M}| $ should be used as a measure
of asymptotic near optimality of the control $u^{N,M}_{\epsilon}(\cdot) $.}

EXAMPLE 2 (Continued).
Consider the   periodic optimization problem  (\ref{e:ex-4-1-rep-101-2}).
The $(N,M)$-approximating averaged problem (\ref{e-ave-LP-opt-di-MN-ave})
was constructed with the use of  monomials $\ z^{j_1}_1z^{j_2}_2$
as $\ \psi_{j_1,j_2}(z) $
and  monomials $\ y_1^{i_1}y_2^{i_2}$ as $\ \phi_{i_1,i_2}(y)$ and with $N,M=35$,
\begin{equation}\label{e:near-opt-SP-10-5-per}
\psi_{j_1,j_2}(z)\BYDEF z^{j_1}_1z^{j_2}_2, \ \ 1 \leq j_1+ j_2\leq  5, \ \ \ \ \ \ \ \ \ \ \phi_{i_1,i_2}(y)=y_1^{i_1}y_2^{i_2}, \ \  \ \  1 \leq i_1+ i_2\leq  5.
\end{equation}
 Solving this problem with the algorithm of Section  \ref{Sec-LP-based-algorithm}, one finds  its optimal
value
\begin{equation}\label{e:near-opt-SP-10-6-per}
\tilde{G}^{35,35}\approx -1.186.
\end{equation}
as well as the coefficients of the expansions
\begin{equation}\label{e:near-opt-SP-10-7-per}
\zeta^{35,35}(z)=\sum_{1\leq j_1+j_2\leq 5} \lambda_{j_1,j_2}^{35,35}z^{j_1}_1z^{j_2}_2 , \ \ \ \ \ \ \ \ \ \ \eta^{35,35}_z(y)=\sum_{1\leq i_1+i_2\leq 5} \omega_{z,i_1,i_2}^{35,35} \ y_1^{i_1}y_2^{i_2},
\end{equation}
that define
solutions of the  $(N,M)$-approximating averaged and dual problems (see (\ref{e:DUAL-AVE-0-approx-2}) and (\ref{e:DUAL-AVE-0-approx-1-Associate-1})).
Using $\zeta^{35,35}(z)$ and $\eta^{35,35}_z(y) $, one can compose the problem (\ref{e-NM-minimizer-0}):
$$
\min_{u_i\in [-1,1]}\{0.1u_1^2+ 0.1u_2^2- z_1^2+ \frac{\partial \zeta^{35,35}(z)}{\partial z_1}z_2 + \frac{\partial \zeta^{35,35}(z)}{\partial z_2}(-4z_1-0.3 z_2 -y_1u_2 + y_2u_1) + \frac{\partial\eta^{35,35}_z(y)}{\partial y_1}(-y_1 +u_1)
$$
\vspace{-.2in}
\begin{equation}\label{e-NM-minimizer-0-example-per}
\ \ \ \ \ \ \ \ \ \ \ \ \ \ \ \ \ \ \ \ \ \ \ +\ \frac{\partial\eta^{35,35}_z(y)}{\partial y_2}(-y_2 +u_2)\}.
\end{equation}
The solution of the problem (\ref{e-NM-minimizer-0-example-per}) is similar to that of (\ref{feedbackFinalVel-SP}) and is written in the form
\begin{equation}\label{feedbackFinalVel-SP-per}
u^{35,35}_i(y,z)=\left\{
\begin{array}{rrrl}
- 5 b_i^{35,35}(y,z) \ & \ \ \ \ \ if&  \ \ \ \ \ \ \ \ \ \ \ \  |5 b_i^{35,35}(y,z)|\leq 1,  
\\
-1  \ & \ \ \ \  \  if&  \ \ \ \ \ \ \ \ \ \ \ \  \ -5 b_i^{35,35}(y,z) < -1,
\\
1  \ & \ \ \ \ \  if&  \ \ \ \ \ \ \ \ \ \ \ \  \
-5 b_i^{35,35}(y,z) > 1,
\\
\end{array}
\right\},
\ \ \ i=1,2,
\end{equation}
where
$\ b_1^{35,35}(y,z)\BYDEF \frac{\partial \zeta^{35,35}(z)}{\partial z_2} y_2 + \frac{\partial\eta^{35,35}_z(y)}{\partial y_1} \ $
and $\ b_2^{35,35}(y,z)\BYDEF -\frac{\partial \zeta^{35,35}(z)}{\partial z_2} y_1 + \frac{\partial\eta^{35,35}_z(y)}{\partial y_2}\ $.

As in Example 1, the equations
describing the fast dynamics do not depend on the slow component $z(\cdot)$. Hence, the
the control $\ u_{\epsilon}^{35,35}(t)=(u^{15,35}_{1, \epsilon}(t),u^{15,35}_{2, \epsilon}(t ))\ $ defined by (\ref{e:near-opt-SP-10-1})-(\ref{e:near-opt-SP-10-2}) can be written in the feedback form:
\begin{equation}\label{e:near-opt-SP-10-8-per}
 u^{35,35}_{i, \epsilon}(t) \BYDEF u^{35,35}_i( y^{35,35}_{\epsilon}(t),  z^{35,35}_{\epsilon}(t_l))\ \ \
 \ \ \forall \ t\in [t_l,t_{1+1}), \ \ \ l=0,1,... \ ,\ \ \ \ \ \ \ \ i=1,2,
\end{equation}
where $(y^{35,35}_{\epsilon}(\cdot),  z^{35,35}_{\epsilon}(\cdot)) $ is the solution of the system (\ref{e:ex-4-2-repeat}),(\ref{e:ex-4-1-rep-101-1})
obtained with the control $\ u^{35,35}_{ \epsilon}(\cdot)$.

The periodic solution of the system (\ref{e:ex-4-2-repeat}),(\ref{e:ex-4-1-rep-101-1}) was found with MATLAB for $\epsilon =0.01 $
and $\epsilon = 0.001$. The images of the state trajectories obtained as the result of the integration are depicted in Figures 9 and 10
(where, again, the superscripts are omitted from the notations).
The slow $z $-components appear to be  moving periodically along a closed, ellipse like, figure on the plane $(z_1,z_2)$, with the period being approximately equal
 to $3.16 $. Note that this  figure  and the period  appear to be the same for $\epsilon =0.01 $
and $\epsilon = 0.001$. The fast $y $-components  are moving along  square like figures centered around the points on  the \lq\lq ellipse", with about $50$  rounds for the case  $\epsilon = 0.01$ and about $500$  rounds for the case  $\epsilon = 0.001$. The  values of the objective functions obtained for these two cases
are approximately the same and $\approx -1.177$, the latter being close to the value of $\tilde{G}^{35,35}$ (see (\ref{e:near-opt-SP-10-6-per})).

\bigskip

\begin{center}
{\it Images of state trajectories  for $\ \epsilon = 0.01$  and $\ \epsilon = 0.001$  }
\end{center}
\begin{center}
\vspace{-.1in}
\includegraphics[scale=0.36]{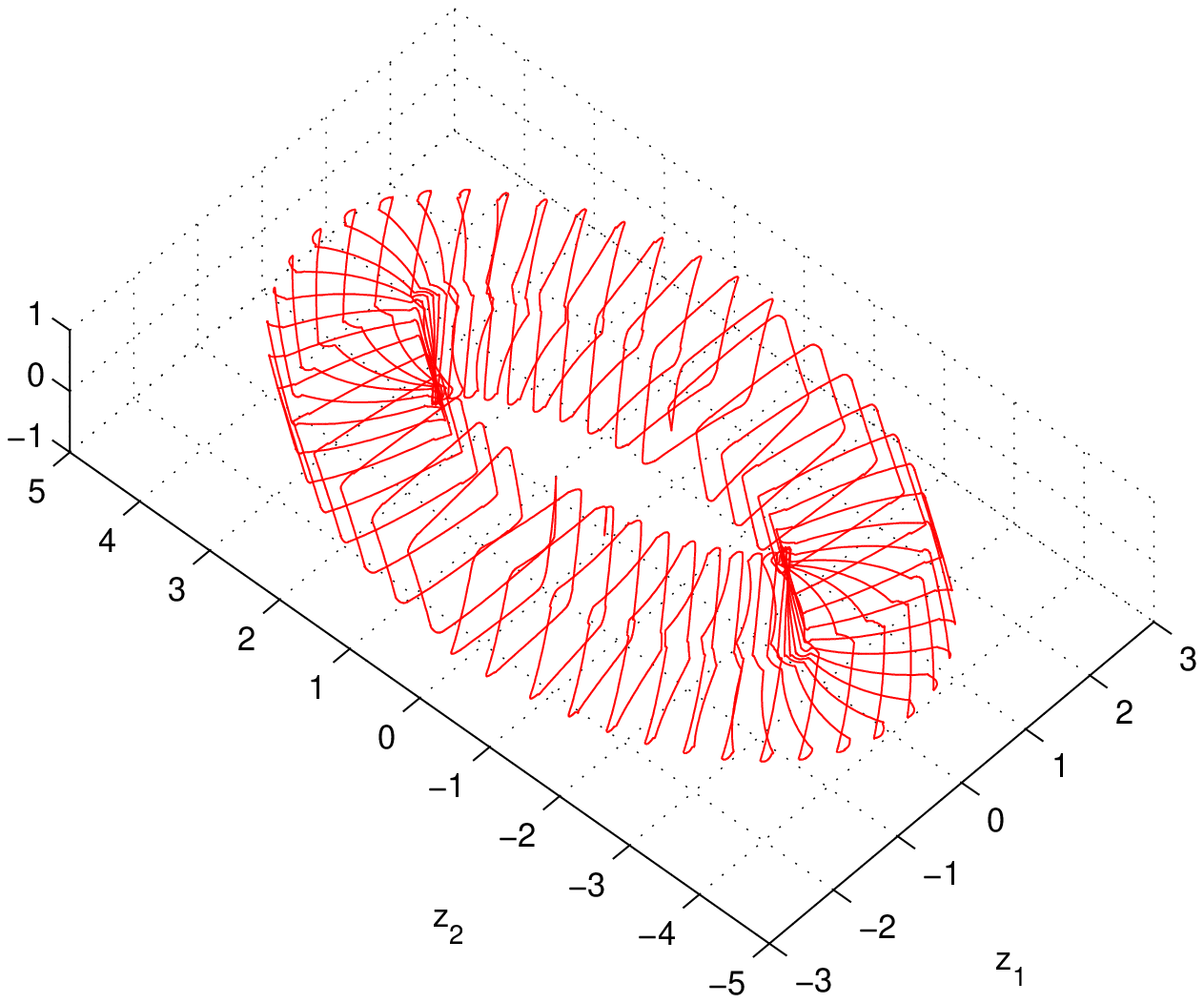}
\hspace{+.12in}
\includegraphics[scale=0.36]{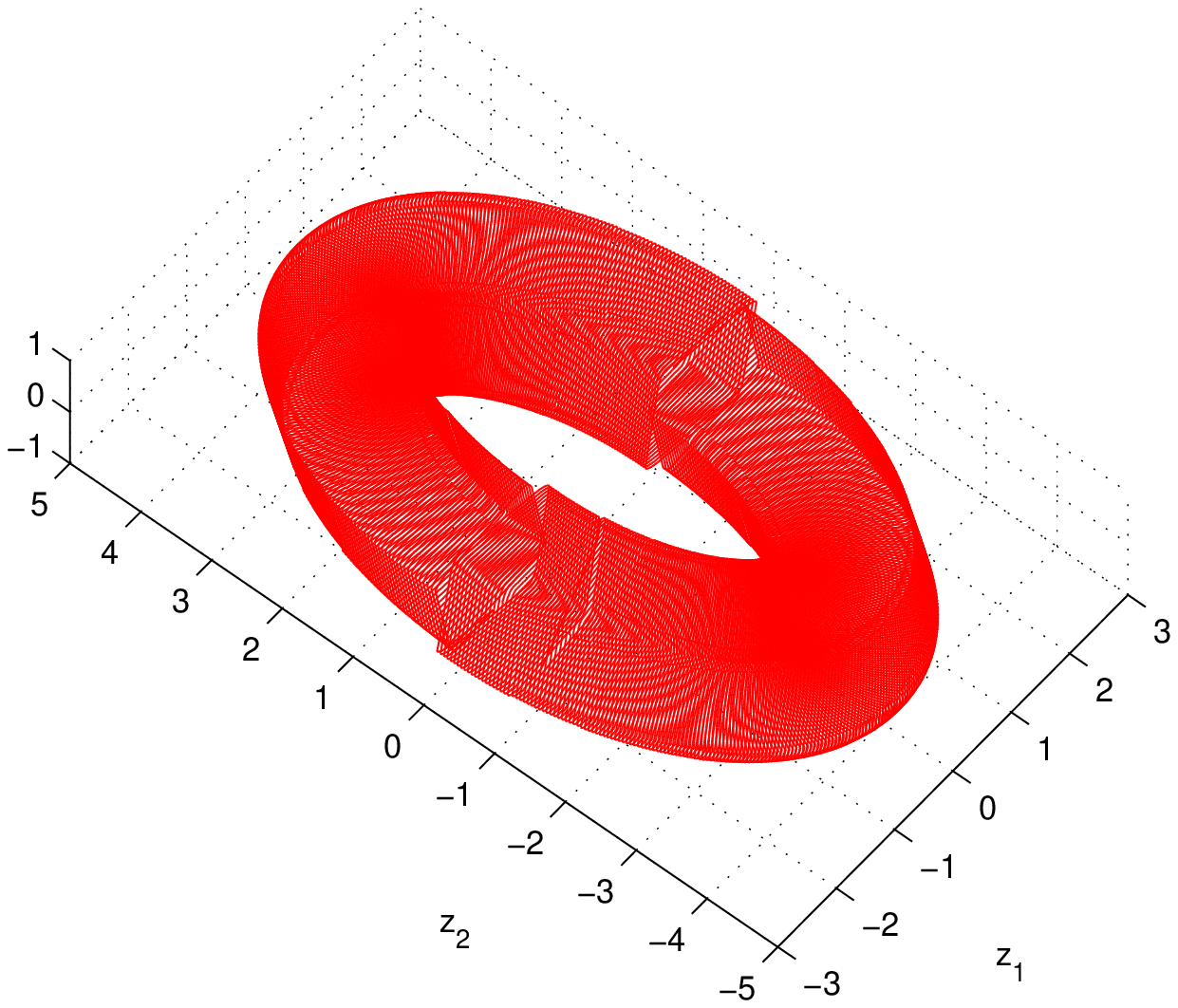}
\end{center}
 \vspace{-.1in}
\ \ \ \ \ \ \ \ \ \ \ \ \ \ \ \ \ \   \ \ \ \ Fig. 9:  $\ (y_{ \epsilon}(t), z_{\epsilon}(t))\  $  for $\epsilon = 0.01$ \ \ \ \ \ \ \  \ \ \ \ \ \
 Fig.10: \ $\ (y_{ \epsilon}(t), z_{\epsilon}(t))$ for $\epsilon = 0.001$

\section{LP based  algorithm for solving $(N,M) $-approximating problems}\label{Sec-LP-based-algorithm}

We will start with a consideration of an algorithm for finding an optimal solution of the following \lq\lq generic" semi-infinite LP problem
\begin{equation}\label{e:LP-Decomp-alg-1}
\min_{p\in \Omega}\{\int_{X}\Psi_0(x)p(dx)\}\BYDEF \sigma^*
\end{equation}
\vspace{-.1in}
where
\vspace{-.1in}
\begin{equation}\label{e:LP-Decomp-alg-2}
 \Omega\BYDEF \{p\in \mathcal{P}(X) \ : \ \int_{X}\Psi_i(x)p(dx)=0, \ \ i=1,...,K\},
\end{equation}
with $X$ being a non-empty compact metric space and  with $\Psi_i(\cdot): X\rightarrow \R^1, \ i=0,1,...,K, $ being continuous functional on $X $.
Note that the problem dual with respect to (\ref{e:LP-Decomp-alg-1}) is the problem (\ref{e:Important-simple-duality-Lemma-1}), and we assume
that the inequality (\ref{e:Important-simple-duality-Lemma-3}) is valid only with $v_i = 0 , \ i=1,...,K $ (which, by Lemma \ref{Lemma-Important-simple-duality}, ensures the existence of a solution of the problem (\ref{e:Important-simple-duality-Lemma-1})).

It is known (see, e.g., Theorems A.4 and A.5 in \cite{Rubio}) that among the optimal solutions of the problem (\ref{e:LP-Decomp-alg-1}) there exists
one that is presented in the form
$$
 p^{*}= \sum^{K+1}_{l=1}p_l^{*} \delta_{x_l^*}, \ \  \ \ \ \ \ {\rm where} \ \ \ \ \ \ p_l^{*}\geq 0, \ \ \ \ \sum^{K+1}_{l=1}p_l^{*}=1,
$$
where $\delta_{x_l^*}$ are Dirac measures concentrated at $\ x_l^*\in X$, $\ l=1,...,K+1$. Having in mind this presentation, let us consider the following algorithm for finding optimal concentration points $\{x_l^*\} $ and optimal weights $\{p_l^*\}$ (see \cite{GM} and \cite{GRT}).
Let points $ \ \{x_l\in X \ , \ l = 1,..., L\} \ $ ($L\geq K+1$) be chosen to define
an initial grid $\mathcal{X}_0$ on $X$
$$ \mathcal{X}_0= \{x_l\in X \ , \ l = 1,..., L\}. \ $$
At every iteration a new point is defined and added to this set.
Assume that after $J $ iterations the points $x_{L+1},...,x_{L+J}$ have been defined and the set $\mathcal{X}_J $ has been constructed:
$$
\mathcal{X}_J =\{x_l\in X \ , \ l = 1,..., L+J\}.
$$
The iteration $J+1$ ($J=0,1,... $) is described as follows:

(i) Find a basic optimal solution $\ p^J = \{p^J_l\} \ $  of the LP problem
\begin{equation}\label{e:NS-3}
\min_{p\in\Omega_J}\{\sum_{l=1}^{L+J}p_l\Psi_0(x_l)\} \BYDEF \sigma^J ,
\end{equation}
where
\begin{equation}\label{e:NS-4}
\Omega_J\BYDEF \{ p     \ : \ p= \{p_l\} \geq 0\ ,  \ \ \ \sum_{l=1}^{L+J}p_l = 1,\ \ \ \ \ \ \sum_{l=1}^{L+J}p_l\Psi_i(x_l)= 0 \ ,\ \ i = 1,...,K \}.
\end{equation}
Note that no more than $K+1 $ components of $ p^J $ are positive, these being called basic components.
Also, find an optimal solution  $\ \{\lambda_0^J,\ \lambda_i^J , i=1,...,K\}$ of the problem dual with respect to (\ref{e:NS-3}), the latter being
of the form
\begin{equation}\label{e:NS-4-dual}
 \max\{\lambda_0 \ : \  \lambda_0\leq \Psi_0(x_l)+ \sum_{i=1}^{K}\lambda_i \Psi_i(x_l)\ \ \forall \ l=1,...,K+J\}.
\end{equation}

(ii) Find an optimal solution $x_{L+J+1} $ of the problem
\begin{equation}\label{e:NS-5}
\min_{x\in X} \{\Psi_0(x) + \sum_{i=1}^K\lambda_i^J \Psi_i(x)\}\BYDEF a^J
\end{equation}

(iii) Define the set $\mathcal{X}_{J+1} $ by the equation
$$
\mathcal{X}_{J+1} =\mathcal{X}_{J}\cup \{x_{L+J+1} \}.
$$

Using an argument similar to one commonly used in a standard  (finite dimensional) linear programming,  one can show (see, e.g., \cite{GM} or \cite{GRT}) that if
$\ a^J \geq \lambda_0^J $, then $\ \sigma^J = \sigma^*$ and  the measure $\ \sum_{l\in \varrho^J}p_l^J\delta_{x_l} $ (where $\varrho^J $
stands for the index set of basic components of $p^J$) is an optimal solution of the problem (\ref{e:LP-Decomp-alg-1}), with $\lambda^J\BYDEF \{\lambda_i^J, \ i=1,...,K\}$ being an optimal
solution of the problem (\ref{e:Important-simple-duality-Lemma-1}). If $\ a^J < \lambda_0^J $, for $J=1,2,..., $ then, under some non-degeneracy assumptions,
it can be shown that $\ \lim_{J\rightarrow\infty}\sigma^J = \sigma^* $, and that any cluster (limit) point of the set of measures
$\{ \sum_{l\in \varrho^J}p_l^J\delta_{x_l} , \ J=1,2,...\}  $ is an optimal solution of the problem (\ref{e:LP-Decomp-alg-1}), while
any cluster (limit) point of the set $\ \{ \lambda^J, \ J=1,2,...\} $ is  an optimal solution of the problem (\ref{e:Important-simple-duality-Lemma-1})
 (main features of the proof of these convergence results can be found  in \cite{GM}, \cite{GRT}).

 The $(N,M)$ approximating problem (\ref{e-ave-LP-opt-di-MN}) is a special case of the problem (\ref{e:LP-Decomp-alg-1}) with an obvious correspondence
 between the notations:
 $$
 x=(\mu,z), \ \ \ \ \ \  X= F_M, \ \ \  \ \ \ \Omega = \tilde{\mathcal{D}}_{di}^{N,M}(z_0), \ \ \ \ \ \ K=N,
 $$
 \vspace{-.2in}
 $$
\Phi_0(x) = \tilde G(\mu , z), \ \ \ \ \ \ \ \  \ \ \ \Phi_i(x) = \nabla\psi_i(z)^T \tilde{g}(\mu,z) + C (\psi_i (z_0) - \psi_i (z) ), \ \ i=1,...,N.
 $$
 Assume that the set
 \begin{equation}\label{e:NS-5-SP-0-101}
\mathcal{X}_J =\{(\mu_l, z_l)\in F_M \ , \ l = 1,..., L+J\}
\end{equation}
 has been constructed. The LP problem (\ref{e:NS-3}) takes in this case the form
 \begin{equation}\label{e:NS-3-ave}
\min_{p\in\Omega_J}\{\sum_{l=1}^{L+J}p_l\tilde G(\mu_l , z_l)\}\BYDEF \tilde{G}^{N,M,J}_{di}(z_0)  ,
\end{equation}
where
$$
\Omega_J\BYDEF \{ p     \ : \ p= \{p_l\} \geq 0\ ,  \ \ \ \sum_{l=1}^{L+J}p_l = 1,\ \ \ \ \ \ \sum_{l=1}^{L+J}p_l [\nabla\psi_i(z)^T \tilde{g}(\mu_l,z_l) + C (\psi_i (z_0) - \psi_i (z_l) )]= 0 \ ,\ \ i = 1,...,N \},
$$
  with the corresponding dual being of the form
   \begin{equation}\label{e:NS-3-ave-dual}
 \max\{\lambda_0 \ : \  \lambda_0\leq \tilde G(\mu_l , z_l)+ \sum_{i=1}^{N}\lambda_i [\nabla\psi_i(z)^T \tilde{g}(\mu_l,z_l) + C (\psi_i (z_0) - \psi_i (z_l) )]\ \ \forall \ l=1,...,K+J\}.
\end{equation}
  Denote by $p^{N,M,J}=\{p^{N,M,J}_l\} $ an optimal basic solution of the problem (\ref{e:NS-3-ave})
 and by $\ \{\lambda_0^{N,M,J},\ \lambda_i^{N,M,J} , i=1,...,N\}$ an optimal solution of the  dual problem (\ref{e:NS-3-ave-dual}).
  The problem (\ref{e:NS-5}) identifying the point to be added to the set
 $\mathcal{X}_J$ takes the following form
\begin{equation}\label{e:NS-5-SP}
\min_{(\mu,z)\in F_M} \{\tilde G(\mu , z) + \sum_{i=1}^N\lambda_i^{N,M,J} [\nabla\psi_i(z)^T \tilde{g}(\mu,z) + C (\psi_i (z_0) - \psi_i (z) ) ]\}
=\min_{z\in Z}\{\tilde{\mathcal{G}}^{N,M,J}(z) + C (\psi_i (z_0) - \psi_i (z) )\},
\end{equation}
where
\vspace{-.2in}
\begin{equation}\label{e:NS-5-SP-1}
\tilde{\mathcal{G}}^{N,M,J}(z)\BYDEF \min_{\mu\in W_M(z)}\{\int_{U\times Y}[G(u,y,z) + \sum_{i=1}^N\lambda_i^{N,M,J} \nabla\psi_i(z)^Tg(u,y,z)]\mu(du,dy)\} .
\end{equation}
Note that the problem (\ref{e:NS-5-SP-1}) is also a special case of the problem (\ref{e:LP-Decomp-alg-1}) with
$$
  x= (u,y), \ \ \ \ \ \  X=U\times Y, \ \ \ \ \ \ \Omega= W_M(z), \ \ \ \ \ \ K=M,
$$
 \vspace{-.2in}
$$
\Phi_0(x) =  G(u,y,z) + \sum_{i=1}^N\lambda_i^{N,M,J} \nabla\psi_i(z)^Tg(u,y,z), \ \ \ \ \ \ \ \  \ \ \ \Phi_i(x) = \nabla\phi_i(y)^T f(u,y,z), \ \ i=1,...,M.
$$
 Its optimal solution as well as an optimal solution of the corresponding dual problem can be found with the help of the same approach. Denote the latter
 as $\mu^{N,M,J}_z$ and $\{\omega_{z,0}^{N,M,J}, \omega_{z,i}^{N,M,J}, \ i=1,...,M\} $, respectively. By adding the point $(\mu^{N,M,J}_{z^*}, z^*)$
 to the set $\mathcal{X}_J$ ($z^*$ being an optimal solution of the problem in the right-hand side of
(\ref{e:NS-5-SP})),  one can define the set $\mathcal{X}_{J+1} $ and then proceed to the next iteration.

Under the controllability conditions introduced in Section \ref{Sec-Existence-Controllability} (see Assumptions \ref{Ass-ave-disc-controllability} and \ref{Ass-associated-local-controllability}) and under additional (simplex method related) non-degeneracy  conditions,
it can be proved (although we do not do it in the present paper) that
the optimal value of the problem (\ref{e:NS-3-ave}) converges to the optimal value of the $(N,M)$-approximating averaged problem
\begin{equation}\label{e:NS-3-ave-convergence}
\lim_{J\rightarrow\infty} \tilde{G}^{N,M,J}_{di}(z_0)  = \tilde{G}^{N,M}_{di}(z_0)
\end{equation}
and  that, if $\ \lambda^{N,M}= \{\lambda_i^{N,M} , i=1,...,N\}$ is a cluster (limit) point of the set of optimal solutions
$\ \lambda^{N,M,J}= \{\lambda_i^{N,M,J} , i=1,...,N\}\ $ of the problem (\ref{e:NS-3-ave-dual}) considered with $J=1,2,... $, then
$$
\zeta^{N,M}(z) \BYDEF \sum_{i=1}^N \lambda_i^{N,M} \psi_i(z)
$$
is an optimal solution of the $(N,M) $-approximating averaged dual problem (\ref{e:DUAL-AVE-0-approx-MN}). In addition to this, it can be shown that,
if $\ \omega^{N,M}_z= \{\omega_{z,i}^{N,M} , i=1,...,M\}$ is a cluster (limit) point of the set of optimal solutions
$\ \omega^{N,M,J}_z= \{\omega_{z,i}^{N,M,J} , i=1,...,M \}\ $ of the problem
dual to (\ref{e:NS-5-SP-1}) considered with $J=1,2,... $, then
$$
\eta^{N,M}_z \BYDEF \sum_{i=1}^M \omega_{z,i}^{N,M} \phi_i(y)
$$
is an optimal solution  of the $(N,M) $-approximating associated dual problem (\ref{e:dec-fast-4-Associated}).

 A software that implements this algorithm on the basis of the IBM ILOG CPLEX LP solver  and   global nonlinear optimization routines  designed by A. Bagirov and M. Mammadov
has been developed (with the CPLEX solver being used for finding optimal solutions of the LP problems  involved and Bagirov's and Mammadov's routines being used for  finding optimizers in (\ref{e:NS-5-SP}) and in problems similar to (\ref{e:NS-5}) that arise
when solving (\ref{e:NS-5-SP-1})). The numerical solutions of  Examples 1 and 2 in  Section \ref{Sec-construction-SP-examples} were obtained with the help of
this software ($C$ was taken to be equal to zero in dealing with the periodic optimization problem of Example 2)

REMARK IV.3. The decomposition of the problem (\ref{e:NS-5}), an optimal solution of which identifies  the  point to be added to the set $\mathcal{X}_J$,
into problems (\ref{e:NS-5-SP}) and (\ref{e:NS-5-SP-1})
 resembles  the column generating technique of generalized linear programming
 (see \cite{Dantzig}).  Note that a similar decomposition was observed in dealing with LP problems related to singular perturbed Markov chains (see, e.g., \cite{AFH}, \cite{PG88} and \cite{Yin}). Finally, let us also note that, while in this paper we are using the $(N,M)$-approximating problems and their LP based solutions
 for finding near optimal ACG families, other methods for finding the latter can be applicable as well. For example, due to the fact that
 the averaged and associated dual problems (\ref{e:DUAL-AVE-0}) and  (\ref{e:dec-fast-4}) are inequality forms of certain HJB equations (see Remark III.1),
 it is plausible  to expect that an adaptation of methods of solution of HJB equations  developed in \cite{Falcone}, \cite{Fleming}, \cite{Kus3-D} can be of a particular use.

\bigskip

{\bf V. Selected proofs.}\label{Proofs for SP-LP}

\section{Proofs of Proposition \ref{Prop-ave-disc} and Proposition \ref{Prop-SP-2} }\label{Sec-Phi-Map}\

{\it Proof of Proposition \ref{Prop-ave-disc}.}
Let $h_i(u,y,z):U\times
Y \times Z \to \mathbb{R}^1 \ i=1,2,...$, be a sequence of Lipschitz continuous functions that is dense
in the unit ball of $C(U\times Y\times Z)$ and let
\begin{equation}\label{e:h-N}
 h^N(u,y,z) = (h_1(u,y,z), ..., h_N(u,y,z)), \ \ N=1,2,... \ ,
 \end{equation}
 \begin{equation}\label{e:h-N-tilde}
\tilde h^N(\mu,z) = (\tilde h_1(\mu,z), ..., \tilde h_N(\mu,z)), \ \ N=1,2,... \ ,
 \end{equation}
 where $\ \tilde h_i(\mu,z)=\int_{U\times Y}h_i(u,y,z)\mu(du,dy) $ with $\mu\in \mathcal{P}(U\times Y) $. From the fact
 that the averaged system approximates the SP system on finite time intervals it follows (see (\ref{e:intro-0-3-8}))
 that

 \begin{equation}\label{e:SP-aug-5-1}
d_H(\Theta_N(\epsilon , T), \Theta_N( T))\leq \beta_N(\epsilon, T) \ , \ \ \ {\rm where} \ \ \ \lim_{\epsilon\rightarrow 0} \beta_N(\epsilon, T) = 0 \ ,
 \end{equation}
$d_H(\cdot,\cdot)$ is the Housdorff metric generated by a norm in $\R^N $, and the sets $\Theta_N(\epsilon , T) $, $ \Theta_N( T) $
 are defined by the equations
 \begin{equation}\label{e:SP-aug-5-1-0-1}
 \Theta_N(\epsilon , T)\BYDEF \bigcup_{u(\cdot)}\{ C \int_0^{T}  e^{-C t} h^N(u(t),y_{\epsilon}(t),z_{\epsilon}(t) dt  \} \ ,
 \end{equation}
 \begin{equation}\label{e:SP-aug-5-1-0-2}
\Theta_N(T)\BYDEF \bigcup_{(\mu(\cdot), z(\cdot))}\{C \int_0^{T}  e^{-C t}\tilde{h}^N(\mu(t),z(t)) dt\},
 \end{equation}
 with the first union being over all  controls of the SP system and the second being over all admissible pairs of the averaged system.
Define the sets $\Theta_N(\epsilon)$ and $\Theta_N $ by the equations
 \begin{equation}\label{e:SP-aug-5-1-3}
 \Theta_N(\epsilon)\BYDEF \bigcup_{u(\cdot)}\{ C \int_0^{\infty}  e^{-C t} h^N(u(t),y_{\epsilon}(t),z_{\epsilon}(t))) dt  \} \ ,
 \end{equation}
 \begin{equation}\label{e:SP-aug-5-1-0-4}
\Theta_N\BYDEF \bigcup_{(\mu(\cdot), z(\cdot))}\{C \int_0^{\infty}  e^{-C t}\tilde{h}^N(\mu(t),z(t)) dt\},
 \end{equation}
 where, again, the first union is over all  controls of the SP system and the second is over all admissible pairs of the averaged system.
  It is easy to see that
 \begin{equation}\label{e:SP-aug-5-1-0-5}
d_H(\Theta_N(\epsilon , T), \Theta_N(\epsilon) )
\leq a_Ne^{-C T}  , \ \ \ \ \ \ \ \ \ \
d_H(\Theta_N(T), \Theta_N)\leq  a_N e^{-C T} ,
 \end{equation}
  where $\ a_N\BYDEF \max_{(u,y.z)\in U\times Y\times Z}||h^N(u,y,z)||$.

  Let us  use (\ref{e:SP-aug-5-1}) and (\ref{e:SP-aug-5-1-0-5}) to show that
 \begin{equation}\label{e:SP-aug-5-1-0-6}
d_H( \Theta_N(\epsilon),  \Theta_N )
\leq \hat \beta_N(\epsilon)
 \end{equation}
 for some $\hat \beta_N(\epsilon) $ such that $ \  \lim_{\epsilon\rightarrow 0} \hat \beta_N(\epsilon)= 0  $.
 Let us separately deal with two
different cases. First is the case when the estimate (\ref{e:SP-aug-5-1}) is uniform, that is,
\begin{equation}\label{e:SP-aug-5-1-1}
 \beta_N(\epsilon, T)\leq \beta_N(\epsilon) \ , \ \ \ \ \ \lim_{\epsilon\rightarrow 0} \beta_N(\epsilon) = 0 \ .
 \end{equation}
The second is the case when there exists a number $\alpha >0 $ and sequences $\{\epsilon_l\}$,
$\{T_l\}$ such that
\begin{equation}\label{e:SP-aug-5-1-2}
\beta_N(\epsilon_l, T_l) \geq \alpha \  , \ \ \ \ \ \lim_{l\rightarrow \infty} \epsilon_l = 0 \ ,
\ \ \ \ \ \lim_{l\rightarrow \infty}T_l = \infty \ .
 \end{equation}
 In case (\ref{e:SP-aug-5-1-1}) is valid, from (\ref{e:SP-aug-5-1}) and (\ref{e:SP-aug-5-1-0-5}) it follows that
$$
d_H( \Theta_N(\epsilon),  \Theta_N )\leq d_H( \Theta_N(\epsilon),  \Theta_N(\epsilon , T)) +
d_H(\Theta_N(\epsilon , T), \Theta_N (T)) + d_H(\Theta_N (T), \Theta_N )
$$
$$
\leq \ 2 a_N e^{-C T} + \beta_N(\epsilon) \ \ \ \ \forall T\in [0,\infty) \ .
$$
Hence, passing to the limit when $T\rightarrow\infty$, one obtains (\ref{e:SP-aug-5-1-0-6}) with $\bar \beta_N(\epsilon) = \beta_N(\epsilon) $.

To deal with the case when (\ref{e:SP-aug-5-1-2}) is true, choose $T(\epsilon)$ in such a way that
\begin{equation}\label{e:SP-aug-5-1-5-1-1}
\lim_{\epsilon\rightarrow 0}T(\epsilon) = \infty \ , \ \ \ \ \ \lim_{\epsilon\rightarrow 0}\beta_N(\epsilon , T(\epsilon)) = 0 ;
\end{equation}
see Lemma \ref{Auxiliary-Lemma} below.
Using (\ref{e:SP-aug-5-1}) and (\ref{e:SP-aug-5-1-0-5}), one can obtain that
$$
d_H( \Theta_N(\epsilon),  \Theta_N )\leq d_H( \Theta_N(\epsilon),  \Theta_N(\epsilon , T(\epsilon))) +
d_H(\Theta_N(\epsilon , T(\epsilon)), \Theta_N (T(\epsilon))) + d_H(\Theta_N (T(\epsilon)), \Theta_N )
$$
$$
\leq \ 2 a_N e^{-\alpha T(\epsilon)} + \beta_N(\epsilon , T(\epsilon))  \ .
$$
Denoting $ \ 2 a_N e^{-\alpha T(\epsilon)} + \beta_N(\epsilon , T(\epsilon)) \BYDEF  \bar \beta_N(\epsilon) \ $, one has
 $ \  \lim_{\epsilon\rightarrow 0} \bar \beta_N(\epsilon) = 0 \ $ (due to (\ref{e:SP-aug-5-1-5-1-1})). This
proves the validity of (\ref{e:SP-aug-5-1-0-6}).

From (\ref{e:oms-0-2}) it follows that the set $\Theta_N(\epsilon) $ can be rewritten in the form
it follows that
 \begin{equation}\label{e:Theta-comparison-1}
\Theta_N(\epsilon) = \bigcup_{\gamma\in \Gamma_{di}(\epsilon , y_0,z_0)}\int_{U\times Y\times Z} h^N(u,y,z)\gamma(du,dy,dz) \ .
\end{equation}
Also, from (\ref{e:oms-0-2}) and from the definition of the map $\Phi(\cdot) $ (see (\ref{e:h&th-1})) it follows that
\begin{equation}\label{e:Theta-comparison-2}
\Theta_N = \bigcup_{p\in \tilde \Gamma_{di}(z_0)} \int_{F} \tilde{h}^N(\mu,z)p(d\mu , dz) =
 \bigcup_{\gamma\in \Phi(\tilde \Gamma_{di}(z_0))}\int_{U\times Y\times Z} h^N(u,y,z)\gamma(du,dy,dz) \ .
\end{equation}
Having in mind the representations (\ref{e:Theta-comparison-1}) and (\ref{e:Theta-comparison-2})
and using Corollary 3.6 of \cite{Gai-Ng},
one can come to the conclusion that
the validity of (\ref{e:SP-aug-5-1-0-6}) for any $N=1,2,...$, implies (\ref{e-OccupSet-Convergence-Dis}).

The validity of (\ref{e-Objective-Convergence-Dis}) follows from (\ref{e-OccupSet-Convergence-Dis}) (due to the presentations (\ref{e:oms-4}), (\ref{e:oms-4-di-ave}) and the definition of the map $\Phi(\cdot)$).

\endproof

\begin{Lemma}\label{Auxiliary-Lemma} If (\ref{e:SP-aug-5-1-2}) is valid, then there exists a monotone decreasing function $T(\epsilon)$ defined
on an interval $(0,c)$
 (c is some positive number) such that (\ref{e:SP-aug-5-1-5-1-1}) is valid.
\end{Lemma}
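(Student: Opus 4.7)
\textbf{Proof plan for Lemma \ref{Auxiliary-Lemma}.}

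The plan is to build $T(\epsilon)$ by a standard diagonal construction, using only the pointwise (in $T$) convergence $\lim_{\epsilon\to 0}\beta_N(\epsilon,T)=0$ guaranteed by (\ref{e:SP-aug-5-1}); notice that the hypothesis (\ref{e:SP-aug-5-1-2}) is what precludes the trivial choice $T(\epsilon)\equiv \infty$ based on a uniform rate, but it does not enter the construction itself.

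First I would, for each positive integer $k$, invoke the fact that $\lim_{\epsilon\to 0}\beta_N(\epsilon,k)=0$ to choose $\delta_k>0$ such that
$$\beta_N(\epsilon,k)<\tfrac{1}{k}\qquad\forall\,\epsilon\in(0,\delta_k).$$
By replacing $\delta_k$ with $\min\{\delta_1,\dots,\delta_k,1/k\}$ if necessary, I may assume that the sequence $\{\delta_k\}$ is strictly decreasing and satisfies $\delta_k\to 0$ as $k\to\infty$. Set $c\BYDEF\delta_1$.

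Next I would define $T(\cdot):(0,c)\to\mathbb{N}$ as the step function
$$T(\epsilon)\BYDEF k \quad\text{for}\quad \epsilon\in[\delta_{k+1},\delta_k),\ k=1,2,\dots\,.$$
Since $\delta_{k+1}<\delta_k$ and the value $k$ assigned on the $k$-th interval increases as the interval moves to the left, $T(\cdot)$ is (non-strictly) monotone decreasing in $\epsilon$. Moreover, for any integer $K$, if $\epsilon<\delta_{K+1}$ then $T(\epsilon)\geq K+1$; letting $\epsilon\to 0$ (hence $K\to\infty$) gives $\lim_{\epsilon\to 0}T(\epsilon)=\infty$. Finally, for $\epsilon\in[\delta_{k+1},\delta_k)$ one has $\epsilon<\delta_k$, and therefore by the choice of $\delta_k$,
$$\beta_N(\epsilon,T(\epsilon))=\beta_N(\epsilon,k)<\tfrac{1}{k}.$$
Since $k=T(\epsilon)\to\infty$ as $\epsilon\to 0$, this yields $\lim_{\epsilon\to 0}\beta_N(\epsilon,T(\epsilon))=0$, completing the verification of (\ref{e:SP-aug-5-1-5-1-1}).

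There is essentially no hard step: the only point that requires mild care is ensuring simultaneously that $T(\cdot)$ is monotone decreasing (achieved by forcing $\{\delta_k\}$ to be strictly decreasing) and that it is defined on a single interval $(0,c)$ (achieved by setting $c=\delta_1$). The argument is a routine diagonal extraction against the countable family of rates $\{\beta_N(\cdot,k)\}_{k\in\mathbb{N}}$.
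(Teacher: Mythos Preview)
Your proof is correct and follows the same diagonal step-function idea as the paper: both pick thresholds $\delta_k$ (the paper calls them $\bar\epsilon_k$) at which $\beta_N(\cdot,k)$ drops below a prescribed tolerance, and then set $T(\epsilon)=k$ on the $k$-th resulting interval. Your version is cleaner in two respects. First, the paper opens by assuming without loss of generality that $\beta_N(\epsilon,T)$ is monotone in each argument, an assumption you do not need. Second, the paper defines $\bar\epsilon_k\BYDEF\sup\{\epsilon\in[0,1]:\beta_N(\epsilon,k)\le 2^{-k}\}$ and then \emph{uses hypothesis} (\ref{e:SP-aug-5-1-2}) to prove $\bar\epsilon_k\to 0$ (arguing by contradiction that otherwise $\beta_N(\epsilon,T)\equiv 0$), whereas you simply force $\delta_k\downarrow 0$ by intersecting with $1/k$. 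Your observation that (\ref{e:SP-aug-5-1-2}) does not actually enter the construction is therefore on point: the conclusion holds under the pointwise convergence in (\ref{e:SP-aug-5-1}) alone, and the paper's appeal to (\ref{e:SP-aug-5-1-2}) is an artifact of its particular choice of thresholds rather than a genuine ingredient.
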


{\it Proof of Lemma \ref{Auxiliary-Lemma} }. Let us assume (without loss of generality) that $\beta_N(\epsilon, T)$ is
decreasing if $\epsilon$ is decreasing (with fixed $T$) and is increasing if  $T$ is increasing (with fixed $\epsilon $). Let us
define the sequence $\{\bar{\epsilon}_k \}$ by the equations
$$
\bar{\epsilon}_k = sup_{\epsilon\in [0,1]}\{\epsilon \ : \ \beta_N(\epsilon , k)\leq \frac{1}{2^{k}}\ \} \ , \ \ k=1,2,... \ .
$$
Note that, due to monotonicity of  $\beta_N(\epsilon, T)$ in $\epsilon$,
$$
\beta_N(\epsilon , k)\leq \frac{1}{2^{k}} \ \ \ \forall \epsilon\in (0,\bar{\epsilon}_k) \ .
$$
It is easy to verify (using the fact that $\beta_N(\epsilon, T)$ is  increasing in $T$) that $\bar{\epsilon}_1\geq \bar{\epsilon}_2\geq ... \bar{\epsilon}_k \geq ... $, and, hence, there exists a limit
$$
\lim_{k\rightarrow\infty}\bar{\epsilon}_k\BYDEF \bar{\epsilon}\geq 0 \ .
$$
Let us show that $\bar{\epsilon}=0 $. Assume it is not true and $\bar{\epsilon}>0 $. Then, for any $\epsilon\in [0,\bar{\epsilon})$ and for any fixed $j\leq k $,
$$\beta_N(\epsilon , j)\leq \beta_N(\epsilon , k)\leq   \frac{1}{2^{k}} \ . $$
By letting $k$ go to infinity in the last inequality, one comes to  the conclusion that $\beta_N(\epsilon , j)= 0 $, $j=1,2,...$ and, consequently,
to the conclusion that  $\beta_N(\epsilon , T)= 0 $ for any $T>0$ (due to monotonicity in $T$). The latter contradicts (\ref{e:SP-aug-5-1-2}). Thus,
\begin{equation}\label{e:SP-aug-5-1-3-7}
\lim_{k\rightarrow\infty}\bar{\epsilon}_k= 0 \ .
\end{equation}
Let $k_1<k_2<...k_l<...$ be a sequence of natural numbers such that $k_l\rightarrow\infty$ and such that
$\bar{\epsilon}_{k_1}> \bar{\epsilon}_{k_2}>...> \bar{\epsilon}_{k_l}> ... $ .
Define the function $T(\epsilon)$ on the interval $(0,\bar{\epsilon}_{k_1})$ by the equation
\begin{equation}\label{e:SP-aug-5-1-4}
T(\epsilon)= k_l \ \ for \ \ \epsilon \in [\bar{\epsilon}_{k_{l+1}}, \bar{\epsilon}_{k_l}) \ , \ \ l=1,2,... \ .
\end{equation}
It is easy to see that the function  $T(\epsilon)$ is increasing when $\epsilon$ is decreasing, and
\begin{equation}\label{e:SP-aug-5-1-5}
\lim_{\epsilon\rightarrow 0}T(\epsilon) = \infty \ .
\end{equation}
Also, according to the construction above,
\begin{equation}\label{e:SP-aug-5-1-6}
\beta_N(\epsilon, T(\epsilon)) =   \beta_N(\epsilon, k_l)  \leq \frac{1}{2^{k_l}} \ \ \ \forall \epsilon\in [\bar{\epsilon}_{k_{l+1}},\bar{\epsilon}_{k_l})
\end{equation}
$$
\Rightarrow \ \ \ \ \lim_{\epsilon\rightarrow 0}\beta_N(\epsilon , T(\epsilon)) = 0 \ .
$$
 \qquad
\endproof

{\it Proof of  Proposition \ref{Prop-SP-2}.}
To prove (\ref{equality-1-SP-new}),
let us first prove that the inclusion
\begin{equation}\label{incl-dang-1}
\Phi(\tilde{\mathcal{D}}_{di}(z_0))\subset \mathcal{D}_{di}^{\mathcal{A}}(z_0)
\end{equation}
is valid. Take an arbitrary $ \gamma\in \Phi(\tilde{\mathcal{D}}_{di}(z_0))$. That is, $\
\gamma = \Phi(p) \ $ for some $ \ p\in  \tilde{\mathcal{D}}_{di}(z_0) \ $. By (\ref{e:h&th-1}),
$$
\int_{{U\! \times Y \times Z}}[ \ \psi(z)\nabla \phi(y)^T f(u,y,z)\ ]\Phi(p)(du,dy,dz)
$$
$$
= \int_{F} [ \ \psi(z)\int_{U\times Y}\nabla \phi(y)^T f(u,y,z)\mu(du,dy)\ ]
p(d\mu,dz).
$$
By definition of $F$ (see (\ref{e:graph-w})),
 $$
\int_{U\times Y}\nabla \phi(y)^T f(u,y,z)\mu(du,dy) = 0 \ \ \ \ \forall (\mu, z)\in F .
$$
Consequently (see (\ref{e:SP-W-3})),
\begin{equation}\label{e: SP-Prop-2-1}
\int_{{U\! \times Y \times Z}}[ \ \psi(z)\nabla \phi(y)^T f(u,y,z)\ ]\Phi(p)(du,dy,dz)=0\ \ \ \Rightarrow
\ \ \ \Phi(p)\in \mathcal{D}.
\end{equation}
Also from (\ref{e:h&th-1}) and from the fact that $ \ p\in  \tilde{\mathcal{D}}_{di}(z_0) \ $ it follows that
$$
\int_{U\times Y\times Z}[\ \nabla \phi(z)^T g(u,y,z)\ + \ C (\ \psi(z_0)-\psi(z) \ )\ ]\Phi(p)(du,dy,dz)
$$
$$
= \int_{F} [\ \nabla \phi(z)^T \tilde g(\mu ,z)\ + \ C (\ \psi(z_0)-\psi(z) \ )\ ]
p(d\mu , dz) \ = \ 0 \ \ \ \Rightarrow \ \ \ \Phi(p)\in \mathcal{A}_{di}(z_0).
$$
Thus, $\gamma = \Phi(p)\subset \mathcal{D}\cap\mathcal{A}_{di}(z_0) $. This proves (\ref{incl-dang-1}).
Let us now show that the converse inclusion
\begin{equation}\label{incl-dang-2}
\Phi(\tilde{\mathcal{D}}_{di}(z_0))\supset \mathcal{D}_{di}^{\mathcal{A}}(z_0)
\end{equation}
is valid.
To this end, take  $\ \gamma\in  \mathcal{D}_{di}^{\mathcal{A}}(z_0) $  and show that $\gamma\in \Phi (\tilde{\mathcal{D}}_{di}(z_0))$.
Due to (\ref{tilde-W-1}), $ \gamma $ can be presented in the form (\ref{e:SP-W-4-extra}) with
 $\mu(du,dy|z)\in W(z) $ for $\nu$ almost all $z\in Z$.
Changing values of $\mu $
on a subset of $Z$ having the $\nu$ measure  $0$, one can come to the conclusion
that  $\ \gamma$ can be presented in the form (\ref{e:SP-W-4-extra}) with
\begin{equation}\label{e:SP-disintegration-1}
\mu(du,dy|z)\in W(z) \ \ \ \ \ \forall z\in Z.
\end{equation}
Let $\mathcal{L} $ be a subspace of $C[F ]$ defined by the equation
\begin{equation}\label{e:SP-L-subspace-1}
\mathcal{L}\BYDEF \{\tilde q (\cdot,\cdot) \ : \ \tilde q (\mu,z)=\int_{U\times Y}q(u,y,z)\mu(du,dy) , \ \ q\in C[U\times Y\times Z] \}.
\end{equation}
For every $\tilde q \in \mathcal{L} $, let $p_{\mathcal{L}}(\tilde q): \mathcal{L} \rightarrow \R^1 $ be defined by the equation
\begin{equation}\label{e:SP-L-subspace-2}
p_{\mathcal{L}}(\tilde q )\BYDEF \int_{z\in Z}\tilde q(\mu(\cdot |z),z) \nu(dz) = \int_{z\in Z}[\int_{U\times Y}q(u,y,z) \mu(du,dy|z)\ ]\nu(dz)
\end{equation}
$$ = \int_{U\times Y}q(u,y,z) \gamma(du,dy,dz).$$
Note that $p_{\mathcal{L}} $ is a positive linear functional on $\mathcal{L} $. That is, if $\tilde q_1(\mu,z)\leq \tilde q_2(\mu,z) \ \forall (\mu , z)\in F \ $, then $p_{\mathcal{L}}(\tilde q_1) \leq p_{\mathcal{L}}(\tilde q_2) $.
 Note also that  $1\in \mathcal{L} $. Hence, by Kantorovich theorem (see, e.g.,
\cite{Aliprantis}, p. 330), $p_{\mathcal{L}} $ can be extended
to a positive linear functional $p $ on the whole $C[F ]$, with
\begin{equation}\label{e:SP-L-subspace-3}
p(\tilde q)=p_{\mathcal{L}}(\tilde q ) \ \ \forall \tilde q \in \mathcal{L} .
\end{equation}
 Due to the fact that $p $ is positive, one obtains that
 \begin{equation}\label{e:SP-L-subspace-4}
sup_{\zeta(\cdot)\in \bar B }\ p(\zeta(\cdot))\leq sup_{\zeta(\cdot)\in \bar B}\ p(|\zeta(\cdot)|)\leq p(1) =1,
\end{equation}
where $\bar B$ is the closed unit ball in $C[F ]$ (that is,
$ \ \bar B\BYDEF \{\zeta(\cdot)\in  C[F ] \ : \ \max_{(\mu , z)\in F}
|\zeta(\mu,z)|\leq 1\}$).
Thus, $p\in C^*[F ] $, and, moreover, $||p||=p(1)=1 $. This implies that there exists a unique probability measure
$p(d\mu , dz)\in {\cal P} (F) $ such that, for any $\zeta(\mu,z)\in C[F ] $,
\begin{equation}\label{e:SP-L-subspace-6}
p(\zeta) = \int_{F}\zeta(\mu,z)p(d\mu , dz)
\end{equation}
(see, e.g., Theorem 5.8  on page 38 in \cite{Parth}).
 Using this relationship for $\zeta(\mu,z)=\tilde q (\mu, z)\in \mathcal{L} $, one obtains (see (\ref{e:SP-L-subspace-2}) and (\ref{e:SP-L-subspace-3})) that
\begin{equation}\label{e:SP-L-subspace-7}
\int_{F}\tilde q(\mu,z)p(d\mu , dz) = \int_{U\times Y}q(u,y,z) \gamma(du,dy,dz).
\end{equation}
Since the latter is valid for any $ q(u,y,z)\in C[U\times Y\times Z]$, it follows  that
\begin{equation}\label{e:SP-L-subspace-8}
\gamma = \Phi (p).
\end{equation}
Considering  now (\ref{e:SP-L-subspace-7})  with $q(u,y,z)=\nabla \psi (z)^T g(u,y,z) + C(\psi (z_0) - \psi (z))$,
and taking into account that, in this case, $\tilde q (\mu,z)=  \nabla \psi (z)^T \tilde g(\mu,z) + C(\psi (z_0) - \psi (z))$ one obtains that
$$
\int_{F} [\nabla \psi (z)^T \tilde g(\mu,z) + C(\psi (z_0) - \psi (z))]  p(d\mu , dz)
$$
$$
=\int_{U\times Y\times Z} [\nabla \psi (z)^T g(u,y,z) + C(\psi (z_0) - \psi (z))]\gamma (du,dy,dz)=0 ,
$$
where the equality to zero follows from the fact that $\gamma\in \mathcal{A}_{di}(z_0) $ (see (\ref{e:SP-M})). This implies
that $p\in \tilde{\mathcal{D}}_{di}(z_0)$. Hence, by (\ref{e:SP-L-subspace-8}), $\gamma\in \Phi (\tilde{\mathcal{D}}_{di}(z_0))$.
 This proves (\ref{equality-1-SP-new}).

 The validity of
(\ref{e-ave-LP-opt-1}) as well as the fact that $\gamma = \Phi(p) $ is  optimal in (\ref{SP-IDLP-0}) if and only if
$p$ is  optimal on (\ref{e-ave-LP-opt-di}) follow from (\ref{equality-1-SP-new}) and the definition of the map $\Phi (\cdot) $ (see (\ref{e:h&th-1})).
\endproof


\section{Proof of Theorem \ref{Main-SP-Nemeric}}\label{Sec-Main-Main}\
Note, first of all, that there exists an optimal solution $p^{N,M}$ of the problem (\ref{e-ave-LP-opt-di-MN}) which is
presented as a convex combination of (no more than $N+1$)
 Dirac measures (see, e.g., Theorems A.4 and A.5 in \cite{Rubio}).
  That is,
  \begin{equation}\label{e-NM-minimizer-proof-2}
p^{N,M}= \sum^{K^{N,M}}_{k=1}p_k^{N,M} \delta_{(\mu_k^{N,M},z_k^{N,M})},
\end{equation}
  where $\delta_{(\mu_k^{N,M},z_k^{N,M})}$
is the Dirac measure concentrated at $(\mu_k^{N,M},z_k^{N,M})$ and
 \begin{equation}\label{e-NM-minimizer-proof-1}
 (\mu_k^{N,M},z_k^{N,M})\in F_M, \ \ \  \  p_k^{N,M}>0, \ \ k=1,...,K^{N,M}\leq N+1;\ \ \ \ \   \   \sum^{K^{N,M}}_{k=1}p_k^{N,M}=1 .
\end{equation}

\begin{Lemma}\label{opt-1}
For any $ k=1,...,K^{N,M} $,
\begin{equation}\label{e-NM-minimizer-proof-2-2}
\mu_k^{N,M} = {\rm argmin}_{\mu\in W_M(z_k^{N,M})}\{\tilde G(\mu, z_k^{N,M})  + \nabla \zeta^{N,M} (z_k^{N,M})^T  \tilde g (\mu , z_k^{N,M}) \}.
\end{equation}
That is, $\mu_k^{N,M}$ is a minimizer of the problem
\begin{equation}\label{e-NM-minimizer-proof-2-3}
\min_{\mu\in W_M(z_k^{N,M})}\{ \tilde G(\mu, z_k^{N,M}) + \nabla \zeta^{N,M} (z_k^{N,M})^T  \tilde g (\mu , z_k^{N,M})\ \}.
\end{equation}
\end{Lemma}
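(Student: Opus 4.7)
The proof will be a standard complementary slackness argument applied to the primal-dual pair consisting of the $(N,M)$-approximating averaged problem (\ref{e-ave-LP-opt-di-MN}) and its dual (\ref{e:DUAL-AVE-0-approx-MN}). The idea is to combine three pieces of information: the feasibility of $p^{N,M}$ in $\tilde{\mathcal{D}}_{di}^{N,M}(z_0)$, the dual inequality (\ref{e:DUAL-AVE-0-approx-2-1}) satisfied by $\zeta^{N,M}$, and the strong duality identity equating the primal and dual optimal values.

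Concretely, I would proceed as follows. Using the representation (\ref{e-NM-minimizer-proof-2}), the constraints defining $\tilde{\mathcal{D}}_{di}^{N,M}(z_0)$ (see (\ref{D-SP-new-MN})) become $\sum_{k=1}^{K^{N,M}} p_k^{N,M}[\nabla\psi_i(z_k^{N,M})^T\tilde g(\mu_k^{N,M},z_k^{N,M}) + C(\psi_i(z_0)-\psi_i(z_k^{N,M}))]=0$ for $i=1,\ldots,N$. Multiplying by $\lambda_i^{N,M}$, summing over $i$ and adding the equality $\sum_k p_k^{N,M}\tilde G(\mu_k^{N,M},z_k^{N,M})=\tilde G_{di}^{N,M}(z_0)$ (which expresses the optimality of $p^{N,M}$), one obtains
\begin{equation*}
\sum_{k=1}^{K^{N,M}} p_k^{N,M}\Bigl[\tilde G(\mu_k^{N,M},z_k^{N,M})+\nabla\zeta^{N,M}(z_k^{N,M})^T\tilde g(\mu_k^{N,M},z_k^{N,M})+C(\zeta^{N,M}(z_0)-\zeta^{N,M}(z_k^{N,M}))\Bigr]=\tilde G_{di}^{N,M}(z_0).
\end{equation*}

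On the other hand, by (\ref{e:DUAL-AVE-0-approx-2-1}) applied at the feasible points $(\mu_k^{N,M},z_k^{N,M})\in F_M$, each bracketed term is bounded below by $\tilde G_{di}^{N,M}(z_0)$. Since $p_k^{N,M}>0$ and the weights sum to one, every bracketed term must in fact equal $\tilde G_{di}^{N,M}(z_0)$; that is,
\begin{equation*}
\tilde G(\mu_k^{N,M},z_k^{N,M})+\nabla\zeta^{N,M}(z_k^{N,M})^T\tilde g(\mu_k^{N,M},z_k^{N,M})=\tilde G_{di}^{N,M}(z_0)-C(\zeta^{N,M}(z_0)-\zeta^{N,M}(z_k^{N,M})).
\end{equation*}
Comparing this identity with (\ref{e:DUAL-AVE-0-approx-2-1}) taken along the slice $\mu\in W_M(z_k^{N,M})$ (with $z=z_k^{N,M}$ fixed), the right-hand side is a lower bound for $\tilde G(\mu,z_k^{N,M})+\nabla\zeta^{N,M}(z_k^{N,M})^T\tilde g(\mu,z_k^{N,M})$ for all such $\mu$; the identity shows that this lower bound is attained at $\mu=\mu_k^{N,M}$, which is exactly (\ref{e-NM-minimizer-proof-2-2}).

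There is no real obstacle here: the only subtlety is recognizing that one must add the dual-feasibility identity scaled by $\lambda_i^{N,M}$ to the primal-optimality equality in order to bring the bracket to a form directly comparable with the dual inequality (\ref{e:DUAL-AVE-0-approx-2-1}). Once this is done, the positivity of the weights $p_k^{N,M}$ forces termwise equality, which is the standard complementary-slackness conclusion.
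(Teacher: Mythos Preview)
Your proof is correct and follows essentially the same complementary-slackness argument as the paper: combine primal feasibility of $p^{N,M}$ with the dual inequality (\ref{e:DUAL-AVE-0-approx-2-1}) to obtain a weighted sum of nonnegative terms equal to zero, forcing termwise equality, and then restrict to the slice $\mu\in W_M(z_k^{N,M})$. The only cosmetic difference is that the paper phrases the first step as observing that the extra terms integrate to zero for any $p\in\tilde{\mathcal{D}}_{di}^{N,M}(z_0)$, whereas you write out the constraint multiplication explicitly.
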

\begin{proof}
From (\ref{e:DUAL-AVE-0-approx-MN}) and (\ref{e:DUAL-AVE-0-approx-2-1}) it follows that
\begin{equation}\label{e:DUAL-AVE-0-approx-2-1-Lemma-opt-1}
\tilde{G}^{N,M}_{di}(z_0)= \min_{(\mu ,z) \in F_M } \{\tilde G(\mu , z)+ \nabla \zeta^{N,M}(z)^T \tilde g (\mu , z) + C (\zeta^{N,M}(z_0) - \zeta^{N,M}(z))\ \}.
\end{equation}
Also, for any $p\in \tilde{\mathcal{D}}_{di}^{N,M}(z_0)$,
$$
\int_{F_M}\tilde {G}(\mu, z)p(d\mu, dz)
$$
\vspace{-.2in}
$$
= \int_{F_M}[\tilde G(\mu , z)+ \nabla \zeta^{N,M}(z)^T \tilde g (\mu , z) + C (\zeta^{N,M}(z_0) - \zeta^{N,M}(z))]p(d\mu, dz).
$$
Consequently, for $p=p^{N,M} $,
$$
\tilde{G}^{N,M}_{di}(z_0) = \int_{F_M}\tilde {G}(\mu, z)p^{N,M}(d\mu, dz)
$$
\vspace{-.2in}
$$
= \int_{F_M}[\tilde G(\mu , z)+ \nabla \zeta^{N,M}(z)^T \tilde g (\mu , z) + C (\zeta^{N,M}(z_0) - \zeta^{N,M}(z))]p^{N,M}(d\mu, dz)
$$
\vspace{-.2in}
$$
=\sum^{K^{N,M}}_{k=1}p_k^{N,M} [\tilde G(\mu_k^{N,M} , z_k^{M,N})+ \nabla \zeta^{N,M}(z_k^{N,M})^T \tilde g (\mu_k^{N,M} , z_k^{N,M}) +
 C (\zeta^{N,M}(z_0) - \zeta^{N,M}(z_k^{N,M}))].
$$
Since $(\mu_k^{N,M} , z_k^{N,M})\in F_M $, from the equalities above and from (\ref{e:DUAL-AVE-0-approx-2-1-Lemma-opt-1}) it follows
that
$$
\tilde G(\mu_k^{N,M} , z_k^{N,M})+ \nabla \zeta^{N,M}(z_k^{N,M})^T \tilde g (\mu_k^{N,M} , z_k^{N,M}) + C (\zeta^{N,M}(z_0) - \zeta^{N,M}(z_k^{N,M}))
$$
$$
=\min_{(\mu ,z) \in F_M } \{\tilde G(\mu , z)+ \nabla \zeta^{N,M}(z)^T \tilde g (\mu , z) + C (\zeta^{N,M}(z_0) - \zeta^{N,M}(z))\ \},  \ \ k=1,...,  K^{N,M}.
$$
That is, for $k=1,...,  K^{N,M} $,
$$
(\mu_k^{M,N} , z_k^{M,N})= {\rm argmin}_{(\mu ,z) \in F_M } \{\tilde G(\mu , z)+ \nabla \zeta^{N,M}(z)^T \tilde g (\mu , z) + C (\zeta^{N,M}(z_0) - \zeta^{N,M}(z))\ \}.
$$
The latter imply (\ref{e-NM-minimizer-proof-2-2}).
\end{proof}

\begin{Lemma}\label{opt-2}
In the presentation (\ref{e-NM-minimizer-proof-2}) of an optimal solution $p^{N,M} $ of the problem (\ref{e-ave-LP-opt-di-MN}), $\mu_k^{N,M} $
can be chosen as follows:
\begin{equation}\label{e-NM-minimizer-proof-3}
\mu_k^{N,M}= \sum_{j=1}^{J^{N,M,k}} q_j^{N,M,k}\delta_{(u_j^{N,M,k},y_j^{N,M,k})}, \ \ \ \ k= 1,...,K^{N,M},
\end{equation}
where
\begin{equation}\label{e-NM-minimizer-proof-4}
 \ q_j^{N,M,k}> 0, \ \ j=1,..., J^{N,M,k}, \ \ \ \  \sum_{j=1}^{J^{N,M,k}} q_j^{N,M,k}=1,
\end{equation}
and
\begin{equation}\label{e-NM-minimizer-proof-3-1}
J^{N,M,k}\leq N+M+2 .
\end{equation}
In (\ref{e-NM-minimizer-proof-3}), $\delta_{(u_j^{N,M,k},y_j^{N,M,k})}\in \mathcal{P}(U\times Y)$  are the Dirac measures  concentrated at $\ (u_j^{N,M,k},y_j^{N,M,k})\in U\times Y, \ \ j=1,..., J^{N,M,k}$, with
$$
u_j^{N,M,k}= argmin_{u\in U}\{G(u,y_j^{N,M,k},z_k^{N,M})+\nabla \zeta^{N,M} (z_k^{N,M})^T  g (u,y_j^{N,M,k} , z_k^{N,M})
$$
\begin{equation}\label{e-NM-minimizer-proof-5}
 + \nabla \eta^{N,M} (y_j^{N,M,k})^T  f(u,y_j^{N,M,k},z_k^{N,M})\}.
\end{equation}
\end{Lemma}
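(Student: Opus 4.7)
The plan is to augment Lemma~\ref{opt-1} with auxiliary linear constraints, apply the atomic representation theorem for semi-infinite LP problems to obtain the bound on the number of Dirac atoms, and then use complementary slackness with respect to the $(N,M)$-approximating associated dual problem to identify the atoms as minimizers of the Hamiltonian in (\ref{e-NM-minimizer-proof-5}).

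Fix $k\in\{1,\dots,K^{N,M}\}$, set $z_*:=z_k^{N,M}$, and denote by $H_k(u,y):=G(u,y,z_*)+\nabla\zeta^{N,M}(z_*)^{T}g(u,y,z_*)$ the integrand of the inner problem (\ref{e-NM-minimizer-proof-2-3}). I would introduce the auxiliary semi-infinite LP on $\mathcal{P}(U\times Y)$
\begin{equation*}
\min\nolimits_{\mu}\int_{U\times Y}H_k(u,y)\,\mu(du,dy)
\end{equation*}
subject to $\mu\in W_M(z_*)$ ($M$ equalities), the $N$ equalities $\int \nabla\psi_i(z_*)^{T}g(u,y,z_*)\,\mu(du,dy)=c_{k,i}$ for $i=1,\dots,N$, the equality $\int G(u,y,z_*)\,\mu(du,dy)=c_{k,0}$, and $\int\mu=1$, where the constants $c_{k,i}$ and $c_{k,0}$ are the corresponding integrals obtained with $\mu=\mu_k^{N,M}$. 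This LP has $N+M+2$ linear equality constraints in total, and $\mu_k^{N,M}$ is feasible. By Theorems~A.4 and A.5 in \cite{Rubio}, there exists an optimal basic feasible solution $\mu_k'$ presentable as a convex combination of at most $N+M+2$ Dirac measures $\delta_{(u_j,y_j)}$, which is exactly the bound (\ref{e-NM-minimizer-proof-3-1}).

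Next I would verify that substituting $\mu_k'$ for $\mu_k^{N,M}$ in (\ref{e-NM-minimizer-proof-2}) yields another optimal solution of (\ref{e-ave-LP-opt-di-MN}): the $c_{k,i}$ equalities preserve the outer feasibility constraints defining $\tilde{\mathcal{D}}_{di}^{N,M}(z_0)$, and the $c_{k,0}$ equality preserves the outer objective. Moreover, since $\int H_k\,d\mu_k'=\int H_k\,d\mu_k^{N,M}$, Lemma~\ref{opt-1} together with $\mu_k'\in W_M(z_*)$ forces $\mu_k'$ to be itself an optimal solution of the inner problem (\ref{e-NM-minimizer-proof-2-3}).

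Finally, I would invoke the LP duality between the inner problem (\ref{e-NM-minimizer-proof-2-3}) taken with $z=z_*$ and its dual (\ref{e:dec-fast-4-Associated}) -- equality of optimal values is Theorem~5.2(ii) of \cite{FinGaiLeb} -- using the dual solution $\eta^{N,M}_{z_*}$. Complementary slackness forces each atom $(u_j^{N,M,k},y_j^{N,M,k})$ of $\mu_k'$ to saturate the dual feasibility inequality (\ref{e:DUAL-AVE-0-approx-1-Associate-2}), giving
\begin{equation*}
\sigma^{*}_{N,M}(z_*)=H_k(u_j^{N,M,k},y_j^{N,M,k})+\nabla\eta^{N,M}_{z_*}(y_j^{N,M,k})^{T}f(u_j^{N,M,k},y_j^{N,M,k},z_*).
\end{equation*}
Fixing $y=y_j^{N,M,k}$ in (\ref{e:DUAL-AVE-0-approx-1-Associate-2}) and varying $u\in U$ then forces $u_j^{N,M,k}$ to be the minimizer described in (\ref{e-NM-minimizer-proof-5}). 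The main obstacle is the careful choice of auxiliary constraints: one must pick enough equalities so that the replacement preserves both the feasibility and the optimality of the global representation (\ref{e-NM-minimizer-proof-2}), yet not so many that the atomic bound exceeds the claimed $N+M+2$.
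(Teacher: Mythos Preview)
Your proof is correct and follows the same strategy as the paper's. The only cosmetic difference is that the paper obtains the $N{+}M{+}2$ atomic bound by applying Carath\'eodory's theorem directly to the moment vector of $\mu_k^{N,M}$ in $\bar{co}\,V_k\subset\mathbb{R}^{1+N+M}$ (where $V_k$ is the image of $U\times Y$ under $(u,y)\mapsto\bigl(G,\,\nabla\psi_i^{T}g,\,\nabla\phi_j^{T}f\bigr)$) rather than via Rubio's extreme-point theorems on your auxiliary LP---which is the same argument in different packaging---and the complementary-slackness identification of the atoms as Hamiltonian minimizers is carried out identically.
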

\begin{proof}
Assume that $p_k^{N,M}, \  k=1,...,  K^{N,M}, $ in (\ref{e-NM-minimizer-proof-2}) are fixed. Then
 $\mu_k^{N,M}, \  k=1,...,  K^{N,M},$ form an optimal solution of the following problem
\begin{equation}\label{e-NM-minimizer-proof-6}
 \min_{\{\mu_k\}}\{\sum_{k=1}^{K^{N,M}}p_k^{N,M}\int_{U\times Y}G(u,y,z_k^{N,M})\mu_k(du,dy)\ \},
\end{equation}
where minimization is over $\mu_k \in \mathcal{P}(U\times Y), \ k=1,...,  K^{N,M}, $ that satisfy the following constraints
\begin{equation}\label{e-NM-minimizer-proof-7}
\sum_{k=1}^{K^{N,M}}p_k^{N,M}\int_{U\times Y}[(\nabla\psi_i(z_k^{N,M})^T g(u,y,z_k^{N,M}) + C (\psi_i (z_0) - \psi_i (z_k^{N,M}) )]\mu_k(du,dy) = 0, \ \ \ i=1,...,N,
\end{equation}
\begin{equation}\label{e-NM-minimizer-proof-8}
\int_{U\times Y}\nabla\phi_j(y)f(u,y,z_k^{N,M})\mu_k(du,dy) = 0,  \ \ \ j=1,...,M, \ \ \ \ \  k=1,...,  K^{N,M}.
\end{equation}
In fact, if $ \mu_k^{N,M}, \ \ k=1,...,  K^{N,M}$ is an optimal solution of the problem (\ref{e-NM-minimizer-proof-6})-(\ref{e-NM-minimizer-proof-8}),
then $\bar p^{N,M}=\sum^{K^{N,M}}_{k=1}p_k^{N,M} \delta_{( \mu_k^{N,M},z_k^{N,M})}  $ will be an optimal solution of the problem (\ref{e-ave-LP-opt-di-MN}).
Let us show that the former has an optimal solution that can be presented as the sum in the right-hand side of (\ref{e-NM-minimizer-proof-3}). To this end,
note that the problem (\ref{e-NM-minimizer-proof-6})-(\ref{e-NM-minimizer-proof-8})  can be rewritten in the following equivalent form
\begin{equation}\label{e-NM-minimizer-proof-9}
\min_{\{w_0^k, w_i^k, v_j^k\}}\{\sum_{k=1}^{K^{N,M}}p_k^{N,M}w_0^k\},
\end{equation}
where minimization is over $w_0^k, w_i^k, v_j^k$, $i=1,...,N, \  j=1,...,M, \ k=1,...,K^{N,M}, $ such that
\begin{equation}\label{e-NM-minimizer-proof-10}
\sum_{k=1}^{K^{N,M}}p_k^{N,M}w_i^k = 0, \ \ \  i=1,...,N,
\end{equation}
\begin{equation}\label{e-NM-minimizer-proof-11}
v_j^k = 0, \ \ \ j=1,...,M, \ \ \ \ \  k=1,...,  K^{N,M}
\end{equation}
and  such that
\begin{equation}\label{e-NM-minimizer-proof-12}
\{w_0^k, w_i^k, v_j^k,\ \  i=1,...,N, \ \ j=1,...,M, \  \}\in \bar{co}V_k, \ \ \ k=1,...,  K^{N,M}
\end{equation}
where
$$
 V_k = \{w_0^k, w_i^k, v_j^k\ : \ w_0^k= G(u,y,z_k^{N,M}), \ \ w_i^k = (\nabla\psi_i(z_k^{N,M})^T g(u,y,z_k^{N,M}) + C (\psi_i (z_0) - \psi_i (z_k^{N,M}) ),
$$
\vspace{-.2in}
$$
v_j^k= \nabla\phi_j(y)f(u,y,z_k^{N,M}), \ \ i=1,...,N, \  j=1,...,M;  \ (u,y)\in U\times Y\}.
$$
By Caratheodory's theorem,
$$
\bar{co}V_k = \cup_{\{q_l\}}\{q_1V_k + .... + q_{N+M+2}V_k\ \},
$$
where the union is taken over all $ q_l\geq 0 , \ l=1,..., N+M+2, $ such that $\sum_{l=1}^{N+M+2}q_l = 1 $.
Thus, an optimal solution of the problem (\ref{e-NM-minimizer-proof-10})-(\ref{e-NM-minimizer-proof-12}) can be presented in the form
$$
\bar w_0^k = \sum_{l=1}^{N+M+2}\bar q_l^k G(u_l^k,y_l^k,z_k^{N,M}), \ \ \ \bar w_i^k =\sum_{l=1}^{N+M+2}\bar q_l^k [\nabla\psi_i(z_k^{N,M})^T g(u_l^k,y_l^k,z_k^{N,M})
$$
\vspace{-.2in}
$$
+ C (\psi_i (z_0) - \psi_i (z_k^{N,M}) ], \ \ \ \bar v_j^k =\sum_{l=1}^{N+M+2}\bar q_l^k  \nabla\phi_j(y_l^k)f(u_l^k,y_l^k,z_k^{N,M}), \ \ \ \ \ i=1,...,N, \  j=1,...,M.
$$
The latter implies that there exists an optimal solution of the problem (\ref{e-NM-minimizer-proof-6})-(\ref{e-NM-minimizer-proof-8}) that is presentable
in the form (\ref{e-NM-minimizer-proof-3}).

Let us now show that the relationships (\ref{e-NM-minimizer-proof-5}) are valid.
Note, firstly, that from (\ref{e:dec-fast-4-Associated}) and (\ref{e:DUAL-AVE-0-approx-1-Associate-2}) it follows that
\begin{equation}\label{e:DUAL-AVE-0-approx-1-Associate-2-NM}
\sigma_{N,M}^*(z)=\min_{(u,y)\in U\times Y} \{ G(u,y,z)+\nabla \zeta^{N,M} (z)^T  g (u,y , z) + \nabla \eta^{N,M} (y)^T  f(u,y,z)\}.
\end{equation}
By Lemma \ref{opt-1}, $\mu_k^{N,M}$ is an optimal solution of the problem (\ref{e-NM-minimizer-proof-2-3}). That is,
$$
\int_{ U\times Y}[G(u,y, z_k^{N,M}) + \nabla \zeta^{N,M} (z_k^{N,M})^T  g (u,y , z_k^{N,M})]\mu_k^{N,M}(du,dy)
$$
\vspace{-.2in}
$$
= \min_{\mu\in W_M(z_k^{N,M})}\int_{U\times Y} [G(u,y,z_k^{N,M})+\nabla \zeta^{N,M} (z_k^{N,M})^T  g (u,y , z_k^{N,M})]\mu(du,dy) = \sigma_{N,M}^*(z_k^{N,M}),
$$
the latter equality being due to the duality relationships between the problem (\ref{e:dec-fast-4-Associated}) and (\ref{e:dec-fast-4-Associated-1}).
Since $\mu_k^{N,M}\in W_M(z_k^{N,M}) $,
$$
\int_{U\times Y} [G(u,y,z_k^{N,M})+\nabla \zeta^{N,M} (z_k^{N,M})^T  g (u,y , z_k^{N,M})]\mu_k^{N,M}(du,dy)
$$
\vspace{-.2in}
$$
= \int_{U\times Y}[G(u,y,z_k^{N,M})+\nabla \zeta^{N,M} (z_k^{N,M})^T  g (u,y , z_k^{N,M}) +   \nabla \eta^{N,M} (y)^T  f(u,y,z_k^{N,M})]\mu_k^{N,M}(du,dy).
$$
Consequently,
$$
\int_{U\times Y}[G(u,y,z_k^{N,M})+\nabla \zeta^{N,M} (z_k^{N,M})^T  g (u,y , z_k^{N,M}) +   \nabla \eta^{N,M} (y)^T  f(u,y,z_k^{N,M})]\mu_k^{N,M}(du,dy)
$$
\vspace{-.2in}
$$
 =\sigma_{N,M}^*(z_k^{N,M}).
$$
After the  substitution of (\ref{e-NM-minimizer-proof-3}) into the equality above and taking into account (\ref{e-NM-minimizer-proof-4}), one can obtain
$$
\sum_{j=1}^{J^{N,M,k}} q_j^{N,M,k}[G(u_j^{N,M,k},y_j^{N,M,k},z_k^{N,M})+\nabla \zeta^{N,M} (z_k^{N,M})^T  g (u_j^{N,M,k},y_j^{N,M,k} , z_k^{N,M})
$$
\vspace{-.2in}
\begin{equation}\label{e:DUAL-AVE-0-approx-1-Associate-2-NM-1}
 +   \nabla \eta^{N,M} (y_j^{N,M,k})^T  f(_j^{N,M,k},y_j^{N,M,k},z_k^{N,M}) - \sigma_{N,M}^*(z_k^{N,M})]= 0.
\end{equation}
By (\ref{e:DUAL-AVE-0-approx-1-Associate-2-NM}), from (\ref{e:DUAL-AVE-0-approx-1-Associate-2-NM-1}) it follows that
$$
G(u_j^{N,M,k},y_j^{N,M,k},z_k^{N,M})+\nabla \zeta^{N,M} (z_k^{N,M})^T  g (u_j^{N,M,k},y_j^{N,M,k} , z_k^{N,M})
$$
\vspace{-.2in}
$$
+   \nabla \eta^{N,M} (y_j^{N,M,k})^T  f(u_j^{N,M,k},y_j^{N,M,k},z_k^{N,M})= \sigma_{N,M}^*(z_k^{N,M}) \ \ \ \ \forall j = 1, ..., J^{M,N,k}.
$$
Also by (\ref{e:DUAL-AVE-0-approx-1-Associate-2-NM}), the latter implies
$$
(u_j^{N,M,k},y_j^{N,M,k})= {\rm argmin}_{(u,y)\in U\times Y}\{G(u,y,z_k^{N,M})+\nabla \zeta^{N,M} (z_k^{N,M})^T  g (u,y , z_k^{N,M})
$$
\vspace{-.2in}
\begin{equation}\label{e:DUAL-AVE-0-approx-1-Associate-2-NM-2}
 +  \ \nabla \eta^{N,M} (y)^T  f(u,y,z_k^{N,M})\},
\end{equation}
which, in turn, implies (\ref{e-NM-minimizer-proof-5}).
\end{proof}

\begin{Lemma}\label{convergence-important}
Let Assumption \ref{SET-1} be satisfied. Then, for any $t\in [0,\infty) $, there exists a sequence
\begin{equation}\label{e:convergence-important-0}
(\mu_{k^{N,M}}^{N,M},  z_{k^{N,M}}^{N,M})\in  \{(\mu_k^{N,M}, z_k^{N,M}),\ k=1,...,K^{N,M} \}, \ \ N=1,2,..., \ \ M=1,2,...,
\end{equation}
(with $\{(\mu_k^{N,M}, z_k^{N,M}),\ k=1,...,K^{N,M} \} $ being the set of  concentration points of the
Dirac measures  in  (\ref{e-NM-minimizer-proof-2}))
such that
\begin{equation}\label{e:convergence-important-1}
 \lim_{N\rightarrow \infty}\limsup_{M\rightarrow \infty} [\rho(\mu^*(t),\mu_{k^{N,M}}^{N,M}) + ||z^*(t)-z_{k^{N,M}}^{N,M}||]=0.
\end{equation}

Also, if  $(\mu_{k^{N,M}}^{N,M},  z_{k^{N,M}}^{N,M}) $ is as in (\ref{e:convergence-important-1}),
  then for any $\tau\in [0,\infty) $, there exists a sequence
    \begin{equation}\label{e:convergence-important-2-2}
\ (u^{N,M,k_{N,M}}_{j_{N,M}}, y^{N,M,k_{N,M}}_{j_{N,M}})\in \{(u_j^{N,M,k^{N,M}},y_j^{N,M,k^{N,M}}),\ j=1,...,J^{N,M,k^{N,M}} \}, \ \ N=1,2,..., \ \ M=1,2,...,
\end{equation}
 ($\{(u_j^{N,M,k^{N,M}},y_j^{N,M,k^{N,M}}),\ j=1,...,J^{M,N,k^{N,M}} \}$ being the set of
  concentration points of the Dirac measures in (\ref{e-NM-minimizer-proof-3}) taken with $k=k_{N,M} $)
  such that
  \begin{equation}\label{e:convergence-important-2}
\lim_{N\rightarrow \infty}\limsup_{M\rightarrow \infty}[ ||u^*_t(\tau)-u^{N,M,k_{N,M}}_{j_{N,M}}|| +
||y^*_t(\tau))- y^{N,M,k_{N,M}}_{j_{N,M}}||] = 0. \end{equation}

\end{Lemma}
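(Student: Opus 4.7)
The plan is to deduce both assertions from the iterated weak$^*$ convergence of the optimal atomic measures $p^{N,M}$ to the (unique) optimal solution $p^*$ of the averaged IDLP problem, combined with the fact that $p^*$ charges every neighbourhood of $(\mu^*(t),z^*(t))$ and that $\mu^*(t)$ in turn charges every neighbourhood of $(u^*_t(\tau),y^*_t(\tau))$. First, by Assumption \ref{SET-1}(i) the optimizer $p^*$ of (\ref{e-ave-LP-opt-di}) is unique, so (\ref{e:graph-w-M-102-101}) in Proposition \ref{Prop-LM-convergence} gives
$$
\lim_{N\to\infty}\limsup_{M\to\infty}\rho(p^{N,M},p^*)=0.
$$
Moreover, combining Assumption \ref{SET-1}(i),(ii) with (\ref{C-result-di-ave}), Definition \ref{Def-ACG-opt} and Proposition \ref{Prop-clarification-1} (applied to the optimal admissible pair $(\mu^*(\cdot),z^*(\cdot))$), this unique $p^*$ is exactly the discounted occupational measure $p_{di}^{\mu^*(\cdot),z^*(\cdot)}$ generated by $(\mu^*(\cdot),z^*(\cdot))$.

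The key lemma for the first assertion is that, for every $t\in[0,\infty)$ and every $r>0$,
$$
p^*\!\left(B_r(\mu^*(t),z^*(t))\right)>0.
$$
Indeed, by Assumption \ref{SET-1}(ii), $\mu^*(\cdot)$ is piecewise continuous and, at each discontinuity, one-sided continuous, while $z^*(\cdot)$ is absolutely continuous. Hence there is a (one-sided) subinterval $I_t\subset[0,\infty)$ of positive Lebesgue measure on which $(\mu^*(s),z^*(s))\in B_r(\mu^*(t),z^*(t))$, so the discounted occupational measure identity yields the bound $p^*(B_r(\mu^*(t),z^*(t)))\ge C\!\int_{I_t}\!e^{-Cs}\,ds>0$. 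Then, for each $(N,M)$, I would select
$$
(\mu_{k^{N,M}}^{N,M},z_{k^{N,M}}^{N,M})\in\mathop{\rm Arg\,min}_{k=1,\ldots,K^{N,M}}\bigl[\rho(\mu^*(t),\mu_k^{N,M})+\|z^*(t)-z_k^{N,M}\|\bigr].
$$
If (\ref{e:convergence-important-1}) failed, there would exist $\alpha>0$ and subsequences along which no atom of $p^{N,M}$ meets $B_\alpha(\mu^*(t),z^*(t))$. Choosing a continuous test function $\phi:F\to[0,1]$ equal to $1$ on $B_{\alpha/4}(\mu^*(t),z^*(t))$ and vanishing outside $B_{\alpha/2}(\mu^*(t),z^*(t))$, one would have $\int\phi\,dp^{N,M}=0$ along these sequences, whereas $\int\phi\,dp^*\ge p^*(B_{\alpha/4}(\mu^*(t),z^*(t)))>0$. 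This contradicts the iterated weak$^*$ convergence from Step~1, establishing (\ref{e:convergence-important-1}).

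For (\ref{e:convergence-important-2}) the same strategy is applied one level down. By Lemma \ref{opt-2}, $\mu_{k^{N,M}}^{N,M}$ is itself a convex combination of at most $N+M+2$ Dirac measures concentrated at $(u_j^{N,M,k^{N,M}},y_j^{N,M,k^{N,M}})\in U\times Y$, and by the first part these measures converge, in the iterated sense, to $\mu^*(t)$ in the weak$^*$ topology of $\mathcal{P}(U\times Y)$. By (\ref{e:convergence-important-7}) of Assumption \ref{SET-1}(iii), for almost every $\tau$ and every $r>0$ one has $\mu^*(t)(B_r(u^*_t(\tau),y^*_t(\tau)))>0$. Selecting $(u^{N,M,k_{N,M}}_{j_{N,M}},y^{N,M,k_{N,M}}_{j_{N,M}})$ to minimize the distance to $(u^*_t(\tau),y^*_t(\tau))$ among the atoms of $\mu_{k^{N,M}}^{N,M}$ and applying the same bump-function/contradiction argument on $U\times Y$ yields (\ref{e:convergence-important-2}).

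The main technical delicacy is the iterated (rather than joint) nature of the limit $N\to\infty$, then $M\to\infty$: the weak$^*$ convergence of $p^{N,M}$ is only available in this iterated sense, so the contradiction argument has to be performed inside $\limsup_{M\to\infty}$ with $N$ fixed, and then the outer $\lim_{N\to\infty}$ invoked. Passing from $p^{N,M}$ to the ``inner'' atomic measures $\mu_{k^{N,M}}^{N,M}$ requires the same care: one has to argue that if an atom $(\mu_{k^{N,M}}^{N,M},z_{k^{N,M}}^{N,M})$ has been selected so as to approach $(\mu^*(t),z^*(t))$ in the iterated limit, then the atoms of $\mu_{k^{N,M}}^{N,M}$ can in turn be selected to approach $(u^*_t(\tau),y^*_t(\tau))$ in the same iterated limit, which is where the fact that $\mu_{k^{N,M}}^{N,M}$ itself is a finite convex combination of Diracs (Lemma \ref{opt-2}) is essential.
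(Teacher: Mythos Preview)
Your proposal is correct and follows essentially the same strategy as the paper's own proof: identify $p^*$ as the discounted occupational measure generated by $(\mu^*(\cdot),z^*(\cdot))$, use piecewise one-sided continuity to show $p^*$ charges every ball around $(\mu^*(t),z^*(t))$, and then argue by contradiction via the iterated weak$^*$ convergence $p^{N,M}\to p^*$ from Proposition~\ref{Prop-LM-convergence}; the second assertion is handled identically one level down using (\ref{e:convergence-important-7}). The only cosmetic difference is that you pass through a continuous bump function while the paper invokes the Portmanteau-type inequality $p^*(\mathcal{B}_r)\le\liminf p^{N,M}(\mathcal{B}_r)$ for open sets directly, and the paper phrases the contradiction through an auxiliary limit point $(\bar\mu,\bar z)\in cl\,\Theta^*$ rather than working at the fixed $(\mu^*(t),z^*(t))$---both routes are equivalent.
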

\begin{proof}
Note first of all that from the fact that the optimal solution $p^*$ of the  IDLP problem (\ref{e-ave-LP-opt-di}) is unique and  the equality
(\ref{C-result-di-ave}) is valid (see Assumption \ref{SET-1}(i)) it follows that $p^*$ is the discounted occupational measure generated by
$(\mu^*(\cdot), z^*(\cdot)) $.

Let
$$
\Theta^*\BYDEF \{(\mu,z)\ : \ (\mu,z)=(\mu^*(t), z^*(t)) \ \ \ {\rm for\ some}\ \ \ t\in [0,\infty)\}
$$
and let $cl \Theta^* $ stand for the closure of $\Theta^* $ in $\mathcal{P}(U\times Y)\times \R^n $. It is easy to show that
from Assumption \ref{SET-1}(ii) it follows that, if $(\bar \mu, \bar z)\in cl \Theta^* $, then,
for any $r>0 $, the open set $\ \mathcal{B}_r(\bar \mu, \bar z) \BYDEF \{(\mu,z) \ : \ \rho(\mu,\bar \mu) + ||z-\bar z||< r \}$
has a nonzero $p^* $-measure. That is,
\begin{equation}\label{e:convergence-important-3}
p^*(\mathcal{B}_r(\bar \mu, \bar z))> 0.
\end{equation}
In fact, if $(\bar \mu, \bar z)\in cl \Theta^* $, then there exists a sequence $t_l, \ l=1,2,..., $ such that
$\ (\bar \mu, \bar z) =\lim_{l\rightarrow\infty}(\mu^*(t_l), z^*(t_l))$, with $(\mu^*(t_l), z^*(t_l))\in \mathcal{B}_r(\bar \mu, \bar z) $ for $l$ large
enough. Taking one such $l$, one may conclude that there exists $\alpha > 0 $ such that
$(\mu^*(t'), z^*(t'))\in \mathcal{B}_r(\bar \mu, \bar z)\ \ \forall t'\in (t_l-\alpha, t_l ] $ if $\mu^*(\cdot) $ is continuous from the left at $t_l$
and $(\mu^*(t'), z^*(t'))\in \mathcal{B}_r(\bar \mu, \bar z)\ \ \forall t'\in [t_l, t_l+\alpha )$  if $\mu^*(\cdot) $ is continuous from the right at $t_l$.
By the definition of the discounted occupational measure generated by $(\mu^*(\cdot), z^*(\cdot))$ (see (\ref{e:occup-C})),
this implies (\ref{e:convergence-important-3}).

Assume now that (\ref{e:convergence-important-1}) is not true. Then there exists a number $r>0$ and sequences $(\mu_i , z_i)\in \Theta^* $, $N_i $, $M_{i,j} $
($ i=1,2,..., $ and $j=1,2,... $) such that
$$
\lim_{i\rightarrow\infty}(\mu_i , z_i) = (\bar \mu , \bar z ), \ \ \ \ \ \ \lim_{i\rightarrow\infty}N_i = \infty,
\ \ \ \ \ \ \lim_{j\rightarrow\infty}M_{i,j}= \infty
$$
and such that
\begin{equation}\label{e:convergence-important-4}
dist ((\mu_i , z_i), \Theta^{N_i, M_{i,j}})\geq 2r ,
\end{equation}
where $\Theta^{N, M}$ is the set of the concentration points of the
Dirac measures  in  (\ref{e-NM-minimizer-proof-2}), that is,
$$
\Theta^{N, M}\BYDEF \{(\mu_k^{N,M}, z_k^{N,M}), k=1,...,K^{N,M} \},
$$
taken with $N=N_i $ and $M=M_{i,j} $, and where
$$
dist((\mu,z), \Theta^{N, M})\BYDEF \min_{(\mu',z')\in \Theta^{N, M}}\{\rho(\mu,\mu') + ||z-z'|| \}.
$$
From (\ref{e:convergence-important-4}) it follows that
\begin{equation}\label{e:convergence-important-4-1}
 \ dist ((\bar \mu , \bar z ), \Theta^{N_i, M_{i,j}})\geq r
\end{equation}
for $i\geq i_0 $ (large enough). Hence,
$$
(\mu_k^{N_i,M_{i,j}}, z_k^{N_i,M_{i,j}}) \notin \mathcal{B}_r(\bar \mu, \bar z), \ \ k=1,...,K^{N_i,M_{i,j}}, \ \ i\geq i_0,  \ \ j= 1,2,... \ .
$$
The latter implies that
\begin{equation}\label{e:convergence-important-5}
 p^{N_i, M_{i,j}} (\mathcal{B}_r(\bar \mu, \bar z)) = 0,\ \ i\geq i_0,  \ \ j= 1,2,... \ ,
\end{equation}
where $p^{N,N}$ is defined by (\ref{e-NM-minimizer-proof-2}). Due to the fact that the optimal solution $p^*$ of the  IDLP problem (\ref{e-ave-LP-opt-di})
 is unique (Assumption \ref{SET-1}(i)), the relationship  (\ref{e:graph-w-M-102-101}) is valid. Consequently,
\begin{equation}\label{e:convergence-important-6}
\lim_{i\rightarrow\infty }\limsup_{j\rightarrow\infty}\rho( p^{N_i, M_{i,j}},p^*) = 0.
\end{equation}
From (\ref{e:convergence-important-5}) and (\ref{e:convergence-important-6}) it follows that
$$
p^* (\mathcal{B}_r(\bar \mu, \bar z))\leq \lim_{i\rightarrow\infty }\limsup_{j\rightarrow\infty}p^{N_i, M_{i,j}}(\mathcal{B}_r(\bar \mu, \bar z))=0.
$$
The latter contradicts to (\ref{e:convergence-important-3}). Thus, (\ref{e:convergence-important-1}) is proved.

Let us now prove the validity of (\ref{e:convergence-important-2}). 
Assume it is is not valid.
Then there exists  $r>0$ and sequences $N_i $ , $M_{i,j} $ with
$i=1,..., \ j=1,...,\ $ and with
$\ \lim_{i\rightarrow\infty}N_i = \infty,
\ \ \ \lim_{j\rightarrow\infty}M_{i,j}= \infty  $ such that
\begin{equation}\label{e:convergence-important-4-(u,y)}
dist ((u^*_t(\tau) , y^*_t(\tau)), \theta^{N_i, M_{i,j}})\geq r , \ \ \ \ i, j= 1,2,... \ ,
\end{equation}
where $\theta^{N, M} $ is  the set of the concentration points of the
Dirac measures  in  (\ref{e-NM-minimizer-proof-3}),
$$
\theta^{N, M}\BYDEF \{(u_j^{N,M,k^{N,M}},y_j^{N,M,k^{N,M}}),\ \ j=1,...,J^{N,M,k^{N,M}} \},
$$
taken with $k=k^{N,M} $ and with $N=N_i $, $M=M_{i,j} $, and where
$$
dist((u,y), \theta^{N, M})\BYDEF \min_{(u',y')\in \theta^{N, M}}\{||u-u'|| + ||y-y'|| \}.
$$
From (\ref{e:convergence-important-4-(u,y)}) it follows that
$$
(u_j^{N_i,M_{i,j},k^{N_i,M_{i,j}}}, y_j^{N_i,M_{i,j},k^{N_i,M_{i,j}}}) \notin B_r(u^*_t(\tau),y^*_t(\tau)), \ \ j=1,...,J^{N_i,M_{i,j},k^{N_i,M_{i,j}}}, \ \ \ \ \ \ \ i, j= 1,2,... \ .
$$
The latter implies that
\begin{equation}\label{e:convergence-important-5-(u,y)}
\mu_{k^{N_i,M_{i,j}}}^{N_i, M_{i,j}} (B_r(u^*_t(\tau),y^*_t(\tau)) = 0, \ \ \ \ \ \ \ i, j= 1,2,... \ .
\end{equation}
where $\mu_k^{N,M}$ is defined by (\ref{e-NM-minimizer-proof-3}) (taken with $k=k^{N_i,M_{i,j}}, \ N=N_i $ and $M=M_{i,j} $).

From (\ref{e:convergence-important-1}) it follows, in particular, that
\begin{equation}\label{e:convergence-important-1-(u,y)}
 \lim_{N\rightarrow \infty}\limsup_{M\rightarrow \infty} \rho(\mu^*(t),\mu_{k^{N,M}}^{N,M})=0 \ \ \ \ \ \Rightarrow \ \ \ \ \
 \lim_{i\rightarrow \infty}\limsup_{j\rightarrow \infty}\rho(\mu^*(t),\mu_{k^{N_i,M_{i,j}}}^{N_i, M_{i,j}} )=0.
\end{equation}
The later and (\ref{e:convergence-important-5-(u,y)}) lead to
$$
\mu^*(t)(B_r(u^*_t(\tau),y^*_t(\tau))\leq \lim_{i\rightarrow \infty}\limsup_{j\rightarrow \infty}\mu_{k^{N_i,M_{i,j}}}^{N_i, M_{i,j}}(B_r(u^*_t(\tau),y^*_t(\tau))) = 0,
$$
which contradicts to (\ref{e:convergence-important-7}). Thus (\ref{e:convergence-important-2}) is proved.

\end{proof}

{\it Proof of Lemma \ref{fast-convergence}.}
Let $\tau\in [0,\infty) $ be such that the ball $B_{t,\tau}$ is not empty.
Let  also $(\mu_{k^{N,M}}^{N,M},  z_{k^{N,M}}^{N,M}) $ be as in (\ref{e:convergence-important-1}) and $(u^{N,M,k_{N,M}}_{j_{N,M}}, y^{N,M,k_{N,M}}_{j_{N,M}}) $
be as in (\ref{e:convergence-important-2}).
 Note that, due to (\ref{e-NM-minimizer-proof-5}),
\begin{equation}\label{e:convergence-important-11}
u^{N,M,k_{N,M}}_{j_{N,M}}= u(y^{N,M,k_{N,M}}_{j_{N,M}}, z_{k^{N,M}}^{N,M}),
\end{equation}
where $u(y,z) $ is as in (\ref{e-NM-minimizer-1}).  From (\ref{e:convergence-important-1}) and (\ref{e:convergence-important-2}) it follows that
$ \ (z_{k^{N,M}}^{N,M}, y^{N,M,k_{N,M}}_{j_{N,M}})\in Q_t\times B_{t,\tau} $ for $N $ and $M$ large enough. Hence, one can use (\ref{e:HJB-1-17-0-per-Lip-u-NM})
to obtain
$$
||u^*_t(\tau) - u^{N,M}(y^*_t(\tau), z^*(t))||\leq ||u^*_t(\tau)- u^{N,M,k_{N,M}}_{j_{N,M}}  || + ||u(y^{N,M,k_{N,M}}_{j_{N,M}}, z_{k^{N,M}}^{N,M}) -
u^{N,M}(y^*_t(\tau), z^*(t))||
$$
\vspace{-.2in}
$$
\leq ||u^*_t(\tau)- u^{N,M,k_{N,M}}_{j_{N,M}}  || + L(||y^*_t(\tau)-y^{N,M,k_{N,M}}_{j_{N,M}}|| + ||z^*(t)- z_{k^{N,M}}^{N,M}||).
$$
By (\ref{e:convergence-important-1}) and (\ref{e:convergence-important-2}), the latter implies
\begin{equation}\label{e:convergence-important-12}
 \lim_{N\rightarrow \infty}\limsup_{M\rightarrow \infty}||u^*_t(\tau) - u^{N,M}(y^*_t(\tau), z^*(t))|| = 0.
 \end{equation}
Since $B_{t,\tau}$  is not empty for almost all $\tau\in [0,\infty) $ (Assumption \ref{SET-3} (i)), the convergence (\ref{e:convergence-important-12})
takes place for almost all $\tau\in [0,\infty) $.

Let us take an arbitrary $\tau\in [0,\infty) $ and subtract the equation
 \begin{equation}\label{e:L-1-4}
y^*_t(\tau)=y^*_t(0) + \int_0^{\tau} f(u_t^*(\tau'), y^*_t(\tau'), z^*(t)) d\tau'
\end{equation}
from the equation
  \begin{equation}\label{e:L-1-5}
 y^{N,M}_{t} (\tau) =y^*_t(0) + \int_0^{\tau} f(u^{N,M}(y^{N,M}_{t} (\tau'),z^*(t)),  y^{N,M}_{t} (\tau'), z^*(t)) d\tau'.
\end{equation}
We will obtain
$$
|| y_{t}^{N,M}(\tau)-y^*_t(\tau)||\leq \int_0^{\tau} ||f(u^{N,M}( y^{N,M}_{t} (\tau'),z^*(t)),  y_{t}^{N,M}(\tau'), z^*(t))
$$
\vspace{-.2in}
$$
-  f(u_t^*(\tau'), y^*_t(\tau'), z^*(t))|| d\tau'\leq \int_0^{\tau} ||f(u^{N,M}( y^{N,M}_{t} (\tau'),z^*(t)),  y_{t}^{N,M}(\tau'), z^*(t))
$$
\vspace{-.2in}
$$
-f(u^{N,M}(y^*_t(\tau'),z^*(t)), y^*_t(\tau'), z^*(t))|| d\tau'
$$
\vspace{-.2in}
 \begin{equation}\label{e:L-1-6}
 +  \int_0^{\tau} || f(u^{N,M}(y^*_t(\tau'),z^*(t)), y^*_t(\tau'), z^*(t))- f(u_t^*(\tau'), y^*_t(\tau'), z^*(t))|| d\tau'.
\end{equation}
 Using Assumption \ref{SET-3} (ii),(iii), one can derive that
$$
\int_0^{\tau} ||f(u^{N,M}( y^{N,M}_{t} (\tau'),z^*(t)),  y_{t}^{N,M}(\tau'), z^*(t))
-  f(u^{N,M}(y^*_t(\tau'),z^*(t)), y^*_t(\tau'), z^*(t))|| d\tau'
$$
\vspace{-.2in}
$$
\leq \int_{\tau'\notin P_{t,\tau}(N,M)} ||f(u^{N,M}( y^{N,M}_{t} (\tau'),z^*(t)),  y_{t}^{N,M}(\tau'), z^*(t))
-  f(u^{N,M}(y^*_t(\tau'),z^*(t)), y^*_t(\tau'), z^*(t))|| d\tau'
$$
\vspace{-.2in}
$$
+ \int_{\tau'\in P_{t,\tau}(N,M)}[||f(u^{N,M}( y^{N,M}_{t} (\tau'),z^*(t)),  y_{z^*(t)}^{N,M}(\tau'), z^*(t))||
$$
\vspace{-.2in}
$$
+ ||f(u^{N,M}(y^*_t(\tau'),z^*(t)), y^*_t(\tau'), z^*(t))||] d\tau'
$$
\vspace{-.2in}
\begin{equation}\label{e:L-1-6-MN}
\leq L_1 \int_0^{\tau}|| y^{N,M}_{t} (\tau') - y^*_t(\tau')|| d\tau' + L_2 meas \{P_{t,\tau}(N,M)\},
\end{equation}
where $L_1$ is a constant defined (in an obvious way) by Lipschitz constants
of $f(\cdot)$ and $u^{N,M}(\cdot) $, and $\ L_2\BYDEF 2\ max_{(u,y,z)\in U\times Y\times Z}\{||f(u,y,z)||\} $.

Also, due to (\ref{e:convergence-important-12})  and the dominated convergence theorem (see, e.g., p. 49 in \cite{Ash})
 \begin{equation}\label{e:L-1-8}
 \lim_{N\rightarrow \infty}\limsup_{M\rightarrow \infty}\int_0^{\tau} || f(u^{N,M}(y^*_t(\tau'),z^*(t)), y^*_t(\tau'), z^*(t))- f(u_t^*(\tau'), y^*_t(\tau'), z^*(t))|| d\tau' = 0.
\end{equation}
Let us introduce the notation
$$
\kappa_{t,\tau}(N,M)\BYDEF L_2\ meas\{P_{t,\tau}(N,M)\}
$$
\vspace{-.2in}
$$
+\int_0^{\tau} || f(u^{N,M}(y^*_t(\tau'),z^*(t)), y^*_t(\tau'), z^*(t))- f(u_t^*(\tau'), y^*_t(\tau'), z^*(t))|| d\tau'
$$
and rewrite the inequality (\ref{e:L-1-6}) in the form
 \begin{equation}\label{e:L-1-9}
|| y_{t}^{N,M}(\tau)-y^*_t(\tau)||\leq L_1 \int_0^{\tau}||y^{N,M}_{t} (\tau') - y^*_t(\tau')|| d\tau' + \kappa_{t,\tau}(N,M).
\end{equation}
By Gronwall-Bellman lemma, it follows that
 \begin{equation}\label{e:L-1-10}
 \max_{\tau'\in [0,\tau]}|| y_{t}^{N,M}(\tau')-y^*_t(\tau')||  \leq \kappa_{t,\tau}(N,M)e^{L_1\tau}.
\end{equation}
Since, by (\ref{e:HJB-1-17-1-N-z-const}) and (\ref{e:L-1-8}),
\begin{equation}\label{e:L-1-11}
\lim_{N\rightarrow \infty}\limsup_{M\rightarrow \infty}\kappa_{t,\tau}(N,M) = 0,
\end{equation}
the inequality (\ref{e:L-1-10}) implies (\ref{e:HJB-1-17-1-N-z-const-fast-1}).

By (\ref{e:HJB-1-17-1-N-z-const-fast-1}), $\  y_{t}^{N,M}(\tau)\in B_{t,\tau} $ for $N $ and $M$ large enough (for $\tau\in [0,\infty) $  such that the ball
 $B_{t,\tau}$ is not empty). Hence,
$$
|| u^{N,M}(  y_{t}^{N,M}(\tau), z^*(t)) -u_t^*(\tau)||\leq || u^{N,M}(  y_{t}^{N,M}(\tau), z^*(t)) - u^{N,M}(y^*_t(\tau), z^*(t))||
$$
\vspace{-.2in}
$$
+||u^{N,M}(y^*_t(\tau), z^*(t)) - u_t^*(\tau)||\leq L ||y_{z^*(t)}^{N,M}(\tau) - y^*_t(\tau)|| +||u^{N,M}(y^*_t(\tau), z^*(t)) - u_t^*(\tau)||.
$$
The latter implies (\ref{e:HJB-1-17-1-N-z-const-fast-2}) (by (\ref{e:HJB-1-17-1-N-z-const-fast-1}) and (\ref{e:convergence-important-12})).

\endproof

{\it Proof of Theorem \ref{Main-SP-Nemeric}.} Let $q(u,y) $ be continuous. By (\ref{e-opt-OM-1-0-NM-star}) and (\ref{e-opt-OM-1-0-NM}),
for an arbitrary small $\alpha > 0 $ there exists
 $T> 0$ such that
 \begin{equation}\label{e:LA-Airport-2}
 | T^{-1}\int_0^T q(u_t^*(\tau),y_{t}^{*}(\tau))d\tau - \int_{U\times Y}q(u,y)\mu^{*}(t)(du,dy)|\leq \frac{\alpha}{2}
\end{equation}
and
\begin{equation}\label{e:LA-Airport-1}
 | T^{-1}\int_0^T q(u^{N,M}(  y_{t}^{N,M}(\tau), z^*(t)),y_{t}^{N,M}(\tau))d\tau - \int_{U\times Y}q(u,y)\mu^{N,M}(z^*(t))(du,dy)|\leq \frac{\alpha}{2} .
\end{equation}
Using (\ref{e:LA-Airport-1}) and (\ref{e:LA-Airport-2}), one can obtain
$$
|\int_{U\times Y}q(u,y)\mu^{N,M}(z^*(t))(du,dy) - \int_{U\times Y}q(u,y)\mu^{*}(t)(du,dy)|
$$
\vspace{-.2in}
$$
\leq \ | | T^{-1}\int_0^T q(u^{N,M}(  y_{t}^{N,M}(\tau), z^*(t)),y_{t}^{N,M}(\tau))d\tau - T^{-1}\int_0^T q(u_t^*(\tau),y_{t}^{*}(\tau))d\tau| + \alpha .
$$
Due to Lemma \ref{fast-convergence}, the latter implies the following inequality
$$
\lim_{N\rightarrow\infty}\limsup_{M\rightarrow\infty}|\int_{U\times Y}q(u,y)\mu^{N,M}(z^*(t))(du,dy) - \int_{U\times Y}q(u,y)\mu^{*}(t)(du,dy)|\leq \alpha ,
$$
which, in turn, implies
\begin{equation}\label{e:LA-Airport-3}
 \lim_{N\rightarrow\infty}\limsup_{M\rightarrow\infty}|\int_{U\times Y}q(u,y)\mu^{N,M}(z^*(t))(du,dy) - \int_{U\times Y}q(u,y)\mu^{*}(t)(du,dy)|=0
\end{equation}
(due to the fact that $\alpha $ can be arbitrary small). Since $q(u,y) $ is an arbitrary continuous function, from (\ref{e:LA-Airport-3}) it follows that
\begin{equation}\label{e:HJB-16-NM-proof-2}
\lim_{N\rightarrow\infty}\limsup_{M\rightarrow\infty }\rho(\mu^{N,M}(z^*(t)), \mu^*(t)) = 0 ,
\end{equation}
the latter being valid for almost all $t\in [0,\infty) $.

 Taking an arbitrary $t\in [0,\infty) $ and subtracting the equation
 \begin{equation}\label{e:HJB-16-NM-proof-3}
z^*(t)=z_0 + \int_0^t \tilde g(\mu^*(t'), z^*(t')) dt'
\end{equation}
from the equation
 \begin{equation}\label{e:HJB-16-NM-proof-4}
z^{N,M}(t)=z_0 + \int_0^t \tilde g(\mu^{N,M}(z^{N,M}(t')), z^{N,M}(t')) dt',
\end{equation}
one obtains
$$
||z^{N,M}(t)-z^*(t)||\leq \int_0^t ||\tilde g(\mu^{N,M}(z^{N,M}(t')), z^{N,M}(t'))- \tilde g(\mu^*(t'), z^*(t'))|| dt'
$$
\vspace{-.2in}
$$
\leq \int_0^t ||\tilde g(\mu^{N,M}(z^{N,M}(t')), z^{N,M}(t')) - \tilde g(\mu^{N,M}(z^{*}(t')), z^{*}(t'))||dt'
$$
\vspace{-.2in}
 \begin{equation}\label{e:HJB-16-NM-proof-5}
 +  \int_0^t || \tilde g(\mu^{N,M}(z^{*}(t')), z^{*}(t')) - \tilde g(\mu^*(t'), z^*(t'))|| dt'.
\end{equation}
From (\ref{e:HJB-1-17-1-per-Lip-g}) and from the definition of the set $A_t(N,M)$ (see (\ref{e:intro-0-4-N-A-t})), it follows that
$$
\int_0^t ||\tilde g(\mu^{N,M}(z^{N,M}(t')), z^{N,M}(t')) - \tilde g(\mu^{N,M}(z^{*}(t')), z^{*}(t'))||dt'
$$
\vspace{-.2in}
$$
\leq \int_{t'\notin A_t(N,M)}||\tilde g(\mu^{N,M}(z^{N,M}(t')), z^{N,M}(t')) - \tilde g(\mu^{N,M}(z^{*}(t')), z^{*}(t'))||dt'
$$
\vspace{-.2in}
$$
+ \int_{t'\in A_t(N,M)}[||\tilde g(\mu^{N,M}(z^{N,M}(t')), z^{N,M}(t'))|| + ||\tilde g(\mu^{N,M}(z^{*}(t')), z^{*}(t'))||\ ]dt'
$$
\vspace{-.2in}
 \begin{equation}\label{e:HJB-16-NM-proof-6}
\leq L\int_0^t ||z^{N,M}(t')- z^{*}(t') || + 2 L_g meas\{A_t(N,M)\},
\end{equation}
where $L_g\BYDEF  \max_{(u,y,z)\in U\times Y\times Z}||g(u,y,z||$.
This and (\ref{e:HJB-16-NM-proof-5}) allows one to obtain the inequality
\begin{equation}\label{e:HJB-16-NM-proof-6-11}
||z^{N,M}(t)-z^*(t)||\leq L\int_0^t ||z^{N,M}(t')- z^{*}(t') || +\kappa_t(N,M),
\end{equation}
 where
 $$
 \kappa_t(N,M)\BYDEF  2 L_g meas\{A_t(N,M)\} + \int_0^t || \tilde g(\mu^{N,M}(z^{*}(t')), z^{*}(t')) - \tilde g(\mu^*(t'), z^*(t'))|| dt'.
 $$
Note that, by (\ref{e:HJB-16-NM-proof-2}),
\begin{equation}\label{e:HJB-16-NM-proof-7}
 \lim_{N\rightarrow\infty}\limsup_{M\rightarrow\infty }\int_0^t || \tilde g(\mu^{N,M}(z^{*}(t')), z^{*}(t')) - \tilde g(\mu^*(t'), z^*(t'))|| dt'= 0,
\end{equation}
which, along with (\ref{e:HJB-1-17-1-N}), imply that
\begin{equation}\label{e:HJB-16-NM-proof-9}
\lim_{N\rightarrow\infty}\limsup_{M\rightarrow\infty }\kappa_t(N,M)=0.
\end{equation}
By Gronwall-Bellman lemma, from (\ref{e:HJB-16-NM-proof-6-11}) it follows that
$$
max_{t'\in [0,t]}||z^{N,M}(t')-z^*(t')||\leq  \kappa_t(N,M) e^{Lt}.
$$
The latter along with (\ref{e:HJB-16-NM-proof-9})  imply (\ref{e:HJB-1-19-2-NM}).

Let us now establish the validity of (\ref{e:HJB-1-19-1-NM}). Let  $t\in [0,\infty)$ be such that the ball $Q_t $ introduced in Assumption
\ref{SET-2}  is not empty. By triangle inequality,
\begin{equation}\label{e:summarizing-N-M-convergence}
\rho (\mu^{N,M}(z^{N,M}(t)),\mu^*(t))\leq \rho (\mu^{N,M}(z^{N,M}(t)),\mu^{N,M}(z^*(t))) + \rho (\mu^{N,M}(z^*(t)), \mu^*(t))
\end{equation}
Due to (\ref{e:HJB-1-19-2-NM}), $z^{N,M}(t)\in Q_t\ $ for $M$ and $N$ large enough. Hence, by (\ref{e:HJB-1-17-1-per-cont-mu-NM-g-1}),
$$
     \rho(\mu^{N,M}(z^{N,M}(t)), \mu^{N,M}(z^*(t)))\leq \kappa(||z^{N,M}(t')-z^*(t')||),
   $$
which implies that
$$
\lim_{N\rightarrow\infty}\limsup_{M\rightarrow\infty}\rho (\mu^{N,M}(z^{N,M}(t)),\mu^{N,M}(z^*(t)))=0.
$$
The latter, along with (\ref{e:HJB-16-NM-proof-2}) and (\ref{e:summarizing-N-M-convergence}), imply (\ref{e:HJB-1-19-1-NM}).

To prove (\ref{e:HJB-19-NM}), let us recall that
\begin{equation}\label{e:L-1-12-recall-1}
\tilde{V}^*_{di}(z_0) =\int_0 ^ { + \infty }e^{- C t}\tilde{G}(\mu^*(t), z^*(t))dt.
 \end{equation}
For an arbitrary $\delta > 0 $, choose $T_{\delta}> 0 $ in such a way that
$$
2M\int_{T_{\delta}}^{ + \infty }e^{-Ct}dt\leq \frac{\delta}{2}\ , \ \ \ \ \ \ \ M\BYDEF\max_{(u,y,z)\in U\times Y\times Z}\{|G(u,y,z)|\}.
$$
Then
$$
|\tilde V_{di}^{N,M}(z_0)-\tilde{V}^*_{di}(z_0)|\leq \int_0^{T_{\delta}}e^{-Ct}|\tilde G(\mu^{N,M}(z^{N,M}(t)),
z^{N,M}(t))- \tilde{G}(\mu^*(t), z^*(t))|dt + \delta
$$
By (\ref{e:HJB-1-19-2-NM}) and (\ref{e:HJB-1-19-1-NM}),
$$
\lim_{N\rightarrow\infty}\limsup_{M\rightarrow\infty}\int_0^{T_{\delta}}e^{-Ct}|\tilde G(\mu^{N,M}(z^{N,M}(t)),
z^{N,M}(t))- \tilde{G}(\mu^*(t), z^*(t))|dt = 0.
$$
Hence,
$$
\lim_{N\rightarrow\infty}\limsup_{M\rightarrow\infty}|\tilde V_{di}^{N,M}(z_0)-\tilde{V}^*_{di}(z_0)| \leq \delta.
$$
Since $\delta > 0 $ can be arbitrary small, the latter implies (\ref{e:HJB-19-NM}).

\endproof

\section{Proof of Theorem \ref{Prop-convergence-measures-discounted}}\label{Sec-Main-AVE}\

{\it Proof of Theorem \ref{Prop-convergence-measures-discounted}}.
Without loss of generality, one may assume  that
$r_{\delta}$ is  decreasing with $\delta $ and that
$r_{\delta}\leq \delta$ (the later can be achieved by replacing $r_{\delta}$ with  $\min\{\delta , r_{\delta} \} $  if necessary). Having this
in mind, define $\bar\delta(\epsilon)$ as the solution of the problem
\begin{equation}\label{e-implicit-1-0}
\min \{ \delta \ : \ r_{\delta}\geq \Delta^{\frac{1}{2}}(\epsilon)\}.
\end{equation}
The solution of this problem exists since $r_{\delta} $ is right-continuous and, by definition,
\begin{equation}\label{e-implicit-1}
r_{\bar{\delta}(\epsilon)} = \Delta^{\frac{1}{2}}(\epsilon).
\end{equation}
Fix an arbitrary $T>0 $ and introduce the notation
\begin{equation}\label{e-implicit-1-0-100}
\delta(\epsilon)\BYDEF \max \{\bar{\delta}(\epsilon),  \delta_T(\epsilon)\},
\end{equation}
where $ \delta_T(\epsilon) $ is an the statement of the theorem.
Note that, by construction,
\begin{equation}\label{e-implicit-2}
\lim_{\epsilon\rightarrow 0}\delta(\epsilon) = 0, \ \ \ \ \ \ \ \ \ \ \     \delta(\epsilon)\geq \Delta^{\frac{1}{2}}(\epsilon).
\end{equation}
As can be readily seen,
\begin{equation}\label{e-implicit-4}
\max_{t\in [t_l, t_{l+1}]}||z(t)-z(t_l)||\leq M\Delta(\epsilon),
\end{equation}
where $z(\cdot)$ is the solution of (\ref{e-opt-OM-1}) and
$\ M\BYDEF \max_{(u,y,z)\in U\times Y\times Z}||g(u,y,z)||$.
Hence,
$$
z(t)\in z(t_l)+ r_{\delta(\epsilon)} B \ \ \forall t\in [t_l, t_{l+1}]
$$
for all $\epsilon $ small enough. Consequently, due to the assumed weak piecewise Lipschitz continuity of the
ACG family under consideration
(see Definition \ref{ASS-locally-Lipschitz}),
\begin{equation}\label{e-implicit-5-1}
 ||\tilde g_{\mu}(z(t))- \tilde g_{\mu}(z(t_l)) ||\leq L_g ||z(t)-z(t_l)||\leq L_gM\Delta(\epsilon) \ \ \ \ \ {\rm if}  \ \ \ \
 \ t_l\notin\cup_{i=1}^k ( \bar t_i - \delta(\epsilon) , \bar t_i+ \delta (\epsilon)),
\end{equation}
where $L_g$ is a Lischitz constant of $\tilde g_{\mu}(\cdot) $.
Being the solution of (\ref{e-opt-OM-1}), $z(\cdot)$  satisfies the equality
\begin{equation}\label{e-implicit-3}
z(t_{l+1})-z(t_l) - \int_{t_l}^{t_{l+1}}\tilde g_{\mu}(z(t))dt = 0,
\end{equation}
which along with (\ref{e-implicit-5-1}) allow one to obtain
$$
|| z(t_{l+1})-z(t_l) - \Delta (\epsilon) \tilde g_{\mu}(z(t_l))|| =
|| \int_{t_l}^{t_{l+1}}\tilde g_{\mu}(z(t))dt - \Delta (\epsilon) \tilde g_{\mu}(z(t_l)) ||
$$
\begin{equation}\label{e-implicit-6}
\leq \int_{t_l}^{t_{l+1}}||\tilde g_{\mu}(z(t)) -\tilde g_{\mu}(z(t_l))||dt  \leq L_gM\Delta^2(\epsilon)
\ \ \ \ \ {\rm if} \ \ \ \ \ t_l\notin\cup_{i=1}^k ( \bar t_i - \delta(\epsilon) , \bar t_i+ \delta (\epsilon)).
\end{equation}
In addition to the above, one can obtain (by (\ref{e-implicit-4}))
$$
|| z(t_{l+1})-z(t_l) - \Delta (\epsilon) \tilde g_{\mu}(z(t_l))|| \leq || z(t_{l+1})-z(t_l)|| + \Delta (\epsilon) ||\tilde g_{\mu}(z(t_l))||
$$
\begin{equation}\label{e-implicit-7}
\leq 2M \Delta(\epsilon)\ \ \ \ \ {\rm if} \ \ \ \ \ t_l\in\cup_{i=1}^k ( \bar t_i - \delta(\epsilon) , \bar t_i+ \delta (\epsilon)).
\end{equation}
To continue the proof, let us rewrite the SP system (\ref{e:intro-0-1})-(\ref{e:intro-0-2}) in the \lq\lq stretched" time scale
$\tau = t\epsilon^{-1} $
\begin{eqnarray}\label{e:intro-0-1-STR}
\frac{dy(\tau)}{d\tau} &=&f(u(\tau),y(\tau),z(\tau)) ,   \\
\frac{dz(\tau)}{d\tau}&=&\epsilon g(u(\tau),y(\tau),z(\tau)).  \label{e:intro-0-2-STR}
\end{eqnarray}
Let us also introduce the following notations
\begin{equation}\label{e-implicit-7-1}
\tau_l \BYDEF t_l \epsilon^{-1}, \ \ \ \ \ \ \ \ S(\epsilon)\BYDEF \Delta(\epsilon)\epsilon^{-1}.
\end{equation}
In these notations, the control $u^{\epsilon}(\cdot)$ defined by (\ref{e:contr-rev-100-3}) and (\ref{e:contr-rev-100-4}) is rewritten in the form
\begin{equation}\label{e-implicit-7-1-101}
u_{\epsilon}(\tau) = u_{y_{\epsilon}(\tau_l),z_{\epsilon}(\tau_l)}(\tau - \tau_l) \ \ \ \ {\rm for} \ \ \ \ \tau\in [\tau_l, \tau_{l+1}),
 \ \ \ \ \ \ \ \ l=0,1,...\ ,
\end{equation}
and the solution $(y_{\epsilon}(\cdot),z_{\epsilon}(\cdot))$  of the system (\ref{e:intro-0-1-STR})-(\ref{e:intro-0-2-STR}) obtained with this control satisfies
the equations
\begin{equation}\label{e-implicit-8}
z_{\epsilon}(\tau)-z_{\epsilon}(\tau_l) - \epsilon \int_{\tau_l}^{\tau}g(u_{y_{\epsilon}(\tau_l),z_{\epsilon}(\tau_l)}(\tau ' - \tau_l),y_{\epsilon}(\tau '),
z_{\epsilon}(\tau '))d\tau ' = 0 \ \ \ \  \forall \tau\in [\tau_l , \tau_{l+1}],
\end{equation}
\vspace{-.2in}
\begin{equation}\label{e-implicit-8-1}
y_{\epsilon}(\tau)-y_{\epsilon}(\tau_l) -  \int_{\tau_l}^{\tau}f(u_{y_{\epsilon}(\tau_l),z_{\epsilon}(\tau_l)}(\tau ' - \tau_l),y_{\epsilon}(\tau '),
z_{\epsilon}(\tau '))d\tau ' = 0 \ \ \ \  \forall \tau\in [\tau_l , \tau_{l+1}].
\end{equation}
Note that the estimate (\ref{e:convergence-to-gamma-z-estimate}), which we are going to prove, is rewritten
in the stretched time scale as follows
\begin{equation}\label{e:convergence-to-gamma-z-estimate-tau}
max_{t\in [0,\frac{T}{\epsilon}]}||z_{\epsilon}(\tau)-z(\tau \epsilon)||\leq \beta_T(\epsilon), \ \ \ \ \ \ \lim_{\epsilon\rightarrow 0}\beta_T(\epsilon) = 0.
\end{equation}
Let us consider (\ref{e-implicit-8}) with $\tau = \tau_{l+1} $ and subtract  it from the expression
$\ z(t_{l+1})-z(t_l) - \Delta (\epsilon) \tilde g_{\mu}(z(t_l)) $. Using (\ref{e-implicit-6}), one can
obtain
$$
||\ [z(t_{l+1})-z_{\epsilon}(\tau_{l+1})] - [z(t_{l})-z_{\epsilon}(\tau_{l})]
$$
\vspace{-.2in}
$$
- \Delta(\epsilon) [\ \tilde g_{\mu}(z(t_l)) -
S^{-1}(\epsilon)\int_{\tau_l}^{\tau_{l+1}}g(u_{y_{\epsilon}(\tau_l),z_{\epsilon}(\tau_l)}(\tau ' - \tau_l),y_{\epsilon}(\tau '),
z_{\epsilon}(\tau '))d\tau' ||
$$
\vspace{-.2in}
$$
\leq L_gM\Delta^2(\epsilon) \ \ \ \ \ {\rm if} \ \ \ \ \ t_l\notin\cup_{i=1}^k ( \bar t_i - \delta(\epsilon) , \bar t_i+ \delta (\epsilon))
$$
$$
\Rightarrow \ \ \ ||z(t_{l+1})-z_{\epsilon}(\tau_{l+1})||\leq ||z(t_{l})-z_{\epsilon}(\tau_{l})||
$$
\vspace{-.3in}
$$
+ \ \Delta(\epsilon)\ ||\ \tilde g_{\mu}(z(t_l)) - S^{-1}(\epsilon)\int_{0}^{S(\epsilon)}g(u_{y_{\epsilon}(\tau_l),z_{\epsilon}(\tau_l)}
(\tau),y_{y_{\epsilon}(\tau_l),z_{\epsilon}(\tau_l)}(\tau),z_{\epsilon}(\tau_l)
)d\tau ||
$$
\vspace{-.2in}
$$
+ \ \Delta(\epsilon)\  ||\ S^{-1}(\epsilon)\int_{0}^{S(\epsilon)}g(u_{y_{\epsilon}(\tau_l),z_{\epsilon}(\tau_l)}
(\tau),y_{y_{\epsilon}(\tau_l),z_{\epsilon}(\tau_l)}(\tau),z_{\epsilon}(\tau_l)
)d\tau
$$
\vspace{-.2in}
$$
- \ S^{-1}(\epsilon)\int_{0}^{S(\epsilon)}g(u_{y_{\epsilon}(\tau_l),z_{\epsilon}(\tau_l)}(\tau),y_{\epsilon}(\tau + \tau_l),z_{\epsilon}(\tau + \tau_l))d\tau ||
$$
\vspace{-.2in}
\begin{equation}\label{e-implicit-9}
+ \ L_gM\Delta^2(\epsilon) \ \ \ \ \ {\rm if} \ \ \ \ \ t_l\notin\cup_{i=1}^k ( \bar t_i - \delta(\epsilon) , \bar t_i+ \delta (\epsilon)),
\end{equation}
where $y_{y_{\epsilon}(\tau_l),z_{\epsilon}(\tau_l)}(\cdot)$ is the solution of the associated system (\ref{e:intro-0-3}) considered with
 $z=z_{\epsilon}(\tau_l) $  and with the initial condition $y(0)= y_{\epsilon}(\tau_l)$.
 The control $u_{y_{\epsilon}(\tau_l),z^{\epsilon}(\tau_l)}(\tau) $
 steers the associated system to $\mu(du,dy|z^{\epsilon}(\tau_l))$, with the estimate (\ref{e-opt-OM-2-105}) being uniform with respect
 to the initial condition and the values of $z$. This implies that there exists a function $\phi_g(S)$,
  $\
 \lim_{S\rightarrow\infty}\phi_g(S) = 0
 $,
  such that
 \begin{equation}\label{e-opt-OM-2-105-extra}
 ||\ S^{-1}(\epsilon) \int_0^{S(\epsilon)} g(u_{y_{\epsilon}(\tau_l),z_{\epsilon}(\tau_l)}(\tau),y_{y_{\epsilon}(\tau_l)),z_{\epsilon}(\tau_l)}(\tau),z_{\epsilon}(\tau_l))
 d\tau
  - \tilde g_{\mu}(z_{\epsilon}(\tau_l))|| \leq \phi_g(S(\epsilon)).
 \end{equation}
Consequently, for $\ t_l\notin\cup_{i=1}^k ( \bar t_i - \delta(\epsilon) , \bar t_i+ \delta (\epsilon)) $ and $t_l\leq T $,
$$
\Delta(\epsilon)\ ||\ \tilde g_{\mu}(z(t_l)) - S^{-1}(\epsilon)\int_{0}^{S(\epsilon)}g(u_{y_{\epsilon}(\tau_l),z_{\epsilon}(\tau_l)}
(\tau),y_{y_{\epsilon}(\tau_l),z_{\epsilon}(\tau_l)}(\tau),z_{\epsilon}(\tau_l)
)d\tau ||
$$
\vspace{-.2in}
\begin{equation}\label{e-opt-OM-2-105-extra-11}
\leq \Delta(\epsilon)||\tilde g_{\mu}(z(t_l)) - \tilde g_{\mu}(z_{\epsilon}(\tau_l))|| + \Delta(\epsilon)\phi_g(S(\epsilon))
\leq \Delta(\epsilon) \tilde L_g ||z(t_l)) - z_{\epsilon}(\tau_l))|| + \Delta(\epsilon)\phi_g(S(\epsilon)),
 \end{equation}
where $\tilde L_g $ is a Lipschitz constant of $\tilde g_{\mu}(\cdot) $. Note that the last
inequality is valid since $\tilde g_{\mu}(\cdot) $ satisfies Lipschitz condition on $z(t_l)+ a_{t_l}B$
and since $\ z_{\epsilon}(\tau_l)\in  z(t_l)+ a_{t_l}B$
(as $\ t_l\in [0,T] \setminus \cup_{i=1}^k ( \bar t_i -  \delta(\epsilon),
\bar t_i+  \delta(\epsilon) ) $; see (\ref{e:convergence-to-gamma-extra-condition}) and (\ref{e-implicit-1-0-100})).

Also,
$$
\ \Delta(\epsilon)\  ||\ S^{-1}(\epsilon)\int_{0}^{S(\epsilon)}g(u_{y_{\epsilon}(\tau_l),z_{\epsilon}(\tau_l)}
(\tau),y_{y_{\epsilon}(\tau_l),z_{\epsilon}(\tau_l)}(\tau),z_{\epsilon}(\tau_l)
)d\tau
$$
\vspace{-.2in}
$$
- \ S^{-1}(\epsilon)\int_{0}^{S(\epsilon)}g(u_{y_{\epsilon}(\tau_l),z_{\epsilon}(\tau_l)}(\tau),y_{\epsilon}(\tau + \tau_l),z_{\epsilon}(\tau + \tau_l))d\tau ||
$$
\vspace{-.2in}
\begin{equation}\label{e-opt-OM-2-105-extra-12}
\leq L_g \Delta(\epsilon) (\max_{\tau\in [0,S(\epsilon)]}||y_{y_{\epsilon}(\tau_l),z_{\epsilon}(\tau_l)}(\tau) - y_{\epsilon}(\tau + \tau_l)|| + M\Delta(\epsilon)\ ),
\end{equation}
where it has been taken into account that, by (\ref{e-implicit-8}),
\begin{equation}\label{e-opt-OM-2-105-extra-13}
\max_{\tau\in [0,S(\epsilon)]} ||z_{\epsilon}( \tau_l)-z_{\epsilon}(\tau + \tau_l)||\leq M(\epsilon S(\epsilon))= M\Delta(\epsilon).
\end{equation}
By definition, $\ y_{y_{\epsilon}(\tau_l),z_{\epsilon}(\tau_l)}(\cdot) $ satisfies the equation
\begin{equation}\label{e-opt-OM-2-105-extra-14}
y_{y_{\epsilon}(\tau_l),z_{\epsilon}(\tau_l)}(\tau)-y_{\epsilon}(\tau_l) -  \int_{0}^{\tau}f(u_{y_{\epsilon}(\tau_l),z_{\epsilon}(\tau_l)}(\tau ),
y_{y_{\epsilon}(\tau_l),z_{\epsilon}(\tau_l)}(\tau '),
z_{\epsilon}(\tau_l))d\tau ' = 0 \ \ \ \  \forall \tau\in [0 , S(\epsilon)].
\end{equation}
Rewriting (\ref{e-implicit-8-1}) in the form
$$
y_{\epsilon}(\tau +\tau_l)-y_{\epsilon}(\tau_l) -  \int_{0}^{\tau}f(u_{y_{\epsilon}(\tau_l),z_{\epsilon}(\tau_l)}(\tau ' ),y_{\epsilon}(\tau '+\tau_l),
z_{\epsilon}(\tau '+\tau_l))d\tau ' = 0 \ \ \ \  \forall \tau\in [0 , S(\epsilon)]
$$
and subtracting it from (\ref{e-opt-OM-2-105-extra-14}), one can obtain (by (\ref{e-opt-OM-2-105-extra-13}))
$$
||y_{y_{\epsilon}(\tau_l),z_{\epsilon}(\tau_l)}(\tau) - y_{\epsilon}(\tau +\tau_l)||\leq L_f \int_{0}^{\tau}
||y_{y_{\epsilon}(\tau_l),z_{\epsilon}(\tau_l)}(\tau') - y_{\epsilon}(\tau' +\tau_l)||d\tau' + L_f M \epsilon S^2(\epsilon)
\ \ \  \forall \tau\in [0 , S(\epsilon)],
$$
where $L_f $ is a Lipschitz constant of $f(\cdot)$. By Gronwall-Bellman lemma, the latter implies (see also (\ref{e:contr-rev-100-0-1}) and (\ref{e-implicit-7-1}))
\begin{equation}\label{e-opt-OM-2-105-extra-15}
\max_{\tau\in [0,S(\epsilon)]}||y_{y_{\epsilon}(\tau_l),z_{\epsilon}(\tau_l)}(\tau) - y_{\epsilon}(\tau + \tau_l)||\leq M \epsilon S^2(\epsilon)
e ^{L_f S(\epsilon)} = M \epsilon (\frac{1}{2L_f}\ln \frac{1}{\epsilon})^2 \epsilon^{-\frac{1}{2}}\leq \epsilon^{\frac{1}{4}}
\end{equation}
for $\epsilon $ small enough.

Taking (\ref{e-opt-OM-2-105-extra-11}), (\ref{e-opt-OM-2-105-extra-12}) and (\ref{e-opt-OM-2-105-extra-15}) into account, one can
rewrite (\ref{e-implicit-9}) in the form
$$
 ||z(t_{l+1})-z_{\epsilon}(\tau_{l+1})||\leq ||z(t_{l})-z_{\epsilon}(\tau_{l})|| + L_1\Delta (\epsilon)||z(t_{l})-z_{\epsilon}(\tau_{l})||
$$
\vspace{-.2in}
\begin{equation}\label{e-implicit-12}
+ L_2\Delta (\epsilon) \zeta(\epsilon) \ \ \ \ \ {\rm if} \ \ \ \ \ t_l\notin\cup_{i=1}^k ( \bar t_i - \delta(\epsilon) , \bar t_i+ \delta (\epsilon)),\ \ \ \ t_l\leq T,
\end{equation}
where $L_1, L_2 $ are appropriately chosen constants and
\begin{equation}\label{e-implicit-13}
 \zeta(\epsilon)\BYDEF \phi_g(S(\epsilon)) +   \Delta (\epsilon) + \epsilon^{\frac{1}{4}}.
\end{equation}
Note that $\lim_{\epsilon\rightarrow 0} \zeta(\epsilon) = 0 $.

By subtracting (\ref{e-implicit-8}) (taken with $\tau=\tau_{l+1} $)  from the expression
$\ z(t_{l+1})-z(t_l) - \Delta (\epsilon) \tilde g_{\mu}(z(t_l)) $ and taking into account
(\ref{e-implicit-7}), one obtains
$$
||\ [z(t_{l+1})-z_{\epsilon}(\tau_{l+1})] - [z(t_{l})-z_{\epsilon}(\tau_{l})]
$$
\vspace{-.2in}
$$
- \Delta(\epsilon) [\ \tilde g_{\mu}(z(t_l)) - S^{-1}(\epsilon)
\int_{\tau_l}^{\tau_{l+1}}g(u_{y^{\epsilon}(\tau_l),z(t_l)}(\tau - \tau_l),y_{\epsilon}(\tau),z_{\epsilon}(\tau))d\tau ||
$$
\vspace{-.2in}
$$
\leq 2M\Delta(\epsilon) \ \ \ \ \ {\rm if} \ \ \ \ \ t_l\in\cup_{i=1}^k ( \bar t_i - \delta(\epsilon) , \bar t_i+ \delta (\epsilon)),
$$
which leads to the estimate
\begin{equation}\label{e-implicit-14}
 ||z(t_{l+1})-z_{\epsilon}(\tau_{l+1})||\leq ||z(t_{l})-z_{\epsilon}(\tau_{l})|| + L_3\Delta (\epsilon)
 \ \ \ \ \  {\rm if} \ \ \ \ \ t_l\in\cup_{i=1}^k ( \bar t_i - \delta(\epsilon) , \bar t_i+ \delta (\epsilon)),
\end{equation}
where $L_3 = 4M $.
Denoting $\ ||z(t_{l})-z_{\epsilon}(\tau_{l})|| $ as $\kappa_l$, one can come to the conclusion (based on (\ref{e-implicit-12}) and (\ref{e-implicit-14})
and on Lemma \ref{Lemma-Estimates-sigmas}  stated below) that, for any $T>0$, there exists $\xi_T(\epsilon) $,
 $ \ \lim_{\epsilon\rightarrow 0}\xi_T(\epsilon) = 0$, such that
\begin{equation}\label{e-implicit-15}
||z(t_{l})-z_{\epsilon}(\tau_{l})||\leq \xi_T(\epsilon),\ \ \ \ l=0,1,..., \ \lfloor \frac{T}{\Delta(\epsilon)} \rfloor ,
\end{equation}
where $\lfloor \cdot \rfloor $ stands for the floor function   ($\lfloor x \rfloor $ is the maximal integer number that is less or equal than  $x $).
Using (\ref{e-opt-OM-2-105-extra-13}), (\ref{e-implicit-15})  and
having in mind the fact  that $t_l = \tau_l \epsilon $ and that the inequality (\ref{e-implicit-4}) can be rewritten as
$$
\max_{\tau\in [\tau_l, \tau_{l+1}]}||z(\tau \epsilon)-z(\tau_l \epsilon)||\leq M\Delta(\epsilon),
$$
one  can establish the validity of  (\ref{e:convergence-to-gamma-z-estimate-tau}) with
$\beta_T(\epsilon) \BYDEF \xi_T(\epsilon) +2M\Delta(\epsilon)$.

To show that the discounted occupational measure $\gamma_{di}^{\epsilon} $ generated by
$(u_{\epsilon}(\cdot), y_{\epsilon}(\cdot),z_{\epsilon}(\cdot))$ converges to the measure $\gamma$ defined in (\ref{e-opt-OM-2}) it is  sufficient
to show that, for any Lipschitz continuous function $q(u,y,z)$,
\begin{equation}\label{e-implicit-16}
\lim_{\epsilon\rightarrow 0} \int_{U\times Y\times Z}q(u,y,z)\gamma_{di}^{\epsilon}(du,dy,dz)=  \int_{ Z}\tilde q_{\mu}(z)\nu_{di}(dz),
\end{equation}
where $\nu_{di}(dz)$ is the discounted occupational measure generated by the solution $z(t)$ of (\ref{e-opt-OM-1}) and
$\tilde q_{\mu}(\cdot) $ is defined by (\ref{e-opt-OM-1-101}).
By the definition of $\nu_{di}(dz)$
(see (\ref{e-opt-OM-1-extra-101})),
\begin{equation}\label{e-implicit-18}
  \int_{ Z}q_{\mu}(z)\nu_{di}(dz) = C\int_0^{\infty}e^{-Ct} \tilde q_{\mu}(z(t))dt.
\end{equation}
Also, by the definition of the discounted occupational measure (see (\ref{e:oms-0-2})) and due to the fact that the triplet
 $(u_{\epsilon}(\cdot), y_{\epsilon}(\cdot),z_{\epsilon}(\cdot))$ is considered in the stretched time scale,
\begin{equation}\label{e-implicit-19}
\int_{U\times Y\times Z}q(u,y,z)\gamma^{\epsilon}_{di}(du,dy,dz)=  \epsilon C \int_0^{\infty}e^{-\epsilon C\tau}
q(u_{\epsilon}(\tau), y_{\epsilon}(\tau),z_{\epsilon}(\tau))d \tau .
\end{equation}
As can be readily seen,
$$
\lim_{T\rightarrow\infty}C\int_T^{\infty}e^{-Ct} \tilde q_{\mu}(z(t))dt = 0, \ \ \ \ \
\lim_{T\rightarrow\infty}\epsilon C \int_{\frac{T}{\epsilon}}^{\infty}e^{-\epsilon C\tau}
q(u_{\epsilon}(\tau), y_{\epsilon}(\tau),z_{\epsilon}(\tau))d \tau = 0,
$$
with the convergence  being uniform in $\epsilon $ (in the second case). Thus, to prove (\ref{e-implicit-16}),
it is sufficient to prove that
\begin{equation}\label{e-implicit-20}
\lim_{\epsilon\rightarrow 0}\epsilon \int_0^{\frac{T}{\epsilon}}e^{-\epsilon C\tau}
q(u_{\epsilon}(\tau), y_{\epsilon}(\tau),z_{\epsilon}(\tau))d \tau = \int_0^{T}e^{-Ct} \tilde q_{\mu}(z(t))dt.
\end{equation}
The main steps in proving (\ref{e-implicit-20}) are as follows. Let $N_{\epsilon}\BYDEF \lfloor \frac{T}{\Delta(\epsilon)} \rfloor $. As can be readily seen,
the following estimates are valid:
\begin{equation}\label{e-implicit-6-103-1}
 |\int_0^{T}e^{-Ct} \tilde q_{\mu}(z(t))dt - \sum_{l=0}^{N_{\epsilon}-1}e^{-Ct_l} \tilde q_{\mu}(z(t_l))\Delta (\epsilon)|\leq \xi_T (\epsilon), \ \ \ \
 {\rm where} \ \ \ \  \lim_{\epsilon\rightarrow 0}\xi_T (\epsilon) = 0,
\end{equation}
$$
| \epsilon \int_0^{\frac{T}{\epsilon}}e^{-\epsilon C\tau}
q(u_{\epsilon}(\tau), y_{\epsilon}(\tau),z_{\epsilon}(\tau))d \tau
$$
\vspace{-.3in}
\begin{equation}\label{e-implicit-6-104}
- \  \Delta(\epsilon ) \sum_{l=0}^{N_{\epsilon}-1}S^{-1}(\epsilon)\int_0^{S(\epsilon)}e^{-\epsilon C(\tau + \tau_l)}
q(u_{y_{\epsilon}(\tau_l),z_{\epsilon}(\tau_l)}(\tau), y_{\epsilon}(\tau +\tau_l),z_{\epsilon}(\tau +\tau_l))d \tau| \ \leq \ M \Delta(\epsilon),
\end{equation}
where $M$ is a constant.
Similarly to (\ref{e-opt-OM-2-105-extra-11}) and (\ref{e-opt-OM-2-105-extra-12}), one can obtain (using the estimates
(\ref{e-opt-OM-2-105-extra-15}) and (\ref{e-implicit-15}))
\begin{equation}\label{e-implicit-6-105-7-1}
\Delta (\epsilon) |S^{-1}(\epsilon)\int_0^{S(\epsilon)}e^{-\epsilon C(\tau + \tau_l)}
q(u_{y_{\epsilon}(\tau_l),z_{\epsilon}(\tau_l)}(\tau), y_{\epsilon}(\tau +\tau_l),z_{\epsilon}(\tau +\tau_l))d \tau
- \ e^{-Ct_l} \ \tilde q_{\mu}(z(t_l)) | \leq \bar \zeta_T(\epsilon)\Delta (\epsilon)
\end{equation}
for $ \
 t_l\notin\cup_{i=1}^k ( \bar t_i - \delta(\epsilon) , \bar t_i+ \delta (\epsilon))$, where $\ \lim_{\epsilon\rightarrow 0}\bar \zeta_T(\epsilon) $.
In addition to that, one has the following estimate
\begin{equation}\label{e-implicit-6-105-8}
\Delta (\epsilon)|S^{-1}(\epsilon)\int_0^{S(\epsilon)}
q(u_{y_{\epsilon}(\tau_l),z_{\epsilon}(\tau_l)}(\tau), y_{\epsilon}(\tau +\tau_l),z_{\epsilon}(\tau +\tau_l))d \tau - e^{-Ct_l} \ \tilde q_{\mu}(z(t_l)) |\leq M\Delta(\epsilon)
\end{equation}
for $\ t_l\in\cup_{i=1}^k ( \bar t_i - \delta(\epsilon) , \bar t_i+ \delta (\epsilon))$, where $M$ is a constant.
From (\ref{e-implicit-6-105-7-1}) and (\ref{e-implicit-6-105-8}) it follows that
$$
\Delta (\epsilon)
\sum_{l=0}^{N_{\epsilon}-1}| \ S^{-1}(\epsilon)\int_0^{S(\epsilon)}e^{-\epsilon C(\tau + \tau_l)}
q(u_{y_{\epsilon}(\tau_l),z_{\epsilon}(\tau_l)}(\tau), y_{\epsilon}(\tau +\tau_l),z_{\epsilon}(\tau +\tau_l))d \tau
- e^{-Ct_l}\tilde q_{\mu}(z(t_l))| \leq \bar{\bar {\zeta}}_T(\epsilon),
$$
where $\ \lim_{\epsilon\rightarrow 0}\bar{\bar {\zeta}}_T (\epsilon)= 0 $. The latter along with (\ref{e-implicit-6-103-1}) and (\ref{e-implicit-6-104}) prove (\ref{e-implicit-20}).
\endproof

\begin{Lemma}\label{Lemma-Estimates-sigmas}
 Let $\delta(\epsilon)> 0 $ be as in (\ref{e-implicit-1-0-100})
and $\zeta(\epsilon)$ be as in (\ref{e-implicit-13}).
 Assume that $\kappa_0=0 $ and that the  numbers
$\kappa_l \geq 0, \ l=1,...,N_{\epsilon}\ \  (N_{\epsilon}= \lfloor \frac{T}{\Delta(\epsilon)} \rfloor )$,
 satisfy the inequalities
 \begin{equation}\label{piecewiseLip-1}
 \kappa_{l+1}\leq \kappa_l + L_1\Delta(\epsilon)\kappa_l +L_2 \Delta(\epsilon) \zeta(\epsilon)\ \ \ \ {\rm if} \ \ \ \
 \ t_l\notin\cup_{i=1}^k ( \bar t_i - \delta(\epsilon) , \bar t_i+ \delta (\epsilon))
 \end{equation}
 and
\begin{equation}\label{piecewiseLip-2}
 \kappa_{l+1}\leq \kappa_l + L_3\Delta(\epsilon) \ \ \ \ {\rm if} \ \ \ \ \ t_l\in\cup_{i=1}^k ( \bar t_i - \delta(\epsilon) , \bar t_i+ \delta (\epsilon)).
 \end{equation}
 Then
\begin{equation}\label{piecewiseLip-3}
 \kappa_{l}\leq L_4\delta(\epsilon) + L_5\zeta(\epsilon) \ \ \forall \ l=1,...,N_{\epsilon}\ ,
 \end{equation}
where $L_i\ , \ i=4,5 $,  may depend on $T$.
\end{Lemma}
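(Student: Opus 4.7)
The plan is to treat the recursion as a single discrete Gronwall inequality by absorbing the bad-step bound (\ref{piecewiseLip-2}) into the form (\ref{piecewiseLip-1}), and then to control the contribution of the bad indices through a counting argument that exploits the fact that the exceptional intervals $(\bar t_i - \delta(\epsilon), \bar t_i + \delta(\epsilon))$ have total length $2k\delta(\epsilon)$.

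\textbf{Step 1 (Counting the bad indices).} Call an index $l\in\{0,\dots,N_\epsilon-1\}$ \emph{bad} if $t_l\in\cup_{i=1}^k(\bar t_i-\delta(\epsilon),\bar t_i+\delta(\epsilon))$, and let $B$ denote the set of such indices. Since the $t_l$ are spaced $\Delta(\epsilon)$ apart and the union has Lebesgue measure at most $2k\delta(\epsilon)$, one has
\begin{equation*}
|B|\;\le\; k\bigl(\tfrac{2\delta(\epsilon)}{\Delta(\epsilon)}+1\bigr).
\end{equation*}

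\textbf{Step 2 (Uniform recursion).} Since $\kappa_l\ge 0$, bound (\ref{piecewiseLip-2}) implies the weaker inequality $\kappa_{l+1}\le (1+L_1\Delta(\epsilon))\kappa_l+L_3\Delta(\epsilon)$, so for every $l$
\begin{equation*}
\kappa_{l+1}\;\le\; (1+L_1\Delta(\epsilon))\,\kappa_l+\Delta(\epsilon)\,b_l,\qquad b_l\BYDEF L_2\zeta(\epsilon)\mathbf{1}_{l\notin B}+L_3\mathbf{1}_{l\in B}.
\end{equation*}

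\textbf{Step 3 (Iteration and Gronwall).} Iterating and using $\kappa_0=0$ gives
\begin{equation*}
\kappa_l\;\le\;\sum_{j=0}^{l-1}(1+L_1\Delta(\epsilon))^{l-1-j}\Delta(\epsilon)\,b_j
\;\le\; e^{L_1 T}\Delta(\epsilon)\sum_{j=0}^{N_\epsilon-1}b_j,
\end{equation*}
for all $l\le N_\epsilon$, since $(1+L_1\Delta(\epsilon))^{N_\epsilon}\le e^{L_1\Delta(\epsilon) N_\epsilon}\le e^{L_1 T}$. Splitting the sum into good and bad indices,
\begin{equation*}
\Delta(\epsilon)\sum_{j=0}^{N_\epsilon-1}b_j\;\le\; L_2\zeta(\epsilon)\,N_\epsilon\Delta(\epsilon)+L_3\Delta(\epsilon)|B|
\;\le\; L_2 T\,\zeta(\epsilon)+L_3\bigl(2k\delta(\epsilon)+k\Delta(\epsilon)\bigr).
\end{equation*}
By (\ref{e-implicit-2}) one has $\Delta(\epsilon)\le\delta(\epsilon)$, so $L_3(2k\delta(\epsilon)+k\Delta(\epsilon))\le 3kL_3\delta(\epsilon)$, giving
\begin{equation*}
\kappa_l\;\le\; e^{L_1 T}\bigl(L_2 T\,\zeta(\epsilon)+3kL_3\,\delta(\epsilon)\bigr)\;=\;L_5\zeta(\epsilon)+L_4\delta(\epsilon),
\end{equation*}
with $L_5\BYDEF L_2 T e^{L_1 T}$ and $L_4\BYDEF 3kL_3 e^{L_1 T}$, which is (\ref{piecewiseLip-3}).

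The argument is essentially mechanical; the only point that requires care is ensuring that the counting bound $|B|=O(\delta(\epsilon)/\Delta(\epsilon))$ in Step 1 interacts correctly with the factor $\Delta(\epsilon)$ multiplying $L_3$ in Step 3, so that the bad-step contribution ends up of order $\delta(\epsilon)$ rather than of order $1$.
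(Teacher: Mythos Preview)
Your proof is correct and is exactly the kind of standard discrete Gronwall argument the paper has in mind; indeed, the paper's own proof consists of the single sentence ``The proof is straightforward.'' Your Steps~1--3 (counting the bad indices, absorbing both recursions into a single multiplicative form, and iterating with the bound $(1+L_1\Delta(\epsilon))^{N_\epsilon}\le e^{L_1T}$) constitute precisely this straightforward computation, so there is nothing to add.
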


\begin{proof} The proof is straightforward.
 \end{proof}

\bigskip

{\bf Acknowledgment.} We are indebted to Adil Bagirov and Musa Mammadov for making their optimization routines available to us.

\end{document}